\begin{document}

\title[Coherently commutative monoidal quasi-categories]{A homotopy theory of coherently commutative monoidal quasi-categories}
\author[A. Sharma]{Amit Sharma}
\email{asharm24@kent.edu}
\address {Department of mathematical sciences\\ Kent State University\\
  Kent,OH}

\date{May 2, 2020}
%

\newcommand{\CONT}{\noindent}
\newcommand{\FIG}{Fig.\ }
\newcommand{\FIGS}{Figs.\ }
\newcommand{\SEC}{Sec.\ }
\newcommand{\SECS}{Secs.\ }
\newcommand{\TAB}{Table }
\newcommand{\TABS}{Tables }
\newcommand{\EQ}{Eq.\ }
\newcommand{\EQS}{Eqs.\ }
\newcommand{\APP}{Appendix }
\newcommand{\APPS}{Appendices }
\newcommand{\CHP}{Chapter }
\newcommand{\CHPS}{Chapters }

\newcommand{\OFF}{\emph{G2off}~}
\newcommand{\TOO}{\emph{G2Too}~}
\newcommand{\CatS}{Cat_{\bigS}}
\newcommand{\PicS}{{\underline{\pic}}^{\oplus}}
\newcommand{\HPicS}{{Hom^{\oplus}_{\pic}}}

\newtheorem{thm}{Theorem}[section]
\newtheorem{lem}[thm]{Lemma}
\newtheorem{conj}[thm]{Conjecture}
\newtheorem{coro}[thm]{Corollary}
\newtheorem{prop}[thm]{Proposition}

\theoremstyle{definition}
\newtheorem{df}[thm]{Definition}
\newtheorem{nota}[thm]{Notation}

\newtheorem{ex}[thm]{Example}
\newtheorem{exs}[thm]{Examples}

\theoremstyle{remark}
\newtheorem*{note}{Note}
\newtheorem{rem}{Remark}
\newtheorem{ack}{Acknowledgments}

\newcommand{\ChI}{{\textit{\v C}}\textit{ech}}
\newcommand{\Ch}{{\v C}ech}

\newcommand{\ChZG}{hermitian line $0$-gerbe}
\renewcommand{\theack}{$\! \! \!$}

\newcommand{\ChG}{flat hermitian line $1$-gerbe}
\newcommand{\ChC}{hermitian line $1$-cocycle}
\newcommand{\ChGG}{flat hermitian line $2$-gerbe}
\newcommand{\ChCC}{hermitian line $2$-cocycle}
\newcommand{\id}{id}
\newcommand{\LC}{\mathfrak{C}}
\newcommand{\Coker}{Coker}
\newcommand{\Com}{Com}
\newcommand{\Hom}{Hom}
\newcommand{\Mor}{Mor}
\newcommand{\Map}{Map}
\newcommand{\alg}{alg}
\newcommand{\an}{an}
\newcommand{\Ker}{Ker}
\newcommand{\Ob}{Ob}
\newcommand{\Proj}{\mathbf{Proj}}
\newcommand{\topo}{\mathbf{Top}}
\newcommand{\kan}{\mathcal{K}}
\newcommand{\pkan}{\mathcal{K}_\bullet}
\newcommand{\Kan}{\mathbf{Kan}}
\newcommand{\pKan}{\mathbf{Kan}_\bullet}
\newcommand{\QCat}{\mathbf{QCat}}
\newcommand{\gp}{\mathcal{A}_\infty}
\newcommand{\mdl}{\mathcal{M}\textit{odel}}
\newcommand{\sSets}{\mathbf{\S}}
\newcommand{\Sets}{\mathbf{Sets}}
\newcommand{\sSetsM}{\mathbf{\S}^+}
\newcommand{\sSetsQ}{(\mathbf{\sSets, Q})}
\newcommand{\sSetsMQ}{(\mathbf{\sSetsM, Q})}
\newcommand{\sSetsK}{(\mathbf{\sSets, \Kan})}
\newcommand{\pSSets}{\mathbf{\sSets}_\bullet}
\newcommand{\pSSetsK}{(\mathbf{\sSets}_\bullet, \Kan)}
\newcommand{\pSSetsQ}{(\mathbf{\sSets_\bullet, Q})}
\newcommand{\cyl}{\mathbf{Cyl}}
\newcommand{\lin}{\mathcal{L}_\infty}
\newcommand{\Vect}{\mathbf{Vect}}
\newcommand{\Aut}{Aut}
\newcommand{\pic}{\mathcal{P}\textit{ic}}
\newcommand{\Dlin}{\pic}
\newcommand{\bigS}{\mathbf{S}}
\newcommand{\bigA}{\mathbf{A}}
\newcommand{\bhom}{\mathbf{hom}}
\newcommand{\bhomK}{\mathbf{hom}({\textit{K}}^+,\textit{-})}
\newcommand{\Bhom}{\mathbf{Hom}}
\newcommand{\bhomk}{\mathbf{hom}^{{\textit{k}}^+}}
\newcommand{\Dlino}{\pic^{\textit{op}}}
\newcommand{\lino}{\mathcal{L}^{\textit{op}}_\infty}
\newcommand{\lind}{\mathcal{L}^\delta_\infty}
\newcommand{\linK}{\mathcal{L}_\infty(\kan)}
\newcommand{\linC}{\mathcal{L}_\infty\text{-category}}
\newcommand{\linCs}{\mathcal{L}_\infty\text{-categories}}
\newcommand{\ainCs}{\text{additive} \ \infty-\text{categories}}
\newcommand{\ainC}{\text{additive} \ \infty-\text{category}}
\newcommand{\inC}{\infty\text{-category}}
\newcommand{\inCs}{\infty\text{-categories}}
\newcommand{\gS}{{\Gamma}\text{-space}}
\newcommand{\gSet}{{\Gamma}\text{-set}}
\newcommand{\ggS}{\Gamma \times \Gamma\text{-space}}
\newcommand{\gSs}{\Gamma\text{-spaces}}
\newcommand{\gSets}{\Gamma\text{-sets}}
\newcommand{\ggSs}{\Gamma \times \Gamma\text{-spaces}}
\newcommand{\gO}{\Gamma-\text{object}}
\newcommand{\gSCat}{{\Gamma}\text{-space category}}
\newcommand{\pss}{\mathbf{S}_\bullet}
\newcommand{\gSC}{{{{\Gamma}}\mathcal{S}}}
\newcommand{\gSCM}{{{{\Gamma}}\mathcal{S}^+}}
\newcommand{\pGSC}{{{{\Gamma}}\mathcal{S}}_\bullet}
\newcommand{\pGSCStr}{{{{\Gamma}}\mathcal{S}}_\bullet^{\textit{str}}}
\newcommand{\ggSC}{{\Gamma\Gamma\mathcal{S}}}
\newcommand{\gSD}{\mathbf{D}(\gSC^{\textit{f}})}
\newcommand{\sCat}{\mathbf{sCat}}
\newcommand{\pSCat}{\mathbf{sCat}_\bullet}
\newcommand{\gSetCat}{{{{\Gamma}}\mathcal{S}\textit{et}}}
\newcommand{\Dhom}{\mathbf{R}Hom_{\pic}}
\newcommand{\gop}{\Gamma^{\textit{op}}}
\newcommand{\fU}{\mathbf{U}}
\newcommand{\cDN}{\underset{\mathbf{D}[\textit{n}^+]}{\circ}}
\newcommand{\cDK}{\underset{\mathbf{D}[\textit{k}^+]}{\circ}}
\newcommand{\cDL}{\underset{\mathbf{D}[\textit{l}^+]}{\circ}}
\newcommand{\cD}{\underset{\gSD}{\circ}}
\newcommand{\cDT}{\underset{\gSD}{\widetilde{\circ}}}
\newcommand{\ppsSets}{\sSets_{\bullet, \bullet}}
\newcommand{\gdHom}{\underline{Hom}_{\gSD}}
\newcommand{\HomU}{\underline{Hom}}
\newcommand{\ominf}{\Omega_\infty}
\newcommand{\ev}{ev}
\newcommand{\cu}{C(X;\mathfrak{U}_I)}
\newcommand{\Sing}{Sing}
\newcommand{\AlgEin}{\A\textit{lg}_{\E_\infty}}
\newcommand{\SFunc}[2]{\mathbf{SFunc}({#1} ; {#2})}
\newcommand{\unit}[1]{\mathrm{1}_{#1}}
\newcommand{\liminj}{\varinjlim}
\newcommand{\limproj}{\varprojlim}
\newcommand{\HMapC}[3]{\mathcal{M}\textit{ap}^{\textit{h}}_{#3}(#1, #2)}
\newcommand{\tensPGSR}[2]{#1 \underset{\gSR}\wedge #2}
\newcommand{\pTensP}[3]{#1 \underset{#3}\wedge #2}
\newcommand{\MGCat}[2]{\underline{\map}_{\gSC}({#1},{ #2})}
\newcommand{\MGBoxCat}[2]{\underline{\map}_{\gSC}^{\Box}({#1},{ #2})}
\newcommand{\TensPFunc}[1]{- \underset{#1} \otimes -}
\newcommand{\TensP}[3]{#1 \underset{#3}\otimes #2}
\newcommand{\MapC}[3]{\mathcal{M}\textit{ap}_{#3}(#1, #2)}
\newcommand{\bHom}[3]{{#2}^{#1}}
\newcommand{\gn}[1]{\Gamma^{#1}}
\newcommand{\gnk}[2]{\Gamma^{#1}({#2}^+)}
\newcommand{\gnf}[2]{\Gamma^{#1}({#2})}
\newcommand{\ggn}[1]{\Gamma\Gamma^{#1}}
\newcommand{\Nat}{\mathbb{N}}
\newcommand{\partition}[2]{\delta^{#1}_{#2}}
\newcommand{\inclusion}[2]{\iota^{#1}_{#2}}
\newcommand{\EinQC}{\text{coherently commutative monoidal quasi-category}} 
\newcommand{\EinQCs}{\text{coherently commutative monoidal quasi-categories}}
\newcommand{\pHomCat}[2]{[#1,#2]_{\bullet}}
\newcommand{\CatHom}[3]{[#1,#2]^{#3}}
\newcommand{\pCatHom}[3]{[#1,#2]_\bullet^{#3}}
\newcommand{\EinC}{\text{coherently commutative monoidal category}}
\newcommand{\EinCs}{\text{coherently commutative monoidal categories}}
\newcommand{\EinLO}{E_\infty{\text{- local object}}}
\newcommand{\EinSLO}{\E_\infty\S{\text{- local object}}}
\newcommand{\Ein}{E_\infty}
\newcommand{\EinS}{E_\infty{\text{- space}}}
\newcommand{\EinSs}{E_\infty{\text{- spaces}}}
\newcommand{\PCat}{\mathbf{Perm}}
\newcommand{\nor}[1]{{#1}^\textit{nor}}
\newcommand{\pSSetsHom}[3]{[#1,#2]_\bullet^{#3}}
\newcommand{\PNat}{\overline{\L}}
\newcommand{\PStr}{\L}
\newcommand{\Gn}[1]{\Gamma[#1]}
\newcommand{\GIH}{\Gamma\textit{H}_{\textit{in}}}
\newcommand{\QStr}[1]{\L_\bullet(\ud{#1})}
\newcommand{\QStrF}{\L_\bullet}
\newcommand{\Kbar}{\overline{\K}}
\newcommand{\gPerm}{{\Gamma\PCat}}
\newcommand{\gCat}{{\Gamma\Cat}}
\newcommand{\MapS}[3]{\map_{#3}(#1, #2)}
\newcommand{\sSetsMG}{\sSetsM / N(\gop)}
\newcommand{\sSetsMGen}[1]{\sSetsM / N(#1)}
\newcommand{\pF}{\mathfrak{F}_\bullet^+(\gop)}
\newcommand{\pN}{{\textit{N}}_\bullet^+(\gop)}
\newcommand{\pFX}[1]{\mathfrak{F}_{#1}^+(\gop)}
\newcommand{\pNX}[1]{{\textit{N}}_{#1}^+(\gop)}
\newcommand{\nGop}{N(\gop)}
\newcommand{\sSetsMGSM}{(\sSetsM/ N(\gop), \otimes)}
\newcommand{\sSetsGen}[1]{\sSets/ #1}
\newcommand{\ovCatGen}[2]{#1/ #2}
\newcommand{\coMdl}[1]{(\sSets/ #1, \mathbf{L})}
\newcommand{\ccMdl}[1]{(\sSetsM/ #1, \mathbf{cC})}
\newcommand{\NElG}[1]{N(\ovCatGen{{#1}^+}{\gop})}

\def\Pic{\mathbf{2}\mathcal P\textit{ic}}
\def\nc{\mathbb C}

\def\Z{\mathbb Z}
\def\P{\mathbb P}
\def\J{\mathcal J}
\def\I{\mathcal I}
\def\nC{\mathbb C}
\def\H{\mathcal H}
\def\A{\mathcal A}
\def\C{\mathcal C}
\def\D{\mathcal D}
\def\E{\mathcal E}
\def\G{\mathcal G}
\def\B{\mathcal B}
\def\L{\mathcal L}
\def\U{\mathcal U}
\def\K{\mathcal K}
\def\El{\mathcal E{\textit{l}}}

\def\M{\mathcal M}
\def\O{\mathcal O}
\def\R{\mathcal R}
\def\S{\mathcal S}
\def\N{\mathcal N}

\newcommand{\undertilde}[1]{\underset{\sim}{#1}}
\newcommand{\abs}[1]{{\lvert#1\rvert}}
\newcommand{\mC}[1]{\mathfrak{C}(#1)}
\newcommand{\sigInf}[1]{\Sigma^{\infty}{#1}}
\newcommand{\x}[4]{\underset{#1, #2}{ \overset{#3, #4} \prod }}
\newcommand{\mA}[2]{\textit{Add}^n_{#1, #2}}
\newcommand{\mAK}[2]{\textit{Add}^k_{#1, #2}}
\newcommand{\mAL}[2]{\textit{Add}^l_{#1, #2}}
\newcommand{\Mdl}[2]{\L_\infty}
\newcommand{\inv}[1]{#1^{-1}}
\newcommand{\Lan}[2]{\mathbf{Lan}_{#1}(#2)}

\newcommand{\del}{\partial}
\newcommand{\sCatO}{\mathcal{S}Cat_\O}
\newcommand{\FCgop}{\mathbf{F}\mC{N(\gop)}}
\newcommand{\hProd}{{\overset{h} \oplus}}
\newcommand{\hProdn}{\underset{n}{\overset{h} \oplus}}
\newcommand{\hProdk}[1]{\underset{#1}{\overset{h} \oplus}}
\newcommand{\map}{\mathcal{M}\textit{ap}}
\newcommand{\SMGS}[2]{\map_{\gSC}({#1},{ #2})}
\newcommand{\MGS}[2]{\underline{\map}_{\gSC}({#1},{ #2})}
\newcommand{\MGSBox}[2]{\underline{\map}^{\Box}_{\gSC}({#1},{ #2})}
\newcommand{\Aqcat}[1]{\underline{#1}^\oplus}
\newcommand{\Cat}{\mathbf{Cat}}
\newcommand{\Sp}{\mathbf{Sp}}
\newcommand{\SpStb}{\mathbf{Sp}^{\textit{stable}}}
\newcommand{\SpStr}{\mathbf{Sp}^{\textit{strict}}}
\newcommand{\Sspec}{\mathbb{S}}
\newcommand{\ud}[1]{\underline{#1}}
\newcommand{\inrt}{\mathbf{Inrt}}
\newcommand{\act}{\mathbf{Act}}
\newcommand{\StrSMHom}[2]{[#1,#2]_\otimes^{\textit{str}}}
\newcommand{\Sh}[1]{{#1}^\sharp}
\newcommand{\Fl}[1]{{#1}^\flat}
\newcommand{\Nt}[1]{{#1}^\natural}
\newcommand{\Flmap}[3]{\Fl{\left[#1, #2\right]}_{#3}}
\newcommand{\Shmap}[3]{\Sh{\left[#1, #2 \right]}_{#3}}
\newcommand{\mRN}[1]{N^+_{#1}(\gop)}
\newcommand{\mRNGen}[2]{\int_{+}^{d \in {#2}}{#1}}
\newcommand{\mRNL}[1]{{\mathfrak{F}}^+_{#1}(\gop)}
\newcommand{\mRNR}{\mRNGenR{\gop}}
\newcommand{\mRNGenL}[2]{{\mathfrak{F}}^+_{#1}(#2)}
\newcommand{\mapG}[2]{[#1, #2]_{\gop}^+}
\newcommand{\mapFl}[2]{\Fl{[#1, #2]}}
\newcommand{\mapSh}[2]{\Sh{[#1, #2]}}
\newcommand{\mapMS}[2]{[#1, #2]^+}
\newcommand{\ExpG}[2]{{\left({#2}\right)}^{[#1]}}
\newcommand{\expG}[2]{{{#2}}^{[#1]}}
\newcommand{\mapGen}[3]{[#1, #2]_{#3}}
\newcommand{\mapGenM}[3]{[#1, #2]^+_{#3}}
\newcommand{\rNGen}[2]{\int^{d \in {#2}}{#1}}
\newcommand{\rNGenL}[2]{\mathfrak{L}_{#2}(#1)}
\newcommand{\rN}[1]{\int^{n^+ \in \gop} {#1}}
\newcommand{\rNGenR}[1]{\mathfrak{R}_{#1}}
\newcommand{\mRNGenR}[1]{\mathfrak{R}^{+}_{#1}}
\newcommand{\cn}[1]{\mathbf{id}(d)_{{#1}}}
\newcommand{\gCLM}[1]{\mathfrak{L}^+_{#1}}

\begin{abstract}
 The main objective of this paper is to construct a symmetric monoidal closed model category of coherently commutative monoidal quasi-categories. We construct another model category structure whose fibrant objects are (essentially) those coCartesian fibrations which represent objects that are known as symmetric monoidal quasi-categories in the literature. We go on to establish a Quillen equivalence between the two model categories.
\end{abstract}

\maketitle

\tableofcontents

\section[Introduction]{Introduction}
A \emph{symmetric monoidal category} is a category equipped with a multiplicative structure which is associative, unital and commutative only upto natural (coherence) isomorphisms. A \emph{quasi-category} is a simplicial set which satisfies the weak Kan condition namely every inner horn has a filler.  In this paper we study quasi-categories which are equipped with a \emph{coherently commutative} multiplicative structure and thereby generalize the notion of symmetric monoidal categories to higher categories. Such quasi-categories most commonly arise as (simplicial) nerves of simplicial model categories which are equipped a compatible symmetric monoidal structure see \cite{NS}. These quasi-categories played a prominent role in Jacob Lurie's work on the \emph{cobordism hypothesis}. Every \emph{stable} quasi-category \cite{JL4} is equipped with a multiplicative structure which is coherently commutative. The \emph{coherence theorem} for symmetric monoidal categories states that the category of (small) symmetric monoidal categories is equivalent to the category of algebras over the \emph{categorical Barrat-Eccles operad} in $\Cat$. We recall that the categorical Barrat-Eccles operad is an $E_\infty$-operad in $\Cat$.
In a subsequent paper we will prove a similar theorem for the quasi-categories equipped with a coherently commutative multiplicative structure. There are several different models present in the literature which were developed to encode a coherently commutative multiplicative structure on simplicial sets. The most commonly used model is based on operads.
An $E_\infty$-simplicial set is a simplicial set equipped with a \emph{coherently commutative multiplicative} structure which is encoded by an action of an $E_\infty$-operad. In other words an $E_\infty$-simplicial set is an \emph{algebra} over an $E_\infty$-operad in the category of simplicial sets $\sSets$. There are two model category structures on the category $\sSets$ namely the \emph{standard} or the \emph{Kan} model category structure and the \emph{Joyal} model category structure which is also referred to as the model category structure of \emph{quasi-categories}. In this paper we will only be working with the later model category structure. The category of $E_\infty$-simplicial sets inherits a model category structure from the Joyal model category structure, see \cite{BM2}. A fibrant object in this model category can be described as a quasi-category equipped with a coherently commutative multiplicative structure which is encoded by an action of an $E_\infty$-operad. However this model category is NOT symmetric monoidal closed. The main objective of this paper is to overcome this shortcoming by presenting a new model for coherently commutative monoidal quasi-categories based on $\gSs$. Another model to encode a coherently commutative multiplicative structure on simplicial sets was presented by Jacob Lurie  in his book \cite{JL2} which he called \emph{symmetric monoidal quasi-categories}.
He modelled these objects as \emph{co-cartesian fibrations} over a quasi-category which is the nerve of a skeletal category of based finite sets $\gop$ whose objects are $n^+ = \lbrace 0, 1, 2, \dots, n \rbrace$. In this paper we take a dual perspective namely we model  these as functors from $\gop$ into $\sSetsQ$. However Lurie does not construct a model category structure on his symmetric monoidal quasi-categories. Yet another model to encode a coherently commutative multiplicative structure on simplicial sets was presented by Kodjabachev and Sagave in the paper \cite{KS}. The authors present a rigidification of an $E_\infty$-quasi-category by replacing it with a commutative monoid in a symmetric monoidal functor category. They go on to construct a zig-zag of Quillen equivalences between a suitably defined model category structure on the category of commutative monoids mentioned above and a model category of $E_\infty$-simplicial sets. However they were unable to show the existence of a symmetric monoidal closed model category structure.

 A $\gS$ is a functor from the category $\gop$ into the category of simplicial sets $\sSets$. The category $\gSC$ of $\gSs$ is the category of functors and natural transformations $[\gop, \sSets]$.
 A \emph{normalized} $\gS$ is a functor $X:\gop \to \pSSets$ such that $X(0^+) = \ast$. The category of normalized $\gSs$ $\pGSC$ is the full subcategory of the functor category $[\gop; \pSSets]$ whose objects are normalized $\gSs$. In the paper \cite{segal} Segal introduced a notion of normalized $\gSs$ and showed that they give rise to a homotopy category which is equivalent to the homotopy category of connective spectra. Segal's $\gSs$ were renamed \emph{special} $\gSs$ by Bousfield and Friedlander in \cite{BF78} who constructed a model category structure on the category of all normalized $\gSs$ $\pGSC$. The two authors go on to prove that the homotopy category obtained by inverting \emph{stable weak equivalences} in $\pGSC$ is equivalent to the homotopy category of connective spectra.
In the paper \cite{schwede} Schwede constructed a symmetric monoidal closed model category structure on the category of \emph{normalized} $\gSs$ which he called the \emph{stable} Q-model category. The fibrant objects in this model category can be described as \emph{coherently commutative group objects} in the category of (pointed) simplicial sets $\pSSets$, where the latter category is endowed with the \emph{Kan} model category structure. 
 The objective of Schwede's construction was to establish normalized $\gSs$ as a model for \emph{connective spectra}. In this paper we extend the ideas in \cite{schwede} to study coherently commutative monoidal objects in the model category of quasi-categories and thereby generalizing the theory of \emph{symmetric monoidal categories}.  We construct a new symmetric monoidal closed model category structure on the category of $\gSs$ $\gSC$. Our model category is constructed along the lines of Schwede's construction and we call it the \emph{JQ}-model category structure. The fibrant objects in our model category structure can be described as \emph{coherently commutative monoidal quasi-categories}. We will show that the \emph{JQ}-model category is symmetric monoidal closed under the \emph{Day convolution product}.

The category of (pointed) simplicial sets $\pSSets$ inherits a model category structure from the Joyal model category. This model category is symmetric monoidal closed under the \emph{smash product of (pointed) simplicial sets}, see \cite{JT2}. 
We construct a new model category structure on the category of nomalized $\gSs$ $\pGSC$.  The fibrant object of this model category can be described as \emph{strictly unital coherently commutative monoidal quasi-categories}. We will refer to this model category as the \emph{normalized} JQ-model category. In the paper \cite{lydakis} Lydakis constructed a \emph{smash product of $\gSs$} and showed that it endows $\pGSC$ with a closed symmetric monoidal structure. We will show that the normalized $JQ$-model category structure is compatible with the smash product of $\gSs$ \emph{i.e.}  the normalized \emph{JQ} model category is symmetric monoidal closed under the smash product.   Another significant result of this paper is that the obvious forgetful functor $U:\pGSC \to \gSC$ is the right Quillen functor of a Quillen equivalence between the JQ-model category and the normalized JQ-model category. This result is indicative of the presence of a \emph{weak semi-additive} structure in the JQ-model category.

A notion of a symmetric monoidal quasi-category  based on coCartesian fibrations of simplicial sets was introduced in \cite{JL2}. A cocartesian fibration $p:X \to \nGop$ determines a fibrant object $(\Nt{X}, p)$ in a \emph{coCartesian model category structure} on the overcategory $\sSetsMG$ , by \emph{marking} its \emph{coCartesian edges}. We construct another model category structure on $\sSetsMG$, denoted $\sSetsMGSM$, by localizing the coCartesian model category structure. An object $(X, p)$ in $\sSetsMGSM$ is fibrant if and only if it is isomorphic to an object $(\Nt{Y}, q)$ determined by a coCartesian fibration $q:Y \to \nGop$ which is a symmetric monoidal quasi-category. We go on further to show that the Quillen pair $(\mRNL{\bullet}, \mRN{\bullet})$ from \cite[Prop. 3.2.5.18]{JL} is a Quillen equivalence between the model category of coherently commutative monoidal quasi-categories and $\sSetsMGSM$. The right Quillen functor $\mRN{\bullet}$ can be viewed as a higher categorical version of the Grothendieck construction functor.

\begin{ack}
 The author is thankful to Andre Joyal for helping him understand the theory of quasi-categories and also for providing a detailed write up of appendix \ref{Cat-Local} to the author.
\end{ack}

\section{The Setup}
In this section we will collect the machinery needed for various constructions in this paper.

\section[Marked simplicial sets]{A review of marked simplicial sets}
\label{mar-sSets}
In this appendix we will review the theory of marked simplicial sets. Later in this paper we will develop a theory of coherently commutative monoidal objects in the category of marked simplicial sets.

\begin{df}
	\label{mar-sSet}
	A \emph{marked} simplicial set is a pair $(X, \E)$, where $X$ is a simplicial set and $\E$ is a set of edges of $X$ which contains every degenerate edge of $X$. We will say that an edge of $X$ is \emph{marked} if it belongs to $\E$.
	A morphism $f:(X, \E) \to (X', \E')$ of marked simplicial sets is a simplicial map $f:X \to X'$ having the property that $f(\E) \subseteq \E'$. We denote the category of marked simplicial sets by $\sSetsM$.
	\end{df}

Every simplicial set $S$ may be regarded as a marked simplicial set in many ways. We mention two extreme cases: We let $\Sh{S} = (S, S_1)$ denote the marked simplicial set in which every edge is marked. We denote by $\Fl{S} = (S, s_0(S_0))$ denote the marked simplicial set in which only the degerate edges of $S$ have been marked.

The category $\sSetsM$ is \emph{cartesian-closed}, \emph{i.e.} for each pair of objects $X, Y \in Ob(\sSetsM)$, there is an internal mapping object $[X, Y]^+$ equipped with an \emph{evaluation map} $[X, Y]^+ \times X \to Y$ which induces a bijection:
\[
\sSetsM(Z, [X, Y]^+) \overset{\cong} \to \sSetsM(Z \times X, Y),
\]
for every $Z \in \sSetsM$.
\begin{nota}
	We denote by $\Fl{[X, Y]}$ the underlying simplicial set of $[X, Y]^+$.
	\end{nota}
The mapping space $\Fl{[X, Y]}$ is charaterized by the following bijection:
\[
\sSets(K, \Fl{[X, Y]}) \overset{\cong} \to \sSetsM(\Fl{K} \times X, Y),
\]
for each simplicial set $K$.
\begin{nota}
	We denote by $\Sh{[X, Y]}$ the simplicial subset of $\Fl{[X, Y]}$ consisting of all simplices $\sigma \in \Fl{[X, Y]}$ such that every edge of $\sigma$ is a marked edge of $[X, Y]^+$.
\end{nota}
The mapping space $\Sh{[X, Y]}$ is charaterized by the following bijection:
\[
\sSets(K, \Sh{[X, Y]}) \overset{\cong} \to \sSetsM(\Sh{K} \times X, Y),
\]
for each simplicial set $K$.

 The Joyal model category structure on $\sSets$ has the following analog for marked simplicial sets:
 \begin{thm}
 	\label{Joyal-sSetsM}
 	There is a left-proper, combinatorial model category structure on the category of marked simplicial sets $\sSetsM$ in which a morphism $p:X \to Y$ is a
 	\begin{enumerate}
 		\item cofibration if the simplicial map between the underlying simplicial sets is a cofibration in $\sSetsQ$, namely a monomorphism.
 		
 		\item a weak-equivalence if the induced simplicial map on the mapping spaces
 		\[
 		\Fl{[p, \Nt{K}]}:\Fl{[X, \Nt{K}]} \to \Fl{[Y, \Nt{K}]}
 		\]
 		is a weak-categorical equivalence, for each quasi-category $K$.
 		
 		\item fibration if it has the right lifting property with respect to all maps in $\sSetsM$ which are simultaneously cofibrations and weak equivalences.
 		
 		\end{enumerate}
 	Further, the above model category structure is enriched over the Joyal model category, i.e. it is a $\sSetsQ$-model category.
 	\end{thm}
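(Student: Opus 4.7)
The plan is to invoke Jeff Smith's recognition theorem for combinatorial model categories. Take as generating cofibrations the set
\[
I = \{\Fl{\partial \Delta^n} \hookrightarrow \Fl{\Delta^n}\}_{n \geq 0} \cup \{\Fl{\Delta^1} \hookrightarrow \Sh{\Delta^1}\};
\]
a routine cellular argument shows that the saturation of $I$ is precisely the class of morphisms in $\sSetsM$ whose underlying simplicial map is a monomorphism, which matches the prescribed class of cofibrations. Let $W$ denote the prescribed class of weak equivalences. Smith's recognition principle then requires four conditions: (i) $W$ is closed under retracts and satisfies two-out-of-three; (ii) $W$ is accessible as a full subcategory of $\sSetsM^\to$; (iii) every map with the right lifting property against $I$ lies in $W$; and (iv) $W \cap \mathrm{cof}(I)$ is closed under pushouts and transfinite composition. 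Items (i) and (ii) are formal consequences of the corresponding properties of weak categorical equivalences; for accessibility it suffices to restrict the quantifier over $K$ to quasi-categories of bounded cardinality relative to the source and target. For (iii), lifting against the boundary inclusions forces the underlying map to be a trivial Kan fibration, while lifting against $\Fl{\Delta^1} \hookrightarrow \Sh{\Delta^1}$ forces surjectivity on markings, from which one checks directly that $\Fl{[p, \Nt{K}]}$ is a trivial Kan fibration.

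The main technical obstacle is condition (iv), and it rests on the following key lemma: \emph{for every monomorphism $i : A \hookrightarrow B$ in $\sSetsM$ and every quasi-category $K$, the map $\Fl{[i, \Nt{K}]}$ is a Joyal fibration}. By adjunction this is a pushout-product lifting statement, and it is verified by induction on the generators of $I$ and of the Joyal trivial cofibrations. When $i$ is a boundary inclusion and $j$ is an inner horn inclusion or the map $\Delta^0 \to J$ with $J$ the nerve of the walking isomorphism, the required lifts exist because $K$ is a quasi-category in which the equivalences are precisely the marked edges of $\Nt{K}$; when $i$ is the marking map $\Fl{\Delta^1} \hookrightarrow \Sh{\Delta^1}$, a direct verification using Joyal's characterisation of equivalences handles the remaining pushout-products. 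Given the key lemma, condition (iv) follows: $\Fl{[-, \Nt{K}]}$ converts a pushout of a cofibration into a pullback along a Joyal fibration, and pullbacks along Joyal fibrations preserve trivial Joyal fibrations, so pushouts of maps in $W \cap \mathrm{cof}(I)$ remain in $W$; the transfinite composition case is handled similarly using that $\Fl{[-, \Nt{K}]}$ sends colimits in its first argument to limits.

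Left properness is then immediate from the same translation via mapping spaces, using left properness of $\sSetsQ$ together with the fact that $\Fl{[-, \Nt{K}]}$ sends pushouts of monomorphisms to pullbacks along Joyal fibrations. Finally, the asserted $\sSetsQ$-enrichment is the pushout-product axiom for $\Fl{[-,-]}$, which once again reduces on generators to the key lemma above, paired with the pushout-product axiom internal to the Joyal model structure on $\sSets$.
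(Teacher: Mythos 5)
The paper offers no argument of its own for this theorem --- it simply quotes \cite[Prop.~3.1.3.7]{JL} with $S = \Delta[0]$ --- so your Smith-recognition reconstruction is necessarily a different route, and its skeleton (the generating cofibrations, the key lemma on $\Fl{[i,\Nt{K}]}$, and condition (iv), where the map being pulled back is simultaneously a fibration and an equivalence, hence an acyclic Joyal fibration and pullback-stable) is essentially Lurie's marked-anodyne analysis and is sound in outline. The step that fails as written is left properness. Applying $\Fl{[-,\Nt{K}]}$ to the pushout of a weak equivalence $w\colon A \to B$ along a cofibration $A \to C$ requires you to pull back $\Fl{[w,\Nt{K}]}$ --- which is a categorical equivalence but \emph{not} a fibration, since $w$ is not assumed to be a monomorphism --- along the Joyal fibration $\Fl{[C,\Nt{K}]} \to \Fl{[A,\Nt{K}]}$. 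The property you are actually invoking is therefore right properness of $\sSetsQ$, not the left properness you cite, and the Joyal model structure is not right proper: pulling the categorical equivalence $\Lambda^1[2] \hookrightarrow \Delta[2]$ back along the Joyal fibration $\Delta^{\lbrace 0,2 \rbrace} \to \Delta[2]$ yields $\partial\Delta[1] \to \Delta[1]$, which is not a categorical equivalence. The conclusion is still reachable: either note that all mapping spaces in sight are quasi-categories (your key lemma applied to $\emptyset \to A$), so that the pullback of a weak equivalence between fibrant objects along a fibration is a weak equivalence \cite[Prop.~13.1.2]{Hirchhorn}; or, more simply, observe that every object of $\sSetsM$ is cofibrant for these cofibrations, and a model category with all objects cofibrant is left proper.

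The second genuine gap is condition (ii). Accessibility of $W$ is not a ``formal consequence'' when $W$ is cut out by a condition quantified over \emph{all} quasi-categories $K$: your proposed reduction to test objects of bounded cardinality (a bound which, as phrased, even depends on the source and target of the map being tested) is asserted rather than proved, and this is precisely where Lurie has to work --- establishing that the class of marked equivalences is perfect --- before a Smith-type recognition principle applies. Your sketch needs an honest argument producing a small set of detecting objects, or a perfectness proof for $W$. A smaller repair concerns (iii): RLP against $I$ does not give ``directly'' that $\Fl{[p,\Nt{K}]}$ is a trivial Kan fibration, because the adjoint lifting problems involve pushout-products of $p$ with boundary inclusions and $p$ is not a monomorphism. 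Instead, use RLP against cofibrations to produce a section $s$ of $p$ and a lift $H \colon X \times \Sh{\Delta[1]} \to X$ of the evident problem posed on $X \times \Fl{\partial\Delta[1]}$; together with the fact (again an instance of your key lemma) that marked edges of $[X,\Nt{K}]^+$ are equivalences in $\Fl{[X,\Nt{K}]}$, this exhibits $\Fl{[p,\Nt{K}]}$ as a categorical equivalence, which is all that condition (iii) requires.
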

 The above theorem follows from \cite[Prop. 3.1.3.7]{JL}.
 \begin{nota}
 	We will denote the model category structure in Theorem \ref{Joyal-sSetsM} by $\sSetsMQ$ and refer to it either as the \emph{Joyal} model category of \emph{marked} simplicial sets or as the model category of marked quasi-categories.
 \end{nota}
 \begin{thm}
 	\label{Cart-cl-Mdl-S-plus}
 	The model category $\sSetsMQ$ is a cartesian closed model category.
 \end{thm}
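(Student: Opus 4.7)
The plan is to verify the pushout-product axiom for the cartesian product in $\sSetsMQ$; the unit axiom is automatic since the monoidal unit $\Fl{\Delta^0}$ is terminal and cofibrant. Concretely, given cofibrations $f: A \to B$ and $g: C \to D$ in $\sSetsMQ$, I must show that the pushout-product map
\[
f \square g: (A \times D) \coprod_{A \times C} (B \times C) \longrightarrow B \times D
\]
is a cofibration, and is a weak equivalence whenever either $f$ or $g$ is. The cofibration part is immediate: cofibrations in $\sSetsMQ$ are monomorphisms on underlying simplicial sets by Theorem \ref{Joyal-sSetsM}, and the pushout-product of two monomorphisms in $\sSets$ is again a monomorphism.

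For the trivial case, assume $f$ is a trivial cofibration. By the characterization of weak equivalences in Theorem \ref{Joyal-sSetsM}, it suffices to prove that for every quasi-category $K$, the map $\Fl{[f \square g, \Nt{K}]}$ is a categorical equivalence in $\sSetsQ$. The cartesian closed structure on $\sSetsM$ together with the defining bijection of $\Fl{[-, -]}$ yields a natural isomorphism $\Fl{[X \times Y, E]} \cong \Fl{[X, [Y, E]^+]}$, and $\Fl{[-, E]}$ sends pushouts to pullbacks. Combining these, the map $\Fl{[f \square g, \Nt{K}]}$ is identified with the pullback-power map
\[
\Fl{[B, [D, \Nt{K}]^+]} \longrightarrow \Fl{[A, [D, \Nt{K}]^+]} \times_{\Fl{[A, [C, \Nt{K}]^+]}} \Fl{[B, [C, \Nt{K}]^+]},
\]
which is the $\sSetsQ$-enriched pullback-power of $f$ against the restriction map $g^*: [D, \Nt{K}]^+ \to [C, \Nt{K}]^+$.

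The proof thus reduces to an auxiliary lemma: whenever $g: C \to D$ is a cofibration in $\sSetsMQ$ and $E$ is fibrant, the restriction $g^*: [D, E]^+ \to [C, E]^+$ is a fibration in $\sSetsMQ$. Granted this, the $\sSetsQ$-enrichment of $\sSetsMQ$ asserted in Theorem \ref{Joyal-sSetsM}, applied to the trivial cofibration $f$ and the fibration $g^*$, delivers that the displayed pullback-power map is a trivial fibration in $\sSetsQ$ and in particular a categorical equivalence, as desired. The main obstacle is the auxiliary lemma, which is essentially the pullback-power form of the very axiom we are trying to establish; I would prove it by reducing to a set of generating cofibrations of $\sSetsMQ$, such as the boundary inclusions $\Fl{\partial \Delta^n} \hookrightarrow \Fl{\Delta^n}$ together with the single marking map $\Fl{\Delta^1} \hookrightarrow \Sh{\Delta^1}$ from \cite[Prop. 3.1.2.3]{JL}, and verifying the required right lifting property by hand against each generating trivial cofibration of $\sSetsMQ$.
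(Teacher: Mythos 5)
Your formal reductions are sound: the pushout--product of two cofibrations is a monomorphism, hence a cofibration; the identification of $\Fl{[f \square g, \Nt{K}]}$ with the pullback--power of $f$ against $g^{\ast}\colon [D,\Nt{K}]^+ \to [C,\Nt{K}]^+$ is correct (using $\Fl{[X \times Y, E]} \cong \Fl{[X,[Y,E]^+]}$ and the fact that $\Fl{[-,E]}$ turns pushouts into pullbacks); and, granted your auxiliary lemma, the $\sSetsQ$-enrichment asserted in Theorem \ref{Joyal-sSetsM} (in the form of Lemma \ref{Q-bifunctor-char}) does finish the argument, the case of $g$ trivial following by symmetry of the product. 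The genuine gap is that the ``auxiliary lemma'' is not auxiliary. By adjunction, the statement that $[g,E]^+$ is a fibration for every cofibration $g$ and every fibrant $E$ says exactly that $h \square g$ has the left lifting property against fibrant objects for every trivial cofibration $h$, i.e.\ it is the acyclicity half of the pushout--product axiom you set out to prove, merely tested against fibrant targets. All the content of the theorem has been deferred into it, and your plan for it does not go through as stated: fibrations in $\sSetsMQ$ are \emph{defined} by lifting against all trivial cofibrations, and neither this paper nor \cite{JL} exhibits an explicit set of generating trivial cofibrations for this model structure, so there is nothing concrete to ``verify by hand'' against. Knowing the generating cofibrations $\Fl{\partial\Delta[n]} \to \Fl{\Delta[n]}$ and $\Fl{\Delta[1]} \to \Sh{\Delta[1]}$ only lets you reduce to $g$ a generator; you would still have to produce lifts of $[g,E]^+$ against an unidentified class of maps.

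The workable route is the one Lurie takes: use the explicitly defined marked anodyne maps and their stability under pushout--products with cofibrations (\cite[Prop.\ 3.1.2.3]{JL}), together with the identification of the fibrant objects as the $\Nt{K}$ for $K$ a quasi-category (\cite[Prop.\ 3.1.4.1]{JL}), to conclude that the cartesian product is a Quillen bifunctor; this is \cite[Cor.\ 3.1.4.3]{JL}. The paper's proof of the present theorem is precisely that citation, specialized to $S = T = \Delta[0]$, and your auxiliary lemma is an instance of the same corollary (it is the statement, in pullback--power form, that $[-,-]^+$ is the right leg of that Quillen adjunction of two variables). So either invoke that result directly, as the paper does, or be prepared to carry out the marked-anodyne analysis yourself; that is where the actual work lives, and it is the one step your proposal leaves unproved.
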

 \begin{proof}
 	The theorem follows from \cite[Corollary 3.1.4.3]{JL} by taking $S = T = \Delta[0]$.
 \end{proof}
 There is an obvious forgetful functor $U:\sSetsM \to \sSets$. This forgetful functor has a left adjoint $\Fl{(-)}:\sSets \to \sSetsM$.
 \begin{thm}
 	\label{Quil-eq-QCat-MQCat}
 	The adjoint pair of functors $(\Fl{(-)}, U)$ determine a Quillen equivalence between the Joyal model category of marked simplicial sets and the Joyal model category of simplicial sets.
 	\end{thm}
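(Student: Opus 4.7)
The plan is to verify the two criteria for a Quillen equivalence: that $(\Fl{(-)}, U)$ is a Quillen adjunction, and that for every (necessarily cofibrant) simplicial set $X$ and every fibrant $Y \in \sSetsMQ$, a map $\Fl{X} \to Y$ is a weak equivalence if and only if its adjoint $X \to U(Y)$ is.

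For the Quillen pair, cofibrations in both categories are the monomorphisms on underlying simplicial sets and $\Fl{(-)}$ is the identity on underlying simplicial sets, so cofibrations are preserved. The key observation for weak equivalences is the natural isomorphism $\Fl{[\Fl{X}, \Nt{K}]} \cong K^X$ for any quasi-category $K$, where $K^X$ denotes the internal hom in $\sSets$. Indeed, testing against a simplicial set $L$ gives
\[
\sSets(L, \Fl{[\Fl{X}, \Nt{K}]}) \cong \sSetsM(\Fl{L \times X}, \Nt{K}) \cong \sSets(L \times X, K),
\]
using that $\Fl{L} \times \Fl{X} = \Fl{L \times X}$ and that a map out of $\Fl{L \times X}$ is forced to send its marked (i.e.\ degenerate) edges into the marked (degenerate) edges of $\Nt{K}$, which is automatic. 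Consequently, if $f: X \to Y$ is a weak categorical equivalence then $K^f : K^Y \to K^X$ is a weak categorical equivalence by the enrichment of $\sSetsQ$ over itself, and so $\Fl{f}$ is a weak equivalence in $\sSetsMQ$; the same identity shows that $\Fl{(-)}$ also reflects weak equivalences.

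The unit $\eta_X: X \to U(\Fl{X})$ is the identity because $U \circ \Fl{(-)} = \id_{\sSets}$. Given $f: \Fl{X} \to Y$ with adjoint $\hat{f}: X \to U(Y)$ we have $f = \epsilon_Y \circ \Fl{\hat{f}}$, so 2-out-of-3 together with the preserve-and-reflect property of $\Fl{(-)}$ reduces the Quillen equivalence condition to showing that the counit $\epsilon_Y: \Fl{U(Y)} \to Y$ is a weak equivalence whenever $Y$ is fibrant.

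The main obstacle is this last step, which requires a characterization of fibrant objects of $\sSetsMQ$. I would appeal to the specialization of \cite[Prop. 3.1.4.1]{JL} to $S = \Delta[0]$ underlying Theorem \ref{Joyal-sSetsM}: the fibrant objects of $\sSetsMQ$ are, up to isomorphism, of the form $\Nt{Q}$ for a quasi-category $Q$, where the marked edges are exactly the equivalences of $Q$. Granted this, $\epsilon_{\Nt{Q}}: \Fl{Q} \to \Nt{Q}$ is the identity on underlying simplicial sets, changing only the marking. To prove it a weak equivalence it suffices to show that for every quasi-category $K$ the map $\Fl{[\epsilon_{\Nt{Q}}, \Nt{K}]}: \Fl{[\Nt{Q}, \Nt{K}]} \to \Fl{[\Fl{Q}, \Nt{K}]} \cong K^Q$ is an isomorphism. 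An $n$-simplex of the domain is a simplicial map $F: \Delta^n \times Q \to K$ sending each edge $(\id_i, e)$ (with $e$ an equivalence in $Q$) to an equivalence in $K$; this is automatic because the restriction $F|_{\{i\} \times Q}: Q \to K$ is a simplicial map between quasi-categories, hence carries equivalences to equivalences via the induced functor on homotopy categories.
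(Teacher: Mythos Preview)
Your argument is correct and complete. The paper itself does not give an independent proof of this statement: it simply records that the result follows from \cite[Prop.~3.1.5.3]{JL} and moves on. By contrast, you unpack the content directly, using the natural isomorphism $\Fl{[\Fl{X},\Nt{K}]}\cong K^X$ to see that $\Fl{(-)}$ both preserves and reflects weak equivalences, and then handling the counit at fibrant objects by identifying fibrant marked simplicial sets with naturally marked quasi-categories and checking that maps of quasi-categories automatically send equivalences to equivalences. This is a genuinely more elementary and self-contained route than deferring to Lurie's general machinery for the coCartesian model structure over an arbitrary base; the trade-off is only that Lurie's result simultaneously covers the relative case over any $S$, which you do not need here.

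One small wording slip: in the first displayed computation you describe the marked edges of $\Nt{K}$ parenthetically as ``degenerate''. They are the equivalences of $K$, not merely the degenerate edges. Your reasoning is unaffected, since any simplicial map sends degenerate edges to degenerate edges and degenerate edges are equivalences, so the marking condition is indeed automatic; just correct the parenthetical.
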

 The proof of the above theorem follows from \cite[Prop. 3.1.5.3]{JL}.
\subsection[Review of $\gSs$]{Review of $\gSs$}
\label{Rev-gamma-cats}
In this subsection we will briefly review the theory of $\gSs$. We begin by introducing some notations which will be used throughout the paper.
\begin{nota}
We will denote by $\ud{n}$ the finite set $\lbrace 1, 2, \dots, n \rbrace$ and by $n^+$ the based set $\lbrace 0, 1, 2, \dots, n \rbrace$ whose basepoint is the element $0$.
\end{nota}
\begin{nota}
 We will denote by $\N$ the skeletal category of finite unbased sets whose objects are $\ud{n}$ for all $n \ge 0$ and maps are functions of unbased sets. The category $\N$ is a (strict) symmetric monoidal category whose symmetric monoidal structure will be denoted by $+$. For to objects $\ud{k}, \ud{l} \in \N$ their \emph{tensor product} is defined as follows:
 \[
 \ud{k} + \ud{l} := \ud{k + l}.
 \]
\end{nota}
\begin{nota}
 We will denote by $\gop$ the skeletal category of finite based sets whose objects are $n^+$ for all $n \ge 0$ and maps are functions of based sets.
\end{nota}

\begin{nota}
 We denote by $\inrt$ the subcategory of $\gop$ having the same set of objects as $\gop$
 and intert morphisms.
\end{nota}
\begin{nota}
 We denote by $\act$ the subcategory of $\gop$ having the same set of objects as $\gop$
 and active morphisms.
\end{nota}
\begin{nota}
A map $f:\ud{n} \to \ud{m}$ in the category $\N$ uniquely determines an active map in $\gop$ which we will denote by $f^+:n^+ \to m^+$.
This map agrees with $f$ on non-zero elements of $n^+$.
\end{nota}
\begin{nota}
 Given a morphism $f:n^+ \to m^+$ in $\gop$, we denote by $\text{Supp}(f)$ the largest
 subset of $\N$ whose image under $f$ does not caontain the basepoint of $m^+$.
 The set $\text{Supp}(f)$ inherits an order from $\N$ and therefore could be regarded as
 an object of $\N$. We denote by $\text{Supp}(f)^+$ the based set $\text{Supp}(f) \sqcup \lbrace 0 \rbrace$
 regarded as an object of $\gop$ with order inherited from $\N$.
\end{nota}

\begin{prop}
 Each morphism in $\gop$ can be uniquely factored into a composite of an inert map followed
 by an active map in $\gop$.
\end{prop}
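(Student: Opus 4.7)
The plan is to build the factorization explicitly via the support construction introduced just above the proposition, and then verify uniqueness by tracking how each piece is forced by $f$.

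For existence, given $f : n^+ \to m^+$ I would set $S = \text{Supp}(f)$ and construct two morphisms $\rho_f : n^+ \to \text{Supp}(f)^+$ and $\alpha_f : \text{Supp}(f)^+ \to m^+$ as follows. The map $\rho_f$ sends $0$ and every element of $\ud{n} \setminus S$ to the basepoint of $\text{Supp}(f)^+$, and sends each $i \in S$ to itself, viewed as an element of $\text{Supp}(f)^+$. The map $\alpha_f$ sends $0$ to $0$ and agrees with $f$ on every non-basepoint element of $\text{Supp}(f)^+$. Then $\rho_f$ is inert because $\rho_f^{-1}(s) = \{ s \}$ for every non-basepoint $s \in \text{Supp}(f)^+$, and $\alpha_f$ is active because by the very definition of $\text{Supp}(f)$ no non-basepoint element of $\text{Supp}(f)^+$ lies in $f^{-1}(0)$. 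The identity $f = \alpha_f \circ \rho_f$ is then an immediate element-wise check.

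For uniqueness, suppose $f = g \circ h$ is any factorization with $h : n^+ \to k^+$ inert and $g : k^+ \to m^+$ active. Activity of $g$ gives $g^{-1}(0) = \{ 0 \}$, forcing $h^{-1}(0) = f^{-1}(0)$ and hence $\text{Supp}(h) = S$. Inertness of $h$ means that $h$ restricts to a bijection $S \to \ud{k}$, so $k = |S|$ and the middle object is canonically $\text{Supp}(f)^+$; the order on $\text{Supp}(f)$ inherited from $\N$ then pins this bijection down as the order-preserving one, yielding $h = \rho_f$, after which $g = \alpha_f$ is forced by the equation $g \circ h = f$ together with the surjectivity of $h$ onto the non-basepoint elements of $\text{Supp}(f)^+$.

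The main obstacle will be the uniqueness half, specifically pinning down the convention on inert morphisms: if inert maps are permitted to be non-order-preserving retractions, then the factorization is only unique up to a unique isomorphism permuting the middle object, whereas under the order-preserving convention (which is consistent with the notation already set up, where $\text{Supp}(f)^+$ inherits its order from $\N$) strict uniqueness holds. The cleanest route, which I would take, is to phrase the factorization with middle object precisely $\text{Supp}(f)^+$, so that uniqueness reduces to the bookkeeping above.
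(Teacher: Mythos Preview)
Your existence argument is essentially identical to the paper's: you factor $f$ through $\text{Supp}(f)^+$, with the first leg being the projection onto the support and the second leg being the restriction of $f$. The paper's proof stops there and does not address uniqueness at all, so your proposal actually goes further than the paper in supplying the uniqueness half and in flagging the order-preserving convention needed to get strict uniqueness rather than uniqueness up to isomorphism.
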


\begin{proof}
Any map $f:n^+ \to m^+$ in the category $\gop$ can be factored as follows:
\begin{equation}
\label{fact-in-gamma-op}
  \xymatrix@C=11mm{
 n^+  \ar[rd]_{f_{\textit{inrt}}}   \ar[rr]^{f}  &&m^+   \\
  &\text{Supp}(f)^+  \ar[ru]_{f_{\textit{act}}}
 }
\end{equation}
where $\text{Supp}(f) \subseteq n$ is the \emph{support} of the function $f$
\emph{i.e.}  $\text{Supp}(f)$ is the largest subset of $n$ whose elements are mapped
by $f$ to a non zero element of $m^+$. The map $f_{inrt}$ is the projection of
$n^+$ onto the support of $f$ and therefore $f_{inrt}$ is an inert map. The map
$f_{act}$ is the restriction of $f$ to $\text{Supp}(f) \subset \N$,
therefore it is an \emph{active} map in $\gop$.

\end{proof}

\begin{df}
	\label{gamma-space}
	A $\gS$ is a functor from $\gop$ into the category of simplicial sets $\sSets$.
\end{df}
\begin{df}
	\label{nor-gamma-space}
	A \emph{normalized} $\gS$ is $X$ a $\gS$ which satisfies the normalization condition namely $X(0^+) \cong \ast$.
\end{df}

In this paper we will also study functors from $\gop$ into the category of marked simplicial sets:
\begin{df}
	\label{Mar-GS}
	A \emph{marked} $\gS$ is a functor from $\gop$ to the category of marked simplicial sets $\sSetsM$. A morphism of marked $\gSs$ is a natural transformation between marked $\gSs$.
	\end{df}

\begin{nota}
	We denote the category of all marked $\gSs$ and morphisms of $\gSs$ by $\gSCM$.
	\end{nota}

The adjoint pair $(\Fl{(-)}, U)$ defined above induces an adjunction
\begin{equation}
\label{adj-GS-MGS}
\Fl{\Gamma(-)}:\gSC \rightleftharpoons \gSCM :U
\end{equation}
where the left adjoint $\Fl{\Gamma(-)}$ is the induced map $[\gop, \Fl{(-)}]:\gSC = [\gop, \sSets] \to [\gop, \sSetsM] = \gSCM$.

\subsection{Review of the coCartesian model structure}
\label{coCart-mdl-str}
In this subsection we will review the theory of coCartesian fibrations over the simplicial set $N(\gop)$. We also review a model category structure on the category $\sSetsMG$ in which the fibrant objects are (essentially) coCartesian fibrations. We begin by recalling the notion of a $p$-coCartesian edge:
\begin{df}
	\label{p-CC-edge}
	Let $p:X \to S$ be an inner fibration of simplicial sets. Let $f:x \to y \in (X)_1$ be an edge in $X$. We say that $f$ is $p$-coCartesian if, for all $n \ge 2$ and every (outer) commutative diagram, there exists a (dotted) lifting arrow which makes the entire diagram commutative:
	\begin{equation}
	\xymatrix{
	\Delta^{\lbrace 0, 1 \rbrace} \ar@{_{(}->}[d] \ar[rd]^f \\
	\Lambda^0[n] \ar@{_{(}->}[d] \ar[r] & X \ar[d]^p \\
	\Delta[n] \ar[r] \ar@{-->}[ru] & S
   }
	\end{equation}
	\end{df}
\begin{rem}
	Let $M$ be a (ordinary) category equipped with a functor $p:M \to I$, then an arrow $f$ in $M$, which maps isomorphically to $I$, is coCartesian in the usual sense if and only if $f$  is $N(p)$-coCartesian in the sense of the above definition, where $N(p):N(M) \to \Delta[1]$ represents the nerve of $p$.
	\end{rem}
This definition leads us to the notion of a coCartesian fibration of simplicial sets:
\begin{df}
	\label{CC-fib}
	A map of simplicial sets $p:X \to S$ is called a \emph{coCartesian} fibration if it satisfies the following conditions:
	\begin{enumerate}
		\item $p$ is an inner fibration of simplicial sets.
		\item for each edge $p:x \to y$ of $S$ and each vertex $\ud{x}$ of $X$ with $p(\ud{x}) = x$, there exists a $p$-coCartesian edge $\ud{f}:\ud{x} \to \ud{y}$ with $p(\ud{f}) = f$.
		\end{enumerate}
	\end{df}
 A coCartesin fibration roughly means that it is upto weak-equivalence determined by a \emph{functor} from $S$ to a suitably defines $\infty$-category of $\infty$-categories. This idea is explored in detail in \cite[Ch. 3]{JL}.
Next we will review the notion of \emph{relative nerve}:
\begin{df} (\cite[3.2.5.2]{JL}). Let $D$ be a category, and $f : D \to \sSets$ a functor. The nerve of $D$ relative to $f$ is the simplicial set $N_f (D)$ whose $n$-simplices are sets consisting of:
	\begin{enumerate}
		\item[(i)] a functor $d:[n] \to D$; We write $d(i, j)$ for the image of $i \le j$ in $[n]$.
		\item[(ii)] for every nonempty subposet $J \subseteq [n]$ with maximal element $j$, a map $\tau^J:\Delta^J \to f(d(j))$,
		\item[(iii)] such that for nonempty subsets $I \subseteq J \subseteq [n]$ with respective maximal elements  $i \le j$, the following diagram commutes:
		\[
		\xymatrix{
			\Delta^I\ar[r]^{\tau^I} \ar@{_{(}->}[d] & f(d(i)) \ar[d]^{f(d(i, j))} \\
			\Delta^J\ar[r]_{\tau^J}  & f(d(j))
		}
		\]
	\end{enumerate}
\end{df}

For any $f$, there is a canonical map $p_f: N_f (D) \to N(D)$ down to the ordinary nerve of $D$, induced by the unique map to the terminal object $\Delta^0 \in \sSets$ \cite[ 3.2.5.4]{JL}. When $f$ takes values in quasi-categories, this canonical map is a coCartesian fibration.
\begin{rem}
	\label{Rel-Ner-edge}
	A vertex of the simplicial set $N_f(D)$ is a pair $(c, g)$, where $c \in Ob(D)$ and $g \in f(c)_0$.
	An edge $\ud{e}:(c, g) \to (d, k)$ of the simplicial set $N_f(D)$ consists of a pair $(e, h)$, where $e:c \to d$ is an arrow in $D$ and $h:f(e)_0(g) \to k$ is an edge of $f(d)$.
	\end{rem}
An immidiate consequence of the above definition is the following proposition:
\begin{prop}
	\label{Rel-Ner-isom-func-val}
	Let $f:D \to \sSets$ be a functor, then the fiber of $p_f:N_f(D) \to N(D)$ over any $d \in Ob(D)$ is isomorphic to the simplicial set $f(d)$.
\end{prop}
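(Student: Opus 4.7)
The plan is to unwind Definition 3.8 and the pullback defining the fiber. The fiber of $p_f$ over $d$ is, by definition, the simplicial set whose $n$-simplices are those $n$-simplices of $N_f(D)$ which map under $p_f$ to the $n$-fold degeneracy of $d$, i.e.\ whose underlying functor $[n]\to D$ is the constant functor at $d$. So I would fix such an $n$-simplex $(d,\{\tau^J\})$ and extract exactly what data remains.

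With the functor $d\colon [n]\to D$ constant at $d$, we have $d(i,j)=\id_d$ for all $i\le j$, and hence $f(d(i,j))=\id_{f(d)}$. Under this identification, condition (ii) becomes: for every nonempty $J\subseteq [n]$ a map $\tau^J\colon \Delta^J\to f(d)$; and the compatibility condition (iii) collapses to the statement that for $I\subseteq J$ the triangle
\[
\xymatrix{
\Delta^I \ar[r]^{\tau^I}\ar@{_{(}->}[d] & f(d) \ar@{=}[d] \\
\Delta^J \ar[r]_{\tau^J} & f(d)
}
\]
commutes, i.e.\ $\tau^I = \tau^J\circ \iota_{I,J}$ where $\iota_{I,J}\colon \Delta^I\hookrightarrow\Delta^J$ is the canonical inclusion. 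Taking $J=[n]$, this shows that the whole family $\{\tau^J\}$ is determined by the single map $\tau^{[n]}\colon \Delta^n\to f(d)$; conversely, any such map produces a compatible family by restriction along each $\iota_{J,[n]}$. Thus there is a bijection between $n$-simplices of the fiber and $n$-simplices of $f(d)=\sSets(\Delta^n,f(d))$.

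The last step is to check that this bijection is natural in $[n]$, i.e.\ defines an isomorphism of simplicial sets. A morphism $\alpha\colon [m]\to[n]$ acts on an $n$-simplex of $N_f(D)$ by precomposition of the functor and by pulling back each $\tau^J$ along the induced map of posets. Under our identification, this precisely pulls the map $\tau^{[n]}\colon\Delta^n\to f(d)$ back to $\tau^{[n]}\circ\Delta^\alpha\colon\Delta^m\to f(d)$, which agrees with the simplicial structure on $f(d)$. Hence the bijection assembles into an isomorphism of simplicial sets $\mathrm{fib}_d(p_f)\cong f(d)$.

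There is no real obstacle here; the only subtlety to get right is observing that the family $\{\tau^J\}_{J\subseteq[n]}$ is redundant once the underlying functor is constant, and everything reduces to the single top-dimensional datum $\tau^{[n]}$. This redundancy, together with $f(\id_d)=\id_{f(d)}$, is the key mechanism making the fiber collapse onto $f(d)$.
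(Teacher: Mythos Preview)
Your proof is correct and is precisely the unwinding that the paper has in mind: the paper does not give a proof but simply declares the proposition ``an immediate consequence of the above definition,'' and your argument is exactly that immediate consequence spelled out in full.
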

 We now recall a result which will be used in the last section of this paper:
\begin{thm}
	\cite[Thm. 3.2.5.18.]{JL} The relative nerve functor $\mRN{\bullet}$ has a left adjoint $\mRNL{\bullet}$. The adjoint pair $(\mRNL{\bullet}, \mRN{\bullet})$ is a Quillen equivalence between the coCartesian model category $\ccMdl{\nGop}$ and the strict $JQ$-model category of marked $\gSs$.
	\end{thm}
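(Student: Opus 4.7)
The plan is to deduce the theorem from \cite[Thm. 3.2.5.18]{JL} by specializing Lurie's unstraightening to $D = \gop$ and promoting it to the marked setting. There are three things to check: existence of $\mRNL{\bullet}$, that the adjunction is a Quillen adjunction, and that it is in fact a Quillen equivalence. For existence, I would appeal to the adjoint functor theorem: both $\gSCM$ and $\sSetsMG$ are locally presentable (the former as a diagram category into $\sSetsM$, the latter as a slice of $\sSetsM$), while $\mRN{\bullet}$ preserves small limits because its $n$-simplices are defined by a natural limit indexed by nonempty subposets $J \subseteq [n]$, which commutes with limits in the argument; the functor is also manifestly accessible. Explicitly, $\mRNL{\bullet}$ may be described via left Kan extension from the full subcategory of representables in $\sSetsMG$.

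Next I would verify the Quillen adjoint property by showing that $\mRN{\bullet}$ sends fibrations and trivial fibrations in the strict $JQ$-model structure on $\gSCM$ to fibrations and trivial fibrations in $\ccMdl{\nGop}$. A strict $JQ$-fibration $f:X \to Y$ is levelwise a fibration of marked quasi-categories; using Proposition \ref{Rel-Ner-isom-func-val} to identify the fiber of $\mRN{f}$ over the vertex $n^+$ with $f(n^+):X(n^+) \to Y(n^+)$, one constructs lifts against the generating (trivial) cofibrations in $\ccMdl{\nGop}$ by inducting over the skeletal filtration of the codomain, where each step reduces to a lifting problem inside a fiber and is solved by the fibrancy of $f(n^+)$. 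For the Quillen equivalence itself, I would check that the derived unit and counit are weak equivalences: the counit $\mRNL{\mRN{X}} \to X$ on a fibrant $X$ is a levelwise equivalence since, by Proposition \ref{Rel-Ner-isom-func-val}, each level reconstructs $X(n^+)$; and on a fibrant $(Y, p)$, the unit $(Y, p) \to \mRN{\mRNL{(Y,p)}}$ is a coCartesian equivalence because both sides classify the same straightening datum assigning to each $n^+$ the fiber of $p$ over that vertex, together with the coherent coCartesian transport maps.

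The hard part will be careful bookkeeping of the markings. The marking on $\mRN{X}$ must be chosen as precisely those edges whose image in $\gop$ is active and whose image in the relevant fiber of $X$ is a marked edge, in order to correspond exactly to the $p$-coCartesian edges on the $\sSetsMG$ side. Verifying that marked anodyne maps in $\ccMdl{\nGop}$ are sent by $\mRNL{\bullet}$ to weak equivalences of strict marked $\gSs$, and that the chosen marking is the correct one to match the two localizations, is the essential content of the marked upgrade of the unstraightening theorem of \cite{JL}. It is precisely this compatibility that pins down $\ccMdl{\nGop}$ as the partner of the strict $JQ$-model structure, as opposed to a Joyal-style model structure on $\sSetsMG$ without the coCartesian localization.
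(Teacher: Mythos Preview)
The paper does not supply its own proof of this statement: it is presented as a direct citation of \cite[Thm.~3.2.5.18]{JL}, introduced with the phrase ``We now recall a result which will be used in the last section of this paper,'' and no argument is given. So there is nothing to compare your proposal against; you have written more than the paper does.

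That said, your outline is broadly reasonable as a roadmap through Lurie's argument, but one detail is off. You describe the marked edges of $\mRN{X}$ as those whose image in $\gop$ is \emph{active} and whose fiber component is marked. That is not the correct marking: in Lurie's marked relative nerve an edge $(e,h)$ (in the notation of Remark~\ref{Rel-Ner-edge}) is marked precisely when the edge $h$ in the target fiber $X(d)$ is marked, with no condition on $e$ being active or inert. The active/inert distinction enters later, in the Segal-type localization of Section~\ref{cocart-fib-model}, not in the underlying Quillen equivalence with the strict model structure. Getting this marking right is exactly the ``careful bookkeeping'' you flag, so it is worth correcting in your sketch.
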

The latter model category in the statement of the above theorem is constructed in section \ref{EInf-Mar-QCat}.
\begin{nota}
	To each coCartesian fibration $p:X \to \nGop$ we can associate a marked simplicial set denoted $\Nt{X}$ which is composed of the pair $(X, \E)$, where $\E$ is the set of $p$-coCartesian edges of $X$
	\end{nota}
\begin{nota}
Let $X, Y$ be two objects in $\sSetsMQ$. We will denote by $\Flmap{X}{Y}{\nGop} \subseteq \Flmap{X}{Y}{\sSetsM}$ and $\Shmap{X}{Y}{\nGop} \subseteq \Shmap{X}{Y}{\sSetsM}$ the simplicial subsets 
whose vertices are those maps which are compatible with the projections to $\nGop$.
\begin{df}
	\label{CC-Eq}
	A morphism $F:X \to Y$ in the category $\sSetsMG$ is called a \emph{coCartesian}-equivalence if for each coCartesian fibration $Z \to \nGop$, the indued simplicial map
	\[
	\Flmap{F}{\Nt{Z}}{\sSetsM}:\Flmap{Y}{\Nt{Z}}{\sSetsM} \to \Flmap{X}{\Nt{Z}}{\sSetsM}
	\]
	is a categorical equivalence of simplicial-sets(quasi-categories).
	\end{df}

\end{nota}
Next we will recall a model category structure on the overcategory $\sSetsMG$ from \cite[Prop. 3.1.3.7.]{JL} in which fibrant objects are (essentially) coCartesian fibrations.
\begin{thm}
	\label{CC-Mdl-Str}
	There is a left-proper, combinatorial model category structure on the category $\sSetsMG$ in which a morphism is 
	\begin{enumerate}
		\item a cofibration if it is a monomorphism when regarded as a map of simplicial sets.
		\item a weak-equivalences if it is a coCartesian equivalence.
		\item a fibration if it has the right lifting property with respect to all maps which are simultaneously cofibrations and weak-equivalences. 
	\end{enumerate}
\end{thm}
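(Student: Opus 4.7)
The plan is to realize the claimed model category as a left Bousfield localization of a suitable base model structure on $\sSetsMG$, and then to identify the localized weak equivalences with the coCartesian equivalences of Definition \ref{CC-Eq}.

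First, I would equip $\sSetsMG = \sSetsM/N(\gop)$ with the usual slice model structure inherited from the Joyal model category $\sSetsMQ$ of Theorem \ref{Joyal-sSetsM}: a morphism in $\sSetsMG$ is a cofibration, weak equivalence, or fibration precisely when its image under the forgetful functor $\sSetsMG \to \sSetsM$ is. Since $\sSetsMQ$ is left proper and combinatorial, and these properties pass to slice categories, the result is a left-proper combinatorial model structure on $\sSetsMG$ in which the cofibrations are already exactly the monomorphisms.

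Next, I would select a small set $S$ of morphisms in $\sSetsMG$ whose $S$-local fibrant objects are precisely the marked simplicial sets of the form $(\Nt{X}, p)$ for a coCartesian fibration $p:X \to N(\gop)$. Natural candidates are: (i) the inclusions $\Fl{\Lambda^0[n]} \hookrightarrow \Fl{\Delta[n]}$ for $n \ge 2$, equipped with structure maps $\Delta[n] \to N(\gop)$ whose restriction to $\Delta^{\{0,1\}}$ is marked — these encode the lifting property in Definition \ref{p-CC-edge} for a chosen initial edge, hence force existence of coCartesian lifts as in Definition \ref{CC-fib}; and (ii) the map $\Fl{\Delta[1]} \to \Sh{\Delta[1]}$ over each edge of $N(\gop)$, which forces marked edges in a fibrant object to coincide with the $p$-coCartesian edges. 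A careful analysis, parallel to Lurie's treatment, shows that the $S$-local fibrant objects are exactly the marked coCartesian fibrations over $N(\gop)$.

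Having fixed $S$, I would apply Jeff Smith's theorem on left Bousfield localization of left-proper combinatorial model categories to the slice structure above, producing a new left-proper combinatorial model structure with the same cofibrations (still monomorphisms) and strictly more weak equivalences. It remains to identify the $S$-local weak equivalences with the coCartesian equivalences of Definition \ref{CC-Eq}. By general Bousfield localization, a map $F:X \to Y$ is an $S$-local equivalence iff for every $S$-local fibrant $W$ the induced map $\Flmap{Y}{W}{\sSetsM} \to \Flmap{X}{W}{\sSetsM}$ is a weak equivalence in the Joyal model structure; since we have shown that the $S$-local fibrant objects are precisely the $\Nt{Z}$ for coCartesian $Z \to N(\gop)$, and since $\sSetsMQ$ is a $\sSetsQ$-model category so the mapping object $\Flmap{-}{-}{\sSetsM}$ lands in quasi-categories, this recovers precisely the characterization of coCartesian equivalence.

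The main obstacle is step two: pinning down a localizing set $S$ for which the $S$-local fibrant objects are exactly marked coCartesian fibrations, without accidentally imposing extra conditions. This requires the combinatorics of marked pushout-products over $N(\gop)$ developed in \cite[Ch. 3]{JL} and is the only step that is not formal.
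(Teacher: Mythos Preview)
The paper does not give a proof of this theorem; the sentence immediately preceding it reads ``Next we will recall a model category structure on the overcategory $\sSetsMG$ from \cite[Prop.~3.1.3.7.]{JL},'' and no argument follows. Lurie's proof of that proposition is a direct construction: he introduces the class of \emph{marked anodyne} morphisms (HTT~3.1.1.1), shows that having the right lifting property against them characterizes naturally-marked coCartesian fibrations, and then invokes a recognition principle for combinatorial model categories (HTT~A.2.6.13) to obtain the model structure in one stroke. No auxiliary model structure and no Bousfield localization appear.

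Your route---slice $\sSetsMQ$ over $\Sh{N(\gop)}$ and then localize---is a genuinely different packaging, and it can be made to work, but two points deserve care. First, the Bousfield-localization machinery needs the intended fibrant objects $(\Nt{Z},p)$ to already be fibrant in the slice, i.e.\ each $\Nt{Z}\to\Sh{N(\gop)}$ must be a fibration in $\sSetsMQ$; this is true but is not automatic, and verifying it amounts to the same marked-anodyne lifting analysis Lurie carries out. Second, in your identification of the $S$-local weak equivalences you test against the \emph{absolute} mapping object $\Flmap{-}{-}{\sSetsM}$, whereas the model category being localized is the slice, whose homotopy function complex is computed by $\Flmap{-}{-}{\gop}$; the correct characterization of $S$-local equivalences therefore uses the slice mapping object (Lurie's Definition~3.1.3.1 does so; the paper's Definition~\ref{CC-Eq} appears to contain the same slip you reproduce). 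Neither issue is fatal, but once repaired your argument leans on exactly the marked-anodyne combinatorics you flag as ``the only step that is not formal''---so the alternative route does not avoid the hard part, it relocates it.
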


We have defined a function object for the category $\sSetsMG$ above.
The simplicial set $\Flmap{X}{Y}{\gop}$ has verices, all maps from $X$ to $Y$ in $\sSetsMG$. An $n$-simplex in $\Flmap{X}{Y}{\gop}$ is a map $\Fl{\Delta[n]} \times X \to Y$ in $\sSetsMG$, where $\Fl{\Delta[n]} \times (X, p) = (\Fl{\Delta[n]} \times X, pp_2)$, where $p_2$ is the projection $\Fl{\Delta[n]} \times X \to X$. The enriched category $\sSetsMG$ admits tensor and cotensor products. The \emph{tensor product} of an object $X = (X, p)$ in $\sSetsMG$ with a simplicial set $A$ is the objects
\[
\Fl{A} \times X = (\Fl{A} \times X, pp_2).
\]
The \emph{cotensor product} of $X$ by $A$ is an object of $\sSetsMG$ denoted $\expG{A}{X}$. If $q:\expG{A}{X} \to \Sh{\nGop}$ is the structure map, then a simplex $x:\Fl{\Delta[n]} \to \expG{A}{X}$ over a simplex $y = qx:\Delta[n] \to \Sh{\nGop}$ is a map $\Fl{A} \times (\Fl{\Delta[n]}, y) \to (X, p)$. The object $(\expG{A}{X}, q)$ can be constructed by the following pullback square in $\sSetsM$:
\begin{equation*}
\xymatrix{
	\expG{A}{X} \ar[r] \ar[d]_q & \mapMS{\Fl{A}}{X} \ar[d]^{\mapMS{\Fl{A}}{p}} \\
	\Sh{\nGop} \ar[r] & \mapMS{\Fl{A}}{\Sh{\nGop}}
}
\end{equation*}
where the bottom map is the diagonal. There are canonical isomorphisms:
\begin{equation}
\Flmap{\Fl{A} \times X}{Y}{D} \cong \left[A, \Flmap{X}{Y}{D} \right] \cong \Flmap{X}{\expG{A}{Y}}{D}
\end{equation}
\begin{rem}
	\label{Simp-Mdl-Cat}
	The coCartesian model category structure on $\sSetsMG$ is a simplicial model category structure with the simplicial Hom functor:
	\[
	\Shmap{-}{-}{\gop}:\sSetsMG^{op} \times \sSetsMG \to \sSets.
	\]
	This is proved in \cite[Corollary 3.1.4.4.]{JL}. The coCartesian model category structure is a $\sSetsQ$-model category structure with the function object given by:
	\[
	\Flmap{-}{-}{\gop}:\sSetsMG^{op} \times \sSetsMG \to \sSets.
	\]
	This is remark \cite[3.1.4.5.]{JL}.
\end{rem}
\begin{rem}
	\label{enrich-mar-sSets}
	The coCartesian model category is a $\sSetsMQ$-model category with the Hom functor:
	\[
	\mapGenM{-}{-}{D}:\sSetsMGen{D}^{op} \times \sSetsMGen{D} \to \sSetsM.
	\]
	This follows from \cite[Corollary 3.1.4.3]{JL} by taking $S = N(D)$ and $T = \Delta[0]$, where $S$ and $T$ are specified in the statement of the corallary.
\end{rem}

\section[Strict JQ-model structure]{The strict JQ-model category structure on $\gSs$}
\label{str-mdl-str}
 Schwede introduced two model category structures on the category of (normalized) $\gSs$ which he
 called the \emph{strict Q-model category} structure and the
 \emph{stable Q-model category} structure in \cite{schwede}.
 The strict Q-model category structure is obtained by restricting the
 projective model category structure on the functor category $[\gop, \pSSets]$, where the codomain category $\pSSets$ is endowed with the Kan model category structure. In
 this section we study the \emph{projective} model category structure on the category of $\gSs$ namely the functor category $[\gop, \sSets]$. Following Schwede we will refer to this projective model category as the \emph{strict JQ-model category}. We will show that the strict JQ-model category is a $\sSets$- model category, where $\sSets$ is endowed with the Joyal model category structure. We go on further to show that the strict JQ model category is a symmetric monoidal closed model category.
 We begin by recalling the notion of a \emph{categorical equivalence}
 of simplicial sets which is essential for defining
 weak equivalences of the desired model category structure.
 
 \begin{df}
  A morphism of simplicial sets $f:A \to B$ is called a categorical
  equivalence if for any quasi-category $X$, the induced morphism
  on the homotopy categories of mapping spaces
  \[
   ho(\map_{\sSets}(f, X)):ho(\map_{\sSets}(B, X)) \to ho(\map_{\sSets}(A, X)),
  \]
  is an equivalence of (ordinary) categories.
 \end{df}
 \begin{rem}
 
 Categorical equivalences are weak equivalences in a cofibrantly
 generated model category structure on simplicial sets called
 the \emph{Joyal model category structure} which we will denote
 by $\sSetsQ$, see \cite[Theorem 6.12]{AJ1} for the
 definition of the Joyal model category structure.
\end{rem}
 
 \begin{df}
 We call a map of $\gSs$
 \begin{enumerate}
  \item  A \emph{strict JQ-fibration} if
 it is degreewise a \emph{pseudo-fibration} i.e. a fibration of 
 simplicial sets in the Joyal model category structure on simplicial sets.
 \item A \emph{strict JQ-equivalence}
 if it is degreewise a \emph{categorical equivalence} i.e. a weak equivalence of
 simplicial sets in the Joyal model category structure on simplicial sets,
 see \cite{JL}.
 
 \end{enumerate}
 \end{df}
 
 
 \begin{thm}
 \label{strict-cat-Q-model}
  Strict JQ-equivalences, strict JQ-fibrations
  and JQ-cofibrations provide the category of $\gSs$ with a combinatorial, left-proper model category structure on the category of $\gSs$ $\gSC$.
 \end{thm}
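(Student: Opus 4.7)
The plan is to recognize the claimed model structure as the \emph{projective} model structure on the diagram category $[\gop, \sSetsQ]$ and then invoke standard results for projective model structures on diagram categories valued in a combinatorial model category.

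First I would recall that the Joyal model structure $\sSetsQ$ is combinatorial and left-proper: combinatoriality because it is cofibrantly generated by sets of monomorphisms between countable simplicial sets, and left-properness because every object is cofibrant. Since $\gop$ is a small category and $\sSetsQ$ is combinatorial, the projective model structure on $[\gop, \sSetsQ]$ exists by the standard transfer theorem (for example, Lurie's HTT Proposition A.2.8.2 or Hirschhorn Theorem 11.6.1). Unpacking the definition, a morphism is a weak equivalence (respectively, fibration) in the projective structure precisely when it is objectwise a weak categorical equivalence (respectively, a pseudofibration), which matches the definitions of \emph{strict JQ-equivalence} and \emph{strict JQ-fibration} given above. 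The cofibrations are then forced to be the maps with the left lifting property against the trivial fibrations, and hence coincide with the class called \emph{JQ-cofibrations}.

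Next I would record the explicit generating (trivial) cofibrations, as these are needed to see combinatoriality concretely: if $I$ and $J$ are generating sets of (trivial) cofibrations for $\sSetsQ$, then the sets
\[
\{\, \Gamma^{n}\otimes i \,:\, n\ge 0,\ i\in I\,\}\qquad\text{and}\qquad\{\, \Gamma^{n}\otimes j \,:\, n\ge 0,\ j\in J\,\}
\]
are generating (trivial) cofibrations for the projective structure, where $\Gamma^{n} = \gop(n^+, -)$ is the representable $\gS$ at $n^+$ and $\otimes$ is the objectwise product with a simplicial set. The small-object argument applies since all the domains and codomains are small (they are small as objects of $\sSetsQ$ at each level and $\gop$ is a small category).

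Finally, left-properness is immediate from the pointwise character of the structure: a pushout in $\gSC$ of an objectwise cofibration along any map is computed objectwise, and objectwise the pushout of a cofibration by a weak equivalence remains a weak equivalence because $\sSetsQ$ is left-proper. Combinatoriality is inherited since $[\gop, \sSetsQ]$ is locally presentable and the generating sets described above are small. I do not expect any real obstacle here; the entire argument is a packaged application of the projective diagram model structure, and the only thing to verify beyond citation is that the definitions in the statement genuinely match the projective weak equivalences and fibrations, which is immediate from unwinding the definitions.
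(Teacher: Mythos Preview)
Your proposal is correct and matches the paper's own approach: the paper simply cites \cite[Proposition A.3.3.2]{JL} for the existence of the projective model structure and notes that left-properness is inherited from the left-properness of the Joyal model category. Your write-up is in fact more detailed than the paper's one-line proof, spelling out the generating sets and the objectwise argument for left-properness, but the underlying strategy is identical.
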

The model structute in the above theorem follows from \cite[Proposition A 3.3.2]{JL} and the left properness is a consequence of the left properness of the Joyal model category.
 \subsection[Enrichment of the JQ-model category]{Enrichment of the JQ-model category}
 The goal of this section is to show that the JQ-model category
 is a \emph{(symmetric) monoidal model category} which is enriched over itself
 in the sense of definition \ref{enrich-model-cat}. We will prove
 this in two steps, we first establish the existence of a Quillen
 bifunctor
 \[
  - \times -:\gSC \times \sSets \to \gSC,
 \]
 where the category $\gSC$ is endowed with the JQ-model category structure
 and $\sSets$ is endowed with the Joyal model category structure $\sSetsQ$.
 Then we will use this Quillen bifunctor to prove the desired enrichment.
 We begin by reviewing the notion of a \emph{monoidal model category}. 
 
 \begin{df}
  \label{mon-model-cat}
  A \emph{monoidal model category} is a closed monoidal
  category $\C$ with a model category structure, such that
  $\C$ satisfies the following conditions:
  \begin{enumerate}
   \item The monoidal structure $\otimes:\C \times \C \to \C$
   is a Quillen bifunctor.
   
   \item Let $QS \overset{q} \to S$ be the cofibrant replacement for
   the unit object $S$, obtained by using the functorial factorization
   system to factorize $0 \to S$ into a cofibration followed by a
   trivial fibration. Then the natural map
   \[
    QS \otimes X \overset{q \otimes 1} \to S \otimes X
   \]
   is a weak equivalence for all cofibrant $X$. Similarly, the natural
   map $X \otimes QS \overset{1 \otimes q} \to X \otimes S$ is a weak equivalence
   for all cofibrant $X$.

  \end{enumerate}

 \end{df}

 \begin{ex}
 \label{sSets-Q-monoidal}
  The model category of simplicial sets with the Joyal model category
  structure, $\sSetsQ$ is a monoidal model category.
 \end{ex}
 
 \begin{ex}
 \label{stable-Q-model-monoidal}
  The stable Q-model category is a monoidal model category
  with respect to the smash product defined in \cite{lydakis}.
 \end{ex}

 \begin{df}
  \label{enrich-model-cat}
  Let $\bigS$ be a monoidal model category. An \emph{$\bigS$-enriched
  model category} is an $\bigS$ enriched category $\bigA$ equipped with
  a model category structure (on its underlying category) such that
\begin{enumerate}

\item The category $\bigA$ is tensored and cotensored over $\bigS$.
 
\item There is a Quillen adjunction of two variables, (see definition
   \ref{Q-adj-2-var}), 
\[
\left(\otimes, \bhom_{\bigA}, \map_{\bigA}, \phi, \psi \right):
  \bigA \times \bigS \to \bigA.
\]
\end{enumerate}
When $\bigA$ is itself a monoidal model category which is also an
$\bigA$-enriched model category, we will say that
$\bigA$ is \emph{enriched over itself as a model category}.
  
 \end{df}
 
 \begin{ex}
 Both strict and stable Q-model category structures, constructed in \cite{schwede}, on the category
 $\gSC$ are simplicial, i.e. both strict and stable Q- model categories are $\sSetsK$-enriched
 model categories.
 \end{ex}
 \begin{rem}
 The strict JQ-model category structure is NOT simplicial.
 \end{rem}
 
 For each pair $(F, K)$, where $F \in Ob(\gSC)$ and $K \in Ob(\sSets)$,
 one can construct a $\gS$ which we denote by $\TensP{F}{K}{}$ and which is defined
 as follows:
 \[
  (\TensP{F}{K}{})(n^+) := F(n^+) \times K,
 \]
 where the product on the right is taken in that category
 of simplicial sets. This construction is functorial in both variables.
 Thus we have a functor
 \begin{equation*}
 \label{ten-over-sSets} 
  \TensP{-}-{}{}:\gSC \times \sSets \to \gSC. 
 \end{equation*}
  Now we will define a couple of function objects for the category $\gSC$.
 The first function object enriches the category $\gSC$ over
 $\sSets$ \emph{i.e.} there is a bifunctor
 \[
 \MapC{-}{-}{\gSC}:\gSC^{op} \times \gSC \to \sSets
 \]
 which assigns to each pair of objects $(X, Y) \in Ob(\gSC) \times Ob(\gSC)$, a simplicial set
 $\MapC{X}{Y}{\gSC}$ which is defined in degree zero as follows:
 \[
\MapC{X}{Y}{\gSC}_0 := \gSC(X, Y)
 \]
 and the simplicial set is defined in degree $n$ as follows:
 \[
\MapC{X}{Y}{\gSC}_n := \gSC(\TensP{X}{\Delta[n]}{}, Y)
 \]
 For any $\gS$ $X$, the functor $\TensP{X}{-}{}:\sSets \to \gSC$ is
 left adjoint to the functor $\MapC{X}{-}{\gSC}:\gSC\to \sSets$. The \emph{counit} of this adjunction
 is the evaluation map $ev:\TensP{X}{\MapC{X}{Y}{\gSC}}{} \to Y$
 and the \emph{unit} is the obvious simplicial map $K \to \MapC{X}{\TensP{X}{K}{}}{\gSC}$.

 To each pair of objects $(K, X) \in Ob(\sSets) \times Ob(\gSC)$ we can define a $\gS$ $\bHom{K}{X}{\gSC}$, in degree $n$, as follows:
 \[
 (\bHom{K}{X}{\gSC})(n^+) := [K, X(n^+)] \ .
 \]
 This assignment
 is functorial in both variable and therefore we have a bifunctor
 \[
 \bHom{-}{-}{\gSC}:\sSets^{op} \times \gSC \to \gSC.
 \]
 For any $\gS$ $X$, the functor $\bHom{-}{X}{\gSC}:\sSets \to \gSC^{op}$ is
 left adjoint to the functor $\MapC{-}{X}{\gSC}:\gSC^{op} \to \sSets$. 
 The following proposition summarizes the above discussion.
\begin{prop}
\label{two-var-adj-cat-gcat}
There is an adjunction of two variables
\begin{equation}
\label{two-var-adj-gcat}
(\TensP{-}{-}{}, \bHom{-}{-}{\gSC}, \MapC{-}{-}{\gSC}) : \gSC \times \sSets
\\  \to \gSC.
\end{equation}

\end{prop}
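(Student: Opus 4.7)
The plan is to produce three natural bijections
\[
\gSC(\TensP{X}{K}{}, Y) \;\cong\; \sSets(K, \MapC{X}{Y}{\gSC}) \;\cong\; \gSC(X, \bHom{K}{Y}{\gSC}),
\]
and then to verify their compatibility; together with the tensor, cotensor, and mapping-space functors already defined, this is exactly the data of an adjunction of two variables.

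First I would establish the rightmost bijection directly from the definitions. A morphism $X \to \bHom{K}{Y}{\gSC}$ in $\gSC$ is a natural family of simplicial maps $X(n^+) \to [K, Y(n^+)]$; by cartesian-closedness of $\sSets$ this transposes level-by-level to a natural family $X(n^+) \times K \to Y(n^+)$, i.e.\ a morphism $\TensP{X}{K}{} \to Y$ in $\gSC$. Naturality in $X$, $K$, and $Y$ is inherited from the cartesian-closed structure of $\sSets$ together with the pointwise definition of $\TensP{-}{-}{}$ and $\bHom{-}{-}{\gSC}$.

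Next I would establish the leftmost bijection. By definition $\MapC{X}{Y}{\gSC}_n = \gSC(\TensP{X}{\Delta[n]}{}, Y)$, which is the case $K = \Delta[n]$ of the statement. For a general simplicial set $K$, I would use the standard density presentation $K \cong \liminj_{\Delta[n] \to K}\Delta[n]$ together with the fact that $\TensP{X}{-}{}$ is the levelwise product with $X$ and so preserves colimits, and that $\gSC(-, Y)$ converts colimits into limits. This yields
\[
\gSC(\TensP{X}{K}{}, Y) \;\cong\; \limproj_{\Delta[n] \to K} \gSC(\TensP{X}{\Delta[n]}{}, Y) \;\cong\; \limproj_{\Delta[n] \to K} \MapC{X}{Y}{\gSC}_n \;\cong\; \sSets(K, \MapC{X}{Y}{\gSC}),
\]
where the last step is the usual presentation of simplicial maps out of a colimit of representables.

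Finally, I would verify that the composite of these two bijections coincides with the levelwise adjoint correspondence supplied by the cartesian closure of $\sSets$; this coherence is exactly what the definition of an adjunction of two variables requires. Since all three functors are pointwise constructions from the monoidal closed structure on $\sSets$, and both limits and colimits in the functor category $\gSC$ are computed pointwise, the verification reduces to a diagram chase in $\sSets$. The main obstacle I anticipate is the density/co-Yoneda manipulation in the middle step, where one must justify interchanging the colimit over the simplices of $K$ with the levelwise product and with the functor $\gSC(-, Y)$; everything else is essentially formal once the cartesian closure of $\sSets$ and the pointwise computation of (co)limits in $\gSC$ are in hand.
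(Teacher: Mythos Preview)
Your argument is correct and is essentially the standard verification the paper has in mind. In fact the paper gives no proof of this proposition at all: it simply asserts the two partial adjunctions $X\otimes(-)\dashv \MapC{X}{-}{\gSC}$ and $\bHom{-}{X}{\gSC}\dashv \MapC{-}{X}{\gSC}$ in the discussion preceding the statement and then says the proposition ``summarizes the above discussion.'' Your proposal spells out exactly how those assertions are justified, via levelwise cartesian closure of $\sSets$ and the density argument for the simplicial enrichment, so there is nothing to compare.
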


\begin{thm}
  \label{enrich-GamCAT-CAT}
  The strict model category of $\gSs$, $\gSC$, is a $\sSetsQ$- model category.
 \end{thm}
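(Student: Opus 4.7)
Proposition \ref{two-var-adj-cat-gcat} already supplies the two-variable adjunction
$(\TensP{-}{-}{}, \bHom{-}{-}{\gSC}, \MapC{-}{-}{\gSC})$ between $\gSC \times \sSets$ and $\gSC$, and in particular exhibits $\gSC$ as tensored and cotensored over $\sSets$. What remains, by Definition \ref{enrich-model-cat}, is to verify that this two-variable adjunction is a Quillen adjunction of two variables when $\gSC$ is equipped with the strict JQ-model structure and $\sSets$ with the Joyal model structure $\sSetsQ$.

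The plan is to exploit the standard equivalence between the three forms of the pushout-product / pullback-hom axiom for a two-variable adjunction, which allows me to work entirely on the right-adjoint side via $\bHom{-}{-}{\gSC}$. Concretely, I will show the following: for every cofibration $j:K \hookrightarrow L$ in $\sSetsQ$ and every strict JQ-fibration $p:A \to B$ in $\gSC$, the induced map
\[
\bHom{L}{A}{\gSC} \longrightarrow \bHom{L}{B}{\gSC} \times_{\bHom{K}{B}{\gSC}} \bHom{K}{A}{\gSC}
\]
is a strict JQ-fibration in $\gSC$, and furthermore a strict JQ-equivalence whenever either $j$ is a trivial cofibration in $\sSetsQ$ or $p$ is a strict JQ-equivalence. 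This single statement is equivalent to the pushout-product axiom for $\TensP{-}{-}{}$.

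The decisive observation is that the cotensor $\bHom{-}{-}{\gSC}$ is formed levelwise: $\bHom{K}{X}{\gSC}(n^+) = [K, X(n^+)]$ is the cartesian internal hom of simplicial sets. Evaluating the displayed map at an arbitrary $n^+ \in \gop$ therefore produces precisely the pullback-hom in $\sSets$ of $j:K \to L$ and $p(n^+):A(n^+) \to B(n^+)$. By hypothesis each $p(n^+)$ is a fibration in $\sSetsQ$, and by Example \ref{sSets-Q-monoidal} the Joyal model category is cartesian closed, so its SM7 axiom immediately yields a fibration at level $n^+$, trivial whenever either factor is trivial. Since strict JQ-fibrations and strict JQ-equivalences in $\gSC$ are by definition detected levelwise, assembling these conclusions over all $n^+ \in \gop$ gives the required statement in $\gSC$.

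I do not anticipate any serious obstacle; the theorem reduces cleanly to SM7 for $\sSetsQ$. The one real design choice worth flagging is the preference for the pullback-hom form of the axiom over the pushout-product form on $\TensP{-}{-}{}$: the latter would instead require an explicit analysis of pushout-products of generating projective cofibrations of $\gSC$ against monomorphisms of simplicial sets, together with a separate argument for the trivial-cofibration half, whereas the pullback-hom version reduces uniformly and termwise to the cartesian closedness of $\sSetsQ$.
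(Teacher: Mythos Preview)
Your proposal is correct and follows essentially the same approach as the paper: both verify condition (2) of Lemma \ref{Q-bifunctor-char} by observing that the cotensor $\bHom{-}{-}{\gSC}$ is computed levelwise, so the pullback-hom map evaluated at each $n^+$ reduces to the SM7 axiom for the cartesian closed Joyal model structure on $\sSets$. Your explicit remark on why the pullback-hom form is preferable to the pushout-product form is a nice addition, but the core argument is identical to the paper's.
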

 \begin{proof}
  We will show that the adjunction of two variables \eqref{two-var-adj-gcat}
  is a Quillen adjunction for the strict $JQ$-model category structure
 on $\gSC$ and the model category $\sSetsQ$.
  In order to do so, we will verify condition
 (2) of Lemma \ref{Q-bifunctor-char}. Let $g:K \to L$ be a cofibration
 in $\sSets$ and let $p:Y \to Z$ be a strict fibration of $\gSs$,
 we have to show that the induced map
 \[
  \bhom^{\Box}_{\gSC}(g, p):\bHom{L}{Y}{\gSC} \to \bHom{L}{Z}{\gSC}
  \underset{\bHom{K}{Z}{\gSC}} \times \bHom{K}{Y}{\gSC}
 \]
 is a fibration in $\gSC$ which is acyclic if either of $g$ or $p$ is
 acyclic. It would be sufficient to check that the above morphism is degreewise
 a fibration in $\sSetsQ$, i.e. for all $n^+ \in \gop$, the morphism
 \begin{equation*}
  \bhom^{\Box}_{\gSC}(g, p)(n^+) = \bhom^{\Box}_{\sSets}(g, p(n^+)):  Y(n^+)^L  \to
  Z(n^+)^L\underset{ Z(n^+)^K} \times  Y(n^+)^K,
 \end{equation*}
 is a fibration in $\sSetsQ$. This follows from the observations that the simplicial morphism $p(n^+):Y(n^+) \to Z(n^+)$
 is a fibration in $\sSetsQ$ and the model category 
 $\sSetsQ$ is a cartesian closed model category whose internal Hom is provided by the bifunctor ${-}^-:\sSets \times \sSets \to \sSets$.
 \end{proof}
 
 Let $X$ and $Y$ be two $\gSs$, the \emph{Day convolution product} of $X$ and $Y$ denoted by $X \ast Y$ is defined as follows:
 \begin{equation}
 \label{Day-Con-prod}
 X \ast Y(n^+) := \int^{(k^+, l^+) \in \gop} \gop(k^+\wedge l^+, n^+) \times X(k^+) \times Y(l^+).
 \end{equation}
 Equivalently, one may define the Day convolution product of $X$ and $Y$ as the left Kan extension of their \emph{external tensor product} $X \overline \times Y$ along the smash product functor
 \[
 - \wedge - :\gop \times \gop \to \gop.
 \]
 we recall that the external tensor product $X \overline \times Y$ is a bifunctor
 \begin{equation*}
 X \overline \times Y:\gop \times \gop \to \sSets
 \end{equation*}
 which is defined on objects by 
 \[
 X \overline \times Y(m^+, n^+) = X(m^+) \times Y(n^+).
 \]
 
 It follows from
 \cite[Thm.]{} that the functor $- \ast \gn{n}$ has a right adjoint which we denote by $-(n^+ \wedge -):\gSC \to \gSC$. We will denote the $\gS$ $-(n^+ \wedge -)(X)$ by $X(n^+ \wedge -)$ and define it by the following composite:
 \begin{equation}
 \label{defn-X-n-wedge}
 \gop \overset{n^+ \wedge -} \to \gop \overset{X} \to \sSets.
 \end{equation}
 The following proposition sums up this observation:
 \begin{prop}
 	\label{rt-adjs-DayCP}
 	There is a natural isomorphism
 	\[
 	\phi: -(n^+ \wedge -) \cong \MGCat{\gn{n}}{-}.
 	\]
 	In particular,
 	for each $\gS$ $X$ there is an isomorphism of $\gSs$
 	\[
 	\phi(X):X(n^+ \wedge -) \cong \MGCat{\gn{n}}{X}.
 	\]
 	
 \end{prop}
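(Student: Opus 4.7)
The plan is to exhibit the right adjoint to $- \ast \gn{n}$ directly via a co-Yoneda calculation and then compare it with the evaluation description of the internal hom. Recall that $\gn{n}$ is the representable $\gS$ at $n^+$, namely $\gn{n}(k^+) = \gop(n^+, k^+)$ viewed as a discrete simplicial set. The key computation is to identify $\gn{n} \ast \gn{m}$, which by the definition of the Day convolution in equation \eqref{Day-Con-prod} equals
\[
(\gn{n} \ast \gn{m})(r^+) = \int^{(k^+, l^+) \in \gop} \gop(k^+ \wedge l^+, r^+) \times \gop(n^+, k^+) \times \gop(m^+, l^+).
\]
Two applications of the co-Yoneda lemma (one in $k^+$, one in $l^+$) collapse this to $\gop(n^+ \wedge m^+, r^+)$, so $\gn{n} \ast \gn{m} \cong \gn{n \wedge m}$, the representable at $n^+ \wedge m^+$.

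First I would establish the adjunction $(- \ast \gn{n}) \dashv (-)(n^+ \wedge -)$ by the following standard end manipulation, valid for any $\gSs$ $Y$ and $X$:
\begin{align*}
\gSC(Y \ast \gn{n}, X) &\cong \int_{m^+} \sSets\Bigl(\int^{l^+} \gop(n^+ \wedge l^+, m^+) \times Y(l^+),\; X(m^+)\Bigr) \\
&\cong \int_{l^+} \int_{m^+} \sSets\bigl(\gop(n^+ \wedge l^+, m^+) \times Y(l^+),\; X(m^+)\bigr) \\
&\cong \int_{l^+} \sSets\bigl(Y(l^+),\; X(n^+ \wedge l^+)\bigr) \;\cong\; \gSC(Y, X(n^+ \wedge -)),
\end{align*}
where the first isomorphism uses the co-Yoneda reduction of $Y \ast \gn{n}$ computed above (with $Y$ in place of one of the representables), and the penultimate isomorphism is the Yoneda lemma applied pointwise in $\sSets$.

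Since by definition of the Day convolution internal hom the functor $\MGCat{\gn{n}}{-}$ is the right adjoint to $- \ast \gn{n}$, uniqueness of right adjoints yields the desired natural isomorphism $\phi: -(n^+ \wedge -) \cong \MGCat{\gn{n}}{-}$, and in particular $\phi(X)\colon X(n^+ \wedge -) \cong \MGCat{\gn{n}}{X}$. Alternatively, one can bypass adjoint uniqueness by computing the internal hom directly: $\MGCat{\gn{n}}{X}(m^+) \cong \gSC(\gn{n} \ast \gn{m}, X) \cong \gSC(\gn{n \wedge m}, X) \cong X(n^+ \wedge m^+)$, again by Yoneda. There is no real obstacle here beyond the bookkeeping of the two ends; the only point requiring care is ensuring that the Fubini/co-Yoneda manipulations are carried out at the level of simplicial sets (not merely pointwise sets), which is fine because the ends and coends in question are preserved by the forgetful functor from simplicial sets to sets in each simplicial degree.
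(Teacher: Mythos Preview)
Your argument is correct. The co-Yoneda reduction of $\gn{n}\ast\gn{m}$ to $\gn{n\wedge m}$ is valid, the end/coend manipulation establishing $(-\ast\gn{n})\dashv (-)(n^+\wedge -)$ is sound, and either uniqueness of right adjoints or the direct pointwise computation via Yoneda finishes the job.

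The paper proceeds differently. Rather than computing $\gn{n}\ast\gn{m}$ or writing out the end manipulation, it observes that $\gn{n}$ is the left Kan extension of $\gn{1}$ along the functor $n^+\wedge -:\gop\to\gop$, and then invokes the Kan-extension/restriction adjunction to obtain $\gSC(\gn{n},X)\cong\gSC(\gn{1},X(n^+\wedge -))$, which it then promotes to an isomorphism of the internal mapping $\gSs$ (implicitly using that $\gn{1}$ is the Day unit, so $\MGCat{\gn{1}}{-}\cong\id$). Your route is more explicit and self-contained, carrying out the coend calculus directly; the paper's route is more conceptual but leaves the ``this extends to an isomorphism of $\gSs$'' step as an observation. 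Both arguments are standard and ultimately rest on the same Yoneda/co-Yoneda ingredients; yours has the minor advantage of making the simplicial-set-level naturality in each degree transparent, while the paper's has the advantage of identifying the structural reason ($\gn{n}$ as a Kan extension) behind the formula.
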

\begin{proof}
 Consider the functor $n^+ \wedge -:\gop \to \gop$. We observe that a
 \emph{Left Kan extension} of $\gn{1}:\gop \to \sSets$ along $n^+ \wedge -$
 is the $\gS$ $\gn{n}:\gop \to \sSets$. This implies that we have the following bijection
 \begin{equation*}
 \gSC(\gn{n}, X) \cong \gSC(\gn{1}, X(n^+ \wedge -)).
 \end{equation*}
 We observe that this natural bijection extends to a natural isomorphism of $\gSs$:
  \begin{equation*}
  \MGCat{\gn{n}}{X} \cong \MGCat{\gn{1}}{ X(n^+ \wedge -)}.
 \end{equation*}
	
\end{proof}
\begin{prop}
\label{GCAT-SM}
The category of all $\gSs$ $\gSC$ is a symmetric monoidal category under the Day convolution product \eqref{Day-Con-prod}.
The unit of the symmetric monoidal structure is the representable $\gS$ $\gn{1}$.
\end{prop}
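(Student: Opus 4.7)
The plan is to invoke Day's theorem: for any (small) symmetric monoidal category $\C$ and any cocomplete symmetric monoidal category $(\V, \otimes)$ whose tensor preserves colimits in each variable, the functor category $[\C, \V]$ inherits a symmetric monoidal structure by left Kan extension of the external tensor product along $\otimes_\C$, whose unit is the representable $\C(I, -)$ at the unit $I$ of $\C$. I would take $\C = \gop$ with the smash product $\wedge$ and unit $1^+$, and $\V = \sSets$ with the cartesian product. Since $\sSets$ is cartesian closed, $\times$ preserves colimits separately in each variable, so Day's construction applies.

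First I would record that $(\gop, \wedge, 1^+)$ is indeed a (strict) symmetric monoidal category, with the associator, symmetry and unit isomorphisms inherited from the smash product of pointed sets. Then I would unpack the coend \eqref{Day-Con-prod} and verify the unit axiom directly by the coend version of the Yoneda lemma: for each $n^+$,
\[
\gn{1} \ast X(n^+) = \int^{(k^+, l^+)} \gop(k^+ \wedge l^+, n^+) \times \gop(1^+, k^+) \times X(l^+),
\]
and performing the coend over $k^+$ first gives $\int^{k^+} \gop(1^+, k^+) \times \gop(k^+ \wedge l^+, n^+) \cong \gop(1^+ \wedge l^+, n^+) \cong \gop(l^+, n^+)$, after which the remaining coend over $l^+$ yields $X(n^+)$. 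The right unit axiom is symmetric.

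Next I would establish associativity. By Fubini for coends and the fact that $\times$ preserves colimits in each variable, $(X \ast Y) \ast Z (n^+)$ is computed as an iterated coend which, after reindexing along the associativity isomorphism $(k^+ \wedge l^+) \wedge m^+ \cong k^+ \wedge (l^+ \wedge m^+)$, agrees with $X \ast (Y \ast Z)(n^+)$. The symmetry isomorphism $\tau_{X,Y}:X \ast Y \cong Y \ast X$ is induced by the symmetry of $\wedge$ in $\gop$ combined with the symmetry of $\times$ in $\sSets$, again via the coend description. The pentagon, triangle and hexagon coherence diagrams reduce, under the coend formulas, to the corresponding coherence diagrams in $\gop$ and in $\sSets$, both of which hold by hypothesis.

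The calculations are all routine coend manipulations, so the main obstacle is purely bookkeeping: one has to be careful to state the coend Yoneda reductions and the Fubini-type exchanges precisely, and to check that the resulting natural isomorphisms are compatible under the coherence axioms. No new idea beyond Day's original construction is needed, and in the present form the verification is essentially forced by the symmetric monoidal structure on $(\gop, \wedge, 1^+)$ together with cartesian closedness of $\sSets$.
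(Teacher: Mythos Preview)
Your proposal is correct and follows the standard proof of Day's theorem; the paper itself does not supply a proof of this proposition but merely states it, presumably deferring to Day's original construction \cite{Day2} referenced elsewhere in the paper. Your coend/Yoneda/Fubini argument is exactly how one verifies it, so there is nothing to compare---you have simply filled in what the paper leaves implicit.
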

Next we define an internal function object of the category $\gSC$
which we will denote by
\begin{equation}
\label{Int-Map-GCAT}
 \MGCat{-}{-}:\gSC^{op} \times \gSC \to \gSC.
 \end{equation}
 Let $X$ and $Y$ be two $\gSs$, we define the $\gSC$ $\MGCat{X}{Y}$ as follows:
 \begin{equation*}
 \MGCat{X}{Y}(n^+) := \MapC{X \ast \gn{n}}{Y}{\gSC}.
 \end{equation*}
 \begin{prop}
 \label{closed-SM-cat-GCat}
 The category $\gSC$ is a closed symmetric monoidal category under the Day convolution product. The internal Hom is given by the bifunctor \eqref{Int-Map-GCAT} defined above.
 \end{prop}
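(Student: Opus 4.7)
The plan is to verify the two-variable adjunction
\[
\gSC(X \ast Y, Z) \cong \gSC(X, \MGCat{Y}{Z}),
\]
natural in $X$, $Y$, $Z$, and then to upgrade this bijection to a simplicial (indeed $\gSC$-enriched) isomorphism. Conceptually this is an instance of Day's general convolution theorem applied to the symmetric monoidal category $(\gop, \wedge)$ valued in $\sSets$; my plan is to outline a direct verification using coend calculus and density of representables.

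First I would reduce to the case of a representable first argument. The Yoneda lemma gives $X \cong \mathrm{colim}_{\gn{n} \to X}\, \gn{n}$ as a colimit over the category of elements. Since the Day convolution \eqref{Day-Con-prod} is defined by a coend, $- \ast Y$ preserves colimits in the first variable, and both $\gSC(-, Z)$ and $\gSC(-, \MGCat{Y}{Z})$ turn colimits into limits. So it suffices to verify the bijection when $X = \gn{m}$. But then the left-hand side is
\[
\gSC(\gn{m} \ast Y, Z) \cong \gSC(Y \ast \gn{m}, Z)
\]
by symmetry of the Day convolution, and the right-hand side is
\[
\gSC(\gn{m}, \MGCat{Y}{Z}) \cong \MGCat{Y}{Z}(m^+)_0 = \MapC{Y \ast \gn{m}}{Z}{\gSC}_0 = \gSC(Y \ast \gn{m}, Z)
\]
by Yoneda and the very definition of $\MGCat{Y}{Z}$. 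Thus both sides agree.

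As an alternative, direct verification by coend manipulation would unpack both sides to the same double end. Using $\gSC(A, B) \cong \int_{n^+} \sSets(A(n^+), B(n^+))$, Fubini for ends, and Yoneda reduction in the $n^+$ variable, the left-hand side becomes
\[
\int_{n^+,\,k^+,\,l^+} \sSets\!\left(\gop(k^+ \wedge l^+, n^+) \times X(k^+) \times Y(l^+),\, Z(n^+)\right) \cong \int_{k^+,\,l^+} \sSets\!\left(X(k^+) \times Y(l^+),\, Z(k^+ \wedge l^+)\right),
\]
and the right-hand side unfolds via the definition of $\MGCat{Y}{Z}$ to the same double end. The simplicial enrichment follows from the same calculation with $X$ replaced by $\TensP{X}{\Delta[p]}{}$.

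The main bookkeeping obstacle is not any deep computation but making sure that the bijection of hom-sets is natural in all three variables and respects the $\sSets$-enrichment, so that it genuinely defines an internal hom rather than merely an external adjunction of sets. This is handled by performing the same coend calculation levelwise in the $\Delta[p]$-thickening $\TensP{X}{\Delta[p]}{}$ used to define $\MapC{-}{-}{\gSC}$, and by invoking Proposition \ref{rt-adjs-DayCP} to identify the representable case with the already-established adjunction $- \ast \gn{n} \dashv \MGCat{\gn{n}}{-}$.
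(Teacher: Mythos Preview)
Your argument is correct and is the standard coend-calculus verification of Day's closed monoidal structure. The paper, however, does not supply a proof of this proposition at all: it is stated as a known fact (the citation preceding Proposition~\ref{rt-adjs-DayCP} is left blank, and no argument follows the statement of Proposition~\ref{closed-SM-cat-GCat}). In effect the paper is appealing to Day's general convolution theorem, whereas you have written out the specialization to $(\gop,\wedge)$ explicitly.

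What your approach buys is self-containment: the reduction to representables via density, together with the definition of $\MGCat{Y}{Z}(m^+)$ as $\MapC{Y\ast\gn{m}}{Z}{\gSC}$, makes the adjunction transparent without invoking an external reference. One minor remark: in your first reduction you use the symmetry $\gn{m}\ast Y \cong Y\ast\gn{m}$, which is already available from Proposition~\ref{GCAT-SM}, so there is no circularity. Your alternative double-end computation is also sound and has the advantage of exhibiting naturality in all three variables simultaneously, which addresses the bookkeeping concern you flag at the end.
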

 The above proposition implies that for each $n \in \Nat$
 the functor $- \ast \gn{n}:\gSC \to \gSC$ has a right adjoint $\MGCat{\gn{n}}{-}:\gSC \to \gSC$.

 The next theorem shows that the strict model category $\gSC$ is compatible with the Day convolution product.
 \begin{thm}
 \label{SM-closed-mdl-str-GCat}
 The strict JQ-model category $\gSC$ is a symmetric monoidal closed model category under the Day convolution product.
 \end{thm}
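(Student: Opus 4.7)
The plan is to verify the two axioms of Definition \ref{mon-model-cat}. For the unit axiom, recall from Proposition \ref{GCAT-SM} that the Day convolution unit is the representable $\gS$ $\gn{1}$. Every representable is cofibrant in the projective strict JQ-model structure, so $\gn{1}$ is its own cofibrant replacement and the unit axiom is automatic. The main content is therefore the pushout-product axiom: for cofibrations $f, g$ in $\gSC$, the induced map $f \,\Box\, g$ formed using $\ast$ is a cofibration, trivial if either $f$ or $g$ is. By a standard reduction, it suffices to check this on a set of generating (trivial) cofibrations, and such a generating set for the strict JQ-model structure on $\gSC$ is given by the morphisms $\TensP{\gn{n}}{i}{}:\TensP{\gn{n}}{K}{} \to \TensP{\gn{n}}{L}{}$ for $n \ge 0$ and $i:K \to L$ a generating (resp. generating trivial) cofibration in $\sSetsQ$, where $\TensP{\gn{n}}{-}{}$ is the tensoring from Theorem \ref{enrich-GamCAT-CAT}.

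The computational core is a clean formula on tensored representables. A direct coend calculation using \eqref{Day-Con-prod} and the co-Yoneda lemma gives $\gn{n} \ast \gn{m} \cong \Gamma^{n^+ \wedge m^+}$, and since $\ast$ preserves colimits in each variable (as does each tensoring $\TensP{\Gamma^{n^+ \wedge m^+}}{-}{}$, being a left adjoint by Proposition \ref{two-var-adj-cat-gcat}), this extends to
\[
\TensP{\gn{n}}{K}{} \,\ast\, \TensP{\gn{m}}{L}{} \;\cong\; \TensP{\Gamma^{n^+ \wedge m^+}}{(K \times L)}{}.
\]
Applied to the pushout-product, this yields
\[
(\TensP{\gn{n}}{i}{}) \,\Box\, (\TensP{\gn{m}}{j}{}) \;\cong\; \TensP{\Gamma^{n^+ \wedge m^+}}{(i \,\Box\, j)}{},
\]
in which the pushout-product on the right is formed in $\sSets$ with respect to the Cartesian product.

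Since $\sSetsQ$ is a monoidal model category (Example \ref{sSets-Q-monoidal}), $i \,\Box\, j$ is a cofibration of $\sSetsQ$, trivial if either factor is. Tensoring with $\Gamma^{n^+ \wedge m^+}$ is the left leg of the two-variable Quillen adjunction of Theorem \ref{enrich-GamCAT-CAT}, hence carries (trivial) cofibrations of $\sSetsQ$ to (trivial) cofibrations of the strict JQ-model structure on $\gSC$. This establishes the pushout-product axiom on generators, and therefore on all (trivial) cofibrations. The main obstacle is the identity $\gn{n} \ast \gn{m} \cong \Gamma^{n^+ \wedge m^+}$ on representables together with its propagation through the $\sSets$-tensoring; once in place, everything else reduces formally to the monoidal model structure already known to hold on $\sSetsQ$.
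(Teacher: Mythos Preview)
Your proof is correct, but it takes a different route from the paper's. The paper argues via the pullback--Hom reformulation (condition~(3) of Lemma~\ref{Q-bifunctor-char}): it first observes, using the adjunction $-\ast\gn{n}\dashv -(n^+\wedge-)$ of Proposition~\ref{rt-adjs-DayCP}, that $-\ast\gn{n}$ preserves (acyclic) cofibrations, and then checks that the map $\MGBoxCat{f}{p}$ is a fibration degreewise by identifying its value at $n^+$ with $\map^\Box_{\gSC}(f\ast\gn{n},p)$ and invoking the $\sSetsQ$-enrichment of Theorem~\ref{enrich-GamCAT-CAT}. Your approach instead works directly with the pushout-product on generating maps, exploiting the explicit formula $\gn{n}\ast\gn{m}\cong\Gamma^{n^+\wedge m^+}$ and its compatibility with the $\sSets$-tensoring to reduce everything to the Cartesian monoidal model structure on $\sSetsQ$. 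The paper's argument is slicker in that it never computes Day convolution on representables and applies uniformly to any cofibration $f$; your argument is more transparent computationally and makes the underlying combinatorics visible, at the cost of the extra reduction to generators. Both ultimately rest on Theorem~\ref{enrich-GamCAT-CAT}.
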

 \begin{proof}
 Using the adjointness which follows from proposition \ref{closed-SM-cat-GCat} one can show that if a map $f:U \to V$ is a (acyclic) cofibration in the strict JQ-model category $\gSC$ then the induced map
 $f \ast \gn{n}:U \ast \gn{n} \to V \ast \gn{n}$ is also a (acyclic) cofibration in the strict JQ-model category for all $n \in \Nat$. By $(3)$ of Lemma \ref{Q-bifunctor-char} it is sufficient to show that whenever
 $f$ is a cofibration and $p:Y \to Z$ is a fibration then the map
 \[
    \MGBoxCat{f}{p}:\MGCat{V}{Y} \to \MGCat{V}{Z} \underset{\MGCat{U}{Z}}\times \MGCat{U}{Y}.
 \]
   is a fibration in $\gSC$ which is acyclic if either $f$ or $p$ is a
   weak equivalence. The above map is a (acyclic) fibration if and only if the simplicial map
\begin{multline*}
 \MGBoxCat{f \ast \gn{n}}{p}(n^+): \MapC{V \ast \gn{n}}{Y}{\gSC} \to \\
  \MapC{V \ast \gn{n}}{Z}{\gSC}
  \underset{\MapC{U \ast \gn{n}}{Z}{\gSC}} \times \MapC{U \ast \gn{n}}{Y}{\gSC}
\end{multline*}

    is a (acyclic) fibration in $\sSetsQ$ for all $n \in \Nat$. Since $f \ast \gn{n}$ is a cofibration (which is acyclic whenever $f$ is acyclic as observed above) therefore it follows from theorem \ref{enrich-GamCAT-CAT} that the simplicial map $\MGBoxCat{f \ast \gn{n}}{p}(n^+)$ is an (acyclic) fibration of simplicial sets for all $n \in \Nat$.
 \end{proof}

The following corollary is an easy consequence of the above theorem
and we leave the proof as an exercise for the interested reader.

\begin{coro}
Let $F'$ be a Q-cofibrant $\gS$ and $p:F \to G$ is be a
strict JQ-fibration. Then the morphism induced by $p$ on the
function objects
\[
\MGS{F'}{p}:\MGS{F'}{F} \to \MGS{F'}{G}
\]
is a strict JQ-fibration.
\end{coro}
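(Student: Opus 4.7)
The plan is to deduce the corollary directly from the pushout--product property that was established for the strict $JQ$-model category in Theorem \ref{SM-closed-mdl-str-GCat}. The statement proved there is that whenever $f:U \to V$ is a cofibration and $p:F \to G$ is a strict $JQ$-fibration in $\gSC$, the induced map
\[
\MGBoxCat{f}{p}:\MGCat{V}{F} \to \MGCat{V}{G} \underset{\MGCat{U}{G}}\times \MGCat{U}{F}
\]
is a strict $JQ$-fibration. So the strategy is to specialize this with a carefully chosen cofibration $f$.

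First I would take $U = \emptyset$, the initial $\gS$ (the constant functor on the empty simplicial set), and $V = F'$. Since $F'$ is $Q$-cofibrant, the unique map $\emptyset \to F'$ is a cofibration in the strict $JQ$-model structure; this is exactly the definition of a cofibrant object. Applying the pushout--product property from Theorem \ref{SM-closed-mdl-str-GCat} to this cofibration and to the given fibration $p:F \to G$ yields a strict $JQ$-fibration
\[
\MGCat{F'}{F} \to \MGCat{F'}{G} \underset{\MGCat{\emptyset}{G}}\times \MGCat{\emptyset}{F}.
\]

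Next I would identify the target. Because $\emptyset$ is initial in $\gSC$ and the Day convolution product preserves colimits in each variable (by adjointness of the internal Hom, which exists by Proposition \ref{closed-SM-cat-GCat}), $\emptyset \ast \gn{n} \cong \emptyset$ for every $n$, and so the simplicial set $\MapC{\emptyset \ast \gn{n}}{X}{\gSC}$ is a singleton for every $\gS$ $X$. Thus $\MGCat{\emptyset}{X}$ is the terminal $\gS$ for every $X$, and the pullback on the right collapses canonically to $\MGCat{F'}{G}$. Under this identification the pushout--product map is precisely $\MGS{F'}{p}$.

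The only thing to watch is the identification $\MGBoxCat{\emptyset \to F'}{p} = \MGS{F'}{p}$ after collapsing the terminal factor; this is a routine naturality check using the universal property of the pullback, and I do not expect any real obstacle here, since the cofibrancy of $F'$ is exactly what is needed to make $\emptyset \to F'$ a cofibration and feed it into Theorem \ref{SM-closed-mdl-str-GCat}.
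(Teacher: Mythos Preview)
Your proposal is correct and is precisely the intended argument: the paper does not give a proof but states that the corollary is ``an easy consequence of the above theorem'' (Theorem~\ref{SM-closed-mdl-str-GCat}) and leaves it to the reader, and your specialization of the pullback--corner map to the cofibration $\emptyset \to F'$ is exactly that easy consequence.
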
 

\begin{df}
	A morphism in $\gSC$ is called a \emph{trivial fibration} of $\gSs$ if it has the right lifting property with respect to all maps in the following class of maps
	\[
	\lbrace \gn{n} \times f ; \ f \text{is a simplicial monomorphism and } n \ge 0 \rbrace
	\]
	\end{df}

\begin{prop}
	A trivial fibration is a strict $JQ$ equivalence.
	\end{prop}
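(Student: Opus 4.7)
The plan is to reduce the statement to the standard fact that a map of simplicial sets with the right lifting property against all monomorphisms is a trivial fibration in the classical Kan model structure, and hence also a trivial fibration (in particular a categorical equivalence) in the Joyal model category $\sSetsQ$, since monomorphisms are the Joyal cofibrations.

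First I would establish a natural isomorphism $\MapC{\gn{n}}{X}{\gSC} \cong X(n^+)$. A $k$-simplex of the left-hand side is, by definition, a morphism $\TensP{\gn{n}}{\Delta[k]}{} \to X$ in $\gSC$; such a morphism amounts to a natural family of maps $\gop(n^+, m^+) \times \Delta[k] \to X(m^+)$ indexed by $m^+ \in \gop$, and by the (enriched) Yoneda lemma this is the same data as a simplicial map $\Delta[k] \to X(n^+)$. Naturality in $X$ is immediate.

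Next, let $p : X \to Y$ be a trivial fibration in the sense of the definition. Using the adjunction of Proposition \ref{two-var-adj-cat-gcat}, the lifting problem for $p$ against $\TensP{\gn{n}}{f}{} : \TensP{\gn{n}}{K}{} \to \TensP{\gn{n}}{L}{}$ transposes to the lifting problem for the induced simplicial map $\MapC{\gn{n}}{X}{\gSC} \to \MapC{\gn{n}}{Y}{\gSC}$ against $f : K \to L$. Combined with the identification of the previous paragraph, this is exactly the lifting problem for $p(n^+) : X(n^+) \to Y(n^+)$ against $f$. Therefore $p$ being a trivial fibration in $\gSC$ is equivalent to each component $p(n^+)$ having the right lifting property against every simplicial monomorphism.

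Finally, a simplicial map with the right lifting property against all monomorphisms is a trivial fibration in the Kan model structure, hence also a trivial fibration in $\sSetsQ$ because the two model structures share the class of cofibrations. In particular each $p(n^+)$ is a categorical equivalence, which by definition means $p$ is a strict JQ equivalence. The only substantive step is the Yoneda-type identification of $\MapC{\gn{n}}{X}{\gSC}$ with $X(n^+)$; everything else is formal transposition across the two-variable adjunction, so there is no real obstacle.
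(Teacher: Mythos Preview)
Your proof is correct and follows essentially the same approach as the paper: both transpose the lifting problem across the two-variable adjunction, invoke the Yoneda identification $\MapC{\gn{n}}{X}{\gSC}\cong X(n^+)$, and conclude that each $p(n^+)$ is a trivial fibration of simplicial sets and hence a categorical equivalence. The only cosmetic difference is that the paper cites \cite[Prop.~1.22]{AJ1} for the last step, whereas you argue directly that the Kan and Joyal model structures share cofibrations and hence trivial fibrations.
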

\begin{proof}
		Let $p:X \to Y$ be a trivial fibration of $\gSs$ and $f:A \to B$ be a simplicial monomorphism then whenever the outer diagram commutes in the following diagram:
	\[
	\xymatrix{
		\gn{n} \times A \ar[r] \ar[d]_{\gn{n} \times f} & X \ar[d]^p \\
		\gn{n} \times B \ar[r] \ar@{-->}[ru] & Y
	}
	\]
	there exists a dotted arrow which makes the whole diagram commutative, for each $n \ge 0$. By adjointness, we get the following commutative diagram
	in the category of simplicial sets:
	\[
	\xymatrix{
		A \ar[r] \ar[d]_{f} & \MapC{\gn{n}}{X}{\gSC} \ar[d]^{\MapC{\gn{n}}{p}{\gSC}} \\
		B \ar[r] \ar@{-->}[ru] & \MapC{\gn{n}}{Y}{\gSC}
	}
	\]
	We observe that the map
	\[
	\MapC{\gn{n}}{X}{\gSC}:\MapC{\gn{n}}{X}{\gSC} \to  \MapC{\gn{n}}{Y}{\gSC}
	\]
	is the same as the simplicial map $p(n^+):X(n^+) \to Y(n^+)$ upto isomorphism namely we have the following commutative diagram:
	\[
	\xymatrix{
		A \ar[r] \ar[d]_{f} & \MapC{\gn{n}}{X}{\gSC} \ar[d]^{\MapC{\gn{n}}{p}{\gSC}}
		\ar[r]^{\cong} & X(n^+) \ar[d]^{p(n^+)} \\
		B \ar[r] \ar@{-->}[ru] & \MapC{\gn{n}}{Y}{\gSC} \ar[r]_{\cong} & Y(n^+)
	}
	\]
	 This observation and the above simplicial commutative diagram together imply that for each $n \ge 0$, the simplicial map $p(n^+)$ has the right lifting property with respect to all simplicial monomorphisms, in other words $p(n^+)$ is a trivial fibration of simplicial sets. By \cite[Prop. 1.22]{AJ1},
	this implies that the simplicial map $p(n^+)$ being a weak equivalence in the Joyal model category of simplicial sets. Thus we have shown that $p$ is a strict $JQ$-equivalence of $\gSs$.
	\end{proof}

 \begin{prop}
	A strict $JQ$-fibration is a trivial fibration of $\gSs$ if and only if it is a strict $JQ$ equivalence.
\end{prop}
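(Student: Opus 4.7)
The plan is to combine the preceding proposition with the characterization of acyclic fibrations in $\sSetsQ$. The forward implication requires nothing new: the preceding proposition already establishes that \emph{any} trivial fibration is a strict $JQ$-equivalence, so in particular this holds for those that happen also to be strict $JQ$-fibrations.

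For the converse, suppose $p:X\to Y$ is a strict $JQ$-fibration which is also a strict $JQ$-equivalence. Then for each $n^+\in\gop$, the simplicial map $p(n^+):X(n^+)\to Y(n^+)$ is simultaneously a fibration and a weak equivalence in the Joyal model category, hence an acyclic fibration in $\sSetsQ$. By \cite[Prop. 1.22]{AJ1}, this means $p(n^+)$ has the right lifting property with respect to every monomorphism of simplicial sets.

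To deduce that $p$ is a trivial fibration of $\gSs$, I would reverse the adjointness argument used in the proof of the preceding proposition. Given a lifting problem against $\gn{n}\times f$ where $f:A\to B$ is a simplicial monomorphism and $n\ge 0$, transpose it via the two-variable adjunction of Proposition \ref{two-var-adj-cat-gcat} into a lifting problem of $f$ against $\MapC{\gn{n}}{p}{\gSC}$ in $\sSets$. Under the natural Yoneda-type isomorphism $\MapC{\gn{n}}{-}{\gSC}\cong(-)(n^+)$ exhibited in the preceding proof, this transposed map is precisely $p(n^+)$, so the previous paragraph supplies a lift, which transposes back to the desired lift in $\gSC$.

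The argument has no real obstacle, as it is essentially the mirror image of the preceding proof, exploiting that \cite[Prop. 1.22]{AJ1} characterizes acyclic Joyal fibrations purely by the RLP against monomorphisms. The only point requiring care is the bookkeeping between the two distinct notions of ``trivial fibration'' in play — the intrinsic one defined by the lifting class $\{\gn{n}\times f\}$ on $\gSC$, versus the degreewise class of acyclic fibrations in $\sSetsQ$ — and confirming via adjointness that, at least within the subclass of strict $JQ$-fibrations, these coincide on the nose.
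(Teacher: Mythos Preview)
Your argument is correct. The paper in fact states this proposition without proof, presumably regarding it as an immediate consequence of the preceding proposition together with the standard model-category fact that acyclic fibrations in $\sSetsQ$ are exactly the maps with the right lifting property against monomorphisms. Your write-up supplies precisely the expected argument---the adjointness manoeuvre is the mirror image of the one in the preceding proof---so there is nothing to compare against and nothing to correct. One minor remark: the direction ``acyclic fibration $\Rightarrow$ RLP against cofibrations'' is just the model-category axiom CM4, so you need not invoke \cite[Prop.~1.22]{AJ1} for that step; the citation is only needed (as in the preceding proof) for the converse direction.
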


\section[The model category of coherently commutative monoidal quasicategories]{The JQ-model category}

\label{EInf-QCat}
  The objective of this section is to construct a new model
  category structure on the category $\gSC$. This new model
  category is obtained by localizing the strict JQ-model category
 defined above. We will refer to this new model category structure
 as the \emph{model category structure of coherently commutative monoidal quasi-categories}. The guiding principle of this new model structure is to endow its homotopy category with a semi-additive structure. In other words we want this new
 model category structure to have finite \emph{homotopy biproducts}.  We go on further to show that this new model category is symmetric monoidal closed with respect to
 the \emph{Day convolution product}, see \cite{Day2}.    We begin by recalling the notion of
 a \emph{left Bousfield localization}:
 
 \begin{df}
  Let $\M$ be a model category and let $\S$ be a class of maps in $\M$.
  The left Bousfield localization of $\M$ with respect to $\S$
  is a model category structure $L_\S\M$ on the underlying category of $\M$
  such that
  \begin{enumerate}
   \item The class of cofibrations of $L_\S\M$ is the same as the
   class of cofibrations of $\M$.
   
   \item A map $f:A \to B$ is a weak equivalence in $L_\S\M$ if it is an $\S$-local equivalence,
   namely, for every fibrant $\S$-local object $X$, the induced map on homotopy
   function complexes
   \[
    f^\ast:Map_{\M}^h(B, X) \to Map_{\M}^h(A, X)
   \]
   is a homotopy equivalence of simplicial sets. Recall
   that an object $X$ is called fibrant $\S$-local if $X$ is fibrant
    in $\M$ and for every element
   $g:K \to L$ of the set $\S$, the induced map on
   homotopy function complexes
   \[
    g^\ast:Map_{\M}^h(L, X) \to Map_{\M}^h(K, X)
   \]
   is a weak homotopy equivalence of simplicial sets.
    
  \end{enumerate}
where $\HMapC{-}{-}{\M}$ is the simplicial function object associated with the strict model category $\M$, see \cite{DK80}, \cite{DK1980} and \cite{DK3}.
 \end{df}
 
 We want to construct a left Bousfield localization of
 the strict model category of $\gSs$. For each pair $k^+, l^+ \in \gop$,
 we have the obvious \emph{projection maps} in $\gSC$
 \[
  \delta^{k+l}_k:(k+l)^+ \to k^+ \ \ \ \ and \ \ \ \ \delta^{k+l}_l:(k+l)^+ \to l^+.
 \]
 The maps
 \[
 \gop(\delta^{k+l}_k,-):\Gamma^{k} \to \Gamma^{k+l} \ \ \ \ and \ \ \ \ 
 \gop(\delta^{k+l}_l,-):\Gamma^{l} \to \Gamma^{k+l} 
 \]
 induce a map of $\gSs$ on the coproduct which we denote as follows:
 \[
  h_k^l:\Gamma^l \sqcup \Gamma^l \to \Gamma^{l+k}.
 \]
 
 We now define a class of
 maps $\E_\infty\S$ in $\gSC$:
 \begin{equation*}
  \E_\infty\S := \lbrace h_k^l:\Gamma^l \sqcup \Gamma^l \to \Gamma^{l+k}:
  l, k \in \mathbb{Z}^+ \rbrace
 \end{equation*}
 \begin{df}
  We call a $\gS$ $X$ a $(\Delta \times \E_\infty\S)$-\emph{local object}
  if it is a fibrant object in the strict $JQ$-model category and for each map $h_k^l \in \E_\infty\S$, the induced simplicial map
  \begin{multline*}
  \HMapC{\Delta[n] \times h_k^l}{X}{\gSC}: \HMapC{\Delta[n] \times \Gamma^{k+l}}{X}{\gSC} \to \\
  \HMapC{\Delta[n] \times (\Gamma^l \sqcup \Gamma^l)}{X}{\gSC},
  \end{multline*}
 is a homotopy equivalence of simplicial sets for all $n \ge 0$ where $\HMapC{-}{-}{\gSC}$ is the simplicial function complexes associated with the strict model category $\gSC$, see \cite{DK80}, \cite{DK1980} and \cite{DK3}.
 \end{df}
  Appendix \ref{Cat-Local} tell us that a model for $\HMapC{X}{Y}{\gSC}$ is the Kan complex
 $J(\MapC{X}{Y}{\gSC})$ which is the maximal kan complex contained in the quasicategory $\MapC{X}{Y}{\gSC}$.

  The following proposition gives a characterization of
 $\E_\infty\S$-local objects
  \begin{prop}
  \label{char-CCMC}
 \begin{sloppypar}
 A $\gS$ $X$ is a $(\Delta \times \E_\infty\S)$-local object in $\gSC$ if and only if it satisfies the Segal condition namely the functor
  \end{sloppypar}
 \begin{equation*}
 (X(\partition{(k+l)}{k}), X(\partition{(k+l)}{l})):X((k+l)^+) \to X(k^+) \times X(l^+)
 \end{equation*}
 is an equivalence of categories for all $k^+, l^+ \in \Ob(\gop)$.
 \end{prop}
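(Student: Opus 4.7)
The plan is to unwind the local condition using the enrichment of the strict $JQ$-model category over $\sSetsQ$ together with the Yoneda lemma, and then match the resulting condition against the Segal condition via the standard characterization of categorical equivalences.

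First I would exploit the enriched adjunction of Theorem~\ref{enrich-GamCAT-CAT}: for a cofibrant $\gS$ $A$, a simplicial set $K$, and a fibrant $\gS$ $X$, there is a natural isomorphism $\MapC{\TensP{A}{K}{}}{X}{\gSC} \cong \MapC{A}{X}{\gSC}^{K}$. Combined with the tensored Yoneda isomorphism $\MapC{\Gamma^k}{X}{\gSC} \cong X(k^+)$, this yields
\[
\MapC{\Delta[n] \times \Gamma^{k+l}}{X}{\gSC} \cong X((k+l)^+)^{\Delta[n]}
\]
and
\[
\MapC{\Delta[n] \times (\Gamma^k \sqcup \Gamma^l)}{X}{\gSC} \cong \bigl(X(k^+) \times X(l^+)\bigr)^{\Delta[n]},
\]
with the map induced by $\Delta[n] \times h_k^l$ corresponding to the $\Delta[n]$-th power of the Segal map $(X(\partition{(k+l)}{k}), X(\partition{(k+l)}{l}))$.

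Second, appendix~\ref{Cat-Local} permits the replacement of $\HMapC{-}{X}{\gSC}$ by $J\bigl(\MapC{-}{X}{\gSC}\bigr)$ on cofibrant inputs; since every representable $\Gamma^m$, every coproduct of representables, and every tensor of such with $\Delta[n]$ is strict $JQ$-cofibrant (by projective cofibrancy), this applies here. The locality condition thus translates into the requirement that for all $n, k, l \ge 0$ the map
\[
J\bigl(X((k+l)^+)^{\Delta[n]}\bigr) \longrightarrow J\bigl((X(k^+) \times X(l^+))^{\Delta[n]}\bigr)
\]
is a weak homotopy equivalence of Kan complexes.

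The final step is to invoke the standard characterization (whose content is prepared in appendix~\ref{Cat-Local}) that a map $f:C \to D$ between quasi-categories is a categorical equivalence if and only if $J(f^{\Delta[n]})$ is a weak homotopy equivalence for every $n \ge 0$. The forward direction is a formal consequence of the cartesian closedness of $\sSetsQ$ (Example~\ref{sSets-Q-monoidal}), which promotes $f$ to a categorical equivalence $f^{\Delta[n]}$ of quasi-categories, and of the fact that $J$ carries categorical equivalences between quasi-categories to Kan equivalences. The main obstacle is the reverse direction, where a quasi-inverse together with all higher coherence data witnessing the categorical equivalence must be extracted from the $J$-homotopy equivalences at every level of $\Delta[n]$-exponentiation; the essential content is that weak equivalences in the Joyal model structure are detected by its homotopy function complexes, which are in turn modeled by $J$ of the internal hom. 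Applying this characterization to the Segal map $X((k+l)^+) \to X(k^+) \times X(l^+)$ closes the biconditional.
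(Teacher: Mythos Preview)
Your proposal is correct and follows essentially the same route as the paper's proof. The only difference is packaging: the paper invokes Lemma~\ref{char-lo-QCat-en} from Appendix~\ref{Cat-Local} as a single black box (locality with respect to all $\Delta[n]\times h_k^l$ is equivalent to $\MapC{h_k^l}{X}{\gSC}$ being a categorical equivalence), then identifies that map with the Segal map via Yoneda, whereas you unwind the content of that lemma inline---the adjunction $\MapC{\Delta[n]\times A}{X}{\gSC}\cong\MapC{A}{X}{\gSC}^{\Delta[n]}$, the replacement of $\HMapC{-}{-}{}$ by $J(\MapC{-}{-}{})$, and the characterization of categorical equivalences via $J(f^{\Delta[n]})$---before applying Yoneda.
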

 \begin{proof}
 We begin the proof by observing that each element of the set $\E_\infty\S$ is a map of $\gSs$ between cofibrant $\gSs$. Theorem \ref{char-lo-QCat-en} implies that $X$ is a $(\Delta \times \E_\infty\S)$-local object if and only if the following simplicial map
 \begin{equation*}
 \MapC{h^k_l}{X}{\gSC}:\MapC{\gn{(k+l)}}{X}{\gSC} \to \MapC{\gn{k} \sqcup \gn{l}}{X}{\gSC}
 \end{equation*}
 is a categorical equivalence of quasi-categories.
 
 We observe that we have the following commutative square in $\sSetsQ$
 \begin{equation*}
 \xymatrix@C=24mm{
\MapC{\gn{(k+l)}}{X}{\gSC}  \ar[d]_{\cong}\ar[r]^{ \MapC{h^k_l}{X}{\gSC}} & \MapC{\gn{k} \sqcup \gn{l}}{X}{\gSC}   \ar[d]^{\cong} \\
 X((k+l)^+) \ar[r]_{(X(\partition{(k+l)}{k}), X(\partition{(k+l)}{l}))} & X(k^+) \times X(l^+)
 }
 \end{equation*}
 This implies that the functor $(X(\partition{(k+l)}{k}), X(\partition{(k+l)}{l}))$ is an equivalence of categories if and only if the functor $ \MapC{h^k_l}{X}{\gSC}$ is an equivalence of categories.
  \end{proof}

\begin{df}
 \label{CCMC}
 We will refer to a $(\Delta \times \E_\infty\S)$-local object as a \emph{coherently commutative monoidal quasi-category}.
 \end{df}
\begin{df}
	Let $X$ be a coherently commutative monoidal quasi-category. We will refer to the homotopy category of the quasi-category $X(1^+)$, $ho(X(1^+))$, as the \emph{homotopy category} of $X$ and denote it by $ho(X)$.
\end{df}

\begin{prop}
	\label{homotopy-cat-perm}
	The homotopy category of a coherently commutative monoidal quasi-category is a permutative category.
\end{prop}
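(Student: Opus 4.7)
My strategy is to reduce the statement to the classical categorical analog of Segal's theorem by passing to homotopy categories levelwise. Let $ho : \sSetsQ \to \Cat$ denote the homotopy category functor, which is left adjoint to the nerve. Since $X$ is a strict-JQ-fibrant $\gS$, each simplicial set $X(n^+)$ is a quasi-category, so the composite $h := ho \circ X : \gop \to \Cat$ is a well-defined strict functor, and by construction $h(1^+) = ho(X)$.

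The crucial observation is that $ho$ preserves finite products when restricted to quasi-categories: for quasi-categories $A$ and $B$, one has a natural isomorphism $ho(A \times B) \cong ho(A) \times ho(B)$. Combining this with Proposition~\ref{char-CCMC}, which characterises coherently commutative monoidal quasi-categories via the Segal condition, I would conclude that each induced functor
\[
h((k+l)^+) \longrightarrow h(k^+) \times h(l^+)
\]
is an equivalence of ordinary categories. Thus $h$ is a \emph{special} $\Gamma$-category in the classical sense of Segal.

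Next I would invoke the categorical analog of Segal's theorem: a special $\Gamma$-category determines a permutative structure on its value at $1^+$. Concretely, the tensor product arises from the active map $\mu : 2^+ \to 1^+$ together with a chosen inverse equivalence to the Segal functor $h(2^+) \to h(1^+) \times h(1^+)$; the symmetry comes from the twist $\tau : 2^+ \to 2^+$; the unit is the image of the canonical functor $h(0^+) \to h(1^+)$; and the higher coherences are extracted from the $\gop$-diagrams on $3^+$ and $4^+$.

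The hard part will be producing \emph{strict} associativity and unit axioms rather than merely coherent isomorphisms, which is the classical rectification issue. There are two standard ways forward: either appeal to the coherence theorem identifying symmetric monoidal and permutative categories up to equivalence and transport the structure along the resulting equivalence, or construct the structure directly by choosing Segal inverses compatible with the strict functoriality of $h$ on the active subcategory of $\gop$.
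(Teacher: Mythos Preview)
The paper states this proposition without proof, so there is no argument to compare against; your outline is the natural one and matches the standard literature on special $\Gamma$-categories.

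Your reduction is correct as far as it goes: $ho$ does preserve finite products of quasi-categories, so $h = ho \circ X$ is a special $\Gamma$-category, and the usual Segal machine equips $h(1^+)$ with a symmetric monoidal structure. You have also put your finger on the genuine issue. The construction you describe---choose an inverse equivalence to the Segal map $h(2^+)\to h(1^+)^2$ and compose with $h(\mu)$---produces a tensor product whose associator and unitors are a priori only invertible natural transformations, not identities. Your route (a), transporting along the Mac Lane coherence equivalence, shows only that $ho(X)$ is \emph{equivalent} to a permutative category, not that the category $ho(X(1^+))$ itself carries a permutative structure with the constructed tensor; strictly speaking that is a weaker conclusion than the proposition asserts. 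Your route (b), ``choosing Segal inverses compatibly on the active subcategory'', is the right idea but is not automatic: compatibility of chosen pseudo-inverses across different objects of $\gop$ is exactly the rectification problem, and you would need to invoke a specific result (for instance, the equivalence between special $\Gamma$-categories and permutative categories due to May, Thomason, or Mandell) rather than an ad hoc choice.

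In short, your argument yields a symmetric monoidal structure on $ho(X)$, which is almost certainly what the author intends and is adequate for the downstream use in the paper; the upgrade to ``permutative'' should be read either as ``permutative up to equivalence'' or as an appeal to one of the rectification theorems just cited. If you want the literal statement, cite one of those explicitly rather than leaving route (b) as a sketch.
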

 
 \begin{df}
 A morphism of $\gSs$ $F:X \to Y$ is a $(\Delta \times \E_\infty\S)$-local equivalence if for each coherently commutative monoidal category $Z$
 the following simplicial map
 \[
 \HMapC{F}{Z}{\gSC}:\HMapC{Y}{Z}{\gSC} \to \HMapC{X}{Z}{\gSC}
 \]
 is a homotopy equivalence of simplicial sets.
 \end{df}
 \begin{prop}
  \label{char-CCME}
 \begin{sloppypar}
 A morphism between two cofibrant $\gSs$ $F:X \to Y$ is an $(\Delta \times \E_\infty\S)$-local equivalence if and only if the simplicial map  \end{sloppypar}
 \begin{equation*}
 \MapC{F}{Z}{\gSC}:\MapC{Y}{Z}{\gSC} \to \MapC{X}{Z}{\gSC}
  \end{equation*}
 is an equivalence of quasi-categories for each coherently commutative monoidal quasi-category $Z$.
 \end{prop}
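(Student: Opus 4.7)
This proposition is the morphism-level analogue of Proposition \ref{char-CCMC}, and my plan is to follow an entirely parallel argument. The two key inputs are: (i) the identification $\HMapC{A}{B}{\gSC} \simeq J(\MapC{A}{B}{\gSC})$ for $A$ cofibrant and $B$ fibrant, provided by Appendix \ref{Cat-Local}; and (ii) the general ``enriched'' characterization (Theorem \ref{char-lo-QCat-en}) which, in a $\sSetsQ$-enriched combinatorial model category, converts the defining condition of an $\S$-local equivalence into the condition that the induced map of quasi-categorical mapping spaces $\MapC{-}{-}{\gSC}$ is a categorical equivalence. Since Theorem \ref{enrich-GamCAT-CAT} establishes that $\gSC$ is $\sSetsQ$-enriched and each map $\Delta[n]\times h_k^l$ of the localizing set lives between cofibrant $\gSs$, a direct application of (ii) will deliver the result.

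If one prefers to argue from first principles, the backward direction is immediate: $J$ carries a categorical equivalence of quasi-categories to a weak equivalence of Kan complexes, and combined with (i) this gives the defining condition of an $(\Delta\times\E_\infty\S)$-local equivalence. For the forward direction, suppose $F$ is a local equivalence. Then by (i), $J\bigl(\MapC{F}{Z}{\gSC}\bigr)$ is a weak equivalence of Kan complexes for every coherently commutative monoidal quasi-category $Z$. To upgrade this to the statement that $\MapC{F}{Z}{\gSC}$ is itself a categorical equivalence of quasi-categories, I would replace $Z$ by the cotensor $\bHom{K}{Z}{\gSC}$ for an arbitrary simplicial set $K$. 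One first checks that this cotensor is still a coherently commutative monoidal quasi-category: $\sSetsQ$ is cartesian closed, so each $[K,Z(n^+)]$ is a quasi-category, and $[K,-]$ preserves categorical equivalences between quasi-categories, so the Segal condition of Proposition \ref{char-CCMC} is preserved. By the adjunction of Proposition \ref{two-var-adj-cat-gcat}, $\MapC{X}{\bHom{K}{Z}{\gSC}}{\gSC}$ is canonically isomorphic to $\MapC{X}{Z}{\gSC}^K$, and similarly for $Y$. We conclude that $J\bigl(\MapC{F}{Z}{\gSC}^K\bigr)$ is a weak equivalence of Kan complexes for every simplicial set $K$, which is the standard Joyal-theoretic criterion for $\MapC{F}{Z}{\gSC}$ to be a categorical equivalence of quasi-categories.

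The main obstacle in the direct argument is precisely this last upgrade step, passing from $J$-equivalences on all cotensors to a bona fide categorical equivalence. This is where the treatment of $\sSetsQ$-enriched homotopy function complexes in Appendix \ref{Cat-Local} does the heavy lifting; the most economical route is therefore to invoke Theorem \ref{char-lo-QCat-en} and avoid reproving this internal characterization here.
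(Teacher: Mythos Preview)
Your proposal is correct and follows essentially the same route as the paper. One small caveat: Lemma~\ref{char-lo-QCat-en} as stated is about when a fibrant object is local with respect to the maps $\Delta[n]\times u$, not directly about local \emph{equivalences}; so your first paragraph slightly overstates what a black-box application gives. What actually closes the gap is precisely your ``first principles'' argument---showing $Z^{\Delta[n]}$ is again a coherently commutative monoidal quasi-category, using the two-variable adjunction to rewrite $\MapC{F}{Z^{\Delta[n]}}{\gSC}\cong\MapC{F}{Z}{\gSC}^{\Delta[n]}$, and then invoking the Joyal criterion---and this is exactly what the paper does (with $K=\Delta[n]$, which suffices).
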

\begin{proof}
	Let us first assume that $F:X \to Y$ is an $(\Delta \times \E_\infty\S)$-local equivalence. Then for any coherently commutative monoidal quasi-category $Z$ the following simplicial map
	\[
	\HMapC{F}{Z}{\gSC}: \HMapC{Y}{Z}{\gSC} \to \HMapC{X}{Z}{\gSC}
	\]
	is a homotopy equivalence of Kan complexes. We observe that for each $n >0$, the $\gS$ $Z^{\Delta[n]}$ is a coherently commutative monoidal quasi-category because it satisfies the Segal condition, see \ref{char-CCMC}, namely
	we have the following diagram in which the first map is an equivalence of quasi-categories
	\[
	Z((k+l)^+)^{\Delta[n]} \to (Z(k^+) \times Z(l^+))^{\Delta[n]} \cong Z(k^+)^{\Delta[n]} \times Z(l^+)^{\Delta[n]}.
	\]
This implies that for each $n > 0$ the following simplicial map is an equivalence of quasi-categories:
\[
 \HMapC{F}{Z^{\Delta[n]}}{\gSC}: \HMapC{Y}{Z^{\Delta[n]}}{\gSC} \to \HMapC{X}{Z^{\Delta[n]}}{\gSC}.
\]
By Proposition \ref{func-sSet-char}, we have
\[
  \HMapC{F}{Z^{\Delta[n]}}{\gSC} = J(\MapC{F}{Z^{\Delta[n]}}{\gSC}).
\]
By adjointness we have the following isomorphisms in the category of arrows  of simplicial sets:
\[
 \MapC{F}{Z^{\Delta[n]}}{\gSC} \cong \MapC{F \times \Delta[n]}{Z}{\gSC} \cong \MapC{F}{Z}{\gSC}^{\Delta[n]}
\]
Since the map $J(\MapC{F}{Z^{\Delta[n]}}{\gSC})$ is a homotopy equivalence of Kan complexes, the above isomorphisms imply that so is the simplicial map $J(\MapC{F}{Z}{\gSC}^{\Delta[n]})$. Now Lemma \ref{char-lo-QCat-en}
says that the simplicial map $\MapC{F}{Z}{\gSC}$ is an equivalence of quasi-categories.

Conversely, let us assume that the simplicial map $\MapC{F}{Z}{\gSC}$ is an equivalence of quasi-categories. Since the functor $J$ takes equivalences of quasi-categories to homotopy equivalences of Kan complexes, therefore $J(\MapC{F}{Z}{\gSC}) = \HMapC{F}{Z}{\gSC}$ is a homotopy equivalence of Kan complexes. Thus we have shown that $F:X \to Y$ is a $(\Delta \times \E_\infty \S)$-local object.
	
	\end{proof}

 \begin{df}
  We will refer to a $(\Delta \times \E_\infty \S)$-local equivalence either as an \emph{equivalence of coheretly commutative monoidal categories} or as a $JQ$-equivalence.
 \end{df}
 The main result of this section is about constructing a
 new model category structure on the category $\gSC$,
 by localizing the strict model category of $\gSs$ with respect to
 morphisms in the set $\E_\infty\S$. We recall the following theorem
 which will be the main tool in the construction of the
 desired model category. This theorem first appeared in an unpublished work \cite{smith}
 but a proof was later provided by Barwick in \cite{CB1}.
 \begin{thm} \cite[Theorem 2.11]{CB1}
 \label{local-tool}
 If $\M$ is a combinatorial model category and $\S$ is a small
set of homotopy classes of morphisms of $\M$, the left Bousfield localization $L_\S\M$ of
$\M$ along any set representing $\S$ exists and satisfies the following conditions.
\begin{enumerate}
\item The model category $L_\S\M$ is left proper and combinatorial.
\item As a category, $L_\S\M$ is simply $\M$.
\item The cofibrations of $L_\S\M$ are exactly those of $\M$.
\item The fibrant objects of $L_\S\M$ are the fibrant $\S$-local objects $Z$ of $\M$.
\item The weak equivalences of $L_\S\M$ are the $\S$-local equivalences.
\end{enumerate}
\end{thm}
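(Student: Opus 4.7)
The plan is to apply Jeffrey Smith's recognition theorem for combinatorial model categories, which asserts that given a locally presentable category, a set $I$ of morphisms, and a class $W$ of morphisms satisfying a short list of closure axioms, there exists a combinatorial model structure with $I$ as the generating cofibrations and $W$ as the weak equivalences. We apply this with $I$ equal to a set of generating cofibrations of $\M$ and $W$ equal to the class of $\S$-local equivalences; the cofibrations of $L_\S\M$ will then automatically coincide with those of $\M$, which takes care of conclusions (2) and (3) of the statement.

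The four hypotheses of Smith's theorem to verify are: (i) $W$ satisfies two-out-of-three and is closed under retracts; (ii) every map with the right lifting property against $I$ (i.e.\ every trivial fibration of $\M$) belongs to $W$; (iii) the class $W \cap \mathrm{cof}(I)$ is closed under pushouts and transfinite composition; and (iv) $W$ is accessible as a full subcategory of the arrow category of $\M$. Items (i) and (ii) are essentially formal, flowing from the definition of $\S$-local equivalence in terms of $\HMapC{-}{Z}{\M}$ together with the corresponding properties of weak equivalences of simplicial sets: a trivial fibration induces a weak equivalence of homotopy function complexes into any fibrant object, in particular any fibrant $\S$-local $Z$. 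Item (iii) follows by mapping into a fibrant $\S$-local object $Z$: the functor $\HMapC{-}{Z}{\M}$ turns cofibration-pushouts into homotopy pullbacks of simplicial sets and turns transfinite composites along cofibrations into homotopy inverse limits, so the $\S$-local equivalence property propagates through the diagrams of interest.

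The main obstacle is condition (iv), the accessibility of $W$. The standard trick, due to Smith and carried out in detail by Barwick, is to exploit combinatoriality of $\M$ to produce a functorial fibrant replacement $R$ that commutes with $\kappa$-filtered colimits for a sufficiently large regular cardinal $\kappa$, and then to reformulate membership of $f$ in $W$ as the solvability, up to homotopy, of a bounded family of lifting problems built out of $\S$ against $Rf$. This exhibits $W$ as the preimage, under an accessible functor, of an accessible full subcategory of a presentable arrow category, from which accessibility follows. Once Smith's theorem is in force, conclusion (5) of the statement is tautological, and conclusion (4) follows because the fibrant objects of $L_\S\M$ are detected by the right lifting property against the generating trivial cofibrations, which by construction include maps testing $\S$-locality. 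For conclusion (1), combinatoriality is delivered by Smith's theorem itself, and left properness of $L_\S\M$ is inherited from the left properness of $\M$ since the cofibrations are unchanged and the $W$ of $L_\S\M$ contains the ordinary weak equivalences of $\M$.
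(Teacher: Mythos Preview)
The paper does not prove this theorem at all: it is stated as a citation of \cite[Theorem 2.11]{CB1} (Barwick), with the remark that the result originates in Smith's unpublished work. There is therefore no ``paper's own proof'' to compare against; the result is imported as a black box and used as a tool.

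Your sketch is essentially the standard route, and indeed is the one Barwick follows: apply Smith's recognition theorem with the original generating cofibrations and with $W$ the class of $\S$-local equivalences. Two points in your write-up deserve tightening. First, your argument for left properness is not quite an argument: the facts that cofibrations are unchanged and that $W$ contains the old weak equivalences do not by themselves imply that pushouts of $\S$-local equivalences along cofibrations are again $\S$-local equivalences. One standard way to close this is to reduce (via cofibrant replacement and left properness of $\M$) to a pushout of cofibrant objects along a cofibration, which is a homotopy pushout in $\M$; mapping into a fibrant $\S$-local $Z$ then yields a homotopy pullback of simplicial sets, from which the required equivalence follows. Second, your justification of conclusion (4) is a bit glib: Smith's theorem does not hand you an explicit set of generating trivial cofibrations, so ``the generating trivial cofibrations include maps testing $\S$-locality'' is not something you can simply assert. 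The honest argument identifies the fibrant objects of $L_\S\M$ as those with the right lifting property against all trivial cofibrations, and then shows (using the characterization of $W$ and a small-object/factorization argument) that this is equivalent to being fibrant in $\M$ and $\S$-local.
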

\begin{thm}
 \label{loc-semi-add}
 There is a closed, left proper, combinatorial model category structure on
 the category of $\gSs$, $\gSC$, in which
 \begin{enumerate}
 \item The class of cofibrations is the same as the class of
 JQ-cofibrations of $\gSs$.
 \item The weak equivalences are equivalences of coherently commutative monoidal quasi-categories..
 \end{enumerate}
 \begin{sloppypar}
 An object is fibrant in this model category if and only if it is a
  coherently commutative monoidal category. A fibration between two coherently commutative monoidal quasi-categories is a strict $JQ$-equivalence.
   \end{sloppypar}
 \end{thm}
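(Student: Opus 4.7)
The plan is to obtain the asserted model structure as a left Bousfield localization of the strict $JQ$-model category from Theorem \ref{strict-cat-Q-model}. Since that model category is combinatorial and left proper, and since the set
\[
\Delta \times \E_\infty\S = \{\Delta[n] \times h_k^l : n \ge 0,\ k, l \in \mathbb{Z}^+\}
\]
is (countably) small, Theorem \ref{local-tool} immediately produces a left proper, combinatorial model category $L_{\Delta \times \E_\infty\S}(\gSC)$ on the underlying category $\gSC$, satisfying items (1)--(5) of the Smith--Barwick theorem. I take this to be the desired model structure.

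Items (1) and (2) of the theorem now fall out by direct translation: part (3) of Theorem \ref{local-tool} says cofibrations in the localization coincide with the $JQ$-cofibrations of the strict structure, giving item (1); part (5) says the weak equivalences are the $(\Delta \times \E_\infty\S)$-local equivalences, which are exactly the $JQ$-equivalences by the definition preceding Proposition \ref{char-CCME}, giving item (2). For the characterization of fibrant objects, part (4) of Theorem \ref{local-tool} says they are the fibrant $(\Delta \times \E_\infty\S)$-local objects of the strict $JQ$-model category; by Definition \ref{CCMC} these are precisely the coherently commutative monoidal quasi-categories.

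The last sentence of the theorem, concerning fibrations between fibrant objects, I would deduce from the standard Bousfield localization principle: a map between fibrant objects in $L_\S\M$ is a fibration (respectively, a weak equivalence) in $L_\S\M$ if and only if it is a fibration (respectively, a weak equivalence) in $\M$. Concretely, if $F\colon X \to Y$ is a fibration in the localized structure and both $X, Y$ are coherently commutative monoidal quasi-categories, then $F$ is a strict $JQ$-fibration; if in addition $F$ is a $JQ$-equivalence then by Proposition \ref{char-CCME} the simplicial maps $\MapC{Z}{F}{\gSC}$ are equivalences of quasi-categories for every coherently commutative monoidal quasi-category $Z$, and taking $Z = \gn{n}$ and using the identification $\MapC{\gn{n}}{-}{\gSC} \cong \mathrm{ev}_{n^+}$ forces each $F(n^+)\colon X(n^+) \to Y(n^+)$ to be a categorical equivalence, i.e.\ $F$ is a strict $JQ$-equivalence.

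The main point on which the argument rests is not a difficult computation but rather the identification between the homotopy function complex $\HMapC{-}{-}{\gSC}$ of the strict $JQ$-model category and the Kan complex $J(\MapC{-}{-}{\gSC})$, as recorded in Appendix \ref{Cat-Local}; this is what ensures that the Bousfield-local objects against $\Delta \times \E_\infty\S$ agree with the Segal-style definition of coherently commutative monoidal quasi-category (via Proposition \ref{char-CCMC}). Once that bridge is in place, the theorem is a direct application of Theorem \ref{local-tool} combined with standard facts about fibrations between fibrant objects in a left Bousfield localization.
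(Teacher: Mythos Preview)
Your approach is exactly the paper's: apply Theorem~\ref{local-tool} to the strict $JQ$-model category with the set $\Delta \times \E_\infty\S$, and read off the conclusions. The paper's own proof is two sentences to this effect.

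One genuine slip, however, is in your treatment of the final clause. You invoke Proposition~\ref{char-CCME} and then set $Z = \gn{n}$, but $\gn{n}$ is \emph{not} a coherently commutative monoidal quasi-category (the Segal condition fails: $\gn{n}((k+l)^+) = \gop((k+l)^+, n^+)$ is not equivalent to $\gop(k^+, n^+) \times \gop(l^+, n^+)$), so Proposition~\ref{char-CCME} gives you nothing for this choice of $Z$. Moreover, the proposition concerns $\MapC{F}{Z}{\gSC}$ (contravariant in the first slot), not $\MapC{Z}{F}{\gSC}$ as you wrote, so the identification $\MapC{\gn{n}}{-}{\gSC} \cong \mathrm{ev}_{n^+}$ is not the one in play. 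The clean fix is simply to cite the standard fact about left Bousfield localizations (e.g.\ Hirschhorn, Theorem~3.2.13 and Proposition~3.4.6): a map between $\S$-local objects is a weak equivalence (respectively, fibration) in $L_\S\M$ if and only if it is one in $\M$. This is what the paper's ``follows from (1)'' is gesturing at, and it does not require probing with representables. (Incidentally, the phrase ``strict $JQ$-equivalence'' in the theorem's last sentence is almost certainly a typo for ``strict $JQ$-fibration''; your first reading of it was the right one.)
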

 \begin{proof}
 The strict model category of $\gSs$ is a combinatorial
 model category therefore the existence of the model structure
 follows from theorem \ref{local-tool} stated above.
The last statement follows from $(1)$.
 \end{proof}
 \begin{nota}
 The model category constructed in theorem \ref{loc-semi-add} will
 be called the model category of $\EinQCs$.
 \end{nota}
  \begin{sloppypar}
 The rest of this section is devoted to proving that the model
 category of $\EinQCs$  is a symmetric monoidal closed model category.
 In order to do so we will need some general results which we
 state and prove now.
 \end{sloppypar}
 
\begin{prop}
 \label{criterion-acy-cof}
 A cofibration, $f:A \to B$, between cofibrant objects in a model category $\C$ is
 a weak equivalence in $\C$ if and only if it has the right
 lifting property with respect to all fibrations between fibrant
 objects in $\C$.
 \end{prop}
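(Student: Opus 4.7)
The forward implication is immediate from the lifting axiom: every acyclic cofibration has the left lifting property against all fibrations, in particular against those between fibrant objects.

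For the converse, my plan is to use standard factorizations to reduce the problem to a single application of the hypothesis, and then read off a homotopy inverse of $f$ from the resulting lift. First, factor $f$ functorially as $A \xrightarrow{i} C \xrightarrow{p} B$ with $i$ an acyclic cofibration and $p$ a fibration; since $A$ is cofibrant, so is $C$, and by the two-out-of-three property it suffices to show that $p$ is a weak equivalence. Next, choose an acyclic cofibration $r: B \to \hat B$ with $\hat B$ fibrant, and factor the composite $rp: C \to \hat B$ as an acyclic cofibration $j: C \to \hat C$ followed by a fibration $\hat p: \hat C \to \hat B$. Because $\hat B$ is fibrant, so is $\hat C$, and $\hat p$ is a fibration between fibrant objects.

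Applying the hypothesis to the outer rectangle
\[
\xymatrix{
A \ar[r]^{ji} \ar[d]_f & \hat C \ar[d]^{\hat p} \\
B \ar[r]_r & \hat B
}
\]
produces a diagonal $s: B \to \hat C$ satisfying $sf = ji$ and $\hat p s = r$. The crucial observation is that these two identities are exactly the data needed to exhibit $f$ as an isomorphism in the homotopy category $Ho(\C)$. Writing $[g]$ for the class of $g$ in $Ho(\C)$, the classes $[i]$, $[j]$, $[r]$ are invertible (they come from weak equivalences), so one may set $[m] := [i]^{-1}[j]^{-1}[s]: B \to A$. A short computation shows $[m][f] = \id_A$ (using $sf = ji$) and $[f][m] = [p][j]^{-1}[s] = [r]^{-1}[\hat p][s] = \id_B$ (using $f = pi$, $\hat p j = rp$, and $\hat p s = r$). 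Hence $[f]$ is an isomorphism in $Ho(\C)$, which by the standard characterization implies that $f$ is a weak equivalence.

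The main technical point is arranging the factorization and fibrant replacement so that a single lift $s$ simultaneously witnesses both triangle identities; either identity in isolation would yield only a one-sided inverse of $[f]$ in $Ho(\C)$, and the interplay between the triangle $sf = ji$ (giving the left inverse) and the triangle $\hat p s = r$ (giving the right inverse) is what forces the conclusion.
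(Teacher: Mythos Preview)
Your proof is correct and follows the same core idea as the paper: build a commutative square whose right vertical arrow is a fibration between fibrant objects, apply the hypothesis to obtain a diagonal lift, and read off that $f$ is a weak equivalence from the two triangle identities.

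The paper's version is slightly more economical. It skips your preliminary factorization $f = pi$ (which you never actually exploit---you end up proving directly that $[f]$ is invertible in $Ho(\C)$, not that $p$ is a weak equivalence) and works with the square
\[
\xymatrix{
A \ar[r]^{\eta_A} \ar[d]_f & R(A) \ar[d]^{R(f)} \\
B \ar[r]_{\eta_B} & R(B)
}
\]
obtained by first taking a fibrant replacement $\eta_B: B \to R(B)$ and then factoring $\eta_B f$. The lift $s: B \to R(A)$ then gives the composable string $A \xrightarrow{f} B \xrightarrow{s} R(A) \xrightarrow{R(f)} R(B)$ with $sf = \eta_A$ and $R(f)s = \eta_B$ both weak equivalences, and the paper concludes by invoking the two-out-of-six property rather than passing to $Ho(\C)$. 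Your homotopy-category computation is of course equivalent to two-out-of-six here, so the difference is purely stylistic; the paper's route avoids the detour through $C$ and $\hat C$.
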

 \begin{proof}
 The unique terminal map $B \to \ast$ can be factored into
 an acyclic cofibration $\eta_B:B \to R(B)$ followed by a fibration
 $R(B) \to \ast$. The composite map $\eta_B \circ f$ can again be
 factored as an acyclic cofibration followed by a fibration $R(f)$ as shown
 in the following diagram:
 \begin{equation*}
  \label{fact-comp-cof}
 \xymatrix{
 A \ar[d]_f \ar[r]^{\eta_A} & R(A) \ar[d]^{R(f)} \\
 B \ar@{.>}[ru] \ar[r]_{\eta_B} &R(B)
 }
 \end{equation*}
 Since $B$ is fibrant and $R(f)$ is a fibration, therefore $R(A)$
 is a fibrant object in $\C$. Thus $R(f)$ is a fibration between
 fibrant objects in $\C$ and now by assumption, the 
 dotted arrow exists which makes the whole diagram commutative.
 Since both $\eta_A$ and $\eta_B$ are acyclic cofibrations, therefore
 the two out of six property of model categories implies that
 the map $F$ is a weak-equivalence in the model category $\C$.
 
 \end{proof}

 \begin{prop}
 \label{expn-ho-prod}
 \begin{sloppypar}
 Let $X$ be a $\EinQC$, then for each $n \in \Nat$,
 the $\gS$ $X(n^+ \wedge -)$ is also a $\EinQC$.
 \end{sloppypar}
\end{prop}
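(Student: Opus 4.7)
The plan is to apply Proposition~\ref{char-CCMC}, which characterizes coherently commutative monoidal quasi-categories as $\gSs$ that are fibrant in the strict JQ-model category and satisfy the binary Segal condition. I need to verify both properties for the $\gS$ $Y = X(n^+ \wedge -)$, whose value at $k^+$ is $X((nk)^+)$.

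For fibrancy in the strict JQ-model category: since $X$ is itself strictly fibrant, each $X(m^+)$ is a quasi-category. Hence $Y(k^+) = X((nk)^+)$ is a quasi-category for all $k$, so $Y$ is degreewise fibrant in $\sSetsQ$ and therefore fibrant in the projective structure of Theorem~\ref{strict-cat-Q-model}.

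For the Segal condition, I will show that the map
\[
\phi_{k,l} : X((n(k+l))^+) \longrightarrow X((nk)^+) \times X((nl)^+),
\]
induced by $n^+ \wedge \partition{(k+l)}{k}$ and $n^+ \wedge \partition{(k+l)}{l}$, is a categorical equivalence. The strategy is to reduce this to an iterated Segal condition on $X$. A straightforward induction on $m$ using the binary Segal condition for $X$ shows that for every $m \ge 0$ the map $\pi_m : X(m^+) \to X(1^+)^m$ built from the $m$ inert projections $m^+ \to 1^+$ is an equivalence of quasi-categories. Then $\phi_{k,l}$ fits into a commutative square
\[
\xymatrix{
X((n(k+l))^+) \ar[r]^-{\phi_{k,l}} \ar[d]_{\pi_{n(k+l)}} & X((nk)^+) \times X((nl)^+) \ar[d]^{\pi_{nk} \times \pi_{nl}} \\
X(1^+)^{n(k+l)} \ar[r]_-{\cong} & X(1^+)^{nk} \times X(1^+)^{nl}
}
\]
in which the vertical maps are equivalences of quasi-categories and the bottom map is the isomorphism given by regrouping the $n(k+l) = nk + nl$ coordinates according to the smash decomposition. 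Two-out-of-three then forces $\phi_{k,l}$ to be an equivalence, and Proposition~\ref{char-CCMC} concludes.

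The main thing to pin down carefully is the commutativity of this square, which is essentially an indexing check. After identifying non-basepoint elements of $n^+ \wedge (k+l)^+$ with pairs $(i, j)$ with $1 \le i \le n$, $1 \le j \le k+l$, the map $n^+ \wedge \partition{(k+l)}{k}$ sends $(i, j) \mapsto (i, j)$ when $j \le k$ and to the basepoint otherwise, and dually for $\partition{(k+l)}{l}$. Composing with the unary projections of $X((nk)^+)$ and $X((nl)^+)$ therefore recovers a permutation of the unary projections of $X((n(k+l))^+)$, which is precisely what the square expresses. Once this bookkeeping is settled, the rest of the argument is formal.
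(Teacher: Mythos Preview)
Your proof is correct and follows the same underlying idea as the paper: the Segal condition for $X(n^+\wedge -)$ at $(k,l)$ reduces to the Segal condition for $X$ after identifying $n^+\wedge(k+l)^+$ with $(nk+nl)^+$. The only difference is in execution: the paper applies the binary Segal condition for $X$ once, directly at the pair $(nk,nl)$, using the chain of isomorphisms $n^+\wedge(k+l)^+\cong\bigvee_1^n(k+l)^+\cong\bigl(\bigvee_1^n k^+\bigr)\vee\bigl(\bigvee_1^n l^+\bigr)\cong(nk+nl)^+$, whereas you factor further through the iterated Segal map $\pi_m:X(m^+)\to X(1^+)^m$ and compare at the fully split level. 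Your route is slightly longer but makes the indexing check (which the paper leaves implicit) more transparent; the paper's route avoids the auxiliary induction on $m$.
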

\begin{proof}
 We begin by observing that
 $X(n^+ \wedge -)(1^+) = X(n^+)$ and since $X$ is fibrant,
 the pointed category $X(n^+)$ is equivalent to $\overset{n}
 {\underset{1} \prod} X(1^+)$. Notice that the isomorphisms $(n^+ \wedge (k+l)^+) \cong 
 \overset{n} {\underset{1} \vee} (k+l)^+ \cong (\overset{n} {\underset{1}
 \vee} k^+) \vee (\overset{n} {\underset{1} \vee} l^+) \cong ((\overset{n}
 {\underset{1}  \vee} k^+) + (\overset{n} {\underset{1} \vee} l^+))$.
 The two projection maps $\delta^{k+l}_k:(k+l)^+ \to k^+$ and $\delta^{k+l}_l:(k+l)^+ \to l^+$
 induce an equivalence of categories $X((\overset{n} {\underset{1}
 \vee} k^+) + (\overset{n} {\underset{1} \vee} l^+)) \to
 X(\overset{n} {\underset{1} \vee} k^+) \times X(\overset{n} {\underset{1}
 \vee} l^+)$. Composing with the isomorphisms above, we get
 the following equivalence of pointed simplicial sets
 $X(n^+ \wedge -)((k + l)^+) \to X(n^+ \wedge -)(k^+) \times
 X(n^+ \wedge -)(l^+)$.
 \end{proof}
 
 \begin{coro}
  For each coherently commutative monoidal category $X$,
  the mapping object $\MGCat{\gn{n}}{X}$ is also a coherently commutative monoidal category for each $n \in \Nat$.
 \end{coro}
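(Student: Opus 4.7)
The plan is to deduce the corollary directly from the two results immediately preceding it. First I would invoke Proposition \ref{rt-adjs-DayCP} to obtain a natural isomorphism of $\gSs$
\[
\MGCat{\gn{n}}{X} \cong X(n^+ \wedge -).
\]
This reduces the problem to showing that the $\gS$ on the right hand side is a coherently commutative monoidal quasi-category whenever $X$ is.

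Next, since $X$ is assumed to be a coherently commutative monoidal quasi-category, Proposition \ref{expn-ho-prod} applies and yields that $X(n^+ \wedge -)$ satisfies the Segal condition, namely that the map induced by the two projections
\[
X(n^+ \wedge -)((k+l)^+) \to X(n^+ \wedge -)(k^+) \times X(n^+ \wedge -)(l^+)
\]
is an equivalence of quasi-categories for all $k^+, l^+ \in \Ob(\gop)$. By the characterization in Proposition \ref{char-CCMC}, this is exactly the property needed. Since being a coherently commutative monoidal quasi-category is expressed in terms of the Segal equivalences on the values of the underlying functor $\gop \to \sSets$, it is preserved under isomorphism in $\gSC$. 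Transporting along the isomorphism $\MGCat{\gn{n}}{X} \cong X(n^+ \wedge -)$ therefore shows that $\MGCat{\gn{n}}{X}$ is itself a coherently commutative monoidal quasi-category.

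There is essentially no obstacle here; the substance of the argument is entirely contained in Proposition \ref{expn-ho-prod}, whose proof uses the decomposition $n^+ \wedge (k+l)^+ \cong (n^+ \wedge k^+) + (n^+ \wedge l^+)$ together with the Segal condition for $X$. The corollary is an immediate repackaging of that result via the adjunction identity of Proposition \ref{rt-adjs-DayCP}, and I do not anticipate any technical complication.
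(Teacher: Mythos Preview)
Your proposal is correct and follows exactly the paper's own argument: the paper's proof simply states that the corollary follows from Proposition~\ref{expn-ho-prod} together with Proposition~\ref{rt-adjs-DayCP}, which is precisely the combination you invoke. Your additional reference to Proposition~\ref{char-CCMC} and the remark about invariance under isomorphism only make explicit what the paper leaves implicit.
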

 \begin{proof}
  The corollary follows from the above proposition and proposition \ref{rt-adjs-DayCP}.
 \end{proof}

 The category $\gop$ is a symmetric monoidal category with respect
 to the smash product of pointed sets. In other words the smash product of
 pointed sets defines a
  bi-functor $- \wedge -: \Gamma^{op} \times \Gamma^{op} \to
 \Gamma^{op}$. For each pair $k^+, l^+ \in Ob(\gop)$, there are two
 natural transformations
 
 \[
 \delta^{k+l}_k \wedge -: (k+l)^+ \wedge - \Rightarrow k^+ \wedge - \ \ \ \ \text{and} \ \ \ \
 \delta^{k+l}_l \wedge -: (k+l)^+ \wedge - \Rightarrow l^+ \wedge -.
 \]
  Horizontal composition of either of these two natural transformations
  with a $\gS$ $X$
 determines a morphism of $\gSs$
 \[
  id_X \circ (\delta^{k+l}_k \wedge -) =:X(\delta^{k+l}_k \wedge -):X((k+l)^+ \wedge -) \to X(k^+ \wedge -).
 \]

 \begin{prop}
 \label{Hom-prod-fib-gCat}
 Let $X$ be an $\EinQC$, then for each pair $(k,l) \in \Nat \times \Nat$,
 the following morphism
 \[
 (X(\delta^{k+l}_k \wedge -),X(\delta^{k+l}_l \wedge -)):X((k+l)^+ \wedge -) \to
 X(k^+ \wedge -) \times X(l^+ \wedge -)
 \]
 is a strict equivalence of $\gSs$.
 \end{prop}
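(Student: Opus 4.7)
The plan is to reduce this to the defining Segal property of coherently commutative monoidal quasi-categories, which was established in Proposition \ref{char-CCMC}. Since a strict equivalence of $\gSs$ is defined as a pointwise categorical equivalence, it suffices to check, for each $n^+ \in \gop$, that the evaluated simplicial map
\[
(X(\delta^{k+l}_k \wedge n^+),X(\delta^{k+l}_l \wedge n^+)):X((k+l)^+ \wedge n^+) \to X(k^+ \wedge n^+) \times X(l^+ \wedge n^+)
\]
is a categorical equivalence.

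The first step is to identify the domain and codomain via the natural isomorphisms in $\gop$. Smash distributes over coproduct (wedge), so there is a canonical isomorphism
\[
(k+l)^+ \wedge n^+ \; \cong \; (k^+ \wedge n^+) \vee (l^+ \wedge n^+),
\]
and of course $m^+ \wedge n^+ \cong (mn)^+$. Tracing through these identifications, I would check that $\delta^{k+l}_k \wedge \mathrm{id}_{n^+}$ corresponds (up to relabeling of elements) to the projection $\delta^{kn+ln}_{kn}: (kn+ln)^+ \to (kn)^+$, and similarly for $\delta^{k+l}_l \wedge \mathrm{id}_{n^+}$. This is just an elementwise verification on the underlying finite pointed sets.

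Having made these identifications, the map under consideration becomes (up to isomorphism induced by functoriality of $X$ applied to isomorphisms in $\gop$) the Segal map
\[
(X(\delta^{kn+ln}_{kn}), X(\delta^{kn+ln}_{ln})):X((kn+ln)^+) \to X((kn)^+) \times X((ln)^+).
\]
Since $X$ is by hypothesis a $(\Delta \times \E_\infty\S)$-local object, Proposition \ref{char-CCMC} asserts precisely that every such Segal map is an equivalence of quasi-categories. Thus each $n^+$-component of our morphism is a categorical equivalence, and so the morphism is a strict $JQ$-equivalence of $\gSs$.

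The main (minor) obstacle is purely bookkeeping: one must be careful that the isomorphism $(k+l)^+ \wedge n^+ \cong (kn+ln)^+$, induced by some chosen linear ordering on the product, carries the two smash-projections $\delta^{k+l}_\bullet \wedge \mathrm{id}_{n^+}$ to the two standard projections $\delta^{kn+ln}_\bullet$. Any ordering convention will give an isomorphism in $\gop$, and since $X$ sends isomorphisms to isomorphisms of quasi-categories, the categorical equivalence conclusion is insensitive to the choice. Otherwise, the result is an immediate consequence of the Segal condition and Proposition \ref{expn-ho-prod}.
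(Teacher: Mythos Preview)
Your proposal is correct. The paper does not actually supply a proof for Proposition~\ref{Hom-prod-fib-gCat}; it is left unproved, presumably because the argument is essentially the one already given for the preceding Proposition~\ref{expn-ho-prod}, and your reduction to the Segal condition via the isomorphism $(k+l)^+ \wedge n^+ \cong (kn+ln)^+$ is exactly that argument with the roles of the fixed and varying indices swapped.
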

 
 Using the previous two propositions, we now show
 that the mapping space functor $\MGCat{-}{-}$ provides
 the homotopically correct function object when the
 domain is cofibrant and codomain is fibrant.
 
 \begin{lem}
  \label{Ein-map-obj-Cat}
  \begin{sloppypar}
 Let $W$ be a Q-cofibrant $\gS$ and let $X$ be a coherently commutative monoidal quasi-category. Then the mapping object $\MGCat{W}{X}$ is also a $\EinQC$.
 \end{sloppypar}
 \end{lem}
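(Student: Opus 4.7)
My plan is to apply the Segal-condition characterization of $\EinQC$s in Proposition \ref{char-CCMC}. Accordingly I must verify two things about $Y := \MGCat{W}{X}$: that $Y$ is fibrant in the strict $JQ$-model category, and that for each pair $k, l \ge 0$ the Segal map
\[
\sigma_{k,l}: Y((k+l)^+) \to Y(k^+) \times Y(l^+)
\]
is an equivalence of quasi-categories.

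Fibrancy is essentially automatic from Theorem \ref{SM-closed-mdl-str-GCat}. Since $X$ is a $\EinQC$ it is strictly fibrant, so $X \to \ast$ is a strict $JQ$-fibration; since $W$ is cofibrant, the unique map from the initial $\gS$ to $W$ is a strict $JQ$-cofibration. The pushout-product axiom supplied by the symmetric monoidal closed structure on the strict $JQ$-model category then forces $\MGCat{W}{X} \to \ast$ to be a strict $JQ$-fibration, hence $Y$ is fibrant.

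For the Segal condition, I plan to reduce $\sigma_{k,l}$ to the comparison map of Proposition \ref{Hom-prod-fib-gCat}. Combining the Day convolution adjunction $- \ast \gn{n} \dashv \MGCat{\gn{n}}{-}$ with Proposition \ref{rt-adjs-DayCP}, there is a natural chain of isomorphisms
\[
Y(n^+) \;\cong\; \MapC{W \ast \gn{n}}{X}{\gSC} \;\cong\; \MapC{W}{\MGCat{\gn{n}}{X}}{\gSC} \;\cong\; \MapC{W}{X(n^+ \wedge -)}{\gSC},
\]
natural in $n^+ \in \gop$. Tracing the action of $\partition{(k+l)}{k}$ and $\partition{(k+l)}{l}$ through this chain identifies $\sigma_{k,l}$ with $\MapC{W}{\phi_{k,l}}{\gSC}$, where
\[
\phi_{k,l} = (X(\partition{(k+l)}{k} \wedge -), X(\partition{(k+l)}{l} \wedge -)): X((k+l)^+ \wedge -) \to X(k^+ \wedge -) \times X(l^+ \wedge -)
\]
is exactly the map shown to be a strict $JQ$-equivalence in Proposition \ref{Hom-prod-fib-gCat}. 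Its source and target are themselves $\EinQC$s by Proposition \ref{expn-ho-prod} and hence strictly fibrant. Since the strict $JQ$-model category is a $\sSetsQ$-enriched model category by Theorem \ref{enrich-GamCAT-CAT}, the functor $\MapC{W}{-}{\gSC}$ is right Quillen for cofibrant $W$, and Ken Brown's lemma then guarantees that it carries strict equivalences between fibrant objects to categorical equivalences. Thus $\sigma_{k,l}$ is an equivalence of quasi-categories.

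The only non-mechanical step is the naturality bookkeeping in the chain of isomorphisms, where one must keep track of the contravariant Yoneda action of $\gop$ on the representables $\gn{n}$ alongside the covariant functoriality of $n^+ \mapsto X(n^+ \wedge -)$; once $\sigma_{k,l}$ is correctly identified with $\MapC{W}{\phi_{k,l}}{\gSC}$, the reduction to Proposition \ref{Hom-prod-fib-gCat} is immediate and the rest of the argument is a direct appeal to the enrichment of the strict $JQ$-model category together with the pushout-product axiom.
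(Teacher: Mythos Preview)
Your proof is correct and follows essentially the same route as the paper: both establish strict fibrancy via Theorem \ref{SM-closed-mdl-str-GCat} and verify the Segal condition by exploiting the identification $\MGCat{W}{X}(n^+) = \MapC{W \ast \gn{n}}{X}{\gSC}$, using that $X$ is local and the representables involved are cofibrant. The only difference is presentational: the paper phrases the Segal map directly as $\MapC{W \ast h^k_l}{X}{\gSC}$ and asserts it is an equivalence because $h^k_l$ is a $JQ$-equivalence, whereas you adjoint over to $\MapC{W}{\phi_{k,l}}{\gSC}$ and invoke Proposition \ref{Hom-prod-fib-gCat} together with the $\sSetsQ$-enrichment and Ken Brown. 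Your version has the advantage of making explicit why no circularity with Theorem \ref{SM-closed-CCMC} arises---you use only the \emph{strict} enrichment and the strict equivalence supplied by Proposition \ref{Hom-prod-fib-gCat}, which is exactly what those preparatory propositions were set up to provide.
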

 \begin{proof}
 We begin by recalling that
 \[
 \MGCat{W}{X}((k+l)^+) = \MapC{W \ast \gn{k+l}}{X}{\gSC}.
 \]
 \begin{sloppypar}
 We recall that the $\gn{k+l}$ is a cofibrant $\gS$ and by assumption $W$ is also a cofibrant $\gS$ therefore it follows from
 Theorem \ref{SM-closed-mdl-str-GCat} that $W \ast \gn{k+l}$ is also a cofibrant $\gS$.
 Since $X$ is a $\EinQC$ \emph{i.e.} a fibrant object in the model category of coherently commutative monoidal quasi-categories, therefore
 it follows from Theorem \ref{SM-closed-mdl-str-GCat} that the mapping object $\MapC{W \ast \gn{k+l}}{X}{\gSC}$ is a quasi-category, for all $k, l \ge 0$.
 
 We recall that the map $h^k_l:\gn{k} \sqcup \gn{l} \to \gn{k+l}$
 is a weak equivalence in the model category of coherently commutative monoidal quasi-categories, therefore the top arrow in the following commutative diagram is a categorical equivalence of quasi-categories:
 \begin{equation*}
 \xymatrix{
 \MapC{W \ast \gn{k+l}}{X}{\gSC} \ar[rr]^{\MapC{W \ast h^k_l}{X}{\gSC} \ \ \ \ } && \MapC{(W \ast \gn{k}) \sqcup (W \ast \gn{l})}{X}{\gSC} \ar[d]^{\cong} \\
 && \MapC{W \ast \gn{k}}{X}{\gSC} \times \MapC{W \ast \gn{l}}{X}{\gSC}
}
 \end{equation*}

%
\end{sloppypar}
 \end{proof}

Finally we get to the main result of this section. All the lemmas proved above will be useful in proving the following theorem:
\begin{thm}
\label{SM-closed-CCMC}
The model category of coherently commutative monoidal quasi-categories is a symmetric monoidal closed model category under the Day convolution product.
\end{thm}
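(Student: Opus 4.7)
The plan is to verify that the Day convolution is a Quillen bifunctor on the localized model structure; the unit axiom is automatic since $\gn{1}$ is cofibrant and the monoidal unit is unchanged by localization. Since cofibrations coincide with those of the strict structure, the pushout-product of two cofibrations remains a cofibration by Theorem \ref{SM-closed-mdl-str-GCat}, so what remains is the acyclicity part of the pushout-product axiom.

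I would dualize via the internal hom: it suffices to show that for any Q-cofibrant $\gS$ $W$ and any fibration $p:Y \to Z$ in the localized model structure --- equivalently, a strict $JQ$-fibration between coherently commutative monoidal quasi-categories --- the map $\MGCat{W}{p}$ is again a fibration in the localized structure, and the analogous statement holds for acyclic fibrations. Theorem \ref{SM-closed-mdl-str-GCat} already gives that $\MGCat{W}{p}$ is a strict $JQ$-fibration, and Lemma \ref{Ein-map-obj-Cat} gives that $\MGCat{W}{Y}$ and $\MGCat{W}{Z}$ are coherently commutative monoidal quasi-categories. Combined with the standard fact that a strict fibration between fibrant local objects in a left Bousfield localization is automatically a fibration in the localized structure, this handles the fibration case. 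The acyclic case follows from the strict case since acyclic fibrations are preserved under left Bousfield localization (cofibrations being unchanged). The general pushout-product corner map $\MGBoxCat{f}{p}$, for $f$ a cofibration and $p$ a fibration in the localized structure, is handled identically: Theorem \ref{SM-closed-mdl-str-GCat} makes it a strict $JQ$-fibration, while Lemma \ref{Ein-map-obj-Cat} together with the closure of fibrant objects of a model category under pullback along fibrations yields that both its source and its target are coherently commutative monoidal quasi-categories.

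The main obstacle I anticipate is the reduction itself, namely translating the pushout-product axiom for acyclic cofibrations in the localized structure into a condition about the internal hom out of a cofibrant object landing in fibrant local objects. Once this standard adjunction-based reduction is made, the substantive content of the proof is entirely contained in Lemma \ref{Ein-map-obj-Cat} together with Theorem \ref{SM-closed-mdl-str-GCat} from the strict case.
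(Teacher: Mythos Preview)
Your proposal is correct and follows essentially the same route as the paper. Both arguments reduce the pushout-product axiom to a lifting problem against strict $JQ$-fibrations between coherently commutative monoidal quasi-categories (the paper invokes Proposition~\ref{criterion-acy-cof} for exactly the reduction you flag as the main obstacle), and both then use Theorem~\ref{SM-closed-mdl-str-GCat} together with Lemma~\ref{Ein-map-obj-Cat} to see that the adjoint pullback-corner map is a strict $JQ$-fibration between fibrant local objects, hence a fibration in the localized structure. The only cosmetic difference is that the paper first treats generating cofibrations and then closes up under pushouts, transfinite composition and retracts, whereas you phrase the argument uniformly via the pullback-hom; one small slip in your write-up is the clause ``a fibration $p$ in the localized model structure --- equivalently, a strict $JQ$-fibration between coherently commutative monoidal quasi-categories'', which is only literally true for fibrations between fibrant objects, but that is precisely the case your reduction actually uses.
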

\begin{proof}
Let $i:U \to V$ be a $JQ$-cofibration and $j:Y \to Z$ be another $JQ$-cofibration. We will prove the theorem by showing that the following \emph{pushout product} morphism
\begin{equation*}
i \Box j:U \ast Z \underset{U \ast Y} \coprod V \ast Y \to V \ast Z 
\end{equation*}

is a $JQ$-cofibration which is also a $JQ$-equivalence whenever either $i$ or $j$ is a $JQ$-equivalence.
We first deal with the case of $i$ being a generating $JQ$-cofibration. The closed symmetric monoidal model structure on the strict $JQ$-model category, see theorem \ref{SM-closed-mdl-str-GCat}, implies that $i \Box j$ is a $JQ$-cofibration. Let us assume that $j$ is an acyclic $JQ$-cofibration \emph{i.e.} the $JQ$-cofibration $j$ is also a $JQ$-equivalence of coherently commutative monoidal categories. According to proposition \ref{criterion-acy-cof} the $JQ$-cofibration $i \Box j$ is a $JQ$-equivalence if and only if it has the left lifting property with respect to all strict $JQ$-fibrations of $\gSs$ between coherently commutative monoidal quasi-categories. Let $p:W \to X$ be a strict $JQ$-fibration between two coherently commutative monoidal quasi-categories. By adjointness, a (dotted) lifting arrow would exists in the following diagram
\begin{equation*}
\xymatrix{
U \ast Z \underset{U \ast Y} \coprod V \ast Y \ar[r] \ar[d] & W \ar[d]^p \\
V \ast Z \ar@{..>}[ru] \ar[r] & Y
}
\end{equation*}
if and only if a (dotted) lifting arrow exists in the following adjoint commutative diagram
\begin{equation*}
\xymatrix{
Y \ar[r] \ar[d]_j & \MGCat{V}{W} \ar[d]^{(j^*, p^*)} \\
Z \ar@{..>}[ru] \ar[r] & \MGCat{U}{X} \underset{\MGCat{U}{Y}} \times \MGCat{V}{Y}
}
\end{equation*}
The map $(j^*, p^*)$ is a strict $JQ$-fibration of $\gSs$ by lemma \ref{Q-bifunctor-char} and theorem \ref{SM-closed-mdl-str-GCat}. Further the observation that both $V$ and $U$ are $JQ$-cofibrant and the above lemma \ref{Ein-map-obj-Cat} together imply that $(j^*, p^*)$ is a strict $JQ$-fibration between coherently commutative monoidal categories and therefore a fibration in the $JQ$-model category. Since $j$ is an acyclic cofibration in the $JQ$-model category by assumption therefore the (dotted) lifting arrow exists in the above diagram. Thus we have shown that if $i$ is a $JQ$-cofibration and $j$ is a $JQ$-cofibration which is also a weak equivalence in the $JQ$-model category then $i \Box j$ is an acyclic cofibration in the $JQ$-model category.
Now we deal with the general case of $i$ being an arbitrary $JQ$-cofibration. Consider the following set:
\begin{equation*}
S = \lbrace i:U \to V | \ i \Box j \textit{ \ is an acyclic cofibration in }\gSC \rbrace
\end{equation*}
where $\gSC$ is endowed with the $JQ$-model structure. We have proved above that the set $S$ contains all generating $JQ$-cofibrations.
We observe that the set $S$ is closed under pushouts, transfinite compositions and retracts. Thus $S$ contains all $JQ$-cofibrations.
Thus we have proved that $i \Box j$ is a cofibration which is acyclic if $j$ is acyclic. The same argument as above when applied to the second argument of the Box product (\emph{i.e.} in the variable j)
shows that $i \Box j$ is an acyclic cofibration whenever $i$ is an acyclic cofibration in the $JQ$-model category.

\end{proof}

Finally we will show that the construction of the model category of coherently commutative monoidal quasi-categories achieves the goal of inducing a semi-additive structure on its homotopy category:

\begin{thm}
	\label{Ho-Cat-SA}
	The homotopy category of the model category of coherently commutative monoidal quasi-categories is semi-additive.
	\end{thm}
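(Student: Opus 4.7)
The plan is to verify the three defining properties of a semi-additive category for $ho(\gSC^{JQ})$: the existence of a zero object, the existence of finite products and coproducts, and the agreement of the canonical biproduct map from coproduct to product.

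For the zero object, take the constant $\gS$ $\ud{\ast}$ with $\ud{\ast}(n^+) = \Delta[0]$ for every $n^+$. It trivially satisfies the Segal condition of Proposition \ref{char-CCMC}, so it is a $\EinQC$, and it is terminal in $\gSC$ by the universal property of $\Delta[0]$. For initiality in the homotopy category, observe that $\MapC{\ud{\ast}}{X}{\gSC} = \lim_{\gop} X = X(0^+)$, since $0^+$ is initial in $\gop$; for any $\EinQC$ $X$, the Segal condition applied with $k = l = 0$ forces $X(0^+)$ to be weakly contractible, so $\ud{\ast}$ is weakly initial. Finite products are represented by the pointwise product $X \times Y$, which remains a $\EinQC$ because the Segal equivalence is preserved by products, and finite coproducts exist by the combinatorial model structure of Theorem \ref{loc-semi-add}.

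The central step is to show that for any $\EinQCs$ $X$ and $Y$ the canonical biproduct map $\iota : X \sqcup Y \to X \times Y$, assembled from $(\id_X, 0)$ and $(0, \id_Y)$ through the zero object, is a $JQ$-equivalence. Every object of $\gSC$ is $JQ$-cofibrant, so by Proposition \ref{char-CCME} this reduces to showing that for each $\EinQC$ $Z$ the induced map
\[
\MapC{X \times Y}{Z}{\gSC} \to \MapC{X}{Z}{\gSC} \times \MapC{Y}{Z}{\gSC}
\]
is an equivalence of quasi-categories. An inverse is produced from a homotopy-coherent addition on $Z$: composing the strict $JQ$-equivalence $Z(2^+ \wedge -) \xrightarrow{\sim} Z \times Z$ of Proposition \ref{Hom-prod-fib-gCat} with the map $Z(\mu \wedge -) : Z(2^+ \wedge -) \to Z$ induced by the active fold $\mu : 2^+ \to 1^+$, one obtains a sum $+ : Z \times Z \to Z$ in $ho(\gSC^{JQ})$; given $(f, g)$, the putative inverse sends it to $+ \circ (f \times g)$.

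The main obstacle is verifying that these two operations are mutually inverse at the level of mapping quasi-categories rather than merely on $\pi_0$. This requires coherent associativity, commutativity, and unitality of the sum $+$, all of which should be extracted by combining the Segal condition for $Z$ at higher objects $n^+$ with repeated application of Proposition \ref{Hom-prod-fib-gCat}; the symmetric monoidal closed structure of Theorem \ref{SM-closed-CCMC} should then package the verification into a clean categorical equivalence of quasi-categories, completing the proof.
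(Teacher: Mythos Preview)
Your proposal has two genuine problems. First, the claim that ``every object of $\gSC$ is $JQ$-cofibrant'' is false: the strict $JQ$-model structure is the \emph{projective} model structure on $[\gop,\sSets]$ (Theorem~\ref{strict-cat-Q-model}), and the $JQ$-model category shares the same cofibrations; in a projective model structure not every object is cofibrant. This is repairable by passing to cofibrant replacements, but it must be said.

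Second, and more seriously, the final paragraph is not a proof: you explicitly acknowledge that the ``main obstacle'' is still open and only assert that the required coherences ``should be extracted'' and ``should then package the verification.'' Producing a homotopy inverse to $\MapC{\iota}{Z}{\gSC}$ by hand from the addition $+:Z\times Z\to Z$ requires controlling higher coherences of unitality and commutativity at the level of mapping \emph{quasi-categories}, not just on $\pi_0$, and nothing you have written supplies those homotopies. Invoking Theorem~\ref{SM-closed-CCMC} vaguely does not discharge this.

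The paper's argument avoids all of this by exploiting the Day convolution in a single stroke. After citing a reduction to showing $X\sqcup X\simeq X\times X$ for cofibrant $\EinQCs$ $X$, it observes that $h^1_1:\gn{1}\sqcup\gn{1}\to\gn{2}$ is by construction one of the localizing maps, hence a $JQ$-equivalence between cofibrant objects; since the $JQ$-model structure is symmetric monoidal under Day convolution (Theorem~\ref{SM-closed-CCMC}), the map
\[
X\sqcup X \;\cong\; X\ast(\gn{1}\sqcup\gn{1}) \xrightarrow{\;X\ast h^1_1\;} X\ast\gn{2} \;\cong\; X\times X
\]
is a $JQ$-equivalence for cofibrant $X$. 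The coherences you are attempting to assemble by hand are already packaged into the statement that $-\ast-$ is a left Quillen bifunctor on the localized model category; that is what Theorem~\ref{SM-closed-CCMC} buys you, and it is the missing idea in your approach.
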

\begin{proof}
	In light of \cite[Prop. 6(ii)]{GS}, it is sufficient to show that for each cofibrant coherently commutative monoidal quasi-category $X$, its coproduct with itself is homotopy equivalent to its product with itself. This follows from observing that the following map is a homotopy equivalence:
	\[
	X \sqcup X \cong X \ast (\gn{1} \sqcup \gn{1}) \overset{X \ast h^1_1} \to X \ast (\gn{2}) \cong X \times X.
	\]
	\end{proof}

\section[Equivalence with normalized $\gSs$]{Equivalence with normalized $\gSs$}
In this section we will establish a Quillen equivalence between the model category of coherently commutative monoidal quasi-categories and the model category of strictly unital coherently commutative monoidal quasi-categories which is constructed in appendix \ref{JQ-su-ccm-qcat}.
The category of normalized $\gSs$ is equipped with a forgetful functor
\begin{equation}
\label{Forget-fun-GS}
U:\pGSC \to \gSC.
\end{equation}
This functor maps a normalized $\gS$ $X$ to the following composite
\[
\gop \overset{X} \to \pSSets \overset{U_{\sSets}} \to \sSets,
\]
where the second functor is the obvious forgetful functor which forgets the basepoint of a simplicial set. The forgetful functor $U$ has some very desirable homotopical properties: We will show in this section that $U$ preserves weak-equivalences namely it maps $JQ$-equivalences of normalized $\gSs$ to $JQ$-equivalences. This functor also preserves cofibrations even though it is a right Quillen functor.
\begin{prop}
	\label{pres-acy-fib}
	The forgetful functor $U:\pGSC \to \gSC$ preserves acyclic fibrations.
	\end{prop}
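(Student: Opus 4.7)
The plan is to reduce the statement to a pointwise question about the Joyal model category of (pointed) simplicial sets, where it becomes essentially tautological.

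First, I would observe that in any left Bousfield localization the class of cofibrations is preserved, hence the class of acyclic fibrations (characterized by the right lifting property against all cofibrations) is also unchanged. Applied to $\gSC$, this means the acyclic fibrations of the $JQ$-model category coincide with those of the strict $JQ$-model category of Theorem \ref{strict-cat-Q-model}. The analogous statement should hold in $\pGSC$: acyclic fibrations of the normalized $JQ$-model category (built in the cited appendix \ref{JQ-su-ccm-qcat}) coincide with acyclic fibrations of the underlying strict model category of normalized $\gSs$, which in turn are projective, and therefore detected levelwise in the Joyal model category of pointed simplicial sets $(\pSSets, Q)$.

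Next, the statement reduces to showing that the forgetful functor $U_{\sSets}:\pSSets \to \sSets$ sends acyclic fibrations in $(\pSSets, Q)$ to acyclic fibrations in $\sSetsQ$. This is immediate: an acyclic fibration in either model category is equivalent to a trivial Kan fibration of underlying simplicial sets (the acyclic fibrations of the Joyal structure are the same as those of the Kan structure, being characterized by the right lifting property against monomorphisms), and $U_{\sSets}$ simply forgets the basepoint while preserving the underlying simplicial map. Thus for a normalized $\gS$-fibration $p:X \to Y$ which is also a weak equivalence, each component $p(n^+):X(n^+) \to Y(n^+)$ is an acyclic fibration of pointed simplicial sets, and so $U(p)(n^+) = U_{\sSets}(p(n^+))$ is an acyclic fibration in $\sSetsQ$.

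Chaining these steps, $U(p):U(X) \to U(Y)$ is levelwise an acyclic fibration in $\sSetsQ$, hence an acyclic fibration in the strict $JQ$-model category on $\gSC$, and hence an acyclic fibration in the $JQ$-model category on $\gSC$ by the localization observation. I do not expect any real obstacle here; the only mild care required is to confirm the characterization of acyclic fibrations in the normalized $JQ$-model category via the appendix, but this is exactly parallel to the unnormalized case and follows from the analogue of Theorem \ref{strict-cat-Q-model} for $\pGSC$.
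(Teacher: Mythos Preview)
Your argument is correct and is genuinely different from the paper's. You reduce everything to a pointwise statement by invoking two general facts: (i) in any left Bousfield localization the cofibrations, and hence the acyclic fibrations, are unchanged; (ii) the strict model structures on both $\pGSC$ and $\gSC$ are projective-type, so their acyclic fibrations are detected levelwise. The remaining step, that $U_{\sSets}:\pSSets \to \sSets$ preserves acyclic Joyal fibrations, is immediate since the pointed Joyal structure is the coslice of the unpointed one.

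The paper instead argues by generating cofibrations and adjointness: it characterizes acyclic fibrations in $\pGSC$ via the right lifting property against the generators $\gn{n} \wedge \partial\Delta[m]^+ \to \gn{n} \wedge \Delta[m]^+$, then repeatedly passes through the adjunctions $(-)^+ \dashv U_{\sSets}$ and $\gn{n} \times - \dashv \MapC{\gn{n}}{-}{\gSC}$ to translate a lifting problem for $p$ in $\pGSC$ into one for $U(p)$ in $\gSC$ against the generators $\gn{n} \times \partial\Delta[m] \to \gn{n} \times \Delta[m]$. Your route is shorter and more conceptual; the paper's route has the virtue of making explicit how the two sets of generating cofibrations correspond under the adjunction, which is reused in the companion Proposition~\ref{pres-str-fib} on strict fibrations. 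Either is fine here.
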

\begin{proof}
	A morphism of normalized $\gSs$ $p:X \to Y$ is an acyclic fibration in the $JQ$-model category of normalized $\gSs$ if and only if there is a lifting arrow in the following commutative diagram for each $n \in \Nat$
	\[
	\xymatrix{
   \gn{n} \wedge \del \Delta[n]^+ \ar[d] \ar[r] & X \ar[d] \\
   \gn{n} \wedge \Delta[n]^+ \ar[r] \ar@{-->}[ru] & Y
    }
 \]
 because the collection $I_\bullet = \lbrace \gn{n} \wedge \del \Delta[n]^+ \to \gn{n} \wedge \Delta[n]^+ : n \in \Nat \rbrace$ is a set of generating cofibrations for the combinatorial $JQ$-model category of normalized $\gSs$ $\pGSC$.
  By adjointness the lifting arrow exists in the above diagram if and only if a lifting arrow exists in the following (adjunct) commutative diagram in $\pSSets$
  	\[
  \xymatrix{
  	 \del \Delta[n]^+ \ar[d] \ar[r] & \MapC{\gn{n}}{X}{\pGSC} \cong X(n^+) \ar[d]^p \\
   \Delta[n]^+ \ar[r] \ar@{-->}[ru] & \MapC{\gn{n}}{Y}{\pGSC} \cong Y(n^+)
  }
  \]
  We recall the adjunction $(-)^+:\sSets \rightleftharpoons \pSSets:U_\sSets $ and observe that $U(X)(n^+) = U_\sSets(X(n^+))$. This implies that the lifting arrow in the above commutative diagram of pointed simplicial sets will exists if and only if a lifting arrow exists in the following (adjunct) commutative diagram in $\sSets$
  	\[
  \xymatrix{
  	\del \Delta[n] \ar[d] \ar[r] &  U_\sSets(X(n^+)) \ar[d]^{U_\sSets(p_n)} \\
  	\Delta[n] \ar[r] \ar@{-->}[ru] &  U_\sSets(Y(n^+))
  }
  \]
  We observe that for any normalized $\gS$ $Z$, $U_\sSets(Z(n^+)) \cong U(Z)(n^+)$. Therefore a lifting arrow exists in the above diagram if and only if a lifting arrow exists in the following commutative diagram:
  	\[
  \xymatrix{
  	\del \Delta[n] \ar[d] \ar[r] &  \MapC{\gn{n}}{U(X)}{\gSC} \ar[d]^{\MapC{\gn{n}}{U(p_n)}{\gSC}} \\
  	\Delta[n] \ar[r] \ar@{-->}[ru] &  \MapC{\gn{n}}{U(Y)}{\gSC}
  }
  \]
  By adjointness, this lifting arrow would exist if and only if there exists a lifting arrow in the following (adjunct) commutative diagram:
  \[
  \xymatrix{
  	\gn{n} \times \del \Delta[n] \ar[d] \ar[r] & U(X) \ar[d] \\
  	\gn{n} \times \Delta[n] \ar[r] \ar@{-->}[ru] & U(Y)
  }
  \]
  The collection $I_\bullet = \lbrace \gn{n} \wedge \del \Delta[n]^+ \to \gn{n} \wedge \Delta[n]^+ : n \in \Nat \rbrace$ is a set of generating cofibrations for the combinatorial model category $\gSC$. Thus we have shown that the map of $\gSs$ $U(p)$ has the right lifting property with respect to the set of generating cofibrations of the $JQ$-model category and hence $U(p)$ is an acyclic fibration.

	\end{proof}

A similar argument as in the proof of the above proposition when applied to the collection of generating acyclic cofibrations of the strict $JQ$-model category of normalized $\gSs$  $\pGSC$ gives a proof of the following proposition:
\begin{prop}
	\label{pres-str-fib}
	The forgetful functor $U:\pGSC \to \gSC$ preserves strict $JQ$-fibrations.
	\end{prop}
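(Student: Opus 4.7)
The plan is to imitate the proof of Proposition \ref{pres-acy-fib} with generating acyclic cofibrations in place of generating cofibrations throughout.

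First I would identify the generating acyclic cofibrations of the two strict $JQ$-model categories involved. Since the strict $JQ$-model category on $\pGSC$ is the projective model category lifted from the Joyal model structure on $\pSSets$, its generating acyclic cofibrations can be taken in the form $\gn{n} \wedge j: \gn{n} \wedge A \to \gn{n} \wedge B$, where $j:A \to B$ ranges over a set of generating acyclic cofibrations of the pointed Joyal model category. Similarly the strict $JQ$-model category on $\gSC$ has generating acyclic cofibrations of the form $\gn{n} \times j': \gn{n} \times A' \to \gn{n} \times B'$, where $j':A' \to B'$ runs over a set of generating acyclic cofibrations of $\sSetsQ$.

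Next, given a strict $JQ$-fibration $p:X \to Y$ in $\pGSC$, I would show that $U(p)$ has the right lifting property against each such $\gn{n} \times j'$ by transporting the lifting square through the same chain of adjunctions used in Proposition \ref{pres-acy-fib}. The $\sSetsQ$-enrichment of $\gSC$ from Theorem \ref{enrich-GamCAT-CAT} reduces the problem to a lifting square in $\sSets$ with $j'$ on the left and $U_{\sSets}(p(n^+))$ on the right. The adjunction $((-)^+, U_{\sSets})$ then recasts this as a lifting square in $\pSSets$ against $j'^+:A'^+ \to B'^+$ for $p(n^+):X(n^+) \to Y(n^+)$. Finally, the $\pSSets$-enrichment of the strict $JQ$-model category on $\pGSC$ translates this back into a lifting problem for $p$ against $\gn{n} \wedge j'^+$ in $\pGSC$.

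To conclude I need the observation that $(-)^+:\sSets \to \pSSets$ is a left Quillen functor for the Joyal model structures, so that $j'^+$ is again an acyclic cofibration in the pointed Joyal model category; this is immediate because cofibrations on both sides are monomorphisms and adding a disjoint basepoint preserves Joyal equivalences. Therefore $\gn{n} \wedge j'^+$ is a generating acyclic cofibration in the strict $JQ$-model category of normalized $\gSs$, and since $p$ is a strict $JQ$-fibration the required lift exists. The main obstacle is really just bookkeeping: one must confirm that the two chains of adjoint bijections compose to give precisely matching lifting problems, and that the set $\lbrace \gn{n} \wedge j^+ \rbrace$ really does detect strict $JQ$-fibrations in $\pGSC$; once these setup items are recorded the diagram chase is purely formal.
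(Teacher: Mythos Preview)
Your proposal is correct and follows exactly the approach indicated in the paper, which simply states that the argument of Proposition~\ref{pres-acy-fib} goes through verbatim with generating acyclic cofibrations in place of generating cofibrations. You have supplied the details the paper omits, including the key point that $(-)^+$ carries generating acyclic cofibrations of $\sSetsQ$ to generating acyclic cofibrations of $\pSSetsQ$, so that $\gn{n}\wedge j'^+$ is indeed among the maps against which the strict $JQ$-fibration $p$ is assumed to lift.
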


We would like to construct a left adjoint of the functor $U$.
For a given $\gS$ $X$ we will construct another $\gS$ $X[0]$ which is  equipped with a map $\iota:X[0] \to X$.

\begin{df}
	Let $X$ be a $\gS$, the \emph{unital part} of $X$ is the constant $\gS$ $X[0]$ which is defined by
	\[
	X[0](n^+) := X(0^+)
	\]
	for all $n^+ \in Ob(\gop)$. The map $\iota$ is defined, in degree $n$ by the following simplicial map:
	\[
	\iota(n^+) := X(0_n):X(0^+) \to X(n^+),
	\]
	where $0_n:0^+ \to n^+$ is the unique map in $\gop$ between $0^+$ and $n^+$. 
\end{df}
We notice that if $X$ is a normalized $\gS$ then the unital part of $U(X)[0]$ is the terminal $\gS$. We want to use the above construction to associate with a $\gS$ a normalized $\gS$ which is equipped with a map from the original $\gS$.
\begin{df}
	Let $X$ be a $\gS$, we define another $\gS$ $U(\nor{X})$  by the following pushout square:
	\begin{equation}
	\label{normalization-GSs}
	\xymatrix{
		X[0] \ar[d] \ar[r]^\iota & X \ar[d]^{\eta_X} \\
		1 \ar[r] & U(\nor{X})
	}
	\end{equation}
	where $1$ is the terminal $\gS$. The bottom horizontal arrow in the above pushout square is an object of the category $1/\gSC$.  Since $U(\nor{X})(0^+) = \ast$
	therefore the image is an object of the category $(1/\gSC)_\bullet$, see \eqref{nor-pointed-GS}. Its image under the isomorphism of categories from the remark following \eqref{nor-pointed-GS} determines a normalized $\gS$ which we denote by $\nor{X}$ and call it the \emph{normalization of} $X$.
\end{df}
The above construction is functorial in $X$ and hence we have defined a functor $\nor{(-)}:\gSC \to \pGSC$. 
\begin{prop}
	\label{deg-wise-mono}
	For any $\gS$ $X$ the map $\iota:X[0] \to X$ defined above is a degreewise mononorphism of simplicial sets.
\end{prop}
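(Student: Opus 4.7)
The plan is to exhibit $\iota(n^+) = X(0_n)$ as a split monomorphism in $\sSets$, which is automatically a monomorphism. The key observation is categorical: in $\gop$, the unique map $0_n : 0^+ \to n^+$ admits a retraction, namely the unique map $r_n : n^+ \to 0^+$ (which must send every element of $n^+$ to the basepoint of $0^+$). The composite $r_n \circ 0_n$ is an endomorphism of $0^+$ in $\gop$, and since $\gop(0^+, 0^+)$ is a singleton, this composite must equal $\id_{0^+}$.

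Applying the functor $X : \gop \to \sSets$ to this identity and using functoriality, I obtain
\[
X(r_n) \circ X(0_n) = X(r_n \circ 0_n) = X(\id_{0^+}) = \id_{X(0^+)}.
\]
Thus $\iota(n^+) = X(0_n)$ is split by $X(r_n)$ in $\sSets$, so it is a split monomorphism.

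Finally, a split monomorphism of simplicial sets is in particular a monomorphism: at each simplicial degree $k$, the map $X(0_n)_k : X(0^+)_k \to X(n^+)_k$ is a split injection of sets, hence injective. Therefore $\iota(n^+)$ is a monomorphism of simplicial sets for every $n^+ \in \gop$, which is exactly the degreewise monomorphism claim.

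No serious obstacle is anticipated; the argument is essentially the observation that $0^+$ is the initial object of $\gop$ equipped with a canonical retraction from every other object, so any functor out of $\gop$ sends the structure map out of $0^+$ to a split monomorphism.
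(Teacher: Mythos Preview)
Your proof is correct and follows essentially the same approach as the paper's own proof: both observe that the composite $0^+ \to n^+ \to 0^+$ is the identity (the paper phrases this as $0^+$ being a zero object, you phrase it as $\gop(0^+,0^+)$ being a singleton), apply the functor $X$, and conclude that $\iota(n^+)$ has a left inverse. Your version is slightly more explicit in spelling out why a split monomorphism of simplicial sets is a degreewise injection, but the argument is the same.
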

\begin{proof}
	We want to show that for each $n^+ \in Ob(\gop)$ the simplicial map $\iota(n^+):X(0^+) \to X(n^+)$ is a monomorphism. We observe that the object $0^+$ is the zero object in $\gop$ therefore the unique composite arrow
	\[
	0^+ \to n^+ \to 0^+
	\]
	is the identity map of $0^+$, for all $n^+ \in Ob(\gop)$. This implies that the simplicial map $X(0^+) \to X(n^+) \to X(0^+)$ is the identity map of $X(0^+)$, in other words the simplicial map $\iota(n^+)$ has a left inverse which implies that the map $\iota(n^+):X(0^+) \to X(n^+)$ is a monomorphism.
\end{proof}
\begin{prop}
	\label{unit-eq-fib-obj}
	For each coherently commutative monoidal quasi-category $X$ the map of $\gSs$ $\eta_X:X \to U(\nor{X})$ defined in \eqref{normalization-GSs} is a (strict) $JQ$-equivalence.
	\end{prop}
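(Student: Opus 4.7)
The plan is to verify the claim pointwise in $\gop$ and then invoke left properness of the Joyal model category $\sSetsQ$. Since $\gSC = [\gop, \sSets]$ is a functor category, small colimits (in particular pushouts) are computed objectwise in $\sSets$; in particular, $X[0]$ evaluated at $n^+$ is just $X(0^+)$ and the terminal $\gS$ $1$ is pointwise $\ast$. Evaluating the defining pushout square \eqref{normalization-GSs} at an arbitrary $n^+ \in \Ob(\gop)$ therefore produces a pushout square in $\sSets$ whose top edge is $\iota(n^+) : X(0^+) \to X(n^+)$, whose left edge is the unique map $X(0^+) \to \ast$, and whose right edge is $\eta_X(n^+)$. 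It suffices to show that each $\eta_X(n^+)$ is a categorical equivalence.

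The crux is to show that $X(0^+)$ is a contractible quasi-category. Since $X$ is a coherently commutative monoidal quasi-category, Proposition \ref{char-CCMC} gives the Segal condition for every pair $(k^+, l^+)$. Specialize to $k = l = 0$: both components $X(\partition{0}{0})$ of the Segal map are the identity of $X(0^+)$, so the map in question is the diagonal $\Delta \colon X(0^+) \to X(0^+) \times X(0^+)$, and it is a categorical equivalence. Either projection $\pi_i \colon X(0^+) \times X(0^+) \to X(0^+)$ satisfies $\pi_i \circ \Delta = \mathrm{id}_{X(0^+)}$, so by two-out-of-three each $\pi_i$ is a categorical equivalence. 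A standard argument (examine $\pi_0$, then observe that first-projection on the mapping complexes $\MapC{y}{y}{X(0^+)} \times \MapC{y}{y}{X(0^+)} \to \MapC{y}{y}{X(0^+)}$ is a weak equivalence of Kan complexes) forces $X(0^+)$ to be contractible. Hence $X(0^+) \to \ast$ is a categorical equivalence in $\sSetsQ$.

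With this in hand, the rest is formal. By Proposition \ref{deg-wise-mono}, $\iota(n^+)$ is a monomorphism of simplicial sets, hence a cofibration in $\sSetsQ$. Left properness of the Joyal model category guarantees that the pushout of the categorical equivalence $X(0^+) \to \ast$ along the cofibration $\iota(n^+)$ is again a categorical equivalence; but this pushout is precisely $\eta_X(n^+)$. As this holds for every $n^+$, we conclude that $\eta_X$ is degreewise a categorical equivalence, i.e., a strict $JQ$-equivalence. The main obstacle is the contractibility of $X(0^+)$; once that is in place, left properness delivers the result immediately.
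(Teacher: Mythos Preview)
Your proposal is correct and follows essentially the same approach as the paper: evaluate the pushout \eqref{normalization-GSs} degreewise, use Proposition~\ref{deg-wise-mono} to see that $\iota(n^+)$ is a cofibration, and invoke left properness of $\sSetsQ$ to conclude that $\eta_X(n^+)$ is a categorical equivalence. The only difference is that the paper simply asserts that $X(0^+)$ is homotopy equivalent to the terminal simplicial set, whereas you supply an argument for this via the Segal condition at $k=l=0$; your fiber/projection argument is in fact cleaner than the somewhat sketchy mapping-complex remark, since $\pi_1:X(0^+)\times X(0^+)\to X(0^+)$ is a trivial fibration with fibers isomorphic to $X(0^+)$.
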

\begin{proof}
	Since $X$ is a coherently commutative monoidal quasi-category by assumption therefore the unique terminal map $X[0] \to 1$ is a strict $JQ$-equivalence because $X(0^+)$ is homotopy equivalent to the terminal simplicial set in the Joyal model category. The $\gS$ $U(\nor{X})$ is defined as a pushout, see \eqref{normalization-GSs}, and pushouts in the category $\gS$ are degreewise therefore we have the following pushout diagram in the category $\sSets$:
	\begin{equation}
	\label{deg-wise-normalization}
	\xymatrix{
		X[0](n^+) \ar[d] \ar[r]^{\iota(n^+)} & X(n^+) \ar[d]^{\eta_X(n^+)} \\
		1 \ar[r] & U(\nor{X})(n^+)
	}
	\end{equation}
	for each $n^+ \in Ob(\gop)$. By proposition \ref{deg-wise-mono} the simplicial map $\iota(n^+)$ is a monomorphism. Since monomorphisms are cofibrations in the Joyal model category which is a left proper model category therefore a pushout of a weak equivalence along a monomorphism is a weak equivalence in Joyal model category. Thus we have shown that the map $\iota(n^+):X(n^+) \to U(\nor{X})(n^+)$ is a weak equivalence in the Joyal model category which proves that the unit map $\eta_X:X \to U(\nor{X})$ is a strict $JQ$-equivalence whenever $X$ is a coherently commutative monoidal quasi-category.
	
	\end{proof}
\begin{coro}
	\label{pres-fib-objs}
	The functor $\nor{(-)}$ takes coherently commutative monoidal quasi-categories to strictly unital coherently commutative monoidal quasi-categories.
	\end{coro}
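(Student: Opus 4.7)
The plan is to reduce the claim to showing that the underlying $\gS$ $U(\nor{X})$ is a coherently commutative monoidal quasi-category; together with the tautological fact that $\nor{X}$ is normalized (the pushout \eqref{normalization-GSs} forces $\nor{X}(0^+)=\ast$), this will yield strict-unital fibrancy via the characterization of fibrant objects in the normalized $JQ$-model category established in the appendix. Thus the burden is to verify, for $U(\nor{X})$, both the Segal condition from Proposition \ref{char-CCMC} and that each level is a quasi-category.

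The key input is Proposition \ref{unit-eq-fib-obj}, which supplies a strict $JQ$-equivalence $\eta_X\colon X \to U(\nor{X})$, i.e.\ a levelwise categorical equivalence. For the Segal condition I would fit the Segal maps of $X$ and $U(\nor{X})$ into a naturality square whose vertical edges are induced by $\eta_X$; the right vertical is a product of categorical equivalences and hence itself a categorical equivalence (by Theorem \ref{enrich-GamCAT-CAT} together with cartesian closedness of $\sSetsQ$), so two-out-of-three applied with the Segal condition on $X$ transfers it to $U(\nor{X})$.

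For the level-wise quasi-category condition, the pushout \eqref{deg-wise-normalization} gives $U(\nor{X})(n^+) \cong X(n^+) \coprod_{X(0^+)} \Delta[0]$. The Segal condition applied at $k=l=0$ forces the Segal map $X(0^+) \to X(0^+) \times X(0^+)$ to be a categorical equivalence, which in turn forces $X(0^+)$ to be a contractible quasi-category, so $X(0^+) \to \Delta[0]$ is a Joyal equivalence; combined with Proposition \ref{deg-wise-mono} (that $\iota(n^+)$ is a monomorphism) and left properness of the Joyal model category, the pushout map $X(n^+) \to U(\nor{X})(n^+)$ is a categorical equivalence out of the quasi-category $X(n^+)$. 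The main obstacle I expect is promoting this Joyal equivalence to genuine quasi-categoricity of $U(\nor{X})(n^+)$: the Joyal model category is not right proper, so this step requires real work. I would handle it by factoring $X(0^+) \to \Delta[0]$ as a Joyal trivial cofibration $X(0^+) \hookrightarrow E$ followed by a Joyal trivial fibration $E \to \Delta[0]$, treating the pushout along the trivial cofibration via closure of the class of quasi-categories under pushouts of contractible sub-quasi-category inclusions, and then contracting $E$ down to $\Delta[0]$ using the trivially fibred structure of $E \to \Delta[0]$.
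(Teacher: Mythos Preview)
The paper records this corollary without proof, treating it as an immediate consequence of Proposition~\ref{unit-eq-fib-obj}. Your overall strategy---reduce to showing that $U(\nor{X})$ is a coherently commutative monoidal quasi-category, and transfer the Segal condition along the strict $JQ$-equivalence $\eta_X$ via a naturality square and two-out-of-three---is correct and is precisely what the placement of the corollary suggests the paper has in mind.

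The gap is in your handling of level-wise quasi-categoricity. First, your proposed factorization of $X(0^+)\to\Delta[0]$ is vacuous: since $X(0^+)$ is already a (contractible) quasi-category, hence Joyal-fibrant, the map $X(0^+)\to\Delta[0]$ is itself a Joyal trivial fibration, so one may take $E=X(0^+)$ and you are returned to the original quotient. More seriously, the principle you invoke---that quasi-categories are closed under pushouts collapsing a contractible sub-quasi-category---is not a standard fact, you do not prove it, and there is no evident reason it should hold: the Joyal model structure is not right proper, and quotienting a quasi-category by a contractible sub-simplicial-set can in principle destroy inner-horn fillers. Nor does the ``trivially fibred structure of $E\to\Delta[0]$'' help, since the collapse in question is a pushout rather than a pullback, and trivial fibrations are not stable under pushout. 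The split-mono structure of $\iota(n^+)$ coming from Proposition~\ref{deg-wise-mono} does not obviously rescue this either. The paper does not address this point, so as written both the paper and your proposal leave the strict fibrancy of $\nor{X}$ unverified; for the downstream use in Corollary~\ref{U-pres-weak-eq} one can sidestep the issue by inserting a fibrant replacement of $\nor{Z}$ in $\pGSC$, but the corollary as literally stated needs more argument than either supplies.
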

\begin{coro}
	\label{U-pres-weak-eq}
	The functor $U:\pGSC \to \gSC$ preserves weak equivalences.
	\end{coro}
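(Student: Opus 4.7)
My plan is to reduce a weak-equivalence statement in $\gSC$ to one in $\pGSC$ via the adjunction $\nor{(-)}\dashv U$, exploiting Proposition~\ref{unit-eq-fib-obj} and Corollary~\ref{pres-fib-objs} to control interaction with fibrant objects. Fix a $JQ$-equivalence $f:X\to Y$ in $\pGSC$; by definition $U(f)$ is a $JQ$-equivalence in $\gSC$ if and only if $\HMapC{U(f)}{W}{\gSC}$ is a homotopy equivalence of Kan complexes for every coherently commutative monoidal quasi-category $W$. So I fix such a $W$ and aim to verify this.

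The first step replaces $W$ by $U(\nor{W})$. By Corollary~\ref{pres-fib-objs}, $\nor{W}$ is a strictly unital coherently commutative monoidal quasi-category, hence $JQ$-fibrant in $\pGSC$, and $U(\nor{W})$ is strict $JQ$-fibrant in $\gSC$ since $U$ preserves level-wise Joyal-fibrant objects (by the argument of Proposition~\ref{pres-str-fib}). By Proposition~\ref{unit-eq-fib-obj}, the unit $\eta_W:W\to U(\nor{W})$ is a strict $JQ$-equivalence, hence a $JQ$-equivalence in $\gSC$, so it induces a homotopy equivalence
\[
\HMapC{U(f)}{W}{\gSC} \;\simeq\; \HMapC{U(f)}{U(\nor{W})}{\gSC},
\]
reducing the problem to the right-hand side.

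The second step uses the $\sSets$-enriched version of the adjunction. A direct computation from the pushout defining $\nor{(-)}$ shows that the counit $\nor{U(Z)}\to Z$ is an isomorphism for every normalized $\gS$ $Z$ (the pushout collapses because $U(Z)(0^+)=\ast$), so $U$ is fully faithful. Granted the compatibility $\nor{A\times \Delta[n]}\cong \nor{A}\otimes\Delta[n]$ between normalization and tensoring, the enriched adjunction yields natural isomorphisms of simplicial mapping spaces
\[
\MapC{U(X)}{U(\nor{W})}{\gSC} \;\cong\; \MapC{\nor{U(X)}}{\nor{W}}{\pGSC} \;\cong\; \MapC{X}{\nor{W}}{\pGSC},
\]
and applying the maximal Kan subcomplex functor $J$, which by Appendix~\ref{Cat-Local} models the homotopy function complex, yields $\HMapC{U(f)}{U(\nor{W})}{\gSC}\cong \HMapC{f}{\nor{W}}{\pGSC}$. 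The latter is a homotopy equivalence because $f$ is a $JQ$-equivalence in $\pGSC$ and $\nor{W}$ is $JQ$-fibrant there, completing the argument.

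The principal technical obstacle I anticipate is the enriched compatibility of normalization with tensoring, $\nor{A\times\Delta[n]}\cong \nor{A}\otimes\Delta[n]$: the tensoring in $\pGSC$ uses a smash product with $\Delta[n]_+$ while that in $\gSC$ is cartesian, and one must verify the two coincide after normalizing. This should reduce to a short pointwise calculation with the defining pushout, using that the $\gS$ $A[0]$ on the left-hand pushout corner is constant and therefore interacts trivially with any cartesian factor.
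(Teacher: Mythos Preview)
Your approach is essentially the paper's: replace the test object $W$ by $U(\nor{W})$ via the unit $\eta_W$, identify the resulting $\gSC$-mapping space with a $\pGSC$-mapping space, and conclude from $\nor{W}$ being $JQ$-fibrant. Two remarks are in order.

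First, there is a small gap: you never pass to a cofibrant replacement of $f$. Your final identification $\HMapC{U(f)}{U(\nor{W})}{\gSC}\simeq J(\MapC{U(f)}{U(\nor{W})}{\gSC})$ via Appendix~\ref{Cat-Local} requires $U(X)$ and $U(Y)$ to be cofibrant in $\gSC$, and the parallel identification on the $\pGSC$ side requires $X,Y$ cofibrant there. The paper handles this by opening with ``we may assume $f$ is a map between cofibrant objects''; you then also need that $U$ preserves cofibrant objects, which is Corollary~\ref{U-pres-cof} (its proof uses only that the counit is an isomorphism, so there is no circularity).

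Second, your ``principal technical obstacle''---the tensoring compatibility $\nor{A\times\Delta[n]}\cong \nor{A}\wedge\Delta[n]^+$---is bypassed in the paper. Instead of promoting the adjunction to an enriched one, the paper invokes Corollary~\ref{nor-map-sp} directly: for normalized $X,Y$ one has $U(\MapC{X}{Y}{\pGSC})\cong\MapC{U(X)}{U(Y)}{\gSC}$, which follows from a one-line equalizer argument using $\MapC{\gn{0}}{U(Y)}{\gSC}\cong 1$. Your pushout calculation is correct, but the paper's route is shorter and avoids the smash-vs-cartesian bookkeeping you flagged.
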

\begin{proof}
	A map $f:X \to Y$ is a $JQ$-equivalence of normalized $\gSs$ if and only if each cofibrant replacement if also the same so we may assume that $f$ is a map between cofibrant objects. It would be sufficient to show that for each coherently commutative monoidal quasi-category $Z$ the following simplicial map is an equivalence of quasi-categories:
	\[
	\MapC{U(f)}{Z}{\gSC}:\MapC{U(Y)}{Z}{\gSC} \to \MapC{U(X)}{Z}{\gSC}
	\]
	We have the following commutative diagram of simplicial mapping objects:
	\[
	\xymatrix@C=20mm{
	\MapC{U(Y)}{Z}{\gSC} \ar[r]^{^{\MapC{U(f)}{Z}{\gSC}}} \ar[d]_{\MapC{U(Y)}{\eta_Z}{\gSC}} & \MapC{U(X)}{Z}{\gSC} \ar[d]^{\MapC{U(X)}{\eta_Z}{\gSC}} \\
	\MapC{U(Y)}{U(\nor{Z})}{\gSC} \ar[r] \ar[d]_\cong & \MapC{U(X)}{U(\nor{Z})}{\gSC} \ar[d]^\cong \\
	U(\MapC{Y}{\nor{Z}}{\pGSC}) \ar[r]_{U(\MapC{f}{\nor{Z}}{\pGSC})} & U(\MapC{X}{\nor{Z}}{\pGSC})
    }
	\]
	Since $f$ is a $JQ$-equivalence of normalized $\gSs$ by assumption and $\nor{Z}$ is a strictly unital coherently commutative monoidal quasi-category therefore
	the simplicial map $U(\MapC{f}{\nor{Z}}{\pGSC})$ is an equivalence of quasi-categories. The vertical arrows in the bottom rectangle are isomorphisms by corollary \ref{nor-map-sp}. The vertical arrows in the top rectangle are equivalences of quasi-categories because $U(X)$ and $U(Y)$ are cofibrant and $U(\nor{Z})$ is fibrant. Now the two-out-of-three property of weak equivalences in a model category tellas us that the top horizontal map in the above diagram namely $\MapC{U(f)}{\eta_Z}{\gSC}$ is a weak equivalence in the Joyal model category. By lemma \ref{char-CCME} we have shown that the map $U(f)$ is a $JQ$-equivalence.
	\end{proof}

We claim that $\nor{(-)}$ is a left adjoint of the forgetful functor $U:\pGSC \to \gSC$. The unit of this adjunction is given by the quotient map $\eta_X:X \to U(\nor{X})$. For a normalized $\gS$ $Y$ we have a canonical isomorphism (depicted by the dotted arrow) in the following diagram
\begin{equation}
\label{counit-map-isom}
\xymatrix{
	1 \ar[r] \ar@{=}[d] & U(Y) \ar[d] \ar@/^/[rdd]^{id} \\
	1 \ar[r] \ar@/_/[rrd] & U(\nor{U(Y)}) \ar@{-->}[rd]^{\epsilon_Y} \\
	&& U(Y)
}
\end{equation}
The diagram \eqref{counit-map-isom} is a composite arrow in the category $(1/\gSC)_\bullet$.
The image of the map $\epsilon_Y$ under the isomorphism from the remark following \eqref{nor-pointed-GS} gives us the counit map which we also denote by $\epsilon_Y$. 

The next proposition verifies our claim made above:
\begin{prop}
	The functor $\nor{(-)}:\gSC \to \pGSC$ is a left adjoint to the forgetful functor $U:\pGSC \to \gSC$.
\end{prop}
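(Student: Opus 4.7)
The plan is to establish the adjunction by constructing a natural bijection
\[
\pGSC(\nor{X}, Y) \cong \gSC(X, U(Y))
\]
using the defining pushout square \eqref{normalization-GSs} together with the identification of $\pGSC$ with $(1/\gSC)_\bullet$ via the isomorphism of categories recalled in the remark following \eqref{nor-pointed-GS}, and then to check the triangle identities with unit $\eta_X$ and counit $\epsilon_Y$ as defined.

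First I would construct the forward map. Given a morphism $f:X \to U(Y)$ of $\gSs$ with $Y$ normalized, the composite
\[
X[0] \overset{\iota}\to X \overset{f}\to U(Y)
\]
factors uniquely through the terminal $\gS$ $1$, since in degree $n^+$ it is the map $X(0^+) \to U(Y)(0^+) = U_{\sSets}(Y(0^+)) = U_{\sSets}(\ast) = \ast$ followed by the structure map, which is constant. Hence the universal property of the pushout \eqref{normalization-GSs} produces a unique morphism $\widetilde{f}:U(\nor{X}) \to U(Y)$ such that $\widetilde{f} \circ \eta_X = f$ and such that $\widetilde{f}$ lies under $1$. Thus $\widetilde{f}$ is a morphism in $(1/\gSC)_\bullet$, and its image under the isomorphism of categories $(1/\gSC)_\bullet \cong \pGSC$ provides the desired morphism $\nor{X} \to Y$ in $\pGSC$.

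Next I would construct the backward map: given $g:\nor{X} \to Y$ in $\pGSC$, apply $U$ and precompose with $\eta_X$ to obtain a morphism $U(g)\circ \eta_X: X \to U(Y)$ in $\gSC$. The two assignments are mutually inverse: starting from $f:X \to U(Y)$, the morphism $\widetilde{f}$ is determined by $\widetilde{f}\circ \eta_X = f$ by the pushout property, which recovers $f$ after precomposition with $\eta_X$; conversely, starting from $g:\nor{X} \to Y$, the morphism $U(g)\circ \eta_X$ factors uniquely through $U(\nor{X})$ by a morphism lying under $1$, and uniqueness in the pushout forces this morphism to be $U(g)$, which under the isomorphism $(1/\gSC)_\bullet \cong \pGSC$ corresponds to $g$. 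Naturality in both variables follows from the universal property of the pushout and the functoriality of $U$, $\nor{(-)}$, $\eta$ and the identification $(1/\gSC)_\bullet \cong \pGSC$.

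The potential pitfall is keeping track of the basepoint datum when passing between $U(\nor{X})$ in $\gSC$ and $\nor{X}$ in $\pGSC$ through the equivalence $(1/\gSC)_\bullet \cong \pGSC$, so the main bookkeeping step is to verify that the unique map $\widetilde{f}$ produced by the pushout really does respect the distinguished maps from $1$, i.e.\ that it is a morphism in $(1/\gSC)_\bullet$ rather than merely in $1/\gSC$. This is automatic because both $U(\nor{X})$ and $U(Y)$ receive their map from $1$ through the universal cone of the pushout on the one hand, and through normalization $Y(0^+)=\ast$ on the other. Once this is checked, the triangle identities $U(\epsilon_Y) \circ \eta_{U(Y)} = \mathrm{id}_{U(Y)}$ and $\epsilon_{\nor{X}}\circ \nor{\eta_X} = \mathrm{id}_{\nor{X}}$ reduce to identities in the pushout diagram \eqref{normalization-GSs} and the defining diagram \eqref{counit-map-isom}, completing the proof.
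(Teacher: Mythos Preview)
Your proposal is correct and follows essentially the same argument as the paper: both use the universal property of the pushout \eqref{normalization-GSs} together with the identification $(1/\gSC)_\bullet \cong \pGSC$ to produce, for each $f:X \to U(Y)$, a unique induced map $\nor{X} \to Y$. The only cosmetic difference is that the paper phrases this as verifying that $\eta_X$ is a universal arrow from $X$ to $U$, whereas you set up the hom-set bijection explicitly and then remark on the triangle identities; the underlying content is the same.
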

\begin{proof}
	We will prove this proposition by showing that the unit map $\eta_X$ constructed above is universal. Let $X$ be a $\gS$ and $Y$ be a normalized $\gS$ and
	$f:X \to U(Y)$ be a map in $\gSC$. We will show the extstence of a unique map
	$g:\nor{X} \to Y$ in $\pGSC$ such that the following diagram commutes in the category $\gSC$:
	\begin{equation}
	\label{universal-arrow}
	\xymatrix{
		X \ar[r]^{\eta_X \ \ \ \ } \ar[rd]_f & U(\nor{X}) \ar[d]^{U(g)} \\
		& U(Y)
	}
	\end{equation}
	The map $1 \to U(Y)$ in the diagram below is the image of the normalized $\gS$ $Y$
	under the isomorphism of categories in remark following \eqref{nor-pointed-GS}:
	\begin{equation*}
	\xymatrix{
		X[0] \ar[r] \ar[d] & X \ar[d]_{\eta_X} \ar@/^/[rdd]^f \\
		1 \ar[r] \ar@/_/[rrd] & U(\nor{X}) \ar@{-->}[rd]_{U(g)} \\
		&& U(Y)
	}
	\end{equation*}
	Since $U(Y)(0^+) = \ast$ therefore $f$ maps $X(0^+)$ to a point. This implies that the outer solid diagram in the figure above commutes. Since the square in the above diagram is a pushout square therefore there exists a unique (dotted) arrow which makes the whole diagram commutative.
	The lower commutative triangle in the diagram above is a map in the category $(1/\gSC)_\bullet$. The image of this map under the isomorphism of categories from the remark following \eqref{nor-pointed-GS} is a map $g:\nor{X} \to Y$ in $\pGSC$ whose image under the forgetful functor $U(g)$ makes the diagram \eqref{universal-arrow} commute.
\end{proof}
This proposition has the following consequence:
\begin{coro}
	\label{U-pres-cof}
	The forgetful functor $U:\pGSC \to \gSC$ maps $JQ$-cofibrations of normalized $\gSs$ to $JQ$-cofibrations.
\end{coro}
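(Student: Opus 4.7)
Since the $JQ$-model on $\gSC$ (resp.\ $\pGSC$) is a left Bousfield localization of the corresponding strict $JQ$-model, their classes of cofibrations coincide, so it suffices to show that $U$ carries strict $JQ$-cofibrations to strict $JQ$-cofibrations. Because cofibrations are generated by a small set, and because $U$ preserves pushouts, transfinite compositions and retracts --- this follows by factoring $U$ as $\pGSC\hookrightarrow[\gop,\pSSets]\to\gSC$, observing that pushouts of normalized $\gSs$ stay normalized (the value at $0^+$ of $\ast\sqcup_{\ast}\ast$ is $\ast$), and that the levelwise forgetful $\pSSets\to\sSets$ preserves pushouts and filtered colimits (the underlying simplicial set of a pointed pushout is the ordinary pushout) --- it is enough to verify that $U$ sends each generating cofibration
\[
i_{n,m}\colon\gn{n}\wedge(\partial\Delta[m])^+\longrightarrow\gn{n}\wedge\Delta[m]^+
\]
of the strict $JQ$-model on $\pGSC$ (as recorded in the proof of Prop.~\ref{pres-acy-fib}) to a cofibration in $\gSC$.

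The adjunction $\nor{(-)}\dashv U$ from the preceding proposition, combined with the chain of Yoneda-style isomorphisms
\[
\pGSC(\gn{n}\wedge K^+,X)\;\cong\;\pSSets(K^+,X(n^+))\;\cong\;\sSets(K,U_{\sSets}(X(n^+)))\;\cong\;\gSC(\gn{n}\times K,U(X)),
\]
identifies $\gn{n}\wedge K^+\cong\nor{(\gn{n}\times K)}$ in $\pGSC$. Applying $U$ and unwinding the pushout description of $\nor{(-)}$ from \eqref{normalization-GSs} yields
\[
U(\gn{n}\wedge K^+)\;\cong\;(\gn{n}\times K)\sqcup_{\gn{n}[0]\times K}\mathbf{1},
\]
the quotient in $\gSC$ of $\gn{n}\times K$ by the constant sub-$\gS$ at the one-point set $\gn{n}(0^+)$, where $\mathbf{1}$ denotes the terminal $\gS$. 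Pasting the pushout squares defining $U(\gn{n}\wedge(\partial\Delta[m])^+)$ and $U(\gn{n}\wedge\Delta[m]^+)$ exhibits $U(i_{n,m})$ as a pushout in $\gSC$ of the pushout-product
\[
\bigl(\gn{n}[0]\hookrightarrow\gn{n}\bigr)\,\Box\,\bigl(\partial\Delta[m]\hookrightarrow\Delta[m]\bigr).
\]

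The hard part will be to verify that this pushout-product is itself a cofibration in the strict $JQ$-model on $\gSC$. By the Quillen bifunctor property of the $\sSetsQ$-enrichment from Theorem~\ref{enrich-GamCAT-CAT}, together with the standard fact that $\partial\Delta[m]\hookrightarrow\Delta[m]$ is a cofibration in $\sSetsQ$, the task reduces to establishing that the canonical inclusion $\gn{n}[0]\cong\gn{0}\hookrightarrow\gn{n}$ --- picking out the zero morphism at each level --- is a cofibration in the strict $JQ$-model on $\gSC$. I would establish this last point by an explicit cellular decomposition of the discrete $\gS$ $\gn{n}$ relative to $\gn{0}$, expressing the inclusion as a retract of a transfinite composition of pushouts of generators $\emptyset\to\gn{k}$; once this is in hand the corollary follows immediately.
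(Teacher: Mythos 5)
Your reduction is carried out correctly up to its final step: passing to the strict model structures, restricting to the generating cofibrations $i_{n,m}$ via preservation of pushouts, transfinite composites and retracts, identifying $U(\gn{n}\wedge K^+)$ with $(\gn{n}\times K)\sqcup_{\gn{n}[0]\times K}\mathbf{1}$, and exhibiting $U(i_{n,m})$ as a cobase change of the pushout--product of $\gn{n}[0]\hookrightarrow\gn{n}$ with $\partial\Delta[m]\hookrightarrow\Delta[m]$. But the step you defer --- that $\gn{0}\cong\gn{n}[0]\hookrightarrow\gn{n}$ is a cofibration in the strict $JQ$-model structure on $\gSC$ --- is not merely the hard part: it fails, and the cellular decomposition you propose cannot exist. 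A retract of a transfinite composite of pushouts of $\emptyset\to\gn{k}$ under $\gn{0}$ is a retract of a coproduct inclusion $\gn{0}\to\gn{0}\sqcup Z$, and $\gn{0}\hookrightarrow\gn{n}$ is not such a retract: if $s:\gn{n}\to\gn{0}\sqcup Z$ were a splitting under $\gn{0}$, a nonzero $f\in\gn{n}(m^+)$ would have $s(f)$ in the $Z$-summand, yet postcomposition with the zero map $m^+\to 0^+$ carries $f$ to the zero element and hence $s(f)$ into the $\gn{0}$-summand, contradicting naturality (the complement of the zero map is not a subfunctor of $\gop(n^+,-)$).

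In fact the inclusion fails the defining lifting property outright. Let $J$ be a contractible Kan complex with two distinct vertices $a\ne b$, and define $X\in\gSC$ by $X(m^+)=J\sqcup\lbrace 1,\dots,m\rbrace$, with $J$ sitting over the zero element of $\gn{1}(m^+)$ and $i$ over the map $1\mapsto i$; for $g:m^+\to l^+$ let $X(g)$ be the identity on $J$, send $i$ to $g(i)$ when $g(i)\ne 0$, and send $i$ to the vertex $a$ when $g(i)=0$. Then $p:X\to\gn{1}$ is a levelwise trivial fibration, hence a strict acyclic $JQ$-fibration, but the square with top map $b:\gn{0}\to X$, bottom map $\mathrm{id}:\gn{1}\to\gn{1}$ and left map induced by $z:1^+\to 0^+$ admits no lift: the unique vertex of $X(1^+)$ lying over $\mathrm{id}_{1^+}$ is sent by $X(z)$ to $a$, not $b$. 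Since $\gn{0}\to\gn{1}$ is the image under $U$ of a cofibration of $\pGSC$ (namely $U(i_{1,0})$, the unique map out of the initial object $\gn{0}$ of $\pGSC$ into the cofibrant object $\gn{1}$), this obstructs the corollary itself and not just your route to it. The paper's own argument does not evade the problem: it rests on the assertion that ``by adjointness $U(i)$ is a cofibration if and only if $\nor{U(i)}$ is,'' whereas adjointness together with Proposition \ref{pres-acy-fib} yields only the ``only if'' direction; your more explicit reduction has the virtue of isolating exactly the content that is missing.
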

\begin{proof}
	Let $i:V \to W$ br a $JQ$-cofibration of normalized $\gSs$, we will show that
	$U(i)$ is a $JQ$-cofibration. By adjointness $U(i)$ is a cofibration if and only if $\nor{U(i)}$ is a $JQ$-cofibration of normalized $\gSs$. The following commutative square in $\pGSC$ shows that $U(i)$ is a cofibration because $i$ is one by assumption:
	\[
	\xymatrix{
		\nor{U(V)} \ar[r]_\cong^{\epsilon_V} \ar[d]_{\nor{U(i)}} & V \ar[d]^i \\
		\nor{U(W)} \ar[r]^\cong_{\epsilon_W}  & W  
	}
	\]
	
\end{proof}

Next we show that the adjunction $\nor{(-)})\dashv U$ is compatible with the model category structures \emph{i.e.} it is a Quillen adjunction.

\begin{lem}
	\label{Quillen-pair}
	The pair of adjoint functors $(\nor{(-)}), U)$ is a Quillen pair.
\end{lem}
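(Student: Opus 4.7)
The plan is to invoke the standard criterion for descending a Quillen adjunction along left Bousfield localizations: given a Quillen adjunction $F \dashv G: \M \rightleftharpoons \N$ and left Bousfield localizations $L_\S\M$ and $L_{\S'}\N$, the same adjoint pair remains a Quillen adjunction $L_\S\M \rightleftharpoons L_{\S'}\N$ as soon as $G$ carries every fibrant object of $L_{\S'}\N$ into the fibrant objects of $L_\S\M$. I will apply this with $F = \nor{(-)}$, $G = U$, $\M = \gSC$ and $\N = \pGSC$ in their respective strict JQ-model structures, the two localizations being the JQ-model category of coherently commutative monoidal quasi-categories and the JQ-model category of strictly unital coherently commutative monoidal quasi-categories constructed in the appendix.

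First I would verify that $(\nor{(-)}, U)$ is a Quillen pair for the two strict JQ-model structures. Since $\nor{(-)}$ is the left adjoint, it suffices to show that its right adjoint $U$ preserves strict JQ-fibrations and strict JQ-acyclic fibrations, which are precisely Propositions \ref{pres-str-fib} and \ref{pres-acy-fib}.

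Second I would verify the fibrant-object preservation criterion, namely that $U$ sends every strictly unital coherently commutative monoidal quasi-category to a coherently commutative monoidal quasi-category. Given such an $X$, the $\gS$ $U(X)$ is already strict JQ-fibrant by the previous step, so in view of Proposition \ref{char-CCMC} it suffices to check the Segal condition, i.e., that
\[
U(X)((k+l)^+) \to U(X)(k^+) \times U(X)(l^+)
\]
is an equivalence of quasi-categories for all $k,l \ge 0$. But this map is nothing other than the image under the basepoint-forgetting functor $U_{\sSets}:\pSSets \to \sSets$ of the corresponding pointed Segal map for $X$, which is an equivalence of quasi-categories by hypothesis. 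Since $U_{\sSets}$ commutes with finite products and preserves categorical equivalences (both features being determined at the level of the underlying simplicial set), the Segal condition transports from $X$ to $U(X)$. This transport of the Segal condition across the forgetful functor is the only genuine content; combining it with the Bousfield-localization criterion concludes the argument.
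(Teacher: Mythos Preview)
Your proof is correct. It differs from the paper's argument in organization, though both rest on Propositions \ref{pres-acy-fib} and \ref{pres-str-fib}.

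The paper works directly at the localized level, invoking the criterion \cite[Prop.~7.15]{JT3}: a pair is Quillen if the left adjoint preserves cofibrations and the right adjoint preserves fibrations between fibrant objects. Cofibration-preservation of $\nor{(-)}$ is obtained by the adjoint lifting argument using Proposition \ref{pres-acy-fib} (acyclic fibrations are unchanged under left Bousfield localization). For the second half, the paper notes that $JQ$-fibrations between $JQ$-fibrant normalized $\gSs$ coincide with strict $JQ$-fibrations, and then appeals to Proposition \ref{pres-str-fib}.

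Your route instead first establishes the Quillen pair at the strict level (immediate from the two Propositions), and then descends along the two left Bousfield localizations via the standard criterion that the right adjoint preserve fibrant objects. The Segal-condition check you give is exactly what is needed for that step. In fact, your explicit verification that $U$ carries strictly unital coherently commutative monoidal quasi-categories to coherently commutative monoidal quasi-categories fills in a point the paper's proof uses only implicitly: to conclude that $U(p)$ is a fibration in the \emph{localized} structure on $\gSC$ (and not merely a strict $JQ$-fibration) one needs $U$ to land between fibrant objects. Your organization makes this dependence transparent, at the cost of invoking one more abstract lemma about Bousfield localizations; the paper's approach is slightly more direct but leaves that point to the reader.
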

\begin{proof}
	A pair of adjoint functors between two model categories is a Quillen pair if and only if the left adjoint preserves cofibrations and the right adjoint preserves fibrations between fibrant objects, see \cite[Prop. 7.15]{JT3}. Let $i:A \to B$
	be a cofibration in $\gSC$ and let $p:X \to Y$ be an acyclic fibration in $\pGSC$ then by proposition \ref{pres-acy-fib}, there is a lifting arrow in the following (outer) commutative diagram:
	\[
	\xymatrix{
	A \ar[r] \ar[d]_i & U(X) \ar[d]^{U(p)} \\
	B \ar[r] \ar@{-->}[ru] & U(Y)
    }
	\]
	By adjointness this lifting arrow exists if and only if there exists a lifting arrow in the following (adjunct) commutative diagram:
	\[
	\xymatrix{
		\nor{A} \ar[r] \ar[d]_{\nor{i}} & X \ar[d]^{p} \\
		\nor{B} \ar[r] \ar@{-->}[ru] & Y
	}
	\]
	Thus we have shown that for each cofibration $i$ in $\gSC$, its image $\nor{i}$ in $\pGSC$ has the left lifting property with respect to acyclic fibrations in the $JQ$-model category of normalized $\gSs$ $\pGSC$.
	Hence we have shown that the left adjoint preserves cofibrations.
	
	We recall that $JQ$-fibrations between $JQ$-fibrant normalized $\gSs$ are just strict $JQ$-fibrations of normalized $\gSs$. Now proposition \ref{pres-str-fib} tells us that $U$ preserves fibrations between fibrant normalized $\gSs$.
	Hence by  \cite[Prop. 7.15]{JT3} the adjunction in context is a Quillen pair.

\end{proof}

By definition, the counit map $\epsilon_Y:\nor{U(Y)} \to Y$ of the adjunction $(\nor{(-)}), U)$ is an isomorphism for each normalized $\gS$ $Y$. Now we want to show that the unit of of the same adjunction is a $JQ$-equivalence.
\begin{lem}
	The unit map $\eta_X:X \to U(\nor{X})$ is a $JQ$-equivalence for each $\gS$ $X$.
\end{lem}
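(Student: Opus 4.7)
My plan is to reduce to the $JQ$-fibrant case, where Proposition \ref{unit-eq-fib-obj} already establishes the result, by exploiting a functorial $JQ$-fibrant replacement together with the two-out-of-three property. Concretely, I would invoke the small object argument for the combinatorial $JQ$-model category of Theorem \ref{loc-semi-add} to factor the unique terminal map $X \to \ast$ as a $JQ$-acyclic cofibration $r_X : X \to RX$ followed by a $JQ$-fibration. Thus $RX$ is a coherently commutative monoidal quasi-category and $r_X$ is a $JQ$-equivalence by construction.

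Naturality of the unit $\eta$ then gives a commutative square whose horizontal edges are $\eta_X$ and $\eta_{RX}$ and whose vertical edges are $r_X$ and $U(\nor{r_X})$. By Proposition \ref{unit-eq-fib-obj} the bottom edge $\eta_{RX}$ is a strict $JQ$-equivalence (and therefore a $JQ$-equivalence) because $RX$ is fibrant. For the right vertical edge, Lemma \ref{Quillen-pair} says that $\nor{(-)} \dashv U$ is a Quillen pair for the $JQ$-model structures, so the left adjoint sends the acyclic cofibration $r_X$ to an acyclic cofibration $\nor{r_X}$ in $\pGSC$; applying $U$, which preserves $JQ$-equivalences by Corollary \ref{U-pres-weak-eq}, shows that $U(\nor{r_X})$ is a $JQ$-equivalence in $\gSC$. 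Applying two-out-of-three to the naturality square then forces $\eta_X$ to be a $JQ$-equivalence, which is what is required.

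The argument is essentially formal and, pleasantly, does not require any separate cofibrant replacement of $X$ because the fibrant replacement is arranged to be a cofibration. The only point I would mention in passing is the confirmation that the Quillen pair of Lemma \ref{Quillen-pair} really is a Quillen pair for the localized $JQ$-structures, which is transparent from its proof: it uses that $U$ preserves strict $JQ$-fibrations via Proposition \ref{pres-str-fib}, combined with the fact that every $JQ$-fibration between $JQ$-fibrant (i.e.\ strictly unital coherently commutative monoidal) normalized $\gSs$ is a strict $JQ$-fibration. The potential obstacle of $\nor{(-)}$ failing to preserve weak equivalences between arbitrary objects is sidestepped entirely because it is applied only to an acyclic cofibration.
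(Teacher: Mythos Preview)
Your proposal is correct and follows essentially the same argument as the paper: take a $JQ$-fibrant replacement by an acyclic cofibration, use naturality of $\eta$, invoke Proposition \ref{unit-eq-fib-obj} for the fibrant vertex, push the acyclic cofibration through the left Quillen functor $\nor{(-)}$, apply Corollary \ref{U-pres-weak-eq} to $U$, and conclude by two-out-of-three. The only cosmetic difference is the orientation of the naturality square; your citation of Corollary \ref{U-pres-weak-eq} is in fact the intended one (the paper's text refers to Proposition \ref{U-pres-cof} at that step, which appears to be a slip).
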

\begin{proof}
	We have already seen in Proposition \ref{unit-eq-fib-obj} that this result holds when the $\gS$ $X$ is a coherently commutative monoidal quasi-category.
	Now we tackle the general case wherein $X$ is an arbitrary $\gS$. Since the unit map $\eta$ is a natural transformation therefore we have the following commutative diagram in the category $\gSC$:
	\begin{equation*}
	\xymatrix{
		X \ar[d]_{\eta_X} \ar[r] & R(X) \ar[d]^{\eta_{R(X)}} \\
		U(\nor{X})  \ar[r] & U(\nor{R(X)})
	}
	\end{equation*}
	where $X \to R(X)$ is a fibrant replacement of $X$ and therefore it is an acyclic $JQ$-cofibration and $R(X)$ is a coherently commutative monoidal quasi-category. Thus we have shown that the top and right vertical arrow in the commutative diagram above are $JQ$-equivalences. Now we want to show that the bottom horizontal arrow is also a $JQ$-equivalence. The functor $\nor{(-)}$ is a left Quillen functor, see \eqref{Quillen-pair}, therefore it preserves acyclic $JQ$-cofibrations. Now proposition \ref{U-pres-cof} says that $U$ preserves weak equivalences which implies that the bottom horizontal map is a $JQ$-equivalence.
	
\end{proof}
An easy consequence of the above lemma and the fact that the counit of the quillen pair $(\nor{(-)}, U)$ is a natural isomorphism is the following theorem:
\begin{thm}
	\label{Qui-eq-nor-unnor}
	The Quillen pair $(\nor{(-)}, U)$ is a Quillen equivalence.
	\end{thm}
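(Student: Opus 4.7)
The plan is to invoke the standard characterization of a Quillen equivalence, namely that both the derived unit and the derived counit are weak equivalences. Both ingredients are essentially at hand: the preceding lemma gives $\eta_X: X \to U\nor{X}$ as a $JQ$-equivalence for every $\gS$ $X$, and the discussion around \eqref{counit-map-isom} exhibits $\epsilon_Y$ as a natural isomorphism.

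First, for a cofibrant $X \in \gSC$, note that $\nor{X}$ is cofibrant since $\nor{(-)}$ is left Quillen (Lemma \ref{Quillen-pair}); picking a fibrant replacement $\nor{X} \to R\nor{X}$ in $\pGSC$, the derived unit factors as $X \xrightarrow{\eta_X} U\nor{X} \to UR\nor{X}$. The first arrow is a $JQ$-equivalence by the preceding lemma, and the second is $U$ applied to a $JQ$-equivalence in $\pGSC$, hence a $JQ$-equivalence by Corollary \ref{U-pres-weak-eq}.

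For a fibrant $Y \in \pGSC$, take a cofibrant replacement $p: QU(Y) \to U(Y)$ in $\gSC$; the derived counit factors as $\nor{QU(Y)} \xrightarrow{\nor{p}} \nor{U(Y)} \xrightarrow{\epsilon_Y} Y$, and since $\epsilon_Y$ is an isomorphism it suffices to show $\nor{p}$ is a $JQ$-equivalence. The triangle identity $U(\epsilon_Y) \circ \eta_{U(Y)} = \id_{U(Y)}$ combined with $\epsilon_Y$ being an isomorphism forces $\eta_{U(Y)}$ to be an isomorphism as well. Plugging $p$ into the naturality square for $\eta$ then exhibits three of its four edges ($p$, $\eta_{QU(Y)}$, and the isomorphism $\eta_{U(Y)}$) as $JQ$-equivalences, so two-out-of-three yields that $U\nor{p}$ is a $JQ$-equivalence in $\gSC$.

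The remaining step, which I expect to take the most care, is to descend this conclusion to $\nor{p}$ itself, i.e., to verify that $U$ reflects $JQ$-equivalences. This should follow from the enriched adjunction identity $U(\MapC{V}{Z}{\pGSC}) \cong \MapC{U(V)}{U(Z)}{\gSC}$ (as used in the proof of Corollary \ref{U-pres-weak-eq}) together with the standard observation that a map of pointed simplicial sets is a categorical equivalence if and only if its underlying unpointed map is, applied to $\MapC{\nor{p}}{Z}{\pGSC}$ for each strictly unital coherently commutative monoidal quasi-category $Z$. With this in hand, both derived unit and derived counit are $JQ$-equivalences, establishing the Quillen equivalence.
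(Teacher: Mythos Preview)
Your approach matches the paper's, which merely records the theorem as ``an easy consequence'' of the preceding lemma (unit $\eta_X$ is always a $JQ$-equivalence) and the fact that the counit $\epsilon_Y$ is a natural isomorphism. You supply the details the paper omits, and the derived-unit half is handled cleanly.

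There is one soft spot in your derived-counit argument. Your plan for showing that $U$ reflects $JQ$-equivalences is to invoke Corollary~\ref{nor-map-sp} and then the mapping-space characterizations of $JQ$-equivalences. But Propositions~\ref{char-CCME} and~\ref{char-CCME-nor} are stated for maps \emph{between cofibrant objects}, and the target of $\nor{p}$ is $\nor{U(Y)} \cong Y$, which is fibrant but not assumed cofibrant; likewise $U(Y)$ need not be cofibrant in $\gSC$. So the mapping-space criterion does not apply directly to $\nor{p}$ or to $U(\nor{p})$ as written.

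The fix is cheap. One option: first cofibrant-replace $f$ in $\pGSC$; since $U$ preserves weak equivalences (Corollary~\ref{U-pres-weak-eq}) and cofibrations (Corollary~\ref{U-pres-cof}), you may assume both domain and codomain cofibrant, after which your mapping-space argument goes through verbatim (using also that $U(Z)$ is a coherently commutative monoidal quasi-category whenever $Z$ is a normalized one, by Proposition~\ref{pres-str-fib} and the Segal condition). A sleeker alternative avoids the derived-counit computation altogether: use the standard criterion that a Quillen pair is a Quillen equivalence provided the derived unit is a weak equivalence on cofibrant objects and the right adjoint reflects weak equivalences \emph{between fibrant objects}. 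The latter is immediate here: between fibrant objects in either $JQ$-model category a weak equivalence is a strict $JQ$-equivalence, and $U$ reflects those degreewise since $U_{\sSets}:\pSSets \to \sSets$ reflects categorical equivalences.
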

\section[The model category of coherently commutative monoidal marked quasicategories]{The marked JQ-model category}

\label{EInf-Mar-QCat}
  The objective of this section is to construct a new model
  category structure on the category $\gSCM = [\gop; \sSetsM]$, where $\sSetsM$ is the model category of marked simplicial sets. We will refer to an object of $\gSCM$, namely a functor from $\gop$ to $\sSetsM$, as a \emph{marked} $\gS$. This new model category can be described as the model category of coherently commutative objects in $\sSetsM$. We begin by describing a \emph{projective} model category structure on $\gSCM$:
  
   \begin{df}
  	We call a map of marked $\gSs$
  	\begin{enumerate}
  		\item  A \emph{strict JQ-fibration} of marked $\gSs$ if it is degreewise a \emph{pseudo-fibration}  of 
  		marked simplicial sets \emph{i.e.} a fibration in the Joyal model category structure on marked simplicial sets.
  		\item A \emph{strict JQ-equivalence} of marked $\gSs$
  		if it is degreewise a \emph{categorical equivalence} of 
  		marked simplicial sets \emph{i.e.} a weak equivalence in the Joyal model category structure on marked simplicial sets.
  		\item A \emph{strict JQ-cofibration} of marked $\gSs$
  		if it has the left lifting property with respect to maps which are simultaneously strict $JQ$-fibrations and strict $JQ$-equivalences.
  	\end{enumerate}
  \end{df}
  
  \begin{thm}
  	\label{strict-cat-Q-M-model}
  	Strict JQ-equivalences, strict JQ-fibrations
  	and JQ-cofibrations of marked $\gSs$ provide the category $\gSCM$ with a combinatorial, left-proper model category structure.
  \end{thm}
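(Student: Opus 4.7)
The plan is to follow exactly the pattern of Theorem \ref{strict-cat-Q-model}, transporting the argument from $\sSets$ with the Joyal structure to $\sSetsM$ with the Joyal structure of marked simplicial sets. The target category $\sSetsMQ$ has already been established as a combinatorial, left-proper model category in Theorem \ref{Joyal-sSetsM}, so all the ingredients for a projective model structure on the diagram category $[\gop,\sSetsM]$ are in place.

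First, I would invoke the general existence theorem for projective model structures on a functor category $[\mathcal{D},\mathcal{M}]$, where $\mathcal{D}$ is a small category and $\mathcal{M}$ is a combinatorial model category, namely \cite[Prop.~A.3.3.2]{JL}. Applied to $\mathcal{D}=\gop$ and $\mathcal{M}=\sSetsMQ$, this produces a combinatorial model category structure on $\gSCM$ in which the weak equivalences and fibrations are precisely those natural transformations that are, at each object $n^+\in\gop$, a weak equivalence (respectively a fibration) in $\sSetsMQ$. By definition this coincides with the classes of strict JQ-equivalences and strict JQ-fibrations of marked $\gSs$, and the cofibrations are then forced to be the maps with the left lifting property against trivial fibrations, again matching the definition of strict JQ-cofibrations of marked $\gSs$. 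This settles the existence and combinatoriality.

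For left-properness, the argument is routine and proceeds pointwise. Since colimits in $\gSCM$ are computed objectwise in $\sSetsM$, the pushout of a strict JQ-equivalence along a strict JQ-cofibration evaluates at every $n^+\in\gop$ to a pushout in $\sSetsM$ of a categorical equivalence of marked simplicial sets along a cofibration (as any strict JQ-cofibration is in particular a levelwise cofibration, being characterized by a lifting property that passes to each level via the free diagrams at $n^+$). By the left-properness of $\sSetsMQ$ recorded in Theorem \ref{Joyal-sSetsM}, each such pointwise pushout is a weak equivalence in $\sSetsMQ$, so the resulting natural transformation is a strict JQ-equivalence.

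I do not expect any serious obstacle: the only mildly nontrivial step is the verification that strict JQ-cofibrations are in particular pointwise cofibrations in $\sSetsMQ$, which is needed for the left-properness argument. This follows because the generating cofibrations produced by \cite[Prop.~A.3.3.2]{JL} are of the form $\gop(n^+,-)\otimes i$ for $i$ a generating cofibration of $\sSetsMQ$, and evaluating at any $m^+$ gives a coproduct of copies of $i$, which is a cofibration in $\sSetsMQ$; closure under pushout, transfinite composition and retract then yields the pointwise cofibration property for all strict JQ-cofibrations.
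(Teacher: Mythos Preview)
Your proposal is correct and follows essentially the same approach as the paper: both invoke \cite[Prop.~A.3.3.2]{JL} for the existence of the projective model structure and combinatoriality, and both derive left-properness from the left-properness of $\sSetsMQ$. Your version is simply more explicit, spelling out why projective cofibrations are levelwise cofibrations and how the pointwise left-properness argument runs, whereas the paper's proof is a one-line citation.
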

  The model structure in the above theorem follows from \cite[Proposition A 3.3.2]{JL} and the left properness is a consequence of the left properness of the Joyal model category.
  \begin{nota}
  	We will refer to the above model category as the \emph{strict $JQ$-model category of marked simplicial sets}.
  	\end{nota}
  Next we will construct function objects for the above model category. For each pair $(F, K)$, where $F \in Ob(\gSCM)$ and $K \in Ob(\sSets)$,
  one can construct a $\gS$ which we denote by $\TensP{F}{K}{}$ and which is defined
  as follows:
  \[
  (\TensP{F}{K}{})(n^+) := F(n^+) \times \Fl{K},
  \]
  where the product on the right is taken in that category
  of simplicial sets. This construction is functorial in both variables.
  Thus we have a functor
  \begin{equation*}
  \label{mar-ten-over-sSets} 
  \TensP{-}-{}{}:\gSCM \times \sSets \to \gSCM. 
  \end{equation*}
  Now we will define a couple of function objects for the category $\gSCM$.
  The first function object enriches the category $\gSC$ over
  $\sSets$ \emph{i.e.} there is a bifunctor
  \[
  \MapC{-}{-}{\gSCM}:(\gSCM)^{op} \times \gSCM \to \sSets
  \]
  which assigns to each pair of objects $(X, Y) \in Ob(\gSCM) \times Ob(\gSCM)$, a simplicial set
  $\MapC{X}{Y}{\gSCM}$ which is defined in degree zero as follows:
  \[
  \MapC{X}{Y}{\gSCM}_0 := \gSCM(X, Y)
  \]
  and the simplicial set is defined in degree $n$ as follows:
  \begin{equation}
  \label{func-sp-mar-gS}
  \MapC{X}{Y}{\gSCM}_n := \gSCM(\TensP{X}{\Fl{\Delta[n]}}{}, Y)
  \end{equation}
  For any marked $\gS$ $X$, the functor $\TensP{X}{-}{}:\sSets \to \gSCM$ is
  left adjoint to the functor $\MapC{X}{-}{\gSCM}:\gSCM\to \sSets$. The \emph{counit} of this adjunction
  is the evaluation map $ev:\TensP{X}{\MapC{X}{Y}{\gSCM}}{} \to Y$
  and the \emph{unit} is the obvious simplicial map $K \to \MapC{X}{\TensP{X}{K}{}}{\gSCM}$.

  To each pair of objects $(K, X) \in Ob(\sSets) \times Ob(\gSCM)$ we can define a $\gS$ $\bHom{K}{X}{\gSCM}$, in degree $n$, as follows:
  \[
  (\bHom{K}{X}{\gSC})(n^+) := [\Fl{K}, X(n^+)] \ .
  \]
  This assignment
  is functorial in both variable and therefore we have a bifunctor
  \[
  \bHom{-}{-}{\gSCM}:\sSets^{op} \times \gSCM \to \gSCM.
  \]
  For any $\gS$ $X$, the functor $\bHom{-}{X}{\gSCM}:\sSets \to (\gSCM)^{op}$ is
  left adjoint to the functor $\MapC{-}{X}{\gSCM}:(\gSCM)^{op} \to \sSets$. 
  The following proposition summarizes the above discussion.
  \begin{prop}
  	\label{two-var-adj-cat-gcat-mar}
  	There is an adjunction of two variables
  	\begin{equation}
  	\label{two-var-adj-gcat-mar}
  	(\TensP{-}{-}{}, \bHom{-}{-}{\gSCM}, \MapC{-}{-}{\gSCM}) : \gSCM \times \sSets
  	\\  \to \gSCM.
  	\end{equation}
  	
  \end{prop}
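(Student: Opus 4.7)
The plan is to establish the required adjunction of two variables by checking the defining natural bijections directly, following the template of Proposition~\ref{two-var-adj-cat-gcat} with the Joyal model category of marked simplicial sets (which is cartesian closed by Theorem~\ref{Cart-cl-Mdl-S-plus}) playing the role previously played by $\sSetsQ$.

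First I would establish the bijection
\[
\gSCM(\TensP{X}{K}{}, Y) \;\cong\; \gSCM(X, \bHom{K}{Y}{\gSCM})
\]
by working pointwise in $\gop$. Unwinding the definitions, a morphism on the left is a natural transformation whose components are maps $X(n^+) \times \Fl{K} \to Y(n^+)$ in $\sSetsM$. By the cartesian closed structure on $\sSetsM$ recalled after Definition~\ref{mar-sSet}, each such component corresponds bijectively to a map $X(n^+) \to [\Fl{K}, Y(n^+)]^+$. Since that adjunction bijection is natural in the object being exponentiated, the bijection assembles over $n^+ \in \gop$ into the asserted isomorphism of hom-sets in the functor category $\gSCM$.

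Next I would establish the bijection
\[
\sSets(K, \MapC{X}{Y}{\gSCM}) \;\cong\; \gSCM(\TensP{X}{K}{}, Y).
\]
When $K = \Delta[n]$ this is exactly the definition of $\MapC{X}{Y}{\gSCM}$ in \eqref{func-sp-mar-gS}, combined with the Yoneda lemma. For general $K$, I would express $K$ as the colimit of representables over its category of simplices and show that $\TensP{X}{-}{}$ preserves this colimit. Here I use two inputs: the flat embedding $\Fl{(-)}:\sSets \to \sSetsM$ is a left adjoint (to the forgetful functor $U$ of Theorem~\ref{Quil-eq-QCat-MQCat}) and therefore preserves colimits, and the cartesian product in the cartesian closed category $\sSetsM$ preserves colimits in each variable. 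Thus $\TensP{X}{K}{}$ is the colimit of the $\TensP{X}{\Delta[n]}{}$, and applying $\gSCM(-, Y)$ converts this colimit to the limit that computes $\MapC{X}{Y}{\gSCM}$ at $K$.

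The third required bijection follows by composing the two above, and naturality in all three variables is inherited from the naturality of the cartesian-closed adjunction in $\sSetsM$ and of the Yoneda identifications in $\sSets$. The only moderately delicate point is checking that the pointwise transposition at $n^+$ is compatible with the functorial action of morphisms $m^+ \to n^+$ in $\gop$ applied to $X$ and $Y$; this compatibility is forced by dinaturality of the adjunction bijection $\sSetsM(A \times B, C) \cong \sSetsM(A, [B,C]^+)$ in $C$, so no genuinely new obstacle arises beyond the ones already handled in Proposition~\ref{two-var-adj-cat-gcat}.
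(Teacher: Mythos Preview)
Your proposal is correct and follows essentially the same approach as the paper: the paper does not give a separate proof but presents the proposition as a summary of the preceding discussion, which defines the three functors and asserts the two one-variable adjunctions $\TensP{X}{-}{} \dashv \MapC{X}{-}{\gSCM}$ and $\bHom{-}{X}{\gSCM} \dashv \MapC{-}{X}{\gSCM}$. Your argument simply fills in the details of those adjunctions using the cartesian closed structure of $\sSetsM$ and the colimit-preservation of $\Fl{(-)}$, exactly as intended.
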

  
  The following theorem follows from \cite[Remark A.3.3.4]{JL}. A direct proof can also be easily given by a straighforward verification of Lemma \ref{Q-bifunctor-char}$(2)$.
  \begin{thm}
  	\label{enrich-GamCAT-CAT-mar}
  	The strict model category of marked $\gSs$, $\gSCM$, is a $\sSetsQ$- model category with respect to the adjunction of two variables \eqref{two-var-adj-gcat-mar}.
  \end{thm}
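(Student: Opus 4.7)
The plan is to verify condition $(2)$ of Lemma \ref{Q-bifunctor-char} directly, reducing the question degreewise to the already-known fact that $\sSetsMQ$ is a $\sSetsQ$-model category, which is the content of the last sentence of Theorem \ref{Joyal-sSetsM} (and a direct application of \cite[Corollary 3.1.4.3]{JL} with $T = \Delta[0]$).

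First I would fix a cofibration $g:K \to L$ in $\sSetsQ$ and a strict $JQ$-fibration $p:Y \to Z$ of marked $\gSs$. I would then form the pushout--product morphism
\[
\bhom^{\Box}_{\gSCM}(g, p):\bHom{L}{Y}{\gSCM} \to \bHom{L}{Z}{\gSCM} \underset{\bHom{K}{Z}{\gSCM}} \times \bHom{K}{Y}{\gSCM}
\]
and observe that, because strict $JQ$-fibrations and strict $JQ$-equivalences of marked $\gSs$ are defined objectwise, it is enough to check that for every $n^+ \in \gop$ the morphism
\[
\bhom^{\Box}_{\gSCM}(g, p)(n^+) \;=\; \bhom^{\Box}_{\sSetsM}(g, p(n^+))
\]
is a fibration in $\sSetsMQ$ which is trivial as soon as either $g$ or $p(n^+)$ is trivial. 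This uses only that the formation of internal mapping objects $\bHom{-}{-}{\gSCM}$ is computed objectwise, via $\bHom{K}{X}{\gSCM}(n^+) = [\Fl{K}, X(n^+)]^+$, so pushout--products are computed objectwise as well.

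Next I would invoke the enrichment of $\sSetsMQ$ over $\sSetsQ$: for each $n^+$ the map $p(n^+):Y(n^+) \to Z(n^+)$ is a fibration in $\sSetsMQ$, and the $\sSetsQ$-enrichment of $\sSetsMQ$ (the last assertion of Theorem \ref{Joyal-sSetsM}) implies that $\bhom^{\Box}_{\sSetsM}(g, p(n^+))$ is a fibration in $\sSetsMQ$, acyclic whenever $g$ or $p(n^+)$ is. Assembling this fact across all $n^+ \in \gop$ verifies Lemma \ref{Q-bifunctor-char}$(2)$ for the two-variable adjunction \eqref{two-var-adj-gcat-mar}, proving that it is a Quillen adjunction of two variables.

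The only genuinely subtle point is justifying that the two-variable adjunction may be checked objectwise; but this is immediate because all three functors $(\TensP{-}{-}{}, \bHom{-}{-}{\gSCM}, \MapC{-}{-}{\gSCM})$ are built from the corresponding functors on $\sSetsM$ applied levelwise, and limits and colimits in the functor category $\gSCM = [\gop, \sSetsM]$ are computed pointwise. Hence there is no real obstacle, and the theorem reduces formally to Theorem \ref{Joyal-sSetsM}.
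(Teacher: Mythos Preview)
Your proposal is correct and follows precisely the approach the paper indicates: the paper does not spell out a proof but states that the result ``follows from \cite[Remark A.3.3.4]{JL}'' and that ``a direct proof can also be easily given by a straightforward verification of Lemma \ref{Q-bifunctor-char}$(2)$.'' Your argument is exactly this direct verification, and it mirrors line-for-line the proof the paper gives for the unmarked analogue, Theorem \ref{enrich-GamCAT-CAT}.
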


   The adjoint functors $((-)^b, U)$, see \ref{Quil-eq-QCat-MQCat}, induce an adjunction
  \begin{equation}
  \label{ind-adj-mar-gSs}
  \Gamma(-)^b:\gSC \rightleftharpoons \gSCM:U.
  \end{equation}
  This adjunction is a Quillen equivalence in light of \cite[remark A 3.3.2]{JL} and \ref{Quil-eq-QCat-MQCat}:
  \begin{thm}
  	\label{str-mar-QCat-QE}
  	The adjoint pair $(\Gamma\Fl{(-)}, U)$ determines a Quillen equivalence between the strict $JQ$-model structure on $\gSC$ and the model category structure in theorem \ref{strict-cat-Q-M-model}.
  \end{thm}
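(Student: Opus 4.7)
The plan is to reduce the statement to the pointwise Quillen equivalence $(\Fl{(-)}, U):\sSetsQ \rightleftharpoons \sSetsMQ$ of Theorem \ref{Quil-eq-QCat-MQCat}, exploiting the fact that both strict $JQ$-model structures are detected degreewise and that the induced functors $\Gamma\Fl{(-)}$ and $U$ are computed by postcomposition with the underlying functors on values at each $n^+ \in \gop$.

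First I would verify that $(\Gamma\Fl{(-)}, U)$ is a Quillen pair. The functor $U:\gSCM \to \gSC$ is defined pointwise by $U(Y)(n^+) = U(Y(n^+))$. Since by definition the fibrations and weak equivalences of the two strict $JQ$-model structures are exactly pointwise fibrations, respectively pointwise weak equivalences, in $\sSetsMQ$ and $\sSetsQ$, and since $U:\sSetsMQ \to \sSetsQ$ is right Quillen by Theorem \ref{Quil-eq-QCat-MQCat}, the postcomposition functor $U:\gSCM \to \gSC$ preserves fibrations and acyclic fibrations. Hence $(\Gamma\Fl{(-)}, U)$ is a Quillen pair.

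Next I would upgrade this to a Quillen equivalence via the standard criterion: for every cofibrant $X \in \gSC$ and every fibrant $Y \in \gSCM$, a morphism $f:\Gamma\Fl{X} \to Y$ is a weak equivalence in $\gSCM$ if and only if its adjunct $\tilde{f}:X \to U(Y)$ is a weak equivalence in $\gSC$. Because the adjunction is constructed pointwise, the component $\tilde{f}(n^+):X(n^+) \to U(Y(n^+))$ is precisely the adjunct of $f(n^+):\Fl{X(n^+)} \to Y(n^+)$ under the underlying adjunction $(\Fl{(-)}, U)$. Each $Y(n^+)$ is fibrant in $\sSetsMQ$ by hypothesis, and each $X(n^+)$ is automatically cofibrant in $\sSetsQ$ because cofibrations in $\sSetsQ$ are monomorphisms, so every object is cofibrant. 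Applying Theorem \ref{Quil-eq-QCat-MQCat} degreewise yields that $f(n^+)$ is a weak equivalence in $\sSetsMQ$ if and only if $\tilde{f}(n^+)$ is one in $\sSetsQ$, and reassembling across $n^+ \in \gop$ completes the argument. There is essentially no substantive obstacle: the entire proof is a pointwise unpacking of Theorem \ref{Quil-eq-QCat-MQCat} against the pointwise definition of the strict $JQ$-model structures, and it is precisely the content of the general transfer of Quillen equivalences along projective model structures on diagram categories recorded in \cite[Remark A.3.3.4]{JL}.
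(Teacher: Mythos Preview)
Your proposal is correct and follows essentially the same approach as the paper: the paper's proof consists solely of the sentence ``This adjunction is a Quillen equivalence in light of \cite[remark A 3.3.2]{JL} and \ref{Quil-eq-QCat-MQCat}'', which is exactly the transfer of a Quillen equivalence to projective model structures that you spell out in detail and likewise attribute to \cite[Remark A.3.3.4]{JL}. Your version is simply an explicit unpacking of that citation.
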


The following three lemmas will be useful in proving various results in this section:
 \begin{lem}
 	\label{simp-adj}
 	For each pair $(X, Y)$ consisting of a $\gS$ $X$ and a marked $\gS$ $Y$, the above adjunction gives the following simplicial isomorphism:
 	\[
 	\MapC{\Gamma\Fl{(X)}}{Y}{\gSCM} \cong \MapC{X}{U(Y)}{\gSC}.
 	\]
 	\end{lem}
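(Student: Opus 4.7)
The plan is to verify the isomorphism degree-by-degree and then check naturality in the simplicial parameter. The main ingredients are the adjunction $\Fl{(-)} \dashv U$ at the level of marked simplicial sets (which induces \eqref{ind-adj-mar-gSs} levelwise), the explicit formulas \eqref{func-sp-mar-gS} defining the function spaces, and the fact that $\Fl{(-)}$ preserves finite products.

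First, I would unravel definitions in degree $n$. By \eqref{func-sp-mar-gS},
\[
\MapC{\Gamma\Fl{(X)}}{Y}{\gSCM}_n = \gSCM\bigl(\TensP{\Gamma\Fl{(X)}}{\Fl{\Delta[n]}}{},\, Y\bigr),
\]
and the tensor in $\gSCM$ is computed pointwise on $\gop$, so at $m^+ \in \gop$ we have
\[
\bigl(\TensP{\Gamma\Fl{(X)}}{\Fl{\Delta[n]}}{}\bigr)(m^+) \;=\; \Fl{X(m^+)} \times \Fl{\Delta[n]}.
\]
Since $\Fl{S} = (S, s_0(S_0))$, and degeneracies in a product of simplicial sets act componentwise, one checks directly that $\Fl{A} \times \Fl{B} \cong \Fl{A \times B}$ in $\sSetsM$. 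Hence
\[
\bigl(\TensP{\Gamma\Fl{(X)}}{\Fl{\Delta[n]}}{}\bigr)(m^+) \;\cong\; \Fl{X(m^+) \times \Delta[n]} \;=\; \Gamma\Fl{(\TensP{X}{\Delta[n]}{})}(m^+),
\]
and this isomorphism is natural in $m^+$, so $\TensP{\Gamma\Fl{(X)}}{\Fl{\Delta[n]}}{} \cong \Gamma\Fl{(\TensP{X}{\Delta[n]}{})}$ in $\gSCM$.

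Now I would apply the adjunction \eqref{ind-adj-mar-gSs}, which gives the natural bijection
\[
\gSCM\bigl(\Gamma\Fl{(\TensP{X}{\Delta[n]}{})},\, Y\bigr) \;\cong\; \gSC\bigl(\TensP{X}{\Delta[n]}{},\, U(Y)\bigr) \;=\; \MapC{X}{U(Y)}{\gSC}_n.
\]
Composing these two bijections yields an isomorphism of sets $\MapC{\Gamma\Fl{(X)}}{Y}{\gSCM}_n \cong \MapC{X}{U(Y)}{\gSC}_n$ for each $n$.

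Finally, I would verify naturality in $[n] \in \Delta^{op}$: a simplicial operator $\alpha:[k] \to [n]$ induces $\Fl{\alpha_*}:\Fl{\Delta[k]} \to \Fl{\Delta[n]}$ and $\alpha_*:\Delta[k] \to \Delta[n]$, and the identification $\Fl{A \times B} \cong \Fl{A} \times \Fl{B}$ is natural in each variable, so the two families of bijections intertwine the face/degeneracy maps of the two simplicial sets. This upgrades the levelwise bijection to an isomorphism of simplicial sets. I do not anticipate any real obstacle here; the only mildly delicate point is the compatibility $\Fl{A} \times \Fl{B} \cong \Fl{A \times B}$, which is immediate from the definition of $\Fl{(-)}$ and of products in $\sSetsM$.
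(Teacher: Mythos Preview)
Your proof is correct and follows essentially the same approach as the paper: the key observation in both is the identification $\Gamma\Fl{(X)} \otimes \Fl{\Delta[n]} \cong \Gamma\Fl{(X \otimes \Delta[n])}$, after which the adjunction \eqref{ind-adj-mar-gSs} yields the degreewise bijection. The only difference is that the paper invokes \cite[Lemma 2.9]{GJ} to pass from this tensor compatibility to a simplicial isomorphism, whereas you verify the naturality in $[n]$ directly; both amount to the same thing.
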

 \begin{proof}
 	By definition of the function space $\MapC{\Gamma\Fl{(X)}}{Y}{\gSCM}$, see \eqref{func-sp-mar-gS}, and \cite[Lemma 2.9]{GJ}  it is sufficient to observe that for each $n \in \Nat$
 	\[
 	\Gamma\Fl{(X)} \otimes \Fl{\Delta[n]} = \Gamma\Fl{(X \otimes \Delta[n])}.
 	\]
 	\end{proof}
 The next lemma is a consequence of the definition of the left adjoint functor $\Gamma\Fl{(-)}$:
 \begin{lem}
 	\label{left-adj-pres-str-eq}
 	The left adjoint functor $\Gamma\Fl{(-)}$ maps strict $JQ$-equivalences of $\gSs$ to strict $JQ$-equivalences of marked $\gSs$.
 	\end{lem}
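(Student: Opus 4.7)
The plan is to reduce the claim to the pointwise statement, since both the functor $\Gamma\Fl{(-)}$ and the notion of strict $JQ$-equivalence (in $\gSC$ and in $\gSCM$) are defined degreewise. Explicitly, for a morphism $f: X \to Y$ in $\gSC$, the induced morphism $\Gamma\Fl{(f)}: \Gamma\Fl{(X)} \to \Gamma\Fl{(Y)}$ satisfies $\Gamma\Fl{(f)}(n^+) = \Fl{(f(n^+))}$ for every $n^+ \in \gop$. Since $f$ is a strict $JQ$-equivalence of $\gSs$, the map $f(n^+): X(n^+) \to Y(n^+)$ is a categorical equivalence in $\sSetsQ$ for every $n^+$, so it suffices to show that $\Fl{(-)}: \sSets \to \sSetsM$ sends categorical equivalences to weak equivalences in $\sSetsMQ$.

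To prove this pointwise statement, I would invoke Ken Brown's lemma applied to the Quillen pair $(\Fl{(-)}, U)$ from Theorem \ref{Quil-eq-QCat-MQCat}. The functor $\Fl{(-)}$ is a left Quillen functor between $\sSetsQ$ and $\sSetsMQ$, so it preserves weak equivalences between cofibrant objects. Since the cofibrations in $\sSetsQ$ are precisely the monomorphisms, every simplicial set is cofibrant in the Joyal model category. Hence $\Fl{(-)}$ preserves all weak equivalences of $\sSetsQ$, so $\Fl{(f(n^+))}$ is a categorical equivalence in $\sSetsMQ$ for each $n^+$.

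Assembling these pointwise facts, $\Gamma\Fl{(f)}$ is degreewise a categorical equivalence of marked simplicial sets, which by definition means that it is a strict $JQ$-equivalence of marked $\gSs$. There is no real obstacle here; the only point requiring care is the observation that all objects of $\sSetsQ$ are cofibrant, which is what allows Ken Brown's lemma to upgrade ``preserves trivial cofibrations'' to ``preserves all weak equivalences''.
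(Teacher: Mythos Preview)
Your proof is correct and follows essentially the same approach as the paper: both reduce to the degreewise statement $\Gamma\Fl{(f)}(n^+) = \Fl{(f(n^+))}$, observe that every object of $\sSetsQ$ is cofibrant, and then use that the left Quillen functor $\Fl{(-)}$ preserves weak equivalences between cofibrant objects. Your explicit invocation of Ken Brown's lemma is exactly the principle the paper is using when it says that a left Quillen functor preserves weak equivalences between cofibrant objects.
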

 \begin{proof}
 	Let $F:X \to Y$ be a strict $JQ$-equivalence of $\gSs$. For each $n \in \Nat$ we have a categorical equivalence of simplicial sets $F(n^+):X(n^+) \to Y(n^+)$. We recall that each simplicial set is cofibrant in the Joyal model category therefore $F(n^+)$ is a weak equivalence between cofibrant objects.
 	
 	In degree $n$, the map of marked $\gSs$ $\Gamma\Fl{(F)}$ is the following map of marked simplicial sets:
 	\[
 	\Gamma\Fl{(F)}(n^+):\Fl{X(n^+)} \to \Fl{Y(n^+)}.
 	\]
 	In other words $\Gamma\Fl{(F)}(n^+) = \Fl{F(n^+)}$.
 	Since $\Fl{(-)}$ is a left adjoint of a Quillen equivalence therefore it preserves weak-equivalences between cofibrant objects. Thus $\Fl{F(n^+)} = \Gamma\Fl{(F)}(n^+)$ is a weak equivalence in the Joyal model category of marked simplicial sets for each $n^+ \in \gop$ and hence $\Gamma\Fl{(F)}$ is a strict $JQ$-equivalence of marked $\gSs$.
 	\end{proof}
 \begin{lem}
 	\label{coun-eq}
 	For any strict $JQ$-fibrant marked $\gSs$ $X$, the counit map $\epsilon_X:\Gamma\Fl{(X)} \to X$ is a strict $JQ$-equivalence of marked $\gSs$.
 	\end{lem}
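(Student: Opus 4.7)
The plan is to reduce to a pointwise statement and then invoke the Quillen equivalence $(\Fl{(-)}, U)$ between $\sSetsQ$ and $\sSetsMQ$ from Theorem \ref{Quil-eq-QCat-MQCat}. First, I would observe that both functors in the induced adjunction \eqref{ind-adj-mar-gSs} are obtained by pointwise post-composition, so for each $n^+ \in \gop$ the morphism $\epsilon_X(n^+)$ is literally the counit $\epsilon_{X(n^+)}:\Fl{U(X(n^+))} \to X(n^+)$ of the original adjunction of marked simplicial sets. Since strict $JQ$-equivalences of marked $\gSs$ are, by definition, detected pointwise, it therefore suffices to show that each $\epsilon_{X(n^+)}$ is a categorical equivalence in $\sSetsMQ$.

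Next, I would use the assumption that $X$ is strict $JQ$-fibrant: by definition this means that $X(n^+)$ is a fibrant object of $\sSetsMQ$ for every $n^+ \in \gop$. On the other side of the adjunction, every simplicial set is cofibrant in $\sSetsQ$, so in particular $U(X(n^+))$ is cofibrant and needs no cofibrant replacement. Consequently the derived counit of the Quillen equivalence $(\Fl{(-)}, U)$, evaluated at the fibrant object $X(n^+)$, coincides with the strict counit $\epsilon_{X(n^+)}$. The defining property of a Quillen equivalence guarantees that the derived counit is a weak equivalence on all fibrant objects, so $\epsilon_{X(n^+)}$ is a categorical equivalence of marked simplicial sets for every $n^+$, and assembling these pointwise equivalences gives the desired strict $JQ$-equivalence $\epsilon_X$.

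The only real point to verify carefully is the identification of the counit of the induced adjunction with the levelwise counits; once that bookkeeping is done, the result is a formal consequence of Theorem \ref{Quil-eq-QCat-MQCat} together with the fact that everything in $\sSetsQ$ is cofibrant. I do not expect any serious obstacle, since the strict $JQ$-model category is engineered precisely so that cofibrations, fibrations, and weak equivalences can be tested levelwise, and the Quillen equivalence on marked simplicial sets then transports directly to the functor category.
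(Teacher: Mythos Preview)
Your proposal is correct and, in essence, uses the same idea as the paper: the counit of a Quillen equivalence is a weak equivalence on fibrant objects. The route, however, is genuinely different. You work \emph{pointwise}, identifying $\epsilon_X(n^+)$ with the counit $\epsilon_{X(n^+)}$ of the adjunction $(\Fl{(-)},U)$ on marked simplicial sets and invoking Theorem~\ref{Quil-eq-QCat-MQCat} directly; since every object of $\sSetsQ$ is cofibrant, no cofibrant replacement is needed and the derived counit coincides with the strict one. The paper instead works globally with the induced Quillen equivalence $(\Gamma\Fl{(-)},U)$ between the strict $JQ$-model categories (Theorem~\ref{str-mar-QCat-QE}): it chooses a cofibrant replacement $q:Q(U(X))\to U(X)$ in $\gSC$, uses Lemma~\ref{left-adj-pres-str-eq} to see that $\Gamma\Fl{(q)}$ is a strict $JQ$-equivalence, and then applies $2$-out-of-$3$ to the triangle expressing the derived counit as $\epsilon_X\circ\Gamma\Fl{(q)}$. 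Your argument is more elementary and self-contained, avoiding both the cofibrant replacement and the auxiliary lemma; the paper's argument, while slightly longer here, has the advantage of being a template that would still work in projective model structures where objects of the base model category are not all cofibrant.
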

 \begin{proof}
 	Since $(\Gamma\Fl{(-)}, U)$ is a Quillen pair therefore it induces a \emph{derived} adjunction $(\Gamma^L\Fl{(-)}, U^R)$ on the homotopy categories of the two model categories in context. For a strict $JQ$-fibrant $\gS$ $X$, the counit of this derived adjunction $\epsilon^D_X$ is defined as follows:
 	\begin{equation*}
 		\xymatrix{
 		\Gamma\Fl{(U(X))} \ar[r]^{\epsilon_X} & X \\
 		\Gamma\Fl{(Q(U(X)))} \ar[u]^{\Gamma\Fl{(q)}} \ar[ru]_{\epsilon^D_X}
 	   }
 		\end{equation*}
 		where $q:QU((X)) \to U(X)$ is a cofibrant replacement of $U(X)$ in the strict $JQ$-model category of $\gSs$.
 		By the previous lemma $\Gamma\Fl{(q)}$ is a strict $JQ$-equivalence of marked $\gSs$. Since the Quillen pair $(\Gamma\Fl{(-)}, U)$ is also a Quillen equivalence between the strict model categories therefore $\epsilon^D_X$ is a strict $JQ$-equivalence of marked $\gSs$. By the $2$ out of $3$ property of weak equivalences in a model category, we conclude that the counit map $\epsilon_X$ is a strict $JQ$-equivalence of marked simplicial sets.
 	\end{proof}

 Now we will construct another model category structure on $\gSCM$. The guiding principle of this new model structure is to endow its homotopy category with a semi-additive structure. In other words we want this new
 model category structure to have finite \emph{homotopy biproducts}.  
 We want to construct a left Bousfield localization of
 the strict $JQ$-model category of marked $\gSs$. For each pair $k^+, l^+ \in \gop$,
 we have the obvious \emph{projection maps} in $\gSC$
 \[
  \delta^{k+l}_k:(k+l)^+ \to k^+ \ \ \ \ and \ \ \ \ \delta^{k+l}_l:(k+l)^+ \to l^+.
 \]
 The maps
 \[
 \gop(\delta^{k+l}_k,-):\Gamma^{k} \to \Gamma^{k+l} \ \ \ \ and \ \ \ \ 
 \gop(\delta^{k+l}_l,-):\Gamma^{l} \to \Gamma^{k+l} 
 \]
 induce a map of $\gSs$ on the coproduct which we denote as follows:
 \[
  \Gamma\Fl{(h_k^l)}:\Gamma\Fl{(\Gamma^l)} \sqcup \Gamma\Fl{(\Gamma^l)} \to \Gamma\Fl{(\Gamma^{l+k})}.
 \]
 
 We now define a class of
 maps $\Gamma\E_\infty\S$ in $\gSCM$:
 \begin{equation*}
  \Gamma\E_\infty\S := \lbrace \Gamma\Fl{(h_k^l)}:\Gamma\Fl{(\Gamma^k)} \sqcup \Gamma\Fl{(\Gamma^l)} \to \Gamma\Fl{(\Gamma^{k+l})}:
  l, k \in \mathbb{Z}^+ \rbrace
 \end{equation*}
 \begin{df}
  We call a $\gS$ $X$ a $(\Delta \times \Gamma\E_\infty\S)$-\emph{local object}
  if it is a fibrant object in the strict $JQ$-model category and for each map $h_k^l \in \Gamma\E_\infty\S$, the induced simplicial map
  \begin{multline*}
  \HMapC{\Delta[n] \times \Gamma\Fl{(h_k^l)}}{X}{\gSCM}: \HMapC{\Delta[n] \times \Gamma\Fl{(\Gamma^{k+l})}}{X}{\gSCM} \to \\
  \HMapC{\Delta[n] \times (\Gamma\Fl{(\Gamma^k)} \sqcup \Gamma\Fl{(\Gamma^l)})}{X}{\gSCM},
  \end{multline*}
 is a homotopy equivalence of simplicial sets for all $n \ge 0$ where $\HMapC{-}{-}{\gSCM}$ is the simplicial function complexes associated with the strict $JQ$-model category $\gSCM$, see \cite{DK80}, \cite{DK1980} and \cite{DK3}.
 \end{df}
  Appendix \ref{Cat-Local} tell us that a model for $\HMapC{X}{Y}{\gSCM}$, where $X$ is cofibrant and $Y$ is fibrant, is the Kan complex
 $J(\MapC{X}{Y}{\gSCM})$, namely the maximal Kan complex contained in the quasicategory $\MapC{X}{Y}{\gSCM}$.

  The following proposition gives a characterization of
 $\E_\infty\S$-local objects
  \begin{prop}
  \label{char-mar-CCMC}
 \begin{sloppypar}
 A $\gS$ $X$ is a $(\Delta \times \Gamma\E_\infty\S)$-local object in $\gSCM$ if and only if its underlying $\gS$ $U(X)$ satisfies the Segal condition namely the functor
  \end{sloppypar}
 \begin{equation*}
 (U(X)(\partition{(k+l)}{k}), U(X)(\partition{(k+l)}{l})):U(X)((k+l)^+) \to U(X)(k^+) \times U(X)(l^+)
 \end{equation*}
 is a categorical equivalence of quasi-categories for all $k^+, l^+ \in \Ob(\gop)$.
 \end{prop}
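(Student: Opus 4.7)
The plan is to reduce this characterization to the unmarked analogue, Proposition \ref{char-CCMC}, via the Quillen adjunction $(\Gamma\Fl{(-)}, U)$ from Theorem \ref{str-mar-QCat-QE}. The key observation is that every map in the localizing set $\Gamma\E_\infty\S$ is of the form $\Gamma\Fl{(h^k_l)}$, where $h^k_l$ is the corresponding map in $\E_\infty\S$, and the domains and codomains of all such maps are images under $\Gamma\Fl{(-)}$ of cofibrant $\gSs$.

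First I would note that $\Gamma^k$ and $\Gamma^{k+l}$ are cofibrant in $\gSC$, hence $\Gamma\Fl{(\Gamma^k)}$, $\Gamma\Fl{(\Gamma^l)}$ and $\Gamma\Fl{(\Gamma^{k+l})}$ are cofibrant in the strict $JQ$-model category of marked $\gSs$ (since left Quillen functors preserve cofibrant objects). By the general characterization of local objects in an enriched setting (Theorem \ref{char-lo-QCat-en}, applied to $\gSCM$ as a $\sSetsQ$-model category via Theorem \ref{enrich-GamCAT-CAT-mar}), a strict $JQ$-fibrant marked $\gS$ $X$ is $(\Delta \times \Gamma\E_\infty\S)$-local if and only if for every $k, l$, the simplicial map
\[
\MapC{\Gamma\Fl{(h^k_l)}}{X}{\gSCM} : \MapC{\Gamma\Fl{(\Gamma^{k+l})}}{X}{\gSCM} \to \MapC{\Gamma\Fl{(\Gamma^k) \sqcup \Gamma\Fl{(\Gamma^l)}}}{X}{\gSCM}
\]
is a categorical equivalence of quasi-categories.

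Next I would invoke Lemma \ref{simp-adj} (together with the fact that $\Gamma\Fl{(-)}$ preserves coproducts as a left adjoint) to obtain a natural isomorphism of arrows of simplicial sets
\[
\MapC{\Gamma\Fl{(h^k_l)}}{X}{\gSCM} \cong \MapC{h^k_l}{U(X)}{\gSC}.
\]
Combining this with the commutative square used in the proof of Proposition \ref{char-CCMC}, namely
\[
\xymatrix@C=20mm{
\MapC{\gn{(k+l)}}{U(X)}{\gSC} \ar[d]_\cong \ar[r]^{\MapC{h^k_l}{U(X)}{\gSC}} & \MapC{\gn{k} \sqcup \gn{l}}{U(X)}{\gSC} \ar[d]^\cong \\
U(X)((k+l)^+) \ar[r] & U(X)(k^+) \times U(X)(l^+)
}
\]
reduces the local condition to the assertion that the bottom map is a categorical equivalence of quasi-categories for all $k, l$, which is precisely the Segal condition on $U(X)$.

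The only step requiring care is verifying that the fibrancy assumption on $X$ (as a marked $\gS$) gives the correct interpretation of the Segal condition on $U(X)$, and that the isomorphism from Lemma \ref{simp-adj} indeed fits into the commutative square above. The former holds because the right Quillen functor $U$ sends strict $JQ$-fibrant marked $\gSs$ to strict $JQ$-fibrant $\gSs$, ensuring each $U(X)(n^+)$ is a quasi-category; the latter is a direct naturality check, using that $\Gamma\Fl{(-)}$ applied to the pushout defining $h^k_l$ reproduces the analogous pushout in $\gSCM$. With these verified, the biconditional follows immediately.
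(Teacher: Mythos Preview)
Your proposal is correct and follows essentially the same route as the paper's proof: both invoke Theorem \ref{char-lo-QCat-en} for the $\sSetsQ$-enriched model category $\gSCM$, pass along the adjunction via Lemma \ref{simp-adj} to mapping spaces in $\gSC$, and finish with the same commutative square identifying the map $\MapC{h^k_l}{U(X)}{\gSC}$ with the Segal map. Your write-up is slightly more explicit about why the domains and codomains are cofibrant and about the fibrancy of $U(X)$, but the argument is the same.
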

 \begin{proof}
 We begin the proof by observing that each element of the set $\Gamma\E_\infty\S$ is a map of marked $\gSs$ between cofibrant marked $\gSs$. Theorem \ref{char-lo-QCat-en} and \ref{simp-adj} together imply that $X$ is a $(\Delta \times \Gamma\E_\infty\S)$-local object if and only if the following simplicial map
 \begin{equation*}
 \MapC{h^k_l}{U(X)}{\gSC}:\MapC{\gn{k+l}}{U(X)}{\gSC} \to \MapC{\gn{k} \sqcup \gn{l}}{U(X)}{\gSC}
 \end{equation*}
 is a categorical equivalence of quasi-categories.
 
 We observe that we have the following commutative square in $\sSetsQ$
 \begin{equation*}
 \xymatrix@C=24mm{
\MapC{\gn{(k+l)}}{U(X)}{\gSC}  \ar[d]_{\cong}\ar[r]^{ \MapC{h^k_l}{U(X)}{\gSC}} & \MapC{\gn{k} \sqcup \gn{l}}{U(X)}{\gSC}   \ar[d]^{\cong} \\
 U(X)((k+l)^+) \ar[r]_{(U(X)(\partition{k+l}{k}), U(X)(\partition{k+l}{l}))} & U(X)(k^+) \times U(X)(l^+)
 }
 \end{equation*}
 This implies that the simplicial map $(U(X)(\partition{k+l}{k}), U(X)(\partition{k+l}{l}))$ is a categorical equivalence of quasi-categories if and only if the functor $ \MapC{h^k_l}{U(X)}{\gSC}$ is one.
  \end{proof}

\begin{df}
 \label{CCMC-mar}
 We will refer to a $(\Delta \times \Gamma\E_\infty\S)$-local object as a \emph{coherently commutative monoidal marked quasi-category}.
 \end{df}
 
 \begin{df}
 A morphism of marked $\gSs$ $F:X \to Y$ is a $(\Delta \times \Gamma\E_\infty\S)$-local equivalence if for each coherently commutative monoidal marked quasi-category $Z$
 the following simplicial map
 \[
 \HMapC{F}{Z}{\gSCM}:\HMapC{Y}{Z}{\gSCM} \to \HMapC{X}{Z}{\gSCM}
 \]
 is a homotopy equivalence of simplicial sets.
 \end{df}
The following proposition follows from an argument similar to the one in the proof of proposition \ref{char-CCME}:
 \begin{prop}
  \label{char-mar-CCME}
 \begin{sloppypar}
 A morphism between two cofibrant marked $\gSs$ $F:X \to Y$ is an $(\Delta \times \Gamma\E_\infty\S)$-local equivalence if and only if the simplicial map  \end{sloppypar}
 \begin{equation*}
 \MapC{F}{Z}{\gSCM}:\MapC{Y}{Z}{\gSCM} \to \MapC{X}{Z}{\gSCM}
  \end{equation*}
 is an equivalence of quasi-categories for each coherently commutative monoidal marked quasi-category $Z$.
 \end{prop}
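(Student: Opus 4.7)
The plan is to follow the structure of the proof of Proposition \ref{char-CCME}, adapting each step to the marked setting. The converse direction is the easy one: if $\MapC{F}{Z}{\gSCM}$ is a categorical equivalence of quasi-categories, then applying the functor $J$ (which sends categorical equivalences of quasi-categories to homotopy equivalences of Kan complexes) shows that $\HMapC{F}{Z}{\gSCM} = J(\MapC{F}{Z}{\gSCM})$ is a homotopy equivalence, whence $F$ is a $(\Delta \times \Gamma\E_\infty\S)$-local equivalence.

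For the forward direction, assume $F$ is a local equivalence and fix a coherently commutative monoidal marked quasi-category $Z$. The key intermediate fact is that the cotensor $\bHom{\Delta[n]}{Z}{\gSCM}$ is again a coherently commutative monoidal marked quasi-category for every $n \ge 0$. By Proposition \ref{char-mar-CCMC} this reduces to checking that $U(\bHom{\Delta[n]}{Z}{\gSCM})$ satisfies the Segal condition in $\gSC$. Since $\bHom{\Delta[n]}{-}{\gSCM}$ is computed pointwise as the cotensor in $\sSetsM$, and since $U$ commutes with this cotensor (using $\Fl{A \times B} \cong \Fl{A} \times \Fl{B}$ and the adjunction $\Fl{(-)} \dashv U$ from Theorem \ref{Quil-eq-QCat-MQCat}), one obtains $U(\bHom{\Delta[n]}{Z}{\gSCM})(k^+) \cong U(Z(k^+))^{\Delta[n]}$. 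Cartesian closure of $\sSetsQ$ (Example \ref{sSets-Q-monoidal}) then propagates the Segal equivalence for $U(Z)$ to one for $U(Z)^{\Delta[n]}$, giving the claim.

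Once this is in hand, the local equivalence hypothesis applied to $\bHom{\Delta[n]}{Z}{\gSCM}$ yields that $\HMapC{F}{\bHom{\Delta[n]}{Z}{\gSCM}}{\gSCM}$ is a homotopy equivalence of Kan complexes for each $n$. By Appendix \ref{Cat-Local}, this Kan complex is modelled by $J(\MapC{F}{\bHom{\Delta[n]}{Z}{\gSCM}}{\gSCM})$, and the two-variable adjunction of Proposition \ref{two-var-adj-cat-gcat-mar} supplies a natural isomorphism
\[
\MapC{F}{\bHom{\Delta[n]}{Z}{\gSCM}}{\gSCM} \;\cong\; \MapC{F}{Z}{\gSCM}^{\Delta[n]}.
\]
Consequently $J\bigl(\MapC{F}{Z}{\gSCM}^{\Delta[n]}\bigr)$ is a homotopy equivalence of Kan complexes for every $n$, and Lemma \ref{char-lo-QCat-en} then forces $\MapC{F}{Z}{\gSCM}$ to be a categorical equivalence of quasi-categories.

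The main obstacle I anticipate is the verification that the cotensor preserves the coherently commutative property: namely making the identification $U(\bHom{\Delta[n]}{Z}{\gSCM})(k^+) \cong U(Z(k^+))^{\Delta[n]}$ rigorous and then confirming that the Segal map for $U(Z)$ remains a categorical equivalence after exponentiating by $\Delta[n]$. Everything else is a transcription of the proof of Proposition \ref{char-CCME}, with $\MapC{-}{-}{\gSCM}$ and $\bHom{\Delta[n]}{-}{\gSCM}$ replacing $\MapC{-}{-}{\gSC}$ and $(-)^{\Delta[n]}$.
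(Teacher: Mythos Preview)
Your proposal is correct and follows essentially the same route as the paper: the paper simply says that Proposition \ref{char-mar-CCME} ``follows from an argument similar to the one in the proof of proposition \ref{char-CCME}'', and your write-up is exactly that adaptation, with the cotensor $\bHom{\Delta[n]}{Z}{\gSCM}$ playing the role of $Z^{\Delta[n]}$ and the Segal condition checked on $U(Z)$ via Proposition \ref{char-mar-CCMC}. The identification $U\bigl(\bHom{\Delta[n]}{Z}{\gSCM}\bigr)(k^+)\cong U(Z(k^+))^{\Delta[n]}$ that you flag as the main obstacle is indeed the only new verification, and it goes through exactly as you outline using $\Fl{(A\times B)}\cong\Fl{A}\times\Fl{B}$ and the adjunction $\Fl{(-)}\dashv U$.
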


 \begin{df}
  We will refer to a $(\Delta \times \Gamma\E_\infty \S)$-local equivalence either as an \emph{equivalence of coherently commutative monoidal marked quasi-categories} or as a $JQ$-equivalence of marked $\gSs$.
 \end{df}

 We construct a
 new model category structure on the category $\gSCM$,
 by localizing the strict $JQ$-model category of marked $\gSs$ with respect to
 morphisms in the set $\Gamma\E_\infty\S$. 
\begin{thm}
 \label{loc-semi-add-mar}
 There is a left proper, combinatorial model category structure on the category of marked $\gSs$, $\gSCM$, in which
 \begin{enumerate}
 \item The class of cofibrations is the same as the class of
 JQ-cofibrations of marked $\gSs$.
 \item The weak equivalences are equivalences of coherently commutative monoidal marked quasi-categories.
 \end{enumerate}
 \begin{sloppypar}
 An object is fibrant in this model category if and only if it is a
  coherently commutative monoidal marked quasi-category. A fibration between two coherently commutative monoidal marked quasi-categories is a strict $JQ$-equivalence of marked simplicial sets.
   \end{sloppypar}
 \end{thm}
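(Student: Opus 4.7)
The plan is to follow the exact same strategy as in the proof of Theorem~\ref{loc-semi-add} in the unmarked case, applying Theorem~\ref{local-tool} (the Smith/Barwick left Bousfield localization result) to the strict $JQ$-model category on $\gSCM$ established in Theorem~\ref{strict-cat-Q-M-model}, with localizing set $\Gamma\E_\infty\S$.

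First I would verify the hypotheses of Theorem~\ref{local-tool}. The strict $JQ$-model structure on $\gSCM$ is combinatorial and left proper by Theorem~\ref{strict-cat-Q-M-model}, and $\Gamma\E_\infty\S$ is a small set of morphisms (indexed by $(k,l)\in\Nat\times\Nat$), so one representative per homotopy class certainly gives a small set. Theorem~\ref{local-tool} then produces a left proper, combinatorial model category structure on the underlying category $\gSCM$; its cofibrations coincide with the strict $JQ$-cofibrations, yielding (1), and its weak equivalences are the $(\Delta\times\Gamma\E_\infty\S)$-local equivalences, which are by definition the equivalences of coherently commutative monoidal marked quasi-categories, yielding (2).

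Next, the characterization of fibrant objects is immediate from Theorem~\ref{local-tool}(4) combined with Proposition~\ref{char-mar-CCMC}: the fibrant objects of the localized model category are precisely the fibrant $(\Delta\times\Gamma\E_\infty\S)$-local objects of $\gSCM$, which are exactly the coherently commutative monoidal marked quasi-categories. For the final assertion, since the class of cofibrations in the localized model category is the same as in the strict $JQ$-model category, the class of trivial fibrations coincides in the two structures. In particular, any fibration (in the localized structure) between two coherently commutative monoidal marked quasi-categories is already a strict $JQ$-fibration of marked $\gSs$, as claimed.

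There is really no genuine obstacle here, since all the hard technical work has been absorbed into (a) the existence of the strict $JQ$-model category on $\gSCM$, and (b) the identification of $(\Delta\times\Gamma\E_\infty\S)$-local objects via Proposition~\ref{char-mar-CCMC}. The only point to be careful about is just unpacking Theorem~\ref{local-tool} so that the stated form of (1), (2), the description of fibrant objects, and the description of fibrations between fibrant objects are obtained directly from items (3), (5), (4), and the shared-cofibration consequence of (3) of that theorem, respectively. The argument is otherwise verbatim the same as in the proof of Theorem~\ref{loc-semi-add}.
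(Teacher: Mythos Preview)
Your proposal is correct and follows essentially the same approach as the paper's own proof, which is just the two-sentence observation that the strict model structure is combinatorial so Theorem~\ref{local-tool} applies, and that the last assertion follows from (1). Your version simply unpacks these two sentences in more detail. One small remark: the theorem as stated says that a fibration between fibrant objects is a strict $JQ$-\emph{equivalence}, whereas you (correctly) deduce that it is a strict $JQ$-\emph{fibration}; the latter is what actually follows from the shared-cofibration argument and is what is used later (e.g.\ in the proof of Theorem~\ref{Qu-adj-mar-gs}), so the statement in the paper appears to contain a typo that you have silently corrected.
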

 \begin{proof}
 The strict model category of $\gSs$ is a combinatorial
 model category therefore the existence of the model structure
 follows from theorem \ref{local-tool}.
The last statement follows from $(1)$.
 \end{proof}
 \begin{nota}
 The model category constructed in theorem \ref{loc-semi-add-mar} will
 be referred to either as the model category of \emph{coherently commutative monoidal marked quasi-categories} or as the $JQ$-model category of marked $\gSs$.
 \end{nota}
\begin{thm}
	\label{Qu-adj-mar-gs}
	The adjoint pair $(\Gamma\Fl{(-)}, U)$ determines a Quillen adjunction between the $JQ$-model category marked $\gSs$ and the $JQ$-model category of $\gSs$.
\end{thm}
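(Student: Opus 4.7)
The plan is to invoke the Joyal--Tierney criterion cited earlier in the proof of Lemma \ref{Quillen-pair}: an adjunction is a Quillen pair exactly when the left adjoint preserves cofibrations and the right adjoint preserves fibrations between fibrant objects. The starting point is Theorem \ref{str-mar-QCat-QE}, which provides the analogous (in fact stronger) Quillen equivalence for the strict $JQ$-model categories; all we need to do is transport this along the two left Bousfield localizations of Theorems \ref{loc-semi-add} and \ref{loc-semi-add-mar}.

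Since cofibrations are unchanged under left Bousfield localization (Theorems \ref{loc-semi-add}(1) and \ref{loc-semi-add-mar}(1)), the preservation of cofibrations by $\Gamma\Fl{(-)}$ is inherited from the strict case. This leaves the main step: showing that $U$ carries fibrations between fibrant objects in the marked $JQ$-model category to fibrations between fibrant objects in the $JQ$-model category of $\gSs$. By the standard property of Bousfield localization that fibrations between fibrant objects in a localized model category are automatically fibrations in the underlying model category, any such map is already a strict $JQ$-fibration in $\gSCM$, and then $U$ sends it to a strict $JQ$-fibration of $\gSs$ by Theorem \ref{str-mar-QCat-QE}. It therefore suffices to show that $U$ preserves fibrant objects.

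Let $X$ be a coherently commutative monoidal marked quasi-category. By definition $X$ is strict-$JQ$-fibrant as a marked $\gS$, so $U(X)$ is strict-$JQ$-fibrant as a $\gS$, again using that $U$ is right Quillen at the strict level (Theorem \ref{str-mar-QCat-QE}). On the other hand, Proposition \ref{char-mar-CCMC} characterizes coherently commutative monoidal marked quasi-categories as precisely those strict-$JQ$-fibrant marked $\gSs$ whose underlying $\gS$ $U(X)$ satisfies the Segal condition, and Proposition \ref{char-CCMC} identifies strict-$JQ$-fibrant $\gSs$ satisfying the Segal condition with coherently commutative monoidal quasi-categories. Chaining these, $U(X)$ is fibrant in the $JQ$-model category of $\gSs$, completing the verification.

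The argument is almost entirely formal once one has the strict Quillen pair in hand; the only non-formal ingredient is the compatibility between the two localizing classes, encoded by Propositions \ref{char-mar-CCMC} and \ref{char-CCMC}, which is where I expect the real work to sit if it had to be done from scratch. In particular, the fact that the localizing set $\Gamma\E_\infty\S$ for the marked side is literally the image of the set $\E_\infty\S$ under $\Gamma\Fl{(-)}$ is what makes the two characterizations line up so cleanly, and thus what makes the descent of the Quillen adjunction essentially automatic.
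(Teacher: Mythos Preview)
Your proposal is correct and follows essentially the same route as the paper's proof: both invoke the Joyal criterion (the paper cites \cite[Prop.~E.2.14]{AJ1}), use that cofibrations are unchanged under left Bousfield localization so $\Gamma\Fl{(-)}$ still preserves them, and reduce the right-adjoint condition to the strict case via the fact that fibrations between local objects in a Bousfield localization coincide with ordinary fibrations. If anything you are slightly more careful: the paper asserts directly that $U$ preserves fibrations between fibrant objects without explicitly verifying that $U$ sends coherently commutative monoidal \emph{marked} quasi-categories to coherently commutative monoidal quasi-categories, whereas you spell this out via Propositions~\ref{char-mar-CCMC} and~\ref{char-CCMC}.
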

\begin{proof}
	As observed above, the adjunction \ref{adj-GS-MGS} determines a Quillen equivalence between the strict $JQ$-model category of marked $\gSs$ and the strict $JQ$-model category of $\gSs$. Since the two model categories in context are constructed as a left-Bousfield localization of the corresponding strict model categories therefore the cofibrations are the same as the corresponding strict model categories. This implies that the left adjoint $\Gamma\Fl{(-)}$ preserves cofibrations. We observe that the fibrations between fibrant objects, in the $JQ$-model category of marked $\gSs$, are the same as strict $JQ$-fibrations of marked $\gSs$. Now it follows from the aforementioned Quillen equivalence that the right adjoint $U$ preserves fibrations between fibrant objects. In light of proposition \cite[Prop. E.2.14.]{AJ1}, we conclude that $(\Gamma\Fl{(-)}, U)$ is a Quillen pair between the two $JQ$-model categories in context of the theorem.
\end{proof}

\begin{prop}
	\label{Rt-adj-pres-Eq}
	The right adjoint functor $U$ maps $JQ$-equivalences of marked $\gSs$ to $JQ$-equivalences of $\gSs$.
\end{prop}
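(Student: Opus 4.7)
The plan is to apply the mapping-space characterization of $JQ$-equivalences from Proposition \ref{char-CCME} and transport the assumption across the adjunction $(\Gamma\Fl{(-)}, U)$. Let $f:X \to Y$ be a $JQ$-equivalence of marked $\gSs$. First I would cofibrantly replace $X$ and $Y$ via strict $JQ$-acyclic fibrations, which $U$ preserves as a right Quillen functor between the strict model categories (Theorem \ref{str-mar-QCat-QE}). Moreover $U$ preserves cofibrations: the degreewise forgetful functor $\sSetsM \to \sSets$ admits both a left adjoint $\Fl{(-)}$ and a right adjoint $\Sh{(-)}$, so it preserves all colimits, and it sends every generating projective cofibration of $\gSCM$ to either a generating projective cofibration of $\gSC$ or an isomorphism. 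Hence after reduction we may assume all of $X$, $Y$, $U(X)$, and $U(Y)$ are cofibrant.

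By Proposition \ref{char-CCME} it then suffices to show, for every coherently commutative monoidal quasi-category $Z$, that $\MapC{U(f)}{Z}{\gSC}$ is a categorical equivalence of quasi-categories. I would realize $Z$ inside the marked world as $\Gamma\Sh{Z}$, whose value at $n^+$ is $\Sh{Z(n^+)}$. Each $\Sh{Z(n^+)}$ is fibrant in the Joyal marked model (since $Z(n^+)$ is a quasi-category), so $\Gamma\Sh{Z}$ is strict $JQ$-fibrant; moreover $U(\Gamma\Sh{Z}) = Z$ satisfies the Segal condition by Proposition \ref{char-CCMC}, and hence Proposition \ref{char-mar-CCMC} shows that $\Gamma\Sh{Z}$ is a coherently commutative monoidal marked quasi-category. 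Applying Proposition \ref{char-mar-CCME} to $f$ then yields that $\MapC{f}{\Gamma\Sh{Z}}{\gSCM}$ is a categorical equivalence.

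The last step is to bridge the two mapping spaces using the counit $\epsilon_W:\Gamma\Fl{U(W)} \to W$ of the adjunction $(\Gamma\Fl{(-)}, U)$. I claim $\epsilon_W$ is always a strict $JQ$-equivalence of marked $\gSs$: degreewise it is a map $\Fl{K} \to (K,\E)$ whose underlying simplicial map is the identity, and the identification $\mapFl{(K,\E)}{\Sh{Q}} \cong Q^K$ --- valid for every quasi-category $Q$ because every edge of $\Sh{Q}$ is marked, so the marking of the source is immaterial --- shows the induced map on the test function spaces is an isomorphism, forcing $\Fl{K} \to (K,\E)$ to be a weak equivalence in the Joyal marked model. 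By the $\sSetsQ$-enrichment of the strict $JQ$-model category of marked $\gSs$ (Theorem \ref{enrich-GamCAT-CAT-mar}), the maps $\MapC{\epsilon_X}{\Gamma\Sh{Z}}{\gSCM}$ and $\MapC{\epsilon_Y}{\Gamma\Sh{Z}}{\gSCM}$ are categorical equivalences. Combining these with the natural isomorphism $\MapC{\Gamma\Fl{U(W)}}{\Gamma\Sh{Z}}{\gSCM} \cong \MapC{U(W)}{Z}{\gSC}$ from Lemma \ref{simp-adj} produces a commutative square whose horizontals are categorical equivalences and whose left vertical is $\MapC{f}{\Gamma\Sh{Z}}{\gSCM}$; two-out-of-three forces the right vertical $\MapC{U(f)}{Z}{\gSC}$ to be a categorical equivalence, and Proposition \ref{char-CCME} concludes. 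The main obstacle is precisely this generalized strict $JQ$-equivalence statement for $\epsilon_W$, which strengthens Lemma \ref{coun-eq} (stated only for strict $JQ$-fibrant objects); fortunately the clean degreewise computation above removes the fibrancy hypothesis without further replacement arguments.
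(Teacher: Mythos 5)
There are two genuine gaps, both stemming from a misunderstanding of which marked simplicial sets are fibrant and of how weak equivalences in $\sSetsMQ$ are detected. First, your claim that $\Sh{Z(n^+)}$ is fibrant in the Joyal marked model because $Z(n^+)$ is a quasi-category is false: by \cite[Prop. 3.1.4.1]{JL} the fibrant objects of $\sSetsMQ$ are exactly the \emph{naturally} marked quasi-categories $\Nt{K}$ (equivalences marked), and $\Sh{K} = \Nt{K}$ only when $K$ is a Kan complex. Hence $\Gamma\Sh{Z}$ is not strict $JQ$-fibrant, so Proposition \ref{char-mar-CCMC} does not apply to it and you cannot invoke Proposition \ref{char-mar-CCME} with $\Gamma\Sh{Z}$ as the test object. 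This part is repairable: replace $\Gamma\Sh{Z}$ by $\Gamma\Nt{Z}$, which is degreewise fibrant, still satisfies $U(\Gamma\Nt{Z}) = Z$, and is functorial because simplicial maps between quasi-categories preserve equivalences.

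The second gap is not repairable as stated and is the one you flag as your ``main obstacle.'' Your proposed strengthening of Lemma \ref{coun-eq} --- that $\epsilon_W$ is a strict $JQ$-equivalence for \emph{every} marked $\gS$ $W$ --- is false. Weak equivalences in $\sSetsMQ$ are defined (Theorem \ref{Joyal-sSetsM}) by testing against the objects $\Nt{K}$, not against $\Sh{Q}$; mapping into $\Sh{Q}$ forgets the marking of the source entirely and therefore only detects the underlying categorical equivalence class, which is why your computation produces an isomorphism. A concrete counterexample to your claim is $\Fl{\Delta[1]} \to \Sh{\Delta[1]}$: it induces an isomorphism on $\mapFl{-}{\Sh{Q}}$ for every quasi-category $Q$, yet it is not a weak equivalence in $\sSetsMQ$, since $\mapFl{\Sh{\Delta[1]}}{\Nt{K}}$ is the quasi-category of equivalences in $K$ while $\mapFl{\Fl{\Delta[1]}}{\Nt{K}}$ is all of $K^{\Delta[1]}$. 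Consequently the fibrancy hypothesis in Lemma \ref{coun-eq} cannot be dropped, and your square of mapping spaces does not have equivalences as its horizontal maps. The paper's proof keeps exactly this hypothesis: it first treats $F$ between strict $JQ$-fibrant (and cofibrant) marked $\gSs$, where Lemma \ref{coun-eq} and Lemma \ref{simp-adj} give the comparison of mapping spaces, and then reduces the general case to this one by a cofibrant--fibrant replacement in the strict model structure together with the two-out-of-three property. Your argument needs that same replacement step; the ``clean degreewise computation'' does not eliminate it.
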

\begin{proof}
	Let us first assume that $F:X \to Y$ is a $JQ$-equivalence of marked $\gSs$ between strict $JQ$-fibrant and cofibrant marked $\gSs$, \emph{i.e.} $X$ and $Y$ are both cofibrant and fibrant objects is the strict $JQ$-model category of marked $\gSs$. By lemma \ref{coun-eq} we have the following commutative square:
	\begin{equation*}
	\xymatrix{
		\Gamma\Fl{(U(X))} \ar[r]^{\Gamma\Fl{(F)}} \ar[d]_{\epsilon_X} & \Gamma\Fl{(U(Y))} \ar[d]^{\epsilon_Y} \\
		X  \ar[r]_F & Y 
	}
	\end{equation*}
	wherein the vertical maps are strict $JQ$-equivalences of marked $\gSs$. This implies that $\Gamma\Fl{(F)}$ is a $JQ$-equivalence whenever $F$ is one. Now for each coherently commutative monoidal quasi-category $Z$, we have the following commutative diagram of maps between mapping spaces in $\sSets$:
	\begin{equation*}
	\xymatrix@C=32mm{
		\MapC{\Gamma\Fl{(U(Y))}}{\Gamma\Fl{(Z)}}{\gSCM} \ar[r]^{\MapC{\Gamma\Fl{(U(F))}}{\Gamma\Fl{(Z)}}{\gSCM}} \ar[d]_{\cong} & \MapC{\Gamma\Fl{(U(X))}}{\Gamma\Fl{(Z)}}{\gSCM} \ar[d]^{\cong} \\
		\MapC{U(Y)}{U(\Gamma\Fl{(Z)})}{\gSC}   \ar[r]_{\MapC{U(F)}{U(\Gamma\Fl{(Z)})}{\gSC}} \ar@{=}[d]  & \MapC{U(X)}{U(\Gamma\Fl{(Z)})}{\gSC} \ar@{=}[d] \\
		\MapC{U(Y)}{Z}{\gSC}   \ar[r]_{\MapC{U(F)}{Z}{\gSC}}  & \MapC{U(X)}{Z}{\gSC}
	}
	\end{equation*}
	where the vertical isomorphisms follow from lemma \ref{simp-adj} and the vertical equalities follow from the observation that the unit map of the adjunction in context is the identity. The above commutative diagram of mapping spaces implies that $U(F)$ is a $JQ$-equivalence of $\gSs$ whenever $F$ is a $JQ$-equivalence of marked $\gSs$.
	
	Let $F:X \to Y$ be a $JQ$-equivalence of marked $\gSs$. By \cite[Prop. 8.1.17]{Hirchhorn} we can choose a   cofibrant- fibrant replacement functor $(R, r)$ in the strict $JQ$-model category structure on marked $\gSs$. This functor gives us the following commutative square:
	\begin{equation*}
	\xymatrix{
	R(X) \ar[r]^{R(F)} & R(Y) \\
	X \ar[u]^{r_X} \ar[r]_F & Y \ar[u]_{r_Y}
  }
	\end{equation*}
	The vertical maps are strict $JQ$-equivalences (acyclic cofibrations) of marked $\gSs$ and the top horizontal arrow is a $JQ$-equivalence of marked $\gSs$ between objects which are both cofibrant and fibrant in the strict $JQ$-model category of marked $\gSs$. It is easy to see that $U$ maps strict $JQ$-equivalences of marked $\gSs$ to strict $JQ$-equivalences of $\gSs$. Further, $U(R(F))$ is a (strict) $JQ$-equivalences of $\gSs$ from the arguments made earlier in the proof. Now the $2$ out of $3$ property of weak-equivalences in a model category tells us that $U(F)$ is a $JQ$-equivalences of $\gSs$.
\end{proof}

Now we state and prove the main result of this section:
\begin{thm}
	\label{Qu-eq-mar-ss}
 The Quillen pair of theorem \eqref{Qu-adj-mar-gs} is a Quillen equivalence.
 \end{thm}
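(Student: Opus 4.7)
The plan is to verify the standard characterization of Quillen equivalences: the Quillen pair $(\Gamma\Fl{(-)}, U)$ is a Quillen equivalence if and only if the derived unit is a $JQ$-equivalence on every cofibrant $\gS$ and the derived counit is a $JQ$-equivalence on every fibrant marked $\gS$. The four inputs I will combine are Theorem \ref{str-mar-QCat-QE} (strict-level Quillen equivalence), Lemma \ref{coun-eq} (the counit is a strict $JQ$-equivalence on any strict $JQ$-fibrant marked $\gS$), Lemma \ref{left-adj-pres-str-eq} ($\Gamma\Fl{(-)}$ preserves strict $JQ$-equivalences), and Proposition \ref{Rt-adj-pres-Eq} ($U$ preserves $JQ$-equivalences). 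A useful simplification is the observation that $U \circ \Gamma\Fl{(-)} = \id_{\gSC}$, because $\Fl{(-)}$ only adds degenerate markings which $U$ then discards; consequently the unit $\eta$ of the adjunction is the identity natural transformation.

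First I would handle the derived unit. Given a cofibrant $\gS$ $X$, fix a fibrant replacement $j:\Gamma\Fl{(X)} \to R$ in the $JQ$-model category of marked $\gSs$, so $j$ is a $JQ$-equivalence and $R$ is a coherently commutative monoidal marked quasi-category. Since $U(\Gamma\Fl{(X)}) = X$, the derived unit is the map $X \to U(R)$ obtained by applying $U$ to $j$, and Proposition \ref{Rt-adj-pres-Eq} immediately gives that this is a $JQ$-equivalence of $\gSs$.

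Next I would treat the derived counit. Let $Y$ be a coherently commutative monoidal marked quasi-category; then $Y$ is in particular strict $JQ$-fibrant. Since the cofibrations of the $JQ$-model category coincide with those of its strict counterpart on both sides, I may choose a cofibrant replacement $q:Q(U(Y)) \to U(Y)$ inside the strict $JQ$-model category of $\gSs$. The derived counit then factors as
\begin{equation*}
\Gamma\Fl{(Q(U(Y)))} \xrightarrow{\Gamma\Fl{(q)}} \Gamma\Fl{(U(Y))} \xrightarrow{\epsilon_Y} Y,
\end{equation*}
in which $\Gamma\Fl{(q)}$ is a strict $JQ$-equivalence of marked $\gSs$ by Lemma \ref{left-adj-pres-str-eq}, and $\epsilon_Y$ is a strict $JQ$-equivalence by Lemma \ref{coun-eq}. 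Since the $JQ$-model category of marked $\gSs$ is obtained from its strict counterpart by left Bousfield localization, every strict $JQ$-equivalence is automatically a $JQ$-equivalence; the composite is therefore a $JQ$-equivalence and the proof is complete.

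I do not expect a serious obstacle. The only point that really needs checking is that a strict cofibrant replacement genuinely serves as a cofibrant replacement in the localized model category, which is immediate because both model structures share the same cofibrations and hence the same class of cofibrant objects; the naturality square for $\epsilon$ then exhibits the displayed factorization of the derived counit on the nose.
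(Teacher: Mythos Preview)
Your proof is correct. Both your argument and the paper's hinge on the observation that the unit $\eta$ is the identity together with Proposition \ref{Rt-adj-pres-Eq}, but you organize things a bit differently: the paper checks the \emph{adjunct} criterion (a map $F:\Gamma\Fl{(X)} \to Y$ with $X$ cofibrant and $Y$ fibrant is a $JQ$-equivalence if and only if its adjunct $\phi(F)=U(F)$ is), whereas you verify the derived unit and derived counit separately. Your route has the advantage of making the counit side completely explicit via Lemmas \ref{left-adj-pres-str-eq} and \ref{coun-eq}, so you never need to argue that $U$ (or $\Gamma\Fl{(-)}$) \emph{reflects} weak equivalences; the paper's terse ``if and only if'' leans implicitly on such a reflection. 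The trade-off is that your argument invokes one more lemma and one more replacement, while the paper's is a one-line reduction once the unit is seen to be the identity.
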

 \begin{proof}
Let $X$ be a $Q$-cofibrant $\gS$ and $Y$ be a $JQ$-fibrant marked $\gS$. We will show that a map $F:\Gamma\Fl{(X)} \to Y$ is a $JQ$-equivalence of marked $\gSs$ if and only if its adjunct map $\phi(F):X \to U(Y)$ is a $JQ$-equivalence of $\gSs$. 
We consider the following commutative diagram provided by the adjunction $(\Gamma\Fl{(-)}, U)$
\begin{equation*}
\xymatrix{
U(\Gamma\Fl{(X)}) \ar[r]^{F} & U(Y) \\
X \ar[u]^{\eta_X} \ar[ru]_{\phi(F)}
}
\end{equation*}
where $\eta_X$ is the unit map. Now the result follows from this diagram, proposition \ref{Rt-adj-pres-Eq} and the observation that the unit $\eta_X$ is the identity map.

%
 	\end{proof}
 
 The next lemma is a consequence of the fact that the left adjoint functor $\Gamma\Fl{(-)}$ preserves fibrant objects, which follows easily from the above charaterization of coherently commutative monoidal marked $\gSs$. The following lemma tells us a strong property of the left adjoint $\Gamma\Fl{(-)}$ which is not posessed by every left Quillen functor of a Quillen adjunction:
 \begin{lem}
 	\label{left-adj-ref-pre}
 	The left adjoint functor $\Gamma\Fl{(-)}$ preserves weak-equivalences namely the $JQ$-equivalences. Further, the left adjoint $\Gamma\Fl{(-)}$ reflects weak equivalences whose codomain is fibrant. 
 \end{lem}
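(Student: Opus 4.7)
The plan is to treat preservation and reflection separately. For reflection, I would first observe that the composite $U \circ \Gamma\Fl{(-)}$ is the identity functor on $\gSC$, because $U \circ \Fl{(-)}$ is already the identity on $\sSets$ by the definition of $\Fl{(-)}$ and $U$. Combining this with Proposition \ref{Rt-adj-pres-Eq}, which states that $U$ sends $JQ$-equivalences of marked $\gSs$ to $JQ$-equivalences of $\gSs$, immediately gives that $\Gamma\Fl{(-)}$ reflects $JQ$-equivalences: if $\Gamma\Fl{(F)}$ is a $JQ$-equivalence of marked $\gSs$ then $U(\Gamma\Fl{(F)}) = F$ is a $JQ$-equivalence of $\gSs$. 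This argument requires no fibrancy hypothesis on the codomain, so it gives a (mild) strengthening of the stated lemma.

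For preservation, I would first handle the special case of a $JQ$-equivalence $F:X \to Y$ between cofibrant $\gSs$. Since $\Gamma\Fl{(-)}$ is a left Quillen functor (Theorem \ref{Qu-adj-mar-gs}), both $\Gamma\Fl{(X)}$ and $\Gamma\Fl{(Y)}$ are cofibrant marked $\gSs$, so by Proposition \ref{char-mar-CCME} it is enough to show that for every coherently commutative monoidal marked quasi-category $W$ the simplicial map $\MapC{\Gamma\Fl{(F)}}{W}{\gSCM}$ is an equivalence of quasi-categories. Lemma \ref{simp-adj} identifies this map with $\MapC{F}{U(W)}{\gSC}$. Because $U$ is a right Quillen functor for the strict model structures, $U(W)$ is strict $JQ$-fibrant, and by Proposition \ref{char-mar-CCMC} its Segal maps are categorical equivalences, so $U(W)$ is a coherently commutative monoidal quasi-category. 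Proposition \ref{char-CCME} then closes the cofibrant case.

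For arbitrary $F:X \to Y$, I would reduce to the cofibrant case by choosing cofibrant replacements $QX \to X$ and $QY \to Y$ in the strict model structure and a lift $QF:QX \to QY$ of $F$. Cofibrant replacements are strict $JQ$-equivalences and hence $JQ$-equivalences, so two-out-of-three shows that $QF$ is a $JQ$-equivalence whenever $F$ is one. By the previous case $\Gamma\Fl{(QF)}$ is a $JQ$-equivalence of marked $\gSs$, and Lemma \ref{left-adj-pres-str-eq} ensures that $\Gamma\Fl{(QX)} \to \Gamma\Fl{(X)}$ and $\Gamma\Fl{(QY)} \to \Gamma\Fl{(Y)}$ are strict $JQ$-equivalences (and hence $JQ$-equivalences) of marked $\gSs$. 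Two-out-of-three applied to the resulting commutative square then forces $\Gamma\Fl{(F)}$ to be a $JQ$-equivalence. The principal obstacle of the argument is precisely this cofibrant reduction, since the mapping-space characterization of $JQ$-equivalences in Proposition \ref{char-CCME} requires cofibrant domain and codomain; everything else is a formal consequence of the simplicial adjunction of Lemma \ref{simp-adj} and the local-object characterizations in Propositions \ref{char-mar-CCMC} and \ref{char-CCME}.
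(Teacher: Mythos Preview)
Your proposal is correct, and both halves differ from the paper's argument in instructive ways.

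For reflection, the paper factors $F$ as an acyclic $JQ$-cofibration $i(F)$ followed by a $JQ$-fibration $p(F)$, argues that $\Gamma\Fl{(P(F))}$ and $\Gamma\Fl{(Y)}$ are fibrant in the marked $JQ$-model, and then invokes Ken Brown for $U$. Your route is cleaner: since the unit of $(\Gamma\Fl{(-)},U)$ is the identity, $U\circ\Gamma\Fl{(-)}=\mathrm{id}_{\gSC}$, and Proposition~\ref{Rt-adj-pres-Eq} already says $U$ preserves \emph{all} $JQ$-equivalences, so reflection is immediate and, as you note, needs no fibrancy hypothesis on the codomain.

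For preservation, the paper again factors $F=p(F)\circ i(F)$ and uses that in a left Bousfield localization the acyclic fibrations coincide with the strict acyclic fibrations; hence $p(F)$ is a strict $JQ$-equivalence, which $\Gamma\Fl{(-)}$ preserves by Lemma~\ref{left-adj-pres-str-eq}, while $\Gamma\Fl{(i(F))}$ is an acyclic cofibration because $\Gamma\Fl{(-)}$ is left Quillen. Your approach---reduce to cofibrant objects, then use the simplicial adjunction of Lemma~\ref{simp-adj} to transport the mapping-space test of Proposition~\ref{char-mar-CCME} to $\MapC{F}{U(W)}{\gSC}$ and identify $U(W)$ as a coherently commutative monoidal quasi-category via Proposition~\ref{char-mar-CCMC}---is equally valid. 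The paper's factorization argument is a bit shorter and avoids the cofibrant-replacement bookkeeping; your argument makes the transfer of locality along $U$ more transparent and is more self-contained with respect to the characterizations established earlier.
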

 \begin{proof}
 	Let $F:X \to Y$ be a morphism of $\gSs$ then it can be factored as follows:
 	\begin{equation*}
 	\xymatrix{
 		& P(F) \ar[rd]^{p(F)} \\
 		X \ar[ru]^{i(F)} \ar[rr]_F && Y
 	}
 	\end{equation*}
 	where $i(F)$ is an acyclic $JQ$-cofibration and $p(F)$ is a $JQ$-fibration of $\gSs$. Let us first assume that $F$ is a $JQ$-equivalence of $\gSs$. Now the $2$ out of $3$ property of weak-equivalences in a model category tells us that $p(F)$ is a acyclic $JQ$-fibration which is the same as a strict acyclic $JQ$-fibration. Theorem \ref{Qu-adj-mar-gs} tells us that $\Gamma\Fl{(-)}$ is a left Quillen functor and therefore it preserves acyclic $JQ$-cofibrations. By lemma \ref{left-adj-pres-str-eq} it also preserves strict $JQ$-equivalences. In other words both $\Gamma\Fl{(i(F))}$ and $\Gamma\Fl{(p(F))}$ are $JQ$-equivalences of marked $\gSs$. Therefore their composite $\Gamma\Fl{(F)} = \Gamma\Fl{(i(F))} \circ \Gamma\Fl{(p(F))}$ is also a $JQ$-equivalence. Hence we have shown that the left adjoint preserves all weak-equivalences.
 	
 	Now let us assume that $Y$ is a $JQ$-fibrant marked $\gS$. Let us further assume that $\Gamma\Fl{(F)}:\Gamma\Fl{(X)} \to \Gamma\Fl{(Y)}$ is a $JQ$-equivalence of marked $\gSs$. Applying the left adjoint functor $\Gamma\Fl{(-)}$ to the above factorization of $F$ gives us the equality $\Gamma\Fl{(F)} = \Gamma\Fl{(i(F))} \circ \Gamma\Fl{(p(F))}$. Since $\Gamma\Fl{(-)}$ is a left Quillen functor therefore $\Gamma\Fl{(i(F))} $ is an acyclic $JQ$-cofibrations of marked $\gSs$. By assumption $\Gamma\Fl{(F)}$ is a weak-equivalence, therefore $\Gamma\Fl{(p(F))}$ is a $JQ$-equivalence of marked $\gSs$. By proposition \ref{char-mar-CCMC}, both $\Gamma\Fl{(P(F))}$ and $\Gamma\Fl{(Y)}$ are $JQ$-fibrant marked $\gSs$. The right adjoint $U$ is a right Quillen functor therefore it preserves weak-equivalences between fibrant objects, hence $U(\Gamma\Fl{(p(F))}) = p(F)$ is a $JQ$-equivalence of marked $\gSs$. Since $i(F)$ is an acyclic $JQ$-cofibration of $\gSs$ therefore $F = p(F) \circ i(F)$ is a $JQ$-equivalence of $\gSs$.

 \end{proof}

\section[Comparison with Symmetric monoidal quasi-categories]{Comparison with Symmetric monoidal quasi-categories}
\label{cocart-fib-model}

In this section we compare our functor based model for coherently commutative monoidal quasi-categories with a recent coCartesian fibration based model for similar objects which have been named \emph{symmetric monoidal} quasi-categories. We recall the following definition from \cite{JL2}:
\begin{df}
	\label{SM-QCat}
	A \emph{symmetric monoidal} quasi-category is a coCartesian fibration $p:X \to \nGop$ such that for each pair of objects $k^+, l^+ \in \gop$ the projection maps $\partition{k+l}{k}:(k+l)^+ \to k^+$ and $\partition{k+l}{l}:(k+l)^+ \to l^+$ induce morphisms of quasi-categories on the fibers
	\[
	 X((k+l)^+) \to X(k^+) \ \ \ \  \textit{and} \ \ \ \ X((k+l)^+) \to X(l^+)
	 \]
	 
	  which determine a categorical equivalence $X((k+l)^+) \to X(k^+) \times X(l^+)$.

	\end{df}
The main result of this section implies that the underlying coCartesian fibrations of symmetric monoidal quasi-categories can be \emph{rectified} (upto  equivalence) into an honest functor.
We recall that a coCartesian fibration $p:X \to \nGop$ can be viewed as an object $(\Nt{X}, p)$ in $\sSetsMG$ wherein the  marked edges of $\Nt{X}$ are exactly the $p$-coCartesian edges. An object is fibrant in the coCartesian model category $\ccMdl{\nGop}$ if and only if it is isomorphic to some $(\Nt{X}, p)$. In this section we will construct another model category structure on $\sSetsMG$, denoted $\sSetsMGSM$, in which an object is fibrant if and only if it is isomorphic to some  $(\Nt{X}, p)$ whose underlying coCartesian fibration represents a symmetric monoidal quasi-category. The main result of this section is to show that the relative nerve functor defined in \cite[Sec. 3.2.5]{JL} is a right Quillen functor of a Quillen equivalence between the $JQ$-model category of marked $\gSs$ $\gSCM$ and the model category $\sSetsMGSM$.

For each $k^+ \in \gop$ we define a simplicial set $\NElG{k}$ which is the nerve of the overcategory $\ovCatGen{k^+}{ \gop}$. This simplicial set is a quasi-category and is equipped with an obvious projection map
\[
p:\NElG{k} \to N(\gop).
\]
This map is a pseudo-fibration \emph{i.e.} a fibration in the Joyal model category. We regard this projection map as a morphism of marked simplicial sets as follows:
\[
p:\Sh{\NElG{k}}\to \Sh{N(\gop)}
\]
\begin{rem}
	We observe that $\mRN{\gn{k}} \cong \NElG{k}$.
	We will denote the quasi-category $\mRN{\gn{k}}$ by $\NElG{k}$. 
\end{rem}
\begin{nota}
	We will denote the value of the functor $\mRN{\bullet}$ on a marked $\gS$ $X$ either as $\mRN{X}$ or as $\mRN{\bullet}(X)$.
	\end{nota}
For each pair of objects $k^+, l^+ \in \gop$ we can define a map in the overcategory $\sSetsMG$ by the following commutative triangle:
\begin{equation*}
\xymatrix{
\Sh{\NElG{k}} \sqcup \Sh{\NElG{k}} \ar[rd] \ar[rr] &&  \Sh{\NElG{(k + l)}} \ar[ld] \\
& \Sh{N(\gop)}
}
\end{equation*}
where the diagonal maps are the obvious projections.
We denote the above morphism by $\Upsilon(k, l)$
We define a set of maps 
\[
\Upsilon\S = \lbrace \Upsilon(k, l): k, l \in \Nat \rbrace
\]
The category $\sSetsMG$ is tensored over $\sSets$ therefore we can define the following set of maps:
\begin{equation}
\Delta \times \Upsilon\S = \lbrace \Upsilon(k, l) \otimes \Delta[n]: n,k, l \in \Nat \rbrace
\end{equation}

 The following proposition is an easy consequence of the enrichment of the coCartesian model category $\sSetsMG$ over $\sSetsQ$ and the main result of appendix \ref{Cat-Local}.
\begin{prop}
	A coCartesian fibration $p:X \to N(\gop)$ viewed as an object of $\sSetsMG$, namely $(\Nt{X}, p)$,  is a $(\Delta \times \Upsilon\S)$-local object if and only if the following simplicial morphism of mapping spaces is a categorical equivalence of quasi-categories:
	\begin{multline*}
	\Flmap{\Upsilon(k, l)}{\Nt{X}}{\gop}:\Flmap{\NElG{(k + l)}}{\Nt{X}}{\gop} \to
	\Flmap{\NElG{k} \sqcup \NElG{l}}{\Nt{X}}{\gop} \cong \\ \Flmap{\NElG{k}}{\Nt{X}}{\gop} \times \Flmap{\NElG{l}}{\Nt{X}}{\gop}
	\end{multline*}
	for each pair $k^+, l^+ \in \gop$.
	\end{prop}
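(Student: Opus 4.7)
The plan is to reduce the $(\Delta \times \Upsilon\S)$-local condition to the stated categorical equivalence by combining the $\sSetsQ$-enrichment of the coCartesian model category with the characterization of weak categorical equivalences of quasi-categories in terms of the $J$-invariant of cotensors with $\Delta[n]$. First, I would unpack the definition: since $(\Nt{X}, p)$ is fibrant in the coCartesian model category (being the marked nerve of a coCartesian fibration), the $(\Delta \times \Upsilon\S)$-local condition reduces to the assertion that for every $n, k, l \geq 0$ the induced morphism on homotopy function complexes
\[
\HMapC{\Upsilon(k, l) \otimes \Delta[n]}{(\Nt{X}, p)}{\sSetsMG}
\]
is a homotopy equivalence of Kan complexes.

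Next, by Remark \ref{enrich-mar-sSets} and Remark \ref{Simp-Mdl-Cat}, $\sSetsMG$ is a $\sSetsQ$-enriched model category with function object $\Flmap{-}{-}{\gop}$, and every object of $\sSetsMG$ is cofibrant because cofibrations are exactly monomorphisms of the underlying simplicial sets. Hence Appendix \ref{Cat-Local} identifies the homotopy function complex into the fibrant object $(\Nt{X}, p)$ with $J\bigl(\Flmap{-}{\Nt{X}}{\gop}\bigr)$; in particular $\Flmap{\Upsilon(k, l)}{\Nt{X}}{\gop}$ is a morphism of quasi-categories. Using the tensor-cotensor adjunction recorded earlier in this section,
\[
\Flmap{\Fl{A} \times Y}{\Nt{X}}{\gop} \cong \bigl[A, \Flmap{Y}{\Nt{X}}{\gop}\bigr],
\]
applied with $A = \Delta[n]$, yields a natural isomorphism
\[
\Flmap{\Upsilon(k, l) \otimes \Delta[n]}{\Nt{X}}{\gop} \cong \Flmap{\Upsilon(k, l)}{\Nt{X}}{\gop}^{\Delta[n]}.
\]
So the local condition becomes: for each $n \geq 0$, the simplicial map $J\bigl(\Flmap{\Upsilon(k, l)}{\Nt{X}}{\gop}^{\Delta[n]}\bigr)$ is a homotopy equivalence of Kan complexes.

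Finally, I would invoke the characterization from Appendix \ref{Cat-Local} (namely Lemma \ref{char-lo-QCat-en}), which states that a morphism $f\colon A \to B$ between quasi-categories is a categorical equivalence if and only if $J(f^{\Delta[n]})$ is a homotopy equivalence of Kan complexes for every $n \geq 0$. This delivers the equivalence stated in the proposition. The target identification
\[
\Flmap{\NElG{k} \sqcup \NElG{l}}{\Nt{X}}{\gop} \cong \Flmap{\NElG{k}}{\Nt{X}}{\gop} \times \Flmap{\NElG{l}}{\Nt{X}}{\gop}
\]
is automatic, since $\Flmap{-}{\Nt{X}}{\gop}$ sends coproducts in the first variable to products. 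The only nontrivial input is the appendix's identification of the homotopy function complex of a $\sSetsQ$-enriched model category as the maximal Kan complex inside the function object of quasi-categories; all other steps are formal manipulations of the tensor-cotensor adjunction and of the appendix's characterization of categorical equivalences.
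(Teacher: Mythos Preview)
Your proposal is correct and follows essentially the same approach the paper indicates: the paper states the proposition is ``an easy consequence of the enrichment of the coCartesian model category $\sSetsMG$ over $\sSetsQ$ and the main result of appendix~\ref{Cat-Local},'' and you have simply unpacked those two ingredients. One minor streamlining: rather than reducing to the fact that $f$ is a categorical equivalence iff each $J(f^{\Delta[n]})$ is a homotopy equivalence, you can invoke Lemma~\ref{char-lo-QCat-en} directly, since it already packages the tensor-cotensor manipulation and that characterization into the statement that $[u,X]$ is a categorical equivalence iff $X$ is $(\Delta[n]\times u)$-local for all $n$.
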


\begin{nota}
	\label{fibers}
	The fiber of a coCartesian fibration $X \to S$ over $s \in S_0$ will be denoted by $X(s)$.
	\end{nota}
\begin{rem}
	\label{Eq-on-fibers}
	Each fibrant object in $\ccMdl{N(\gop)}$ is a coCartesian fibration.
	The above proposition encodes the idea that for each pair of objects $k^+, l^+ \in \gop$, the fiber over $(k+l)^+$, $X((k+l)^+)$ is equivalent (as quasi-categories) to the product of fibers over $k^+$ and $l^+$, $X(k^+) \times X(l^+)$.  We recall from \cite[Lemma 3.12]{HGC} that there is a categorical equivalence
	\[
	\Flmap{\NElG{k}}{\Nt{X}}{\gop} \overset{\Flmap{id_{k^+}}{\Nt{X}}{\gop}}\to \Flmap{\Delta[0]}{\Nt{X}}{\gop} \cong U(X(k^+))
	\]
	for each $k^+ \in \gop$.
	\end{rem}

%
%

 Next we define another model category structure on the overcategory $\sSetsMG$:
 
 \begin{thm}
 	\label{mdl-cat-SM-QCat}
 	There is a left proper, combinatorial model category structure on the category $\sSetsMG$, in which a map is a
 	\begin{enumerate}
 		\item cofibrations if it is a cofibration in $\ccMdl{\nGop}$ namely its underlying simplicial map is a monomorphism.
 		\item weak equivalences if it is a $(\Delta \times \Upsilon\S)$-local equivalence.
 		\item fibration if it has the right lifting property with respect to all maps which are simultaneously cofibrations and weak-equivalences.
 	\end{enumerate}
 	\end{thm}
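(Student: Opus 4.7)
The plan is to obtain this model structure as a left Bousfield localization of the coCartesian model category $\ccMdl{\nGop}$ along the (small) set of maps $\Delta \times \Upsilon\S$. Once the localization is produced, items (1), (2) and (3) are immediate from the standard description of a Bousfield localization, and left properness as well as combinatoriality will be inherited automatically.

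First I would verify the hypotheses of Theorem \ref{local-tool}. By Theorem \ref{CC-Mdl-Str} the coCartesian model category $\ccMdl{\nGop}$ is combinatorial and left proper. The collection $\Delta \times \Upsilon\S$ is indexed by triples $(n,k,l) \in \Nat^3$, hence is a small set of maps. Thus Theorem \ref{local-tool} applies and produces a left proper, combinatorial model category structure on the underlying category $\sSetsMG$ in which the cofibrations agree with those of $\ccMdl{\nGop}$ (i.e.\ underlying monomorphisms of simplicial sets), the weak equivalences are precisely the $(\Delta \times \Upsilon\S)$-local equivalences, and the fibrations are defined by the right lifting property against acyclic cofibrations. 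This matches conditions (1), (2) and (3) of the statement.

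For the claim about fibrant objects, I would use the characterization provided by Theorem \ref{local-tool}(4): an object is fibrant if and only if it is a fibrant $(\Delta \times \Upsilon\S)$-local object of $\ccMdl{\nGop}$. Using the enrichment of the coCartesian model category over $\sSetsQ$ (Remark \ref{Simp-Mdl-Cat}) and the identification of the homotopy function complex in $\ccMdl{\nGop}$ with $J(\Flmap{-}{-}{\gop})$ (via Appendix \ref{Cat-Local}), being $(\Delta \times \Upsilon\S)$-local reduces to the single simplicial map $\Flmap{\Upsilon(k,l)}{\Nt{X}}{\gop}$ being a categorical equivalence of quasi-categories for every $k,l$, exactly as in the proposition preceding the theorem. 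Combined with Remark \ref{Eq-on-fibers}, this says that the fibers satisfy the Segal condition of Definition \ref{SM-QCat}, so fibrant objects are (up to isomorphism) of the form $(\Nt{X},p)$ for $p$ a symmetric monoidal quasi-category.

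The only possible obstacle is verifying the hypotheses of Theorem \ref{local-tool} rigorously, in particular confirming that $\ccMdl{\nGop}$ is genuinely combinatorial and left proper as asserted in Theorem \ref{CC-Mdl-Str}; since that has already been established (citing \cite[Prop.\ 3.1.3.7]{JL}), no further technical work is required. Everything else is a formal consequence of Smith's localization theorem.
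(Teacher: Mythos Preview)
Your proposal is correct and matches the paper's approach exactly: the paper also obtains the model structure by applying Theorem \ref{local-tool} to left Bousfield localize the coCartesian model category $\ccMdl{\nGop}$ at the set $\Delta \times \Upsilon\S$. Your additional discussion of the fibrant objects goes beyond what the paper's proof of this particular theorem contains, but is consistent with the surrounding text.
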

 \begin{proof}
 	The desired model category is obtained by a left Bousfield localization of the coCartesian model category $\ccMdl{\nGop}$.
 	The model category structure follows from theorem \ref{local-tool} with the set of generators for the localization being the set $\Delta \times \Upsilon\S$.

 	\end{proof}
 
 \begin{nota}
 	\begin{sloppypar}
 	We will refer to the above model category as the model category of \emph{symmetric monoidal quasi-categories} and denote this model category by $\sSetsMGSM$.
 	\end{sloppypar}
 \end{nota}

\begin{thm}
	The adjoint pair $(\mRN{\bullet}, \mRNL{\bullet})$ is a Quillen pair between the model category of symmetric monoidal quasi-categories and the $JQ$-model category of marked $\gSs$.
\end{thm}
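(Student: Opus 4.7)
The plan is to apply the standard criterion for descending a Quillen adjunction to left Bousfield localizations. The theorem cited earlier from \cite[Thm.~3.2.5.18]{JL} supplies the Quillen equivalence $(\mRNL{\bullet}, \mRN{\bullet})$ between the strict model category $\ccMdl{\nGop}$ and the strict $JQ$-model structure on $\gSCM$. Both model categories appearing in the present theorem are left Bousfield localizations of these strict structures and share the same classes of cofibrations. Consequently, to show that $(\mRNL{\bullet}, \mRN{\bullet})$ remains a Quillen pair between the localizations, it suffices to verify that the right adjoint $\mRN{\bullet}$ sends fibrant objects of the $JQ$-model category of marked $\gSs$, namely coherently commutative monoidal marked quasi-categories, to fibrant objects of $\sSetsMGSM$, namely objects represented by symmetric monoidal quasi-categories.

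First I would fix a coherently commutative monoidal marked quasi-category $X$. Since $X$ is in particular fibrant in the strict $JQ$-model structure on $\gSCM$, the strict Quillen equivalence implies that $\mRN{X}$ is fibrant in $\ccMdl{\nGop}$ and hence isomorphic to some $(\Nt{Y}, p)$ for a coCartesian fibration $p:Y \to \nGop$. To conclude, I would check that $\mRN{X}$ is $(\Delta \times \Upsilon\S)$-local. By the characterization proposition stated just before Theorem \ref{mdl-cat-SM-QCat}, this reduces to showing that for each pair $k^+, l^+ \in \gop$ the induced morphism on fibers
\[
Y((k+l)^+) \to Y(k^+) \times Y(l^+)
\]
is a categorical equivalence of quasi-categories.

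The key input is the fiber identification: by Proposition \ref{Rel-Ner-isom-func-val} in its marked version, combined with the isomorphism recalled in Remark \ref{Eq-on-fibers}, the fiber of $p:Y \to \nGop$ over $k^+$ is canonically identified with the underlying simplicial set $U(X)(k^+)$ of $X(k^+)$. Functoriality of the relative nerve in $\gop$ further identifies the fiber projection above with the Segal map
\[
(U(X)(\partition{k+l}{k}), U(X)(\partition{k+l}{l})): U(X)((k+l)^+) \to U(X)(k^+) \times U(X)(l^+),
\]
which is a categorical equivalence of quasi-categories by Proposition \ref{char-mar-CCMC} applied to the coherently commutative monoidal marked quasi-category $X$. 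The main obstacle is precisely this fiber identification: one must unwind the construction of the marked relative nerve on a marked $\gS$ to verify that its fibers recover the underlying quasi-categories of the values of $X$ and that the projection induced by $\partition{k+l}{k}$ on fibers matches $U(X)(\partition{k+l}{k})$. Once this is in place, the Quillen pair assertion follows formally from the descent criterion for left Bousfield localizations.
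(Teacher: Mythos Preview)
Your proposal is correct and follows essentially the same approach as the paper: both arguments invoke the strict Quillen equivalence \cite[Prop.~3.2.5.18]{JL}, note that the two model structures in question are left Bousfield localizations with the same cofibrations, and reduce to checking that $\mRN{\bullet}$ preserves fibrant objects. The paper simply asserts this last fact (together with preservation of fibrations between fibrant objects, via \cite[Prop.~E.2.14]{AJ1}), whereas you spell it out explicitly by identifying the fibers of the relative nerve with the values $U(X)(k^+)$ and invoking Proposition~\ref{char-mar-CCMC}; this is exactly the content behind the paper's unproved assertion.
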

\begin{proof}
	In light of \cite[Prop. E.2.14]{AJ1}, it is sufficient to show that the left adjoint $\mRNL{\bullet}$ maps cofibrations to cofibrations and the right adjoint $\mRN{\bullet}$ maps fibrations between fibrant objects to fibrations. We recall from \cite[Prop. 3.2.5.18(2)]{JL} that the adjoint pair $(\mRNL{\bullet}, \mRN{\bullet})$ is a Quillen pair with respect to the coCartesian model category structure $\ccMdl{N(\gop)}$ and the strict $JQ$-model category structure on $\gSCM$. Since the two model category structures in context are left Bousfield localizations, which preserves cofibrations, therefore the left adjoint $\mRNL{\bullet}$ will (still) peserve cofibrations. The fibrations between fibrant objects in the $JQ$-model category are  strict $JQ$-fibrations, therefore the right adjoint $\mRN{\bullet}$ will map such a fibration to a fibration in  $\sSetsMGSM$. Further the right adjoint functor $\mRN{\bullet}$ maps fibrant objects in the $JQ$-model category to fibrant objects in $\sSetsMGSM$. This implies that the right adjoint preserves fibrations between fibrant objects.

\end{proof}

\begin{df}
	An object $Z$ in $\sSetsMG$ is called a \emph{local} object if the following composite map:
	\[
	Z \overset{\eta_Z} \to \mRN{\bullet}\left( \mRNL{\bullet}(Z) \right) \overset{r} \to \mRN{\bullet}\left( R\left( \mRNL{\bullet}(Z) \right) \right)
	\]
	is a weak-equivalence in $\ccMdl{\nGop}$, where $(R, r)$ is a fibrant replacement replacement functor in the $JQ$-model category and $\eta_Z$ is the unit map.
	\end{df}
\begin{rem}
	\label{inv-und-cc-Eq}
	The notion of a local object is invariant under coCartesian equivalences.
	\end{rem}
The following lemma will be useful in writing the proof of the main theorem of this section:
\begin{lem}
	\label{JQ-fib-cc-local}
	Each fibrant object $Z$ in $\sSetsMGSM$ is a local object.
	\end{lem}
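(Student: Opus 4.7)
The plan is to exhibit the composite in the definition of local object as a concatenation of two coCartesian equivalences, by routing through a strict $JQ$-fibrant replacement of $\mRNL{\bullet}(Z)$ in $\gSCM$.

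Let $Z$ be fibrant in $\sSetsMGSM$; in particular $Z$ is fibrant in $\ccMdl{\nGop}$, and up to isomorphism $Z = \Nt{X}$ for a coCartesian fibration $X \to \nGop$ representing a symmetric monoidal quasi-category. Let $R_{\mathrm{str}}$ denote a functorial strict $JQ$-fibrant replacement in $\gSCM$, and build $R(\mRNL{\bullet}(Z))$ as the $JQ$-fibrant replacement of $R_{\mathrm{str}}(\mRNL{\bullet}(Z))$, so that the composite in the definition of a local object factors as
\[
Z \overset{\eta_Z}{\to} \mRN{\bullet}(\mRNL{\bullet}(Z)) \to \mRN{\bullet}(R_{\mathrm{str}}(\mRNL{\bullet}(Z))) \to \mRN{\bullet}(R(\mRNL{\bullet}(Z))).
\]
Since every object of $\ccMdl{\nGop}$ is cofibrant, the Quillen equivalence $(\mRNL{\bullet}, \mRN{\bullet})$ from \cite[Thm.~3.2.5.18]{JL} between $\ccMdl{\nGop}$ and the strict $JQ$-model category of marked $\gSs$ guarantees that the composite of the first two arrows, namely $Z \to \mRN{\bullet}(R_{\mathrm{str}}(\mRNL{\bullet}(Z)))$, is a coCartesian equivalence.

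The key claim is that $R_{\mathrm{str}}(\mRNL{\bullet}(Z))$ is already a coherently commutative monoidal marked quasi-category, i.e., $JQ$-fibrant. To verify the Segal-type criterion of Proposition~\ref{char-mar-CCMC}, I would use that the derived unit above is a coCartesian equivalence between fibrant objects of $\ccMdl{\nGop}$, hence induces categorical equivalences on fibers over each $n^+ \in \gop$. Combining this with Proposition~\ref{Rel-Ner-isom-func-val}, which identifies the fiber of $\mRN{\bullet}(Y)$ over $n^+$ with $Y(n^+)$, yields a categorical equivalence $X(n^+) \simeq R_{\mathrm{str}}(\mRNL{\bullet}(Z))(n^+)$ natural in the structural morphisms of $\gop$. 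Since $Z$ represents a symmetric monoidal quasi-category (Definition~\ref{SM-QCat}), the projection $X((k+l)^+) \to X(k^+) \times X(l^+)$ is a categorical equivalence, and transporting along the fiber equivalences shows that the corresponding Segal map for $R_{\mathrm{str}}(\mRNL{\bullet}(Z))$ is also a categorical equivalence.

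Once this claim is in hand, the canonical map $R_{\mathrm{str}}(\mRNL{\bullet}(Z)) \to R(\mRNL{\bullet}(Z))$ is a $JQ$-equivalence between $JQ$-fibrant, and hence strictly $JQ$-fibrant, marked $\gSs$, so it is a strict $JQ$-equivalence. Applying the right Quillen functor $\mRN{\bullet}$ for the strict model structures and invoking Ken Brown's lemma, the third arrow of the factorization is a coCartesian equivalence, so the entire composite is one and $Z$ is local. The main obstacle is the fiberwise Segal argument: one must ensure that the abstract categorical equivalence on fibers actually implements the Segal maps induced by the structural morphisms of $\gop$, which requires tracking the compatibility of straightening with these morphisms.
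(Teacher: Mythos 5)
Your argument is correct and rests on the same two pillars as the paper's proof, namely the strict Quillen equivalence $(\mRNL{\bullet}, \mRN{\bullet})$ of \cite[Thm. 3.2.5.18]{JL} and the transfer of the Segal condition from $Z$ to $\mRNL{\bullet}(Z)$, which forces a $JQ$-fibrant replacement to be a strict one; but you organize it differently. The paper does not factor through a strict fibrant replacement: it asserts directly that each value $\mRNL{\bullet}(Z)(k^+)$ is equivalent to the fiber of $Z$ over $k^+$, concludes that $\mRNL{\bullet}(Z)$ satisfies the Segal condition, and then checks that the whole composite $Z \to \mRN{\bullet}(R(\mRNL{\bullet}(Z)))$ is a fiberwise categorical equivalence, citing \cite[Prop. 3.3.1.5]{JL} to upgrade this to a coCartesian equivalence. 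You instead split the composite at $R_{\mathrm{str}}(\mRNL{\bullet}(Z))$, obtain the first leg from the derived unit of the strict Quillen equivalence (legitimate, since every object of $\ccMdl{\nGop}$ is cofibrant) and the last leg from Ken Brown's lemma; this buys you a cleaner source for the fiber identification, which you deduce from the cited Quillen equivalence together with Proposition \ref{Rel-Ner-isom-func-val} rather than asserting it about the left adjoint, as the paper does. Two points to tighten. First, the ``main obstacle'' you flag, compatibility of the fiber equivalences with the Segal maps, is genuine but closes by a standard argument that the paper also leaves implicit: the derived unit is a map in $\sSetsMG$ between fibrant objects of $\ccMdl{\nGop}$, hence preserves marked (that is, coCartesian) edges, so it commutes up to natural equivalence with the pushforward functors along $\partition{k+l}{k}$ and $\partition{k+l}{l}$; under the identification of Proposition \ref{Rel-Ner-isom-func-val} these pushforwards in $\mRN{\bullet}(W)$ are, up to equivalence, the structural maps $W(\partition{k+l}{k})$ and $W(\partition{k+l}{l})$, so the Segal squares commute up to equivalence and two-out-of-three transfers the condition to $W = R_{\mathrm{str}}(\mRNL{\bullet}(Z))$. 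Second, you build $R(\mRNL{\bullet}(Z))$ as a particular replacement (the $JQ$-replacement of $R_{\mathrm{str}}(\mRNL{\bullet}(Z))$), whereas the definition of a local object fixes a fibrant replacement functor $(R,r)$; add the easy remark that locality is independent of this choice, since any two $JQ$-fibrant replacements differ by a $JQ$-equivalence between $JQ$-fibrant, hence strictly $JQ$-fibrant, marked $\gSs$, which is a strict $JQ$-equivalence and is therefore preserved by $\mRN{\bullet}$ by Ken Brown's lemma.
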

\begin{proof}
	In light of \cite[Prop. 3.1.4.1]{JL} we may assume that the underlying simplicial map $U(p):U(Z) \to U(\Sh{\nGop})$ is a coCartesian fibrations and the marked edges of $Z$ are the $p$-coCartesian edges \emph{i.e.}. $Z = \Nt{U(Z)}$.
	We begin by making the observation that for each $k^+ \in \gop$, the marked simplicial set $\mRNL{Z}(k^+)$ is equivalent to the fiber over $k^+$ of $Y$. Since $Z$ is fibrant therefore $\mRNL{Z}$ satisfies the Segal condition. Now a fibrant replacement in the strict $JQ$-model category will produce a coherently commutative monoidal marked quasi-category. It follows that any fibrant replacement of $\mRNL{Z}$ in the $JQ$-model category is a fibrant replacement in the strict $JQ$-model category. This gives us the following map in $\sSetsMG$:
	\[
	Z \overset{\eta_Z} \to \mRN{\bullet}\left( \mRNL{\bullet}(Z) \right) \overset{r} \to \mRN{\bullet}\left( R\left( \mRNL{\bullet}(Z) \right) \right).
	\]
	We recall that for each marked $\gS$ $X$, the fiber over each $k^+ \in \gop$ of $p:\mRN{X} \to \nGop$ is isomorphic to $X(k^+)$.
	This implies that the above map is a \emph{pointwise} equivalence \emph{i.e.} for each $k^+ \in \gop$ the above map induces a categorical equivalence of (marked) simplicial sets on the fiber over $k^+$. Since both $Z = \Nt{U(Z)}$ and $\mRN{\bullet}\left( R\left( \mRNL{\bullet}(Z) \right) \right) = \Nt{U(\mRN{\bullet}\left( R\left( \mRNL{\bullet}(Z) \right) \right))}$, it follows from \cite[Prop. 3.3.1.5]{JL} that the above map is a coCartesian equivalence. Now remark \ref{inv-und-cc-Eq} implies that $Z$ is a local object. 
	\end{proof}

Now we state and prove the main result of this section:
\begin{thm}
	\label{Eq-SM-CCM-Qcat}
	The Quillen pair $(\mRNL{\bullet},\mRN{\bullet})$ is a Quillen equivalence between the model category of symmetric monoidal quasi-categories and the $JQ$-model category of marked $\gSs$.
\end{thm}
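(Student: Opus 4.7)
The plan is to verify the Quillen equivalence directly by showing that both the derived unit and the derived counit are weak equivalences, leveraging the strict Quillen equivalence from \cite[Thm.~3.2.5.18]{JL} between $\ccMdl{\nGop}$ and the strict $JQ$-model category of marked $\gSs$, together with Lemma~\ref{JQ-fib-cc-local}. Throughout, I exploit the facts that the cofibrations of $\sSetsMGSM$ (resp.\ of the $JQ$-model of marked $\gSs$) agree with those of $\ccMdl{\nGop}$ (resp.\ of the strict $JQ$-model), and that the classes of trivial fibrations agree likewise, so cofibrant replacements can be computed in the unlocalized model categories.

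For the derived counit, let $Y$ be fibrant in the $JQ$-model of marked $\gSs$; then $Y$ is in particular strict-$JQ$-fibrant. Choose an acyclic fibration $QX \to \mRN{Y}$ in $\ccMdl{\nGop}$ with $QX$ cofibrant; by the preceding observation, this is equally a cofibrant replacement of $\mRN{Y}$ in $\sSetsMGSM$. The strict Quillen equivalence now yields that the adjunct counit $\mRNL{QX} \to Y$ is a strict $JQ$-equivalence of marked $\gSs$, hence in particular a $JQ$-equivalence, which is the desired derived counit in the localized setting.

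The derived unit is the main obstacle. Fix a cofibrant $X$ in $\sSetsMGSM$, a fibrant replacement $X \to LX$ in $\sSetsMGSM$, and a $JQ$-fibrant replacement $\mRNL{X} \to R\mRNL{X}$ in the $JQ$-model of marked $\gSs$. I will insert the derived unit $X \to \mRN{R\mRNL{X}}$ into the commutative square
\[
\xymatrix@C=20mm{
X \ar[r] \ar[d] & LX \ar[d] \\
\mRN{R\mRNL{X}} \ar[r] & \mRN{R\mRNL{LX}}
}
\]
and conclude by two-out-of-three. The top arrow is a $\sSetsMGSM$-weak equivalence by construction. For the right arrow, $LX$ is fibrant in $\sSetsMGSM$, so Lemma~\ref{JQ-fib-cc-local} tells us that $LX \to \mRN{R\mRNL{LX}}$ is a strict coCartesian equivalence, hence a weak equivalence in $\sSetsMGSM$. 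For the bottom arrow, $\mRNL{\bullet}$ is left Quillen for the localized pair and sends the weak equivalence $X \to LX$ between cofibrant objects to a $JQ$-equivalence $\mRNL{X} \to \mRNL{LX}$; passing to fibrant replacements yields a $JQ$-equivalence $R\mRNL{X} \to R\mRNL{LX}$ between $JQ$-fibrant (hence strict-$JQ$-fibrant) marked $\gSs$, which is therefore a strict $JQ$-equivalence, and applying $\mRN{\bullet}$ (right Quillen for the strict pair) produces a strict coCartesian equivalence. Two-out-of-three now forces the left column to be a weak equivalence in $\sSetsMGSM$, completing the verification.

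The key insight enabling this argument is Lemma~\ref{JQ-fib-cc-local}: it asserts that $\sSetsMGSM$-fibrant objects are already in the essential image of $\mRN{\bullet}\circ R\circ \mRNL{\bullet}$ up to strict coCartesian equivalence. Without this input one cannot directly relate the two distinct fibrant replacements of $\mRNL{X}$ (one implicit in $LX$, the other explicit via $R\mRNL{X}$), and the two-out-of-three step collapses. Everything else in the proof is formal manipulation of the two left Bousfield localizations and the strict Quillen equivalence.
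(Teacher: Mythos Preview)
Your proof is correct and follows essentially the same approach as the paper: both arguments verify the derived unit and counit directly, invoking the strict-level Quillen equivalence \cite[Thm.~3.2.5.18]{JL} for the counit and using Lemma~\ref{JQ-fib-cc-local} together with a naturality square and two-out-of-three for the unit. Your commutative square is simply the outer rectangle of the paper's $2\times 3$ diagram, with the intermediate column (the raw unit before fibrant replacement) suppressed.
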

\begin{proof}
	We will prove this theorem by verifying \cite[Prop. 1.3.13(b)]{Hovey}. We choose a fibrant replacement functor $(R, r)$ in the $JQ$-model category of marked $\gSs$. We will first show that the following composite map
	\[
	X \overset{\eta_X} \to \mRN{\bullet}\left(\mRNL{\bullet}(X)\right) \to \mRN{\bullet}(R\left(\mRNL{\bullet}(X)\right))
	\]
	is a weak equivalence in $\sSetsMGSM$, for each cofibrant object $X$ in $\sSetsMGSM$. We choose another fibrant replacement functor $(R^\otimes, r^\otimes)$ in $\sSetsMGSM$. Now we have the following commutative diagram in $\gSCM$:
	\begin{equation*}
	\xymatrix{
	X \ar[r]^{\eta_X \ \ \ \ \ \ \ } \ar[d]_{r^\otimes_X} &  \mRN{\bullet}\left(\mRNL{\bullet}(X)\right) \ar[r]^A \ar[d] & \mRN{\bullet}(R\left(\mRNL{\bullet}(X)\right)) \ar[d]^C  \\
	R^\otimes(X) \ar[r]_{\eta_{R^\otimes(X)} \ \ \ \ \ \ \ \ \ \ \ \ \  } & \mRN{\bullet}\left(\mRNL{\bullet}(R^\otimes(X))\right) \ar[r]_{B \ \ \ \ } & \mRN{\bullet}(R\left(\mRNL{\bullet}(R^\otimes(X))\right))
     }
	\end{equation*}
	where $A$ is the map $\mRN{\bullet}\left(r_{\left(\mRNL{\bullet}(X)\right)}\right)$, $B$ is the map $\mRN{\bullet}\left(r_{\left(\mRNL{\bullet}(R^\otimes(X))\right)}\right)$ and the downward map $C$ is $\mRN{\bullet}\left(R\left(\mRNL{\bullet}(r^\otimes(X))\right)\right)$.
	Since the object $R^\otimes(X)$ is a fibrant object in $\sSetsMGSM$, it follows from lemma \ref{JQ-fib-cc-local} that the bottom row of the above diagram is a coCartesian equivalence.
	Since $r^\otimes_X$ is an acyclic cofibration therefore the left Quillen functor $\mRNL{\bullet}$ preserves it. Thus $R(\mRNL{\bullet}(r^\otimes(X)))$ is a weak-equivalence between fibrant objects which the right Quillen functor $\mRN{\bullet}$ will preserve. Thus the rightmost vertical arrow is also a weak-equivalence in $\sSetsMGSM$. Now the $2$ out of $3$ property of weak-equivalences implies that the top row of the above diagram is a weak-equivalence in $\sSetsMGSM$.
	
	Next we choose a cofibrant replacement functor $(Q, q)$ in $\sSetsMGSM$. We will show that the following map is a weak-equivalence for each coherently commutative monoidal marked quasi-category $Y$:
	\begin{equation*}
	\mRNL{\bullet}(Q\left(\mRN{\bullet}(Y)\right)) \overset{G} \to \mRNL{\gop}(\mRN{\bullet}(Y)) \overset{\epsilon_Y}\to Y
	\end{equation*}
	where $G$ is the map $\mRNL{\bullet}(q_{\mRN{\bullet}(Y)})$. The Quillen equivalence \cite[Prop. 3.2.5.18(2)]{JL} implies that for each fibrant object $Y$, the counit map $\epsilon_Y$ is a coCartesian equivalence and hence a weak-equivalence in $\sSetsMGSM$. Since $q_{\mRN{\bullet}(Y)}$ is a weak-equivalence between fibrant objects in $\sSetsMGSM$ therefore it is a coCartesian equivalence which will be preserved by the left Quillen functor $\mRNL{\bullet}$. Thus we have shown that the above composite map is a weak equivalence in $\sSetsMGSM$.
	
	\end{proof}
Finally we give another characterization for symmetric monoidal quasi-categories. We recall a right Quillen functor $\mRNR:\sSetsMG \to \gSCM$ defined in the paper \cite{HGC}:
\[
\mRNR(X)(k^+) = [\NElG{k}, X]^+_{\gop}.
\]
The following corollary provides another characterization of fibrant objects. It is an easy consequence of the above theorem:

\begin{coro}
	\label{char-LO-objs}
	An coCartesian fibration $p:X \to N(\gop)$ viewed as an object of $\sSetsMGSM$ is a fibrant object if and only if the marked $\gS$ $\mRNR(X)$ is a coherently commutative monoidal marked quasi-category.
\end{coro}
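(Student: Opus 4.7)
The plan is to unwind both conditions into the same statement about mapping spaces, leveraging the fact that $\sSetsMGSM$ was defined explicitly as a left Bousfield localization of the coCartesian model category $\ccMdl{\nGop}$ along the set $\Delta\times\Upsilon\S$. Given a coCartesian fibration $p:X\to\nGop$, the object $(\Nt{X},p)$ is already fibrant in $\ccMdl{\nGop}$, so it is fibrant in $\sSetsMGSM$ if and only if it is a $(\Delta\times\Upsilon\S)$-local object. By the proposition immediately following the definition of $\Upsilon\S$, this is equivalent to requiring that the simplicial map
\[
\Flmap{\Upsilon(k,l)}{\Nt{X}}{\gop}:\Flmap{\NElG{(k+l)}}{\Nt{X}}{\gop}\to\Flmap{\NElG{k}}{\Nt{X}}{\gop}\times\Flmap{\NElG{l}}{\Nt{X}}{\gop}
\]
be a categorical equivalence of quasi-categories for every pair $k^+,l^+\in\gop$.

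Next, the functor $\mRNR$ is defined by the formula $\mRNR(X)(k^+)=[\NElG{k},X]^+_{\gop}$, so applying the forgetful functor $U$ yields the identification $U(\mRNR(X))(k^+)=\Flmap{\NElG{k}}{\Nt{X}}{\gop}$, and the structure maps of $U(\mRNR(X))$ induced by the inert projections $\partition{k+l}{k}$ and $\partition{k+l}{l}$ correspond under this identification to the two components of $\Flmap{\Upsilon(k,l)}{\Nt{X}}{\gop}$. Moreover, $\mRNR$ is a right Quillen functor from $\ccMdl{\nGop}$ to the strict $JQ$-model category of marked $\gSs$ (this is the content of the corresponding result in \cite{HGC}), so $\mRNR(X)$ is automatically strict $JQ$-fibrant whenever $X$ is a coCartesian fibration, which handles the fibrancy component of the notion of a coherently commutative monoidal marked quasi-category.

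Putting these pieces together via Proposition \ref{char-mar-CCMC}, which expresses the condition that $\mRNR(X)$ be a coherently commutative monoidal marked quasi-category as precisely the Segal condition on $U(\mRNR(X))$, one reads off the equivalence of the two conditions in the corollary: both reduce literally to the assertion that $\Flmap{\Upsilon(k,l)}{\Nt{X}}{\gop}$ is a categorical equivalence for all $k^+,l^+\in\gop$. The only real bookkeeping step is the explicit identification of $\Flmap{\NElG{k}}{\Nt{X}}{\gop}$ with $U(\mRNR(X))(k^+)$ together with the requisite structure maps, which follows directly from the defining formula for $\mRNR$ and the adjunction underlying the marked-internal-hom $[-,-]^+_{\gop}$. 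I do not anticipate any substantive obstacle beyond this routine unwinding.
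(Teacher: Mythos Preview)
Your argument is correct and more explicit than what the paper provides. The paper simply asserts the corollary as ``an easy consequence'' of the Quillen equivalence Theorem~\ref{Eq-SM-CCM-Qcat} without spelling out a proof, whereas you bypass that theorem entirely: you unwind the $(\Delta\times\Upsilon\S)$-locality condition for $(\Nt{X},p)$ in $\sSetsMGSM$ and match it, via the defining formula $\mRNR(X)(k^+)=[\NElG{k},X]^+_{\gop}$ and the identification $U([\,{-}\,,\,{-}\,]^+_{\gop})=\Flmap{-}{-}{\gop}$, directly against the Segal condition of Proposition~\ref{char-mar-CCMC}. Your route is genuinely more elementary---it uses only the definitions of the two localizations and of $\mRNR$, together with the fact (from \cite{HGC}) that $\mRNR$ is right Quillen for the \emph{strict} structures to secure strict $JQ$-fibrancy of $\mRNR(X)$. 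The paper's gesture toward the Quillen equivalence would presumably require additionally relating $\mRNR$ to the adjoint pair $(\mRNL{\bullet},\mRN{\bullet})$, a step your approach avoids.
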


 \appendix
 \section{Quillen Bifunctors}
\label{en-of-GS}
  The objective of this section is to recall the notion
  of \emph{Quillen Bifunctors}. In order to do so, we
  begin with the definition of a \emph{two variable adjunction}:
 
 \begin{df}
 \label{2-var-adjunction}
 Suppose $\C$, $\D$ and $\E$ are categories. An \emph{adjunction
 of two variables} from $\C \times \D$ to $\E$ is a quintuple
 $\left(\otimes, \bhom_\C, \map_\C, \phi, \psi \right)$, where
 \[
  \otimes:\C \times \D \to \E, \ \ \ \ \bhom_\C:\D^{op} \times \E \to \C,
  \ \text{and} \ \ \ \ \map_\C:\C^{op} \times \E \to \D
 \]
 are functors and $\phi$, $\psi$ are the following natural transformations
 \[
  \C(C, \bhom_\C(D, E)) \overset{\phi^{-1}}{\underset{\cong} \to} \E(\C \otimes D, E)
  \overset{\psi}{\underset{\cong} \to} \D(D, \map_\C(C, E)).
 \]
  \end{df}
 The following definition is based on Quillen's $SM7$ axiom,
 see \cite{Quillen}.
 
 \begin{df}
  \label{Q-adj-2-var}
  Given model categories $\C$, $\D$ and $\E$, an adjunction
  of two variables, $\left(\otimes, \bhom_\C, \map_\C, \phi, \psi \right):
  \C \times \D \to \E$, is called a \emph{Quillen adjunction of two variables}, if, given a
  cofibration $f:U \to V$ in $\C$ and a cofibration $g:W \to X$ in $\D$,
  the induced map
  \[
   f \Box g:(V \otimes W) \underset{U \otimes W} \coprod (U \otimes X) \to V \otimes X
  \]
  is a cofibration in $\E$ that is trivial if either $f$ or $g$ is.
  We will refer to the left adjoint of a Quillen adjunction of two
  variables as a \emph{Quillen bifunctor}.

 \end{df}
 The following lemma provides three equivalent characterizations
 of the notion of a Quillen bifunctor. These will be useful in this paper
 in establishing enriched model category structures.
 \begin{lem}\cite[Lemma 4.2.2]{Hovey}
  \label{Q-bifunctor-char}
   Given model categories $\C$, $\D$ and $\E$, an adjunction
  of two variables, $\left(\otimes, \bhom_\C, \map_\C, \phi, \psi \right):
  \C \times \D \to \E$. Then the following conditions are equivalent:
  \begin{enumerate}
   \item [(1)] $\otimes:\C \times \D \to \E$ is a Quillen bifunctor.
   
   \item[(2)] Given a cofibration $g:W \to X$ in $\D$ and a fibration
   $p:Y \to Z$ in $\E$, the induced map
   \[
    \bhom_\C^{\Box}(g, p):\bhom_\C(X, Y) \to \bhom_\C(X, Z)
    \underset{\bhom_\C(W, Z)}\times \bhom_\C(W, Y)
   \]
   is a fibration in $\C$ that is trivial if either $g$ or $p$ is a
   weak equivalence in their respective model categories.
   
   \item[(3)] Given a cofibration $f:U \to V$ in $\C$ and a fibration
   $p:Y \to Z$ in $\E$, the induced map
   \[
    \map_\C^{\Box}(f, p):\map_\C(V, Y) \to \map_\C(V, Z) \underset{\map_\C(W, Z)}\times \map_\C(W, Y)
   \]
   is a fibration in $\C$ that is trivial if either $f$ or $p$ is a
   weak equivalence in their respective model categories.

  \end{enumerate}

 \end{lem}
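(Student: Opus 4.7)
The plan is to prove the three conditions equivalent by a symmetric adjointness argument: each of the three maps $f \Box g$, $\bhom_\C^\Box(g,p)$, $\map_\C^\Box(f,p)$ controls the same lifting problem, viewed through one of the three functorial slots of the two-variable adjunction. Once this correspondence of lifting problems is set up, the equivalence (1) $\Leftrightarrow$ (2) $\Leftrightarrow$ (3) drops out by invoking the definition of (acyclic) fibration as a right lifting property against (acyclic) cofibrations.

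First I would fix maps $f:U\to V$ in $\C$, $g:W\to X$ in $\D$, $p:Y\to Z$ in $\E$, and use the natural bijections $\phi,\psi$ of Definition \ref{2-var-adjunction} to establish a three-way bijection between commutative squares
\[
\xymatrix{
(V \otimes W) \underset{U \otimes W}\coprod (U \otimes X) \ar[r]\ar[d]_{f \Box g} & Y \ar[d]^{p} \\
V \otimes X \ar[r] & Z
}
\]
commutative squares with left edge $g$ and right edge $\bhom_\C^\Box(f,p)$, and commutative squares with left edge $f$ and right edge $\map_\C^\Box(g,p)$. The pushout in the source of $f \Box g$ corresponds, under the adjunctions, to the pullbacks appearing in the targets of $\bhom_\C^\Box(f,p)$ and $\map_\C^\Box(g,p)$; this is the content of the universal property once one unwinds $\phi$ and $\psi$. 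Lifts in any one of these three squares correspond bijectively to lifts in the other two.

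Next, to prove (1) $\Rightarrow$ (2), assume $\otimes$ is a Quillen bifunctor. Given $g:W\to X$ cofibration in $\D$ and $p:Y\to Z$ fibration in $\E$, to show $\bhom_\C^\Box(g,p)$ is a fibration I must verify it has the RLP against every acyclic cofibration $f:U\to V$ in $\C$. By the three-way correspondence, this is equivalent to $f \Box g$ having the LLP against $p$; since $f$ is acyclic, hypothesis (1) makes $f \Box g$ an acyclic cofibration in $\E$, and $p$ is a fibration, so the lift exists. The "trivial" clauses of (2) are handled by the same argument: if $g$ is an acyclic cofibration, then (1) says $f\Box g$ is an acyclic cofibration for every cofibration $f$, so $\bhom_\C^\Box(g,p)$ has the RLP against every cofibration in $\C$, hence is an acyclic fibration; and if $p$ is an acyclic fibration, the same bookkeeping with $f\Box g$ a plain cofibration does the job.

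The implication (2) $\Rightarrow$ (1) is the reverse of the above: one tests whether $f \Box g$ is a cofibration (resp.\ acyclic cofibration) by checking the LLP against an arbitrary (resp.\ acyclic) fibration $p$, and transports this to the RLP of $g$ against $\bhom_\C^\Box(f,p)$ via the bijection, which holds by (2). The equivalence (1) $\Leftrightarrow$ (3) is entirely symmetric, using $\psi$ in place of $\phi$ and swapping the roles of $\C$ and $\D$. The only subtlety — not really an obstacle, but the step that must be written carefully — is the identification of the source of $f\Box g$ with the appropriate pullbacks under the adjunctions; this is a routine but slightly fiddly computation with the two natural transformations $\phi,\psi$ and the fact that $\otimes$ preserves colimits in each variable (which follows from it being a left adjoint in each variable separately). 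Once that identification is in place, everything else is formal.
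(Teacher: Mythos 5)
Your argument is correct and is the standard adjointness proof of this lemma; the paper gives no proof of its own, simply citing \cite[Lemma 4.2.2]{Hovey}, whose proof is exactly this three-way correspondence of lifting problems transported through $\phi$ and $\psi$. One notational slip to fix in your setup paragraph: since $\bhom_\C^{\Box}(g,p)$ is a map in $\C$, the corresponding lifting squares have left edge $f$ (not $g$) and right edge $\bhom_\C^{\Box}(g,p)$ (not $\bhom_\C^{\Box}(f,p)$), and dually the squares for $\map_\C^{\Box}(f,p)$ have left edge $g$ --- your substantive paragraphs already use the correct pairing, so this is cosmetic.
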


\section{On local objects in a model category enriched over quasicategories}
\label{Cat-Local}
A very detailed sketch of this appendix was provided to the author by Andre Joyal. This appendix contains some key results which have made this research possible.
\subsection{Introduction}
A model category $E$ is enriched over quasi-categories if the category $E$ is simplicial, tensored and cotensored, and the functor $[- ; -]: E^{op} \times$E$\to \sSets$ is a Quillen functor of two variables, where $\sSets = (\sSets, Qcat)$ is the model structure for quasi-categories. The purpose of this appendix is to introduce the notion of local object with respect to a map in a model category enriched over quasi-categories.
\subsection{Preliminaries}
Recall that a Quillen model structure on a category $E$ is determined by its class of cofibrations together with its class of fibrant objects. For examples, the category of simplicial sets $\sSets = [\Delta^{op},Set]$ admits two model structures in which the cofibrations are the monomorphisms: the fibrant objects are the Kan complexes in one, and they are the quasi-categories in the other. We call the former the model structure for Kan complexes and the latter the model structure for quasi-categories. We shall denote them respectively by $(\sSets, Kan)$ and $(\sSets, QCat)$.
Recall that a simplicial category is a category enriched over simplicial sets. There is a notion of simplicial functor between simplicial categories, and a notion of strong natural transformation between simplicial functors. If $E = (E, [- ; -])$ is a simplicial category, then so is the category $\SFunc{E}{\sSets}$ of simplicial functors $E \to \sSets$. A simplicial functor $F :$E$\to \sSets$ isomorphic to a simplicial functor $[A, -] :$E$\to \sSets$ is said to be \emph{representable}. Recall Yoneda lemma for simplicial functors: if $F :$E$\to \sSets$ is a simplicial functor and $A \in E$, then the map $y : Nat([A,-],F) \to F(A)_0$ defined by putting $y(\alpha) = \alpha (A)(\unit{A})$ for a strong natural transformation $\alpha : [A, -] \to F$ is bijective. The simplicial functor $F$ is said to be represented by a pair $(A, a)$, with $a \in F(A)_0$ , if the unique strong natural transformation $\alpha : [A, -] \to F$ such that $\alpha(A) (\unit{A} ) = a$ is invertible. We say that a simplicial category $E = (E,[-,-])$ is \emph{tensored by} $\Delta$ if the simplicial functor
\[
[A, -]^{\Delta[n]} :E \to \sSets
\]
is representable (by an object denoted $\Delta[n] \times A$) for every object $A \in E$ and every $n \ge 0$. If $E$
has finite colimits and is tensored by $\Delta$, then it is tensored by finite simplicial sets: the simplicial
 functor
is representable (by an object $K \times A$) for every object $A \in E$ and every finite simplicial set $K$.
Dually, we say that a simplicial category $E$ is \emph{cotensored by} $\Delta$ if the simplicial functor 
\[
[-, X]^{\Delta[n]} : E^{op} \to \sSets
\]
is representable (by an object denoted $X^{ \Delta[n]}$) for every object $X \in E$ and every $n \ge 0$. If $E$ has finite limits and is cotensored by $\Delta$, then it is cotensored by finite simplicial sets: the simplicial functor
\[
[-,X]^K :E^{op} \to \sSets
\]
is representable by an object $X^K$ for every object $X \in E$ and every finite simplicial set $K$.
Recall that a model category $E$ is said to be simplicial if the category $E$ is simplicial, tensored and cotensored by 
$\Delta$ and the functor $[-, -] :E^{op} \times E \to \sSets$ is a Quillen functor of two variables, where $\sSets = (sSet,Kan)$. The last condition implies that if $A \in E$ is cofibrant and $X \in E$ is fibrant, then the simplicial set $[A,X]$ is a Kan complex. For this reason, we shall say that a simplicial model category is enriched over Kan complexes.
\begin{df}
\label{enrich-QCat}
We shall say that a model category $E$ is enriched over quasi-categories if the category $E$ is simplicial, tensored and cotensored over $\Delta$ and the functor $[-,-] : E^{op} \times E \to \sSets$ is a Quillen functor of two variables, where $\sSets = (\sSets, Qcat)$.
\end{df}
    The last condition of definition \ref{enrich-QCat} implies that if $A \in E$ is cofibrant and $X \in E$ is fibrant, then the simplicial set $[A,X]$ is a quasi-category.
If $E$ is a category with finite limits than so is the category $[\Delta^{op},E]$ of simplicial objects in E. The evaluation functor $ev_0 : [\Delta^{op},E] \to E$ defined by putting $ev_0(X) = X_0$ has a left adjoint $sk^0$ and a right adjoint $cosk^0$. If $A \in E$, then $sk^0(A)_n = A$ and $cosk^0(A)_n = A^{[n]} = A^{n+1}$ for every $n \ge 0$ (the simplicial object $sk^0(A)$ is the constant functor $cA : \Delta^{op} \to E$ with values $A$). The category $[\Delta^{op}, E]$ is simplicial. If $X, Y \in [\Delta^{op}, E]$ then we have
\[
[X,Y]n =Nat(X \circ p_n,Y \circ p_n)
\]
for every $n \ge 0$, where $p_n$ is the forgetful functor $\Delta/[n] \to \Delta$. If $A \in E$ and $cA := sk^0(A)$, then
\[
 [cA,X]_n = E(A,X_n)
 \]
for every $n \ge 0$. The simplicial category $[\Delta^{op},E]$ is tensored and cotensored by $\Delta$. By construction, if $X \in [\Delta^{op},E]$ and $K$ is a finite simplicial set, then
\begin{equation*}
(K \times X)_n = k_n \times X_n \ \ \ \ (X^K)_n = \int_{[k] \to [n]} X_k^{K_k}
\end{equation*}
The object $M_n(X) := (X \partial \Delta[n])_n$ is called the $n-th$ matching object of $X$. If $S(n)$ denotes the
poset of non-empty proper subsets of $[n]$ then we have
\[
M_n(X) =  \underset{S(n)} \liminj X \circ s(n)
\]
where $s(n) : S(n) \to \Delta$ is the canonical functor. From the inclusion $\partial \Delta[n] \subset \Delta[n]$ we obtain a map $X^{\Delta[n]} \to X^{\partial \Delta[n]}$ hence also a map $X_n \to M_n(X)$.

If $E$ is a model category, then a map $f : X \to Y$ in $[\Delta^{op}, E]$ is called a \emph{Reedy fibration} if the map $X_n \to Y_n \underset{M_n(Y )} \times  M_n(X)$ obtained from the square
\begin{equation*}
\xymatrix{
X_n \ar[d]_{f_n}  \ar[r] & M_n(X) \ar[d]^{M_n(f)} \\
Y_n \ar[r] & Mn(Y)
}
\end{equation*}
 is a fibration for every $n \ge 0$. There is then a model structure on the category $[\Delta^{op},E]$ called the \emph{Reedy model structure} whose fibrations are the Reedy fibrations and whose weak equivalences are the level-wise weak equivalences. A simplicial object $X : \Delta^{op} \to E$ is Reedy fibrant if and only if the canonical map $X_n \to M_n(X)$ is a fibration for every $n \ge 0$. The Reedy model structure is simplicial. If $X$ is Reedy fibrant and $A \in E$ then the simplicial set $E(A,X) := [cA,X]$ is a Kan complex.
 \begin{df}
Let $E$ be a model category. Then a simplicial object $Z : \Delta^{op} \to E$ is called a frame (see \cite{Hovey}) if the following two conditions are satisfied:
\begin{enumerate}
\item $Z$ is Reedy fibrant;
 \item $Z(f)$ is a weak equivalence for every map $f \in \Delta$.
\end{enumerate}
\end{df}
The frame $Z$ is cofibrant if the canonical map $sk^0Z_0 \to Z$ is a cofibration in the Reedy model structure. A \emph{coresolution} of an object $X \in E$ is a frame $Fr(X) : \Delta^{op} \to E$ equipped with a weak equivalence $X \to Fr(X)_0$. Every fibrant object $X \in E$ has a (cofibrant) coresolution $Fr(X) : \Delta^{op} \to E$ with $Fr(X)_0 = X$.
Let $E$ be a model category. If $A,X \in E$, then the homotopy mapping space $\HMapC{A}{X}{E}$ is defined to be the simplicial set
\begin{equation*}
\HMapC{A}{X}{E} = E(A^c,Fr(X))
\end{equation*}
where $A^c \to A$ is a cofibrant replacement of $A$ and $Fr(X)$ is a coresolution of $X$. The simplicial set $E(A^c,Fr(X))$ is a Kan complex and it is homotopy unique. If $E$is enriched over Kan complexes, if $A$ is cofibrant and $X$ is fibrant, then the simplicial set $\HMapC{A}{X}{E}$ is homotopy equivalent to the simplicial set $[A,X]$ (see \cite{Hirchhorn}).
\subsection{Function spaces for quasi-categories}
If $C$ is a category, we shall denote by $J(C)$ the sub-category of invertible arrows in $C$. The sub-category $J(C)$ is the largest sub-groupoid of $C$. More generally, if $X$ is a quasi-category, we shall denote by $J(X)$ the largest sub- Kan complex of $X$. By construction, we have a pullback square
\begin{equation*}
\xymatrix@C=20mm{
J(X) \ar[r] \ar[d] & X \ar[d]^h \\
J(\tau_1(X)) \ar[r] & \tau_1(X)
}
\end{equation*}
where $\tau_1(X)$ is the fundamental category of $X$ and $h$ is the canonical map. The function space $X^A$ is a quasi-category for any simplicial set $X$. We shall denote by $X^{(A)}$ the full sub-simplicial set of $X^A$ whose vertices are the maps $A \to X$ that factor through the inclusion $J(X) \subseteq X$. The simplicial set $X^{(\Delta[1])}$ is a path-space for $X$.

\begin{lem}
\label{cores-QCat}
 If $X$ is a quasi-category, then the simplicial object $P(X) \in [\Delta^{op},sSet]$ defined by putting $P(X)_n = X^{(\Delta[n])}$ for every $n \ge 0$ is a cofibrant coresolution of $X$.
\end{lem}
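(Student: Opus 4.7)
The plan is to verify the four defining conditions of a cofibrant coresolution: (i) $P(X)_0 = X$ with the identity as the required weak equivalence $X \to P(X)_0$; (ii) Reedy fibrancy of $P(X)$; (iii) that every simplicial structure map $P(X)(f)$ is a categorical equivalence; and (iv) that $sk^0 P(X)_0 \to P(X)$ is a Reedy cofibration. Condition (i) is immediate, since $X^{\Delta[0]} = X$ and every vertex of $X$ trivially factors through $J(X) \subseteq X$.

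For (ii), the first step is to identify the matching object as $M_n(P(X)) = X^{(\partial\Delta[n])}$, extending $X^{(-)}$ to arbitrary simplicial sets in the obvious way. The key observation is that $K \mapsto X^{(K)}$, viewed as a functor $\sSets^{op} \to \sSets$, is continuous: it is obtained by pulling back $X^K$ along the inclusion $cosk^0(\sSets(K, J(X))) \hookrightarrow cosk^0(\sSets(K, X))$, and both the power $X^{(-)}$ and $cosk^0$ are continuous in their respective inputs. Applying continuity to the colimit presentation $\partial\Delta[n] = \operatorname{colim}_{S(n)} \Delta^J$ yields the identification. Reedy fibrancy then reduces to showing that the restriction $X^{(\Delta[n])} \to X^{(\partial\Delta[n])}$ is a fibration in $\sSetsQ$, which follows from the same pullback description together with the fact that $X^{\Delta[n]} \to X^{\partial\Delta[n]}$ is an isofibration (cartesian closedness of $\sSetsQ$ applied to the fibrant $X$). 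By induction on $n$, each $P(X)_n$ is then itself a quasi-category.

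For (iii), by the two-out-of-three property and the cosimplicial identities it suffices to treat a single vertex inclusion $i:[0] \to [n]$. The induced restriction $p:X^{(\Delta[n])} \to X$ admits a section $s: X \to X^{(\Delta[n])}$ sending $x$ to the degenerate (hence invertible) constant $n$-chain at $x$, with $p \circ s = \mathrm{id}_X$. It remains to construct a natural equivalence between $s \circ p$ and $\mathrm{id}_{X^{(\Delta[n])}}$. The plan is to build a homotopy $\Delta[1] \times X^{(\Delta[n])} \to X^{(\Delta[n])}$ from $s \circ p$ to the identity, using the composition and inversion structure of the Kan complex $J(X)$: given an invertible $n$-chain $f:\Delta[n] \to X$, the invertibility of its edges provides a canonical contracting deformation to the constant chain at $f(0)$, and these deformations assemble coherently by solving appropriate inner-horn and $J$-lifting problems in the quasi-category $X^{(\Delta[n])}$ supplied by (ii).

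Finally, condition (iv) reduces to showing that for each $n$ the canonical latching map $L_n(P(X)) \to P(X)_n = X^{(\Delta[n])}$ is a monomorphism of simplicial sets, since the latching object of the constant simplicial object $sk^0 P(X)_0$ is $X$ and the pushout appearing in the Reedy cofibration condition collapses accordingly. The latching object $L_n(P(X))$ is the sub-simplicial set of $X^{(\Delta[n])}$ spanned by the degenerate $n$-chains (images of the degeneracy maps from the lower $P(X)_m$), and the inclusion is a monomorphism by the standard Reedy structure on $\Delta$. The main obstacle will be the coherent contraction in (iii): producing a deformation that simultaneously straightens all $n$ invertible edges of every chain across the whole of $X^{(\Delta[n])}$ requires delicate use of the quasi-category structure on $X$ together with the Kan complex structure on $J(X)$, and constitutes the technical heart of the coresolution construction.
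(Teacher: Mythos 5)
The paper never actually proves this lemma (it sits in the appendix sketched by Joyal, with the path-space property of $X^{(\Delta[1])}$ quoted just beforehand), so your proposal has to stand on its own. Your skeleton is the right one: (i) is immediate; the identification $M_n(P(X))\cong X^{(\partial\Delta[n])}$ via continuity of $K\mapsto X^{(K)}$, presented as the pullback of $X^K$ along $cosk^0$ of the vertex sets, is correct; and cofibrancy does reduce to injectivity of the latching maps $L_nP(X)\to P(X)_n$, which holds by the Eilenberg--Zilber lemma (every bisimplicial set is Reedy cofibrant) rather than by anything about the Reedy structure on $\Delta$ per se. The problems are concentrated exactly where the lemma has content. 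In (ii), your justification that $X^{(\Delta[n])}\to X^{(\partial\Delta[n])}$ is a Joyal fibration is wrong as stated: this map is a pullback of $X^{\Delta[n]}\to X^{\partial\Delta[n]}$ only for $n\ge 2$ (when every edge of $\Delta[n]$ lies in the boundary); for $n=1$ the pullback over $X^{(\partial\Delta[1])}=X\times X$ is all of $X^{\Delta[1]}$, not $X^{(\Delta[1])}$, and the $n=1$ case is precisely Joyal's path-object theorem. What is true, and needs an argument, is that the restriction of the isofibration $X^{\Delta[n]}\to X^{\partial\Delta[n]}$ to these full subcomplexes is again an isofibration: inner-horn lifting is inherited automatically (all vertices of $\Lambda^k[m]$, $m\ge 2$, lie in the horn), and for the lifting of invertible edges you must check that an invertible edge of $X^{\Delta[n]}$ whose source lies in $X^{(\Delta[n])}$ has target there too (evaluate the naturality squares in $\tau_1(X)$: the components of the equivalence are invertible, so each edge of the target simplex is conjugate to an invertible one).

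The more serious gap is (iii). The reduction to the vertex evaluations $p:X^{(\Delta[n])}\to X$ via two-out-of-three is fine, and the section by constant simplices is fine, but the coherent contraction of $sp$ to the identity is exactly the step you defer as ``the technical heart''; as written, the central assertion of the lemma is never proved, and note also that a bare $\Delta[1]$-homotopy does not suffice -- you need its components to be invertible and you need the (nontrivial) fact that a pointwise-invertible natural transformation between quasi-category maps is an equivalence. A clean way to close both gaps at once is to reduce to $n=1$: restriction along the spine $I[n]\subseteq\Delta[n]$ gives a trivial fibration $X^{(\Delta[n])}\to X^{(\Delta[1])}\times_X\cdots\times_X X^{(\Delta[1])}$ (spine inclusions are inner anodyne, and the lifts stay in the full subcomplexes), after which both the Reedy fibrancy in degree $1$ and the equivalences $P(X)(f)$ follow from the path-space property the paper records just before the lemma, namely that $X\to X^{(\Delta[1])}\to X\times X$ factors the diagonal as a categorical equivalence followed by a Joyal fibration, with both projections $X^{(\Delta[1])}\to X$ trivial fibrations. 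Either invoke that theorem of Joyal explicitly or prove the corresponding extension property for invertible edges; without one of these, the proposal is incomplete at its key point.
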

\begin{prop}
 If $X$ is a quasi-category and $A$ is a simplicial set, then
 \[ 
 \HMapC{A}{X}{\sSets} \simeq J(X^A).
 \]
 \end{prop}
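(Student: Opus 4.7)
The plan is to establish a natural simplicial isomorphism $\sSets(A, P(X)) \cong J(X^{A})$, from which the stated homotopy equivalence follows immediately.

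First, I observe that since cofibrations in $\sSetsQ$ are the monomorphisms, every simplicial set is cofibrant, so I may take $A^{c} = A$. By Lemma \ref{cores-QCat}, the simplicial object $P(X)$ with $P(X)_{n} = X^{(\Delta[n])}$ is a cofibrant coresolution of $X$. Using the formula $[cA, Y]_{n} = \sSets(A, Y_{n})$ recalled in the preliminaries (with the enriching category being $\sSets$ itself), the homotopy mapping space $\HMapC{A}{X}{\sSets}$ is modeled by the simplicial set $\sSets(A, P(X))$ whose set of $n$-simplices is $\sSets(A, X^{(\Delta[n])})$.

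The core of the argument is to unwind both sides in each degree via the exponential adjunction. On the $P(X)$ side, since $X^{(\Delta[n])}$ is the full sub-simplicial set of $X^{\Delta[n]}$ on those vertices $\Delta[n] \to X$ that factor through $J(X) \subseteq X$, a map $A \to X^{(\Delta[n])}$ corresponds to a map $f \colon A \times \Delta[n] \to X$ such that for every vertex $a \in A_{0}$ the restriction $f|_{\{a\} \times \Delta[n]}$ factors through $J(X)$. The pullback defining $J(X)$ tells me that a simplex of $X$ lies in $J(X)$ exactly when all of its edges are invertible in $X$, so this reduces to the condition that $f$ sends every edge of $\{a\} \times \Delta[n]$ to an invertible edge of $X$, for each $a \in A_{0}$. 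On the $J(X^{A})$ side, the analogous pullback identifies $J(X^{A})_{n}$ with the $n$-simplices of $X^{A}$ all of whose edges are invertible; unwinding the exponential adjunction and invoking the pointwise criterion for invertibility in $X^{A}$ (an edge of $X^{A}$ is invertible iff its restriction to each vertex of $A$ is invertible in $X$), this yields exactly the same set of maps $f \colon A \times \Delta[n] \to X$. Both bijections are natural in $[n] \in \Delta^{op}$, so they assemble into the desired simplicial isomorphism.

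The main obstacle is the invocation of the pointwise criterion for invertibility of edges in the function quasi-category $X^{A}$. This is a well-known but nontrivial consequence of Joyal's theory of equivalences in quasi-categories and is the only input that is not a direct unwinding of the definitions. The remainder of the argument is a routine use of the exponential adjunction together with the characterization of simplices in $J(-)$ provided by its defining pullback, and of the cofibrant coresolution supplied by Lemma \ref{cores-QCat}.
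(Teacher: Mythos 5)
Your proof is correct and follows essentially the same route as the paper: identify $\HMapC{A}{X}{\sSets}_n$ with $\sSets(A, X^{(\Delta[n])})$ via the cofibrant coresolution $P(X)$ of Lemma \ref{cores-QCat}, then match this degreewise with $J(X^A)_n$ through the exponential adjunction. The only difference is that you make explicit the pointwise criterion for invertibility of edges in $X^{A}$, which the paper's one-line transposition claim (``$f$ factors through $X^{(\Delta[n])}$ iff $f^{t}$ factors through $J(X^{A})$'') uses implicitly.
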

 \begin{proof}
  Proof. By Lemma \ref{cores-QCat}, we have
  \[
\HMapC{A}{X}{\sSets}_n = \sSets(A,P(X)_n) = \sSets(A,X^{(\Delta[n])})
\]
But a map $f : A \to X^{\Delta[n]}$ factors through the inclusion $X^{(\Delta[n])} \subseteq X^{\Delta[n]}$ if and only if the
transposed map $f^t : \Delta[n] \to X^A$ factors through the inclusion $J(XA) \subseteq X^A$. Thus, $\sSets(A, X^{(\Delta[n])}) = \sSets(\Delta[n], J(X^A)) = J(X^A)_n$
and this shows that $\HMapC{A}{X}{\sSets} \simeq J(X^A)$. 
\end{proof}
\begin{prop}
\label{func-sSet-char}
 Let $E$ be a model category enriched over quasi-categories. If $A \in E$ is cofibrant and $X \in E$ is fibrant, then the function space $\HMapC{A}{X}{E}$ is equivalent to the Kan complex $J([A,X])$.
 \end{prop}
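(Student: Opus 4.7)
The plan is to adapt the computation of the preceding proposition (the $E=\sSets$ case) to the enriched setting by constructing, inside $E$, an analog of the coresolution $P(X)_n = X^{(\Delta[n])}$ provided by Lemma \ref{cores-QCat}. Evaluated against the cofibrant object $A$, this coresolution should reproduce $P([A,X])$, and hence yield $\HMapC{A}{X}{E} \cong J([A,X])$.

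First I would construct, for each $n \ge 0$, an object $X^{(\Delta[n])}$ of $E$ together with a canonical map $X^{(\Delta[n])} \to X^{\Delta[n]}$ into the cotensor, characterized by the universal property that for every cofibrant $B \in E$ the induced arrow
\[
[B, X^{(\Delta[n])}] \to [B, X^{\Delta[n]}] \cong [B,X]^{\Delta[n]}
\]
is precisely the inclusion $[B,X]^{(\Delta[n])} \hookrightarrow [B,X]^{\Delta[n]}$. Since the inclusion on the right is cut out by a pullback with $\tau_1$ (as in the pullback square defining $J$ recalled at the start of the subsection on function spaces), I would realize $X^{(\Delta[n])}$ as a pullback in $E$ of cotensors of $X$ along $\Delta[n]$, its fundamental category $\tau_1(\Delta[n])$, and the subcategory of invertible arrows inside the latter. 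Existence of such pullbacks uses only finite limits in $E$ together with the cotensor structure.

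Second, I would verify that the simplicial object $Fr(X)_\bullet := X^{(\Delta[\bullet])}$ is a cofibrant coresolution of $X$, with $Fr(X)_0 = X^{(\Delta[0])} = X$. Reedy fibrancy reduces, via the enriched Quillen-bifunctor hypothesis and testing against a cofibrant $B$, to the fact that the matching maps $[B,X]^{(\Delta[n])} \to [B,X]^{(\partial \Delta[n])}$ are Kan fibrations between Kan complexes; the weak-equivalence condition on the structure maps of $Fr(X)_\bullet$ follows by applying $[B,-]$ and invoking Lemma \ref{cores-QCat} for the quasi-category $[B,X]$.

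Third, since $A$ is already cofibrant we may take $A^c = A$, and then
\[
\HMapC{A}{X}{E}_n = E(A, Fr(X)_n) = [A, X^{(\Delta[n])}]_0 = [A,X]^{(\Delta[n])}_0 = J([A,X])_n,
\]
naturally in $[n] \in \Delta^{op}$, yielding an isomorphism $\HMapC{A}{X}{E} \cong J([A,X])$, which is strictly stronger than the stated equivalence. The main obstacle is the first step: in $\sSets$ the object $X^{(\Delta[n])}$ is a tautological sub-simplicial-set, but in a general enriched $E$ its existence has to be extracted from the pullback presentation of $J$ together with the cotensor structure, and one must check that this pullback in $E$ genuinely represents $B \mapsto [B,X]^{(\Delta[n])}_0$. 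Once this representability is secured, the remainder of the argument is a mechanical translation of Lemma \ref{cores-QCat} across the enriched Yoneda lemma.
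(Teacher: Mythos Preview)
Your approach has a genuine gap in the very first step, precisely where you yourself flag the ``main obstacle.'' The pullback you propose---cotensoring $X$ with $\Delta[n]$, with $N\tau_1(\Delta[n])$, and with $NJ(\tau_1(\Delta[n]))$---does not represent $B \mapsto [B,X]^{(\Delta[n])}$. Indeed $\tau_1(\Delta[n]) = [n]$ already, so $X^{N\tau_1(\Delta[n])} = X^{\Delta[n]}$, while $J([n])$ is the discrete category on $n+1$ objects; your pullback therefore collapses to $X^{n+1}$, and $[B,X^{n+1}] = [B,X]^{n+1}$, not $[B,X]^{(\Delta[n])}$. The point is that the pullback square defining $J(Y)$ uses $\tau_1(Y)$, which depends on the quasi-category $Y$ itself, not on a fixed simplicial set one could cotensor against. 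The condition ``every edge of $\Delta[n] \to [B,X]$ is an equivalence'' is not cut out by a finite limit of cotensors of $X$ by fixed simplicial sets, so there is no object of $E$ whose mapping-in from $B$ is literally $[B,X]^{(\Delta[n])}$.

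The paper sidesteps this entirely by working in the opposite direction: rather than building a special coresolution of $X$ inside $E$, it takes an \emph{arbitrary} coresolution $Fr(X)$ of $X$ in $E$, pushes it forward along the right Quillen functor $[A,-]$ to obtain a coresolution $[A,Fr(X)]$ of the quasi-category $[A,X]$ in $(\sSets,\mathrm{QCat})$, and then compares the latter with the cofibrant coresolution $P([A,X])$ from Lemma~\ref{cores-QCat}. Cofibrancy of $P([A,X])$ yields a levelwise categorical equivalence $P([A,X]) \to [A,Fr(X)]$, and applying $\sSets(1,-)$ gives the weak homotopy equivalence $J([A,X]) \simeq E(A,Fr(X)) = \HMapC{A}{X}{E}$. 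This only produces an equivalence (not the isomorphism you were aiming for), which is all the proposition claims; but it requires no delicate construction inside $E$.
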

\begin{proof}
 The functor $[A, -]:$E$\to \sSets$ is a right Quillen functor with values in the model category $(\sSets,Qcat)$, since $A$ is cofibrant. It thus takes a coresolution $Fr(X)$ of $X \in E$ to a coresolution $[A,Fr(X)]$ of the quasi-category $[A,X]$. We have $\HMapC{1}{[A,X]}{\sSets} \simeq \sSets(1,P([A,X]))$, since the simplicial set $1$ is cofibrant. By Lemma \ref{cores-QCat}, the quasi-category $[A,X]$ has a cofibrant coresolution $P([A,X])$. We have $\HMapC{1}{[A,X]}{\sSets} \simeq \sSets(1,[A,Fr(X)])$, since the simplicial set $1$ is cofibrant. There exists a level-wise weak categorical equivalence $\phi : P([A,X]) \to [A,Fr(X)]$ such that the map $\phi(0)$ is the identity, since the coresolution $P([A,X])$ is cofibrant. Moreover, the map
 \[
  \sSets(1,\phi) : \sSets(1,P([A,X])) \to \sSets(1,[A,Fr(X)])
  \]
   is a weak homotopy equivalence. But we have
$\sSets(1,P([A,X])) = J([A,X])$ by lemma \ref{cores-QCat}. Moreover, $\sSets(1,[A,Fr(X)]) = E(A,Fr(X))$, since
\begin{multline*}
\sSets(1,[A,Fr(X)])_n = \sSets(1,[A,Fr(X)]_n) = \\
 \sSets(1,[A,Fr(X)_n]) = E(A, Fr(X)_n)
\end{multline*}
for every $n \ge 0$. 
\end{proof}
\subsection{Local objects}
Let $\Sigma$ be a set of maps in a model category $E$. An object $X \in E$ is said to be $\Sigma$-local if the map
\begin{equation*}
\HMapC{u}{X}{E} : \HMapC{A'} {X}{E} \to \HMapC{A}{ X}{E}
\end{equation*}
is a homotopy equivalence for every map $u:A \to A'$ in $\Sigma$. Notice that if an object $X$ is weakly equivalent to a $\Sigma$-local object, then $X$ is $\Sigma$-local. If the model category $E$ is simplicial (=enriched over Kan complexes) and $\Sigma$ is a set of maps between cofibrant objects, then a fibrant object $X \in E$ is $\Sigma$-local iff the map
$[u,X] : [A',X] \to [A,X]$ is a homotopy equivalence for every map $u : A \to A'$ in $\Sigma$.

\begin{lem}
\label{char-lo-QCat-en}
 Let $E$ be a model category enriched over quasi-categories. If $u : A \to B$ is a map between cofibrant objects, then the following conditions on a fibrant object $X \in E$ are equivalent
 \begin{enumerate}
\item the map $[u, X] : [B, X] \to [A, X]$ is a categorical equivalence;
\item the object $X$ is local with respect to the map $\Delta[n]\times u : \Delta[n] \times A \to \Delta[n] \times B$ for every
$n \ge 0$.
\end{enumerate}
\end{lem}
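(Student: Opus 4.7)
The plan is to reduce both conditions to statements about the Kan complexes $J([A,X]^{\Delta[n]})$ and $J([B,X]^{\Delta[n]})$ using Proposition \ref{func-sSet-char}, and then to invoke the standard characterization of weak categorical equivalences between quasi-categories due to Joyal.

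First I would set up the relevant identifications. Since $E$ is tensored over $\sSets$ and the enrichment is a Quillen adjunction of two variables with respect to $\sSetsQ$, applying the pushout-product axiom to $\emptyset \hookrightarrow \Delta[n]$ and $\emptyset \hookrightarrow A$ shows that $\Delta[n]\times A$ is cofibrant (similarly $\Delta[n]\times B$), for every $n\ge 0$. Moreover, the enrichment adjunction yields a natural isomorphism
\[
[\Delta[n]\times A,X]\cong [A,X]^{\Delta[n]},
\]
and likewise for $B$, under which $[\Delta[n]\times u, X]$ is identified with $[u,X]^{\Delta[n]}$.

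Next, I would apply Proposition \ref{func-sSet-char} to the cofibrant object $\Delta[n]\times A$ (resp.\ $\Delta[n]\times B$) and the fibrant object $X$ to obtain homotopy equivalences
\[
\HMapC{\Delta[n]\times A}{X}{E} \simeq J([\Delta[n]\times A,X]) \cong J([A,X]^{\Delta[n]}),
\]
natural in the map $u$. Under these equivalences, condition (2) translates into the assertion that
\[
J\bigl([u,X]^{\Delta[n]}\bigr) : J\bigl([B,X]^{\Delta[n]}\bigr) \to J\bigl([A,X]^{\Delta[n]}\bigr)
\]
is a weak homotopy equivalence of Kan complexes for every $n\ge 0$. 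Note that $[A,X]$ and $[B,X]$ are quasi-categories, since the enrichment is a Quillen bifunctor into $\sSetsQ$ and $A,B$ are cofibrant while $X$ is fibrant.

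To finish, I would invoke Joyal's characterization: a map $f:P\to Q$ between quasi-categories is a weak categorical equivalence if and only if $J(f^{\Delta[n]})$ is a weak homotopy equivalence for every $n\ge 0$. Applying this to $f=[u,X]$ gives the equivalence of (1) and (2). The main obstacle is this characterization itself—the nontrivial direction, that levelwise weak equivalence of the Kan complexes $J(f^{\Delta[n]})$ forces $f$ to be a weak categorical equivalence, is a foundational result about the Joyal model structure (reflecting the fact that the Joyal weak equivalences between fibrant objects are detected by the enriched hom into $\sSetsQ$), and I would cite it from Joyal's work rather than reprove it here.
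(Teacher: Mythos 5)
Your proposal is correct and follows essentially the same route as the paper's own proof: identify $[\Delta[n]\times u, X]$ with $[u,X]^{\Delta[n]}$, use Proposition \ref{func-sSet-char} (with $\Delta[n]\times A$, $\Delta[n]\times B$ cofibrant and $X$ fibrant) to rewrite the homotopy mapping spaces as $J\bigl([u,X]^{\Delta[n]}\bigr)$, and conclude by Joyal's criterion that a map of quasi-categories $f$ is a categorical equivalence if and only if $J(f^{\Delta[n]})$ is a homotopy equivalence for all $n\ge 0$ (the paper cites Theorem 4.11 and Proposition 4.10 of \cite{JT2} for this, plus the fact that $J$ sends categorical equivalences to homotopy equivalences for the easy direction). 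Your explicit pushout-product justification of the cofibrancy of $\Delta[n]\times A$ is a minor addition the paper leaves implicit; otherwise the two arguments coincide.
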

\begin{proof}
 (1 $\Rightarrow$ 2) The map $[u, X]^{\Delta[n]} : [B, X]^{\Delta[n]} \to [A, X]^{\Delta[n]}$ is a categorical equivalence for every $n \ge 0$, since the map $[u,X]$ is a categorical equivalence by the hypothesis. Hence the map $[\Delta[n] \times u, X]$ is a categorical equivalence, since $[\Delta[n] \times u, X ] = [u, X ]^{\Delta[n]}$. It follows that the map 
 $J([\Delta[n] \times u, X ])$ is a homotopy equivalence, since the functor $J : QCat \to Kan$ takes a
categorical equivalences to homotopy equivalences by \cite{AJ1}. But we have $\HMapC {\Delta[n] \times u}{X}{E} = J([\Delta[n] \times u, X])$ by Proposition \ref{func-sSet-char}, since $\Delta[n] \times u$ is a map between cofibrant objects. Hence the map $\HMapC{\Delta[n] \times u}{X}{E}$ is a homotopy equivalence for every $n \ge 0$. This shows that the object $X$ is local with respect to the map $\Delta[n] \times u$ for every $n \ge 0$. 

(1 $\Leftarrow$ 2) By Proposition \ref{func-sSet-char}, we have $\HMapC{\Delta[n] \times u}{X}{E} = J([\Delta[n]\times u,X])$ for every $n \ge 0$, since $\Delta[n] \times u$ is a map between cofibrant objects. Hence the map $J([\Delta[n] \times u, X])$ is a homotopy equivalence for every $n \ge 0$. But we have $[\Delta[n] \times u, X ] = [u, X]^{\Delta[n]}$. Hence the map $J([u, X ]^{\Delta[n]})$ is a homotopy equivalence for every $n \ge 0$. By Theorem 4.11 and Proposition 4.10 of \cite{JT2} a map between quasi-categories $f : U \to V$ is a categorical equivalence if and only if the map $J(f^{\Delta[n]}) : J(U^{\Delta[n]}) \to J(V^{ \Delta[n]})$ is a homotopy equivalence for every $n \ge 0$. This shows that the map $[u, X]$ is a categorical equivalence.
\end{proof}



 \section[The strict JQ-model category structure on normalized $\gS$]{The strict JQ-model category of normalized $\gSs$}
\label{str-mdl-gcat}
A normalized $\gS$ is a functor $X:\gop \to \pSSets$ such that $X(0^+) = 1$.
The category of all (small) normalized $\gSs$ $\pGSC$ is the category whose objects are  normalized $\gSs$. This category is defined by the following equilizer diagram in $\Cat$:

\begin{equation*}
\xymatrix{
	\pGSC \ar[r] & [\gop; \pSSets] \ar[rd] \ar[rr]^{[0;\pSSets]} && [1; \pSSets] \\
	&& 1 \ar[ru]_0
}
\end{equation*}
 where $[0;\pSSets]$ is the functor which precomposes a functor in $[\gop; \pSSets]$ with the unique (pointed) functor $1 \to \gop$ whose image is $0^+ \in \gop$ and the upward diagonal functor $0$ maps the terminal category $1$ to the identity functor on the terminal simplicial sets. In this appendix we will describe a model category structure on the category
$\pGSC$ which is a version of the \emph{projective model category structure} for the category of
basepoint preserving functors.
\begin{df}
 A morphism $F:X \to Y$ of $\gSs$ is called
 \begin{enumerate}
 \item a \emph{strict JQ equivalence} of normalized $\gSs$  if it is degreewise weak equivalence in the Joyal model category structure on $\pSSets$ \emph{i.e.} $F(n^+):X(n^+) \to Y(n^+)$ is a weak categorical equivalence of (pointed) simplicial sets.
 
 \item a \emph{strict JQ fibration}  of normalized $\gSs$ if it is degreewise a fibration in the Joyal model category structure on $\pSSets$ \emph{i.e.} $F(n^+):X(n^+) \to Y(n^+)$ is an pseudo-fibration of (pointed) simplicial sets.
 
 \item a \emph{JQ-cofibration}  of normalized $\gSs$ if it has the left lifting property with respect to
 all morphisms which are both strict JQ weak equivalence and strict JQ fibrations of normalized $\gSs$.

  \end{enumerate}
 \end{df}
A map of $\gSs$ $F:X \to Y$ is a strict acyclic fibration of normalized $\gSs$ if and only if it has the right lifting property with respect to all maps in the set
 \begin{equation}
 \label{gen-cof}
 \I = \lbrace \TensP{\gn{n}}{\partial_0}{\pSSets},
 \pTensP{\gn{n}}{\partial_1}{\pSSets},  \pTensP{\gn{n}}{\partial_2}{\pSSets} \mid \forall n \in Ob(\N) \rbrace.
 \end{equation}
 We further observe that $F$ is a strict fibration if and only it has the right lifting property with respect to all maps in the set
 \begin{equation}
 \label{gen-acyc-cof}
 \J = \lbrace \pTensP{\gn{n}}{i_0}{\pSSets}, \pTensP{\gn{n}}{i_1}{\pSSets} \mid \forall n \in Ob(\N) \rbrace_{}.
 \end{equation}
 \begin{rem}
 \label{funct.-fact-sys-gCat}
 The category $\gS$ is a locally presentable category. The small object argument (for presentable categories), \cite[Proposition A.1.2.5]{JL}, implies that the sets $\I$ and $\J$ provide two functorial factorization systems on the category $\gS$. The first one factors each morphism in $\gS$ into a composite of
 a strict cofibration of $\gSs$ followed by a strict acyclic fibration of $\gSs$ and the second functorial factorization system factors each morphism in $\gS$ into a composite of
 a strict acyclic cofibration of $\gSs$ followed by a strict fibration of $\gSs$. 
\end{rem}
 The main aim of this subsection is to construct a model category structure on the category
 of all $\gSs$ $\gS$ whose three classes of morphisms are the ones defined
 above. We will refer to this model structure as the \emph{strict model category structure} on $\gS$
 and will refer to the model category as the \emph{strict model category} of $\gSs$.
 \begin{thm}
 \label{str-mdl-cat-gCat}
 Strict JQ equivalences, strict JQ fibrations and JQ-cofibrations of $\gSs$ provide the category $\pGSC$ with a combinatorial model category structure.
 \end{thm}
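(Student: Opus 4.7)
The plan is to apply the standard recognition theorem for cofibrantly generated model categories on a locally presentable category (\emph{e.g.} \cite[Prop. A.3.3.2]{JL} or its dual, Hirschhorn's Theorem 11.3.1), using the sets $\I$ and $\J$ from \eqref{gen-cof} and \eqref{gen-acyc-cof} as the generating cofibrations and generating acyclic cofibrations respectively. This parallels the proof of Theorem \ref{strict-cat-Q-model}, with the extra wrinkle that one must track the normalization condition $X(0^+)=\ast$.

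First I would check that $\pGSC$ is locally presentable: $\pSSets$ is locally presentable, hence so is $[\gop,\pSSets]$, and the normalization condition is an accessibility condition (it picks out the functors sending the zero object of $\gop$ to the zero object of $\pSSets$), so $\pGSC$ is a reflective accessible subcategory, hence locally presentable. Note that each $\gn{n}=\gop(n^+,-)$ is itself normalized (since $\gop(n^+,0^+)=\ast$), so every object appearing as a domain or codomain of a map in $\I\cup\J$ already lies in $\pGSC$; moreover colimits of normalized $\gSs$ along such maps remain normalized, so the small object argument of Remark \ref{funct.-fact-sys-gCat} stays inside $\pGSC$ and produces the two functorial factorization systems. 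Next, by the tensor-hom adjunction $\pGSC(\gn{n}\wedge K,X)\cong \pSSets(K,X(n^+))$, a morphism $F:X\to Y$ in $\pGSC$ has the right lifting property against $\I$ (resp.\ $\J$) if and only if $F(n^+):X(n^+)\to Y(n^+)$ is an acyclic fibration (resp.\ fibration) in the Joyal model category of pointed simplicial sets for every $n^+\in\gop$; \emph{i.e.} $\I\text{-inj}$ is exactly the class of strict JQ-fibrations that are simultaneously strict JQ-equivalences, and $\J\text{-inj}$ is exactly the class of strict JQ-fibrations.

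Having these two characterizations, I would invoke the recognition theorem: the axioms (2-out-of-3 and closure under retracts for strict JQ-equivalences) are inherited degreewise from the Joyal model category on $\pSSets$; $\I\text{-cell}\subseteq \J\text{-cell}\cap\ $strict JQ-equivalences follows because each map in $\J$ is a level-wise acyclic cofibration in $\pSSets_Q$ and acyclic cofibrations are closed under pushout and transfinite composition; and $\I\text{-cof}\cap W = \J\text{-cof}$ follows from the lifting characterizations above together with the retract argument. This gives the required cofibrantly generated model structure, and combinatoriality is automatic since $\I$ and $\J$ are small and $\pGSC$ is locally presentable.

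The only place where a genuine obstacle could hide is the compatibility between the small object argument and the normalization condition, \emph{i.e.}\ the assertion that (transfinite) pushouts of maps in $\I\cup\J$ along morphisms of normalized $\gSs$ remain normalized. This is handled by the observation that each generator has normalized domain and codomain and that evaluation at $0^+$ commutes with colimits in $[\gop,\pSSets]$, so pushouts of identities on $\ast$ along identities on $\ast$ produce $\ast$ again; hence all cellular constructions preserve $\pGSC$, and the recognition theorem applies verbatim.
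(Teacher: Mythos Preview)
Your approach is essentially the same as the paper's: both invoke the small object argument on the sets $\I$ and $\J$ inside the locally presentable category $\pGSC$, observe that the cellular constructions stay normalized, and inherit the 2-out-of-3 and retract axioms degreewise from the Joyal model structure; the paper phrases the latter as ``CM(2)--CM(4) are inherited from the projective model structure on $[\gop,\pSSets]$'' and cites Remark~\ref{funct.-fact-sys-gCat} for CM(5), while you spell out the recognition theorem hypotheses directly. One slip: you wrote $\I\text{-cell}\subseteq \J\text{-cell}\cap W$, but the reasoning you give (each map in $\J$ is a levelwise acyclic cofibration, and these are closed under pushout and transfinite composition) is the argument for $\J\text{-cell}\subseteq \I\text{-cof}\cap W$, which is what the recognition theorem actually requires.
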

 \begin{proof}
 The category of all functors from $\gop$ to $\pSSets$, namely $\CatHom{\gop}{\pSSets}{}$ has a
 model category structure, called the \emph{projective model category structure},
 in which a map is a weak equivalence (resp. fibration) if and only if it is a weak equivalence
 (resp. fibration) degreewise, see \cite[Prop. A.3.3.2]{JL} for a proof. The category 
 $\gS = \pHomCat{\gop}{\pSSets}$ is a subcategory of $\CatHom{\gop}{\pSSets}{}$
 this implies that the axioms $CM(2), CM(3)$ and $CM(4)$, see \cite{Quillen}, \cite[Chap. 2]{GJ} are satisfied by $\gS$ because they are satisfied by the projective model category $\CatHom{\gop}{\pSSets}{}$.
 Finally, $CM(5)$ follows from remark \ref{funct.-fact-sys-gCat} above. The category $\gS$
 is locally presentable. The sets $\I$ and $\J$ defined above form the sets of generating cofibrations
 and generating acyclic cofibrations respectively of the strict model category structure.
 \end{proof}
\begin{nota}
	We will refer to the above model category as the \emph{strict JQ model category of normalized $\gSs$} and we denote it by $\pGSCStr$.
	\end{nota}
We recall that the \emph{smash product} of two (pointed) simplicial sets $(X, x)$ and $(Y, y)$, where the simplicial maps $x:1 \to X$ and $y:1 \to Y$ specify the respective basepoints, is defined by the following pushout square:
\begin{equation}
\label{smash-sSets}
\xymatrix{
X \vee Y \ar[d] \ar[r] & X \times Y \ar[d] \\
1 \ar[r] & X \wedge Y
}
\end{equation}
where the top horizontal arrow is the canonical map between the coproduct and product of the two (pointed) simplicial sets.
 To any pair of objects $(X, C) \in Ob(\pGSC) \times Ob(\pSSets)$ we can assign a $\gS$ $\TensP{X}{C}{\pSSets}$
 which is defined in degree $n$ as follows:
 \[
 (\TensP{X}{C}{\pSSets})(n^+) :=  \pTensP{X(n^+)}{C}{},
 \]
 where the pointed category on the right is the smash product of (pointed) simplicial sets, see \eqref{smash-sSets}. This assignment
 is functorial in both variables and therefore we have a bifunctor
 \[
 \TensP{-}{-}{\pSSets}:\pGSC \times \pSSets \to \pGSC.
 \]
 Next,we define a couple of function objects for the category $\gS$.
 The first function object enriches the category $\pGSC$ over
 $\pSSets$ \emph{i.e.} there is a bifunctor
 \[
 \MapC{-}{-}{\pGSC}:\gS^{op} \times \pGSC \to \pSSets
 \]
 which assigns to any pair of objects $(X, Y) \in Ob(\pGSC) \times Ob(\pGSC)$, a pointed simplicial set
 $\MapC{X}{Y}{\pGSC}$ which is defined in degree zero as follows:
 \[
 \MapC{X}{Y}{\pGSC}_0 := \pGSC(X, Y).
 \]
 The mapping simplicial set is deined in degree $n$ as follows:
 \[
 \MapC{X}{Y}{\pGSC}_n := \pGSC(X \wedge \Delta[n]^+, Y)
 \]
 For any $\gS$ $X$, the functor $\TensP{X}{-}{\pSSets}:\pSSets \to \pGSC$ is
 left adjoint to the functor $\MapC{X}{-}{\pGSC}:\pGSC \to \pSSets$. The counit of this adjunction
 is the evaluation map $ev:\TensP{X}{\MapC{X}{Y}{\pGSC}}{\pSSets} \to Y$
 and the unit is the obvious functor $C \to \MapC{X}{\TensP{X}{C}{\pSSets}}{\pGSC}$, where $Y$ is a normalized $\gS$ and $C$ is a pointed simplicial set.
 
 The mapping object $\MapC{X}{Y}{\pGSC}$ is a (pointed) simplicial set whose basepoint is the composite map $X \to \gn{0} \to Y$, where $\gn{0}$ is the zero object in $\pGSC$. Let $U(\MapC{X}{Y}{\pGSC})$ denote the simplicial set obtained by forgetting the basepoint of $\MapC{X}{Y}{\pGSC}$. We also recall the forgetful functor $U$ which forgets the normalization of a $\gS$, see \eqref{Forget-fun-GS}.
 \begin{lem}
 	Let $X$ and $Y$ be two normalized $\gSs$. The mapping simplicial set $U(\MapC{X}{Y}{\pGSC})$ is an equilizer of the following 
 	diagram:
 	\begin{equation*}
 	\xymatrix{
 	\MapC{U(X)}{U(Y)}{\gSC} \ar[rr] \ar[rd] && \MapC{\gn{0}}{U(Y)}{\gSC} \\
 	& 1 \ar[ru]_0
 }
 	\end{equation*}
 	\end{lem}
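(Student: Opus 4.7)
The plan is to unwind both sides of the claimed equalizer at each simplicial degree and verify naturality in the degree. By definition $U(\MapC{X}{Y}{\pGSC})_n = \pGSC(X \wedge \Delta[n]^+, Y)$, and normalization of $X$ gives $(X \wedge \Delta[n]^+)(0^+) = \ast \wedge \Delta[n]^+ = \ast$, so the source is again normalized and such an $n$-simplex is a system of pointed simplicial maps $X(k^+) \wedge \Delta[n]^+ \to Y(k^+)$ natural in $k^+$.

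First I would translate each pointed-map datum into the unpointed language of $\gSC$. Using the quotient presentation $A \wedge \Delta[n]^+ = (U(A) \times \Delta[n])/(\{\ast_A\} \times \Delta[n])$ for any pointed simplicial set $A$, the universal property of the quotient identifies a pointed map $A \wedge \Delta[n]^+ \to B$ with a (not necessarily pointed) simplicial map $U(A) \times \Delta[n] \to U(B)$ whose restriction to $\{\ast_A\} \times \Delta[n]$ is constant at $\ast_B$. Assembling this level-wise in $k^+$, I identify $\pGSC(X \wedge \Delta[n]^+, Y)$ with the subset of $\MapC{U(X)}{U(Y)}{\gSC}_n = \gSC(U(X) \times \Delta[n], U(Y))$ consisting of those natural transformations $\tilde\gamma$ whose precomposition with $! \times \mathrm{id} : \gn{0} \times \Delta[n] \to U(X) \times \Delta[n]$ (the levelwise basepoint inclusion, where $! : \gn{0} \to U(X)$ is the unique map from the terminal $\gS$) agrees with the canonical natural transformation $\gn{0} \times \Delta[n] \to U(Y)$ which at level $k^+$ is the constant simplicial map at the basepoint of $Y(k^+)$. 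In the language of the fork, the top arrow is precisely $\MapC{!}{U(Y)}{\gSC}$, while the lower composite picks out, via the $0$-simplex $0 \in \MapC{\gn{0}}{U(Y)}{\gSC}_0$, this same canonical constant-basepoint map. Hence the $n$-simplices of the claimed equalizer coincide with $U(\MapC{X}{Y}{\pGSC})_n$.

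Lifting the levelwise bijection to an identification of simplicial sets requires only that it commute with the simplicial operators: on the $\pGSC$-side these arise from pointed maps $\Delta[m]^+ \to \Delta[n]^+$ and on the $\gSC$-side from products with $\Delta[m] \to \Delta[n]$, and the quotient presentation makes the compatibility transparent. The main obstacle is not conceptual but bookkeeping — keeping straight the pointwise smash in $\pSSets$, the pointwise cartesian product in $\sSets$, and the forgetful functor $U$ relating them, and applying the universal property of the quotient carefully in both directions. As a remark, because $Y(0^+) = \ast$, the simplicial set $\MapC{\gn{0}}{U(Y)}{\gSC}$ is in fact terminal (a natural transformation $\gn{0} \to U(Y)$ is pinned down by its component at $0^+$), so both legs of the fork automatically coincide and the equalizer collapses to $\MapC{U(X)}{U(Y)}{\gSC}$; this reflects the fact that naturality with respect to the zero maps $0_k : 0^+ \to k^+$ already forces every natural transformation $U(X) \to U(Y)$ in $\gSC$ to be basepoint-preserving componentwise.
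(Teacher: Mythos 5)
Your proposal is correct and takes essentially the same route as the paper: the equalizer condition is exactly compatibility with the canonical maps $0_X:\gn{0} \to U(X)$ and $0_Y:\gn{0} \to U(Y)$, which identifies the equalizer with the basepoint-preserving natural transformations, i.e.\ precisely the image of $U$, and you carry this out degreewise via the quotient description of $- \wedge \Delta[n]^+$ (the paper states the same identification, just at the level of morphisms). One small wording correction: the map $\gn{0} \to U(X)$ is unique not because $\gn{0}$ is terminal (maps \emph{out of} a terminal object need not be unique) but because Yoneda and the normalization $X(0^+)=\ast$ give $\gSC(\gn{0}, U(X)) \cong \ast$; the map you use, the levelwise basepoint inclusion, is nevertheless the correct one.
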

 \begin{proof}
 	Each normalized $\gS$ $X$ uniquely determines a morphism $0_X:\gn{0} \to U(X)$. It is sufficient to observe that a morphism $f:U(X) \to U(Y)$ lies in the image of the forgetful functor $U$ if and only if the following diagram commutes:
 	\[
 	\xymatrix{
 	& \gn{0} \ar[ld]_{0_X} \ar[rd]^{0_Y} \\
    U(X) \ar[rr]_f && U(Y)
    }
 	\]
 	\end{proof}
 \begin{coro}
 	\label{nor-map-sp}
 	For each pair of normalized $\gSs$ $X$ and $Y$ we have the following canonical isomorphism of mapping simplicial sets
 	\[
 	U(\MapC{X}{Y}{\pGSC}) \cong \MapC{U(X)}{U(Y)}{\gSC}.
 	\]
 	\end{coro}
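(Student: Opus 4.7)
The plan is to read the corollary directly off the preceding lemma by showing that the equalizer diagram degenerates: the common target of the two parallel maps is a terminal simplicial set, so the two maps automatically agree and the equalizer coincides with its source.

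More precisely, the first step is to compute $\MapC{\gn{0}}{U(Y)}{\gSC}$. Since $\gn{0}$ is the representable $\gS$ $\gop(0^+, -)$, the (simplicially) enriched Yoneda lemma produces a canonical isomorphism
\[
\MapC{\gn{0}}{Z}{\gSC} \cong Z(0^+)
\]
for every $\gS$ $Z$. Applying this to $Z = U(Y)$ and recalling that $Y$ is normalized, i.e. $Y(0^+) = \ast$ in $\pSSets$, we see that $U(Y)(0^+) = U_{\sSets}(\ast)$ is the terminal simplicial set. Hence $\MapC{\gn{0}}{U(Y)}{\gSC}$ is terminal.

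The second (and essentially only) step is to observe that any two parallel maps of simplicial sets into a terminal simplicial set are equal. Therefore the two arrows in the diagram of the preceding lemma coincide, and the equalizer of a pair of equal maps is the common domain $\MapC{U(X)}{U(Y)}{\gSC}$. Combining this with the identification given by the lemma yields the desired isomorphism
\[
U(\MapC{X}{Y}{\pGSC}) \cong \MapC{U(X)}{U(Y)}{\gSC}.
\]

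No step is really an obstacle here; the content of the corollary is already carried by the lemma, and all that remains is the computation of $\MapC{\gn{0}}{U(Y)}{\gSC}$ via Yoneda together with the normalization hypothesis on $Y$. Naturality in $X$ and $Y$ is automatic, since the lemma's equalizer presentation, the Yoneda identification, and the degeneracy of the target are each natural in both variables.
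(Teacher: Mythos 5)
Your argument is correct and is essentially the paper's own proof: the paper likewise invokes Yoneda to identify $\MapC{\gn{0}}{U(Y)}{\gSC}$ with $U(Y)(0^+)\cong 1$ (using that $Y$ is normalized), after which the equalizer in the preceding lemma collapses onto $\MapC{U(X)}{U(Y)}{\gSC}$. Your write-up just makes explicit the step, left implicit in the paper, that two parallel maps into a terminal simplicial set coincide.
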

 \begin{proof}
 	It is sufficient to observe that for any normalized $\gS$ $Y$, the Yoneda's lemma tells us that the mapping simplicial set $\MapC{\gn{0}}{U(Y)}{\gSC} \cong 1$.
 	\end{proof}
 
 To each pair of objects $(C, X) \in Ob(\pSSets) \times Ob(\gS)$ we can assign a $\gS$ $X^C$
 which is defined in degree $n$ as follows:
 \[
 (X^C_\bullet)(n^+) := X(n^+)^C_\bullet \,
 \]
 where the (pointed) simplicial set on the right is is defined by the following equilizer diagram:
 \begin{equation*}
 \xymatrix{
 	X(n^+)^C_\bullet  \ar[r] & [C; X(n^+)] \ar[rd] \ar[rr]^{[c;X(n^+)]} && [1; X(n^+)] \\
 	&& 1 \ar[ru]_0
 }
 \end{equation*}
 where $c:1 \to C$ is the basepoint map.
 This assignment
 is functorial in both variable and therefore we have a bifunctor
 \[
 \bHom{-}{-}{\pGSC}:\pSSets^{op} \times \pGSC \to \pGSC.
 \]
 For any $\gS$ $X$, the functor $\bHom{-}{X}{\gS}:\pSSets \to \gS^{op}$ is
 left adjoint to the functor $\MapC{-}{X}{\gS}:\gS^{op} \to \pSSets$. 
 
The following proposition summarizes the above discussion.
\begin{prop}
\label{two-var-adj-qcat-nor-gcat}
There is an adjunction of two variables
\begin{equation}
\label{two-var-adj-nor-gcat}
(\pTensP{-}{-}{\pSSets}, \bHom{-}{-}{\pGSC}, \MapC{-}{-}{\pGSC}) : \pGSC \times \pSSets  \to \pGSC.
\end{equation}

\end{prop}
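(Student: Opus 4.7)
The plan is to assemble the claimed two-variable adjunction by lifting, pointwise in $\gop$, the standard closed symmetric monoidal structure on $(\pSSets, \wedge)$, which furnishes natural bijections
\[
\pSSets(A \wedge C, B) \cong \pSSets(A, B^C_\bullet) \cong \pSSets(C, B^A_\bullet).
\]
Concretely, I would produce the two natural bijections
\[
\pGSC(\pTensP{X}{C}{\pSSets}, Y) \cong \pGSC(X, \bHom{C}{Y}{\pGSC}),
\quad
\pGSC(\pTensP{X}{C}{\pSSets}, Y) \cong \pSSets(C, \MapC{X}{Y}{\pGSC}),
\]
natural in $X, Y \in \pGSC$ and $C \in \pSSets$, which is equivalent to the assertion.

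The first preliminary step is to check that the two constructions $\pTensP{-}{-}{\pSSets}$ and $\bHom{-}{-}{\pGSC}$ actually land in $\pGSC$, i.e.\ preserve the normalization condition. For the smash product, $(\pTensP{X}{C}{\pSSets})(0^+) = X(0^+) \wedge C = \ast \wedge C = \ast$ since $X$ is normalized; for the exponential, the equalizer defining $Y(0^+)^C_\bullet$ involves only the terminal pointed simplicial set when $Y(0^+) = \ast$, hence is itself terminal. Functoriality in both variables for each construction is inherited degreewise from the corresponding pointed simplicial structure.

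Next, I would establish the first bijection: a morphism $f : \pTensP{X}{C}{\pSSets} \to Y$ in $\pGSC$ is a family of pointed simplicial maps $f_{n^+} : X(n^+) \wedge C \to Y(n^+)$ natural in $n^+ \in \gop$, and by the closed structure on $\pSSets$ this is the same as a family $X(n^+) \to Y(n^+)^C_\bullet = (\bHom{C}{Y}{\pGSC})(n^+)$. Naturality in $n^+$ is transported across the pointwise adjunction because the unit/counit of the pointed $(\wedge, (-)^{-}_\bullet)$ adjunction is itself natural in the target. For the second bijection, I would use the definition $\MapC{X}{Y}{\pGSC}_n = \pGSC(X \wedge \Delta[n]^+, Y)$ together with the evident isomorphism $\pTensP{X}{\Delta[n]^+}{\pSSets} \wedge C \cong \pTensP{X}{C}{\pSSets} \wedge \Delta[n]^+$ to identify $n$-simplices of $\pSSets(C, \MapC{X}{Y}{\pGSC})$ with $\pGSC(\pTensP{X}{C}{\pSSets}, Y)_n$; unpacking at $n = 0$ gives the required hom-bijection, and the higher-simplex identification extends this to a natural isomorphism of pointed simplicial sets. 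Finally, one verifies coherence of the two bijections by observing that both sides of each are obtained by degreewise application of the corresponding simplicial adjunction isomorphisms, which already satisfy the required triangle identities.

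The main obstacle I expect is purely bookkeeping: tracking basepoints consistently throughout, in particular identifying the basepoint of $\MapC{X}{Y}{\pGSC}$ (the composite $X \to \gn{0} \to Y$) with the image of the zero map under each adjunction. Corollary \ref{nor-map-sp} already provides the comparison with $\MapC{U(X)}{U(Y)}{\gSC}$, so one can reduce the bookkeeping to the (previously handled) unpointed adjunction of Proposition \ref{two-var-adj-cat-gcat}, and then identify the fixed basepoint summands on both sides.
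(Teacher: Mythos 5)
Your proposal is correct and follows essentially the same route as the paper, which constructs $\pTensP{-}{-}{\pSSets}$, $\bHom{-}{-}{\pGSC}$ and $\MapC{-}{-}{\pGSC}$ degreewise from the closed structure on $(\pSSets,\wedge)$ and then states the proposition as a summary of that discussion, leaving the pointwise adjunction bijections implicit. Your write-up simply makes explicit the normalization check and the naturality/basepoint bookkeeping that the paper omits, so there is nothing to correct.
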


 \begin{thm}
  \label{enrich-p-CAT-Q}
  The strict model category of normalized $\gSs$, $\pGSC$, is a $\pSSets$-enriched model category.
 \end{thm}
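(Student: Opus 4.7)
The plan is to mimic the proof of Theorem \ref{enrich-GamCAT-CAT}, which established the analogous result for unnormalized $\gSs$ enriched over $\sSetsQ$. Proposition \ref{two-var-adj-qcat-nor-gcat} already furnishes the adjunction of two variables
\[
(\pTensP{-}{-}{\pSSets}, \bHom{-}{-}{\pGSC}, \MapC{-}{-}{\pGSC}) : \pGSC \times \pSSets \to \pGSC,
\]
so $\pGSC$ is tensored and cotensored over $\pSSets$, and condition (1) of Definition \ref{enrich-model-cat} holds. It therefore remains to show that this adjunction is a Quillen adjunction of two variables between the strict $JQ$-model category $\pGSC$ and the Joyal model category on $\pSSets$.

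First I would invoke Lemma \ref{Q-bifunctor-char} and verify its condition $(2)$. That is, given a cofibration $g:K\to L$ in $\pSSets$ (with the Joyal model structure) and a strict $JQ$-fibration $p:Y\to Z$ in $\pGSC$, I need to show that the pullback-power map
\[
\bhom^{\Box}_{\pGSC}(g,p):\bHom{L}{Y}{\pGSC}\to \bHom{L}{Z}{\pGSC}\underset{\bHom{K}{Z}{\pGSC}}\times \bHom{K}{Y}{\pGSC}
\]
is a strict $JQ$-fibration of normalized $\gSs$, acyclic if either $g$ or $p$ is.

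Since strict $JQ$-fibrations and strict $JQ$-equivalences of normalized $\gSs$ are detected degreewise, it suffices to check that for every $n^+\in\gop$ the induced map
\[
\bhom^{\Box}_{\pSSets}(g,p(n^+)): Y(n^+)^{L}_{\bullet}\to Z(n^+)^{L}_{\bullet}\underset{Z(n^+)^{K}_{\bullet}}\times Y(n^+)^{K}_{\bullet}
\]
is a (Joyal) fibration in $\pSSets$, and acyclic when appropriate. This is precisely the statement that the adjunction of two variables $(\wedge, \bhom_{\pSSets}, \map_{\pSSets})$ on $\pSSets$ is a Quillen adjunction of two variables with respect to the Joyal model structure, i.e. that the Joyal model category of pointed simplicial sets is symmetric monoidal closed under the smash product of pointed simplicial sets (as recorded in the introduction citing \cite{JT2}). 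Since $p(n^+):Y(n^+)\to Z(n^+)$ is a pseudo-fibration in $\pSSets$ by hypothesis, and $g$ is a cofibration in $\pSSets$, the pushout-product axiom for pointed simplicial sets supplies the required conclusion in each degree.

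The main (and essentially only) step requiring care is ensuring that the degreewise reduction goes through cleanly; concretely one must check that the functors $\bHom{-}{-}{\pGSC}$ and $\MapC{-}{-}{\pGSC}$ commute with evaluation at each $n^+\in\gop$ in the pointed sense, i.e.\ that the pullbacks defining $\bHom{L}{Z}{\pGSC}\underset{\bHom{K}{Z}{\pGSC}}\times \bHom{K}{Y}{\pGSC}$ are computed degreewise (which is automatic since limits in a functor category into $\pSSets$ are computed pointwise). Once that is in place, the theorem follows immediately from the Joyal symmetric monoidal closed model structure on $\pSSets$. An identical argument (by taking $Y=Z$ or $g=\emptyset\to L$) handles the unit-compatibility and the acyclic cases.
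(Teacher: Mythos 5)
Your proposal is correct and follows essentially the same route as the paper: it verifies condition (2) of Lemma \ref{Q-bifunctor-char} for the two-variable adjunction of Proposition \ref{two-var-adj-qcat-nor-gcat}, reduces the pullback-power map to a degreewise statement (using that limits in $\pGSC$ are computed pointwise, and that strict $JQ$-fibrations and equivalences are degreewise), and then invokes the fact that $\pSSets$ with the pointed Joyal structure is monoidal closed/self-enriched under the smash product. This matches the paper's proof, which concludes in exactly the same way from the degreewise fibration $p(n^+)$ and the enrichment of $\pSSets$ over itself.
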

 \begin{proof}
  We will show that the adjunction of two variables \eqref{two-var-adj-gcat}
  is a Quillen adjunction for the strict model category structure
 on $\gS$ and the natural model category structure on $\pSSets$.
  In order to do so, we will verify condition
 (2) of Lemma \ref{Q-bifunctor-char}. Let $g:C \to D$ be a cofibration
 in $\pSSets$ and let $p:Y \to Z$ be a strict fibration of $\gSs$,
 we have to show that the induced map
 \[
  \bhom^{\Box}_{\gS}(g, p):\bHom{X}{Y}{\gS} \to \bHom{D}{Z}{\gS}
  \underset{\bHom{C}{Z}{\gS}} \times \bHom{C}{Y}{\gS}
 \]
 is a fibration in $\pSSets$ which is acyclic if either of $g$ or $p$ is
 acyclic. It would be sufficient to check that the above morphism is degreewise
 a fibration in $\pSSets$, i.e. for all $n^+ \in \gop$, the morphism
 \begin{equation*}
  \bhom^{\Box}_{\gS}(g, p)(n^+): \pHomCat{D}{Y(n^+)} \ \to
  \pHomCat{D}{Z(n^+)}  \
  \underset{ \pHomCat{C}{Z(n^+)} }
  \times  \pHomCat{C}{Y(n^+)} ,
 \end{equation*}
 is a fibration in $\pSSets$. This follows from the observations that the functor $p(n^+):Y(n^+) \to Z(n^+)$
 is a fibration in $\pSSets$ and the natural model category 
 $\pSSets$ is a $\pSSets$-enriched model category whose enrichment is provided by the bifunctor $\pHomCat{-}{-}$.
 \end{proof}
 The adjunction $-^+:\sSets \rightleftharpoons \pSSets:U $ provides us with an enrichment
 of the strict model category of $\gSs$, $\gS$, over the natural model category of
 all (small) categories $\Cat$.
 \begin{coro}
\label{enrich-CAT-Q}
The strict model category of normalized $\gSs$, $\pGSC$, is a $\sSetsQ$-enriched model category.
 \end{coro}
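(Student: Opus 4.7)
The plan is to transport the $\pSSets$-enrichment established in Theorem \ref{enrich-p-CAT-Q} along the Quillen pair $((-)^+, U)$ that relates $\sSetsQ$ to the Joyal model structure on pointed simplicial sets. First I would define the candidate adjunction of two variables on the underlying categories by composition: set the tensor to be $X \otimes K := \TensP{X}{K^+}{\pSSets}$, the cotensor to be $(K, X) \mapsto \bHom{K^+}{X}{\pGSC}$, and the $\sSets$-valued mapping space to be $U \circ \MapC{-}{-}{\pGSC}$. The two-variable hom-set bijections characterizing an adjunction of two variables follow immediately from composing those of Proposition \ref{two-var-adj-qcat-nor-gcat} with those of the ordinary adjunction $((-)^+, U)$.

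Next I would verify the Quillen bifunctor condition via clause $(2)$ of Lemma \ref{Q-bifunctor-char}. Given a cofibration $g:K \to L$ in $\sSetsQ$, namely a monomorphism, and a strict $JQ$-fibration $p:Y \to Z$ in $\pGSC$, I must show that the induced map
\[
\bHom{L^+}{Y}{\pGSC} \to \bHom{L^+}{Z}{\pGSC} \underset{\bHom{K^+}{Z}{\pGSC}}{\times} \bHom{K^+}{Y}{\pGSC}
\]
is a strict $JQ$-fibration in $\pGSC$ which is trivial if either $g$ or $p$ is a weak equivalence. This reduces, via Theorem \ref{enrich-p-CAT-Q} applied to the pair $(g^+, p)$, to the claim that $(-)^+:\sSetsQ \to \pSSets$ sends cofibrations to cofibrations in the Joyal model structure on $\pSSets$ and categorical equivalences in $\sSets$ to categorical equivalences in $\pSSets$.

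Both of these claims are elementary, since adjoining a disjoint basepoint preserves monomorphisms, and the map $K^+ \to L^+$ is obtained from $K \to L$ by taking coproduct with $\mathrm{id}_\ast$, so preserves categorical equivalences. The only subtle point, which I regard as the main (and essentially bookkeeping) obstacle, is to be pedantic about the fact that the ambient model structure on $\pSSets$ used in Theorem \ref{enrich-p-CAT-Q} is the Joyal model structure inherited along the forgetful functor $U:\pSSets \to \sSets$; once this compatibility is fixed, the $\sSetsQ$-enrichment asserted in the corollary is a purely formal consequence of Theorem \ref{enrich-p-CAT-Q} obtained by pulling back along $(-)^+$.
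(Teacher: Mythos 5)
Your proposal is correct and is essentially the paper's own argument: the corollary is obtained by transporting the two-variable Quillen adjunction of Theorem \ref{enrich-p-CAT-Q} along the adjunction $(-)^+:\sSets \rightleftharpoons \pSSets:U$, exactly as you do, with the only substantive input being that $(-)^+$ is left Quillen from $\sSetsQ$ to the Joyal-type structure on $\pSSets$ (monomorphisms and categorical equivalences are preserved since $K^+ = K \sqcup \ast$). Your explicit verification via Lemma \ref{Q-bifunctor-char}(2) just spells out what the paper leaves implicit.
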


\subsection[The $JQ$-model category of normalized $\gSs$]{The $JQ$-model category of normalized $\gSs$}
\label{JQ-su-ccm-qcat}

The objective of this subsection is to construct a new model
category structure on the category $\pGSC$. This new model
category is obtained by localizing the strict JQ-model category of normalized $\gSs$
(see section \ref{str-mdl-gcat}) and we we refer to it as the \emph{JQ-model category of normalized $\gSs$}.  We go on further to show that this
new model category is symmetric monoidal closed with respect to
the smash product which is a categorical version of the smash product
constructed in \cite{lydakis}.
\begin{nota}
	\label{nor-pointed-GS}
	We denote by $1/\gSC$ the overcategory whose objects are maps in $\gSC$
	having domain the terminal $\gS$ $1$. We denote by $(1/\gSC)_\bullet$ the subcategory of $1/\gSC$ whose objects are those maps $1 \to X$ in $\gSC$ 
	whose codomain $\gS$ satisfies the following normalization condition:
	\[
	X(0^+) = \ast. 
	\]
	 
	\end{nota}
\begin{rem}
	\label{isom-nor-pointed-obj}
	We observe that the category of normalized pointed objects $(1/\gSC)_\bullet$ is isomorphic to the category of normalized $\gSs$ $\pGSC$.
	\end{rem}

 We want to construct a left Bousfield localization of
 the strict model category of $\gSs$. For each pair $k^+, l^+ \in \gop$,
 we have the obvious \emph{projection maps} in $\gop$
 \[
  \delta^{k+l}_k:(k+l)^+ \to k^+ \ \ \ \ and \ \ \ \ \delta^{k+l}_l:(k+l)^+ \to l^+.
 \]
 The following two inclusion maps between representable $\gSs$
 \[
 \gop(\delta^{k+l}_k,-):\Gamma^{k} \to \Gamma^{k+l} \ \ \ \ and \ \ \ \ 
 \gop(\delta^{k+l}_l,-):\Gamma^{l} \to \Gamma^{k+l} 
 \]
 induce a pair of maps of $\gSs$ on the coproduct which we denote as follows:
 \[
  h_k^l:\Gamma^l \vee \Gamma^l \to \Gamma^{l+k}.
 \]
 
 We now define a set of maps $\E_\infty\S_\bullet$ in $\pGSC$:
 \begin{equation*}
  \E_\infty\S_\bullet := \lbrace h_k^l:\Gamma^l \vee \Gamma^l \to \Gamma^{l+k}:
  l, k \in \mathbb{Z}^+ \rbrace
 \end{equation*}
 Next we define the set of arrows in $\pGSC$ with respect to which we will localize the strict $JQ$-model category of normalized $\gSs$:
 \begin{equation*}
 \Delta \times \E_\infty\S_\bullet := \lbrace \TensP{\Delta[n]^+}{h_k^l}{\pSSets} :h_k^l \in \Delta \times \E_\infty\S_\bullet \rbrace
 \end{equation*}
  \begin{df}
  	We call a $\gS$ $X$ a $(\Delta \times \E_\infty\S_\bullet)$-\emph{local object}
  	if it is a fibrant object in the strict $JQ$-model category of normalized $\gSs$ and for each map $\TensP{\Delta[n]^+}{h_k^l}{\pSSets} \in \Delta \times \E_\infty\S_\bullet$, the induced simplicial map
  	\begin{multline*}
  	\HMapC{\TensP{\Delta[n]^+}{h_k^l}{\pSSets}}{X}{\pGSC}: \HMapC{\TensP{\Delta[n]^+}{\gn{k+l}}{\pSSets}}{X}{\pGSC} \to \\
  	\HMapC{\TensP{\Delta[n]^+}{(\gn{k} \vee \gn{l})}{\pSSets}}{X}{\pGSC},
  	\end{multline*}
  	is a homotopy equivalence of simplicial sets for all $n \ge 0$ where $\HMapC{-}{-}{\pGSC}$ is the simplicial function complexes associated with the strict model category $\pGSC$, see \cite{DK80}, \cite{DK1980} and \cite{DK3}.
  \end{df}
  Corollary \ref{enrich-CAT-Q} above and appendix \ref{Cat-Local} tell us that a model for $\HMapC{X}{Y}{\pGSC}$ is the Kan complex
  $J(\MapC{X}{Y}{\pGSC})$ which is the maximal kan complex contained in the quasicategory $\MapC{X}{Y}{\pGSC}$.

  The following proposition gives a characterization of
  $\Delta \times \E_\infty\S_\bullet$-local objects
  \begin{prop}
  	\label{char-CCMC}
  	\begin{sloppypar}
  		A normalized $\gS$ $X$ is a $(\Delta \times \E_\infty\S_\bullet)$-local object  if and only if it satisfies the Segal condition namely the functor
  	\end{sloppypar}
  	\begin{equation*}
  	(X(\partition{k+l}{k}), X(\partition{k+l}{l})):X((k+l)^+) \to X(k^+) \times X(l^+)
  	\end{equation*}
  	is an equivalence of (pointed) quasi-categories for all $k^+, l^+ \in \Ob(\gop)$.
  \end{prop}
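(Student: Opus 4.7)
My proof plan follows the pattern of Proposition \ref{char-CCMC} for the unnormalized case. The essential content is to translate the local-object condition into a condition on mapping spaces using Yoneda, and then recognize the resulting map as the Segal map.

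First I would observe that every element of $\E_\infty\S_\bullet$ is a map between cofibrant normalized $\gSs$ (the representables $\gn{k}$ and their wedges are cofibrant in $\pGSCStr$). Combined with Corollary \ref{enrich-CAT-Q}, which establishes that the strict $JQ$-model category of normalized $\gSs$ is enriched over $\sSetsQ$, Lemma \ref{char-lo-QCat-en} applies and tells us that a strictly $JQ$-fibrant normalized $\gS$ $X$ is $(\Delta \times \E_\infty\S_\bullet)$-local if and only if, for every $h_k^l \in \E_\infty\S_\bullet$, the induced simplicial map
\[
\MapC{h_k^l}{X}{\pGSC}:\MapC{\gn{k+l}}{X}{\pGSC} \to \MapC{\gn{k} \vee \gn{l}}{X}{\pGSC}
\]
is a categorical equivalence of quasi-categories. (I am implicitly using the identification, from Corollary \ref{nor-map-sp}, that forgetting the basepoint of $\MapC{-}{X}{\pGSC}$ recovers the unpointed mapping space, so that ``categorical equivalence'' can be tested there.)

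Next, I would apply Yoneda's lemma. The representable $\gn{n}$ satisfies $\pGSC(\gn{n}, X) \cong X(n^+)$ naturally in $X$; this extends to an isomorphism of mapping simplicial sets $\MapC{\gn{n}}{X}{\pGSC} \cong X(n^+)$, since $\TensP{\gn{n}}{\Delta[m]^+}{\pSSets}$ represents the functor $X \mapsto X(n^+)_m$. Because $\gn{k} \vee \gn{l}$ is a coproduct in $\pGSC$, the natural isomorphism
\[
\MapC{\gn{k} \vee \gn{l}}{X}{\pGSC} \cong \MapC{\gn{k}}{X}{\pGSC} \times \MapC{\gn{l}}{X}{\pGSC} \cong X(k^+) \times X(l^+)
\]
holds. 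Finally, the two components of $h_k^l$ come from the inert maps $\partition{k+l}{k}$ and $\partition{k+l}{l}$ under the Yoneda embedding, so the induced map $\MapC{h_k^l}{X}{\pGSC}$ fits into the commutative square
\begin{equation*}
\xymatrix@C=24mm{
\MapC{\gn{k+l}}{X}{\pGSC} \ar[d]_{\cong}\ar[r]^{\MapC{h_k^l}{X}{\pGSC}} & \MapC{\gn{k} \vee \gn{l}}{X}{\pGSC} \ar[d]^{\cong} \\
X((k+l)^+) \ar[r]_{(X(\partition{k+l}{k}), X(\partition{k+l}{l}))} & X(k^+) \times X(l^+)
}
\end{equation*}
whose vertical arrows are isomorphisms. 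This reduces the localization condition to the Segal condition precisely as stated.

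The ``hard part'' is really only bookkeeping: one must verify that the Yoneda isomorphism $\MapC{\gn{n}}{X}{\pGSC} \cong X(n^+)$ is natural with respect to the tensoring by $\Delta[m]^+$, and that the coproduct $\gn{k} \vee \gn{l}$ in the category $\pGSC$ of \emph{normalized} $\gSs$ (whose values at $0^+$ are forced to be the basepoint) is still computed by the evident formula in positive degrees, so that the product decomposition above holds. Once this is in place, the equivalence between the two forms of the hypothesis is formal from the commutative diagram.
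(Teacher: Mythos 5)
Your proposal is correct and follows essentially the same route as the paper's own proof: cofibrancy of the maps in $\E_\infty\S_\bullet$ plus the enrichment over $\sSetsQ$ reduces locality, via Lemma \ref{char-lo-QCat-en}, to the map $\MapC{h^k_l}{X}{\pGSC}$ being a categorical equivalence, and the Yoneda/coproduct identifications give exactly the commutative square with vertical isomorphisms that the paper uses to identify this map with the Segal map. The only cosmetic difference is that you conclude directly from the vertical arrows being isomorphisms, whereas the paper phrases the final step as an appeal to two-out-of-three; the content is the same.
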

  \begin{proof}
  	We begin the proof by observing that each element of the set $\Delta \times \E_\infty\S_\bullet$ is a map of $\gSs$ between cofibrant $\gSs$. Theorem \ref{char-lo-QCat-en} implies that $X$ is a $(\Delta \times \E_\infty\S_\bullet)$-local object if and only if the following map of simplicial sets
  	\begin{equation*}
  	\MapC{h^k_l}{X}{\pGSC}:\MapC{\gn{k+l}}{X}{\pGSC} \to \MapC{\gn{k} \vee \gn{l}}{X}{\pGSC}
  	\end{equation*}
  	is an equivalence of quasi-categories.
  	
  	We observe that we have the following commutative square in $\sSetsQ$
  	\begin{equation*}
  	\xymatrix@C=24mm{
  		\MapC{\gn{k+l}}{X}{\pGSC}  \ar[d]_{\cong}\ar[r]^{ \MapC{h^k_l}{X}{\pGSC}} & \MapC{\gn{k} \vee \gn{l}}{X}{\pGSC}   \ar[d]^{\cong} \\
  		X((k+l)^+) \ar[r]_{(X(\partition{k+l}{k}), X(\partition{k+l}{l}))} & X(k^+) \times X(l^+)
  	}
  	\end{equation*}
  	By the two out of three property of weak equivalences in a model category the  simplicial map $(X(\partition{k+l}{k}), X(\partition{k+l}{l}))$ is an equivalence of  quasi-categories if and only if the map $ \MapC{h^k_l}{X}{\pGSC}$ is an equivalence of quasi-categories.
  \end{proof}
  
  \begin{df}
  	\label{CCMC}
  	We will refer to a $(\Delta \times \E_\infty\S_\bullet)$-local object as a \emph{normalized coherently commutative monoidal quasi-category}.
  \end{df}

  \begin{df}
 	A morphism of normalized $\gSs$ $F:X \to Y$ is a $(\Delta \times \E_\infty\S_\bullet)$-local equivalence if for each normalized coherently commutative monoidal quasi-category $Z$ the following simplicial map
 	\[
 	\HMapC{F}{Z}{\pGSC}:\HMapC{Y}{Z}{\pGSC} \to \HMapC{X}{Z}{\pGSC}
 	\]
 	is a homotopy equivalence of simplicial sets. We may sometimes refer to a $(\Delta \times \E_\infty\S_\bullet)$-local equivalence as an equivalence of normalized coherently commutative monoidal quasi-categories.
 \end{df}
An argument similar to the proof of proposition \ref{char-CCME} proves the following proposition:
 \begin{prop}
 	\label{char-CCME-nor}
 	\begin{sloppypar}
 		A morphism between two $JQ$-cofibrant normalized $\gSs$ $F:X \to Y$ is an $(\Delta \times \E_\infty\S_\bullet)$-local equivalence if and only if the simplicial map  \end{sloppypar}
 	\begin{equation*}
 	\MapC{F}{Z}{\pGSC}:\MapC{Y}{Z}{\pGSC} \to \MapC{X}{Z}{\pGSC}
 	\end{equation*}
 	is an equivalence of quasi-categories for each normalized coherently commutative monoidal quasi-category $Z$.
 \end{prop}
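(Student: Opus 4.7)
The plan is to mirror the argument for Proposition \ref{char-CCME}, adapted to the normalized, pointed setting: the two implications follow the same template, one bootstrapping from Kan-complex homotopy equivalences to categorical equivalences of quasi-categories via cotensoring by $\Delta[n]^+$, and the other applying the maximal-Kan functor $J$ to transport categorical equivalences to homotopy equivalences.

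For the forward direction, suppose $F\colon X\to Y$ is a $(\Delta\times\E_\infty\S_\bullet)$-local equivalence and $Z$ is a normalized coherently commutative monoidal quasi-category. First I would check that for every $n\ge 0$ the pointed cotensor $Z^{\Delta[n]^+}_\bullet$ is again a normalized coherently commutative monoidal quasi-category. Strict $JQ$-fibrancy is automatic because $(-)^{\Delta[n]^+}_\bullet$ is a right Quillen functor on $\pSSets$, and preservation of the Segal property (Proposition \ref{char-CCMC}) reduces to checking that the right Quillen functor $(-)^{\Delta[n]^+}_\bullet$ sends the Segal equivalence for $Z$ to an equivalence of pointed quasi-categories, which it does.

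With that in hand, the hypothesis yields that $\HMapC{F}{Z^{\Delta[n]^+}_\bullet}{\pGSC}$ is a homotopy equivalence of Kan complexes for every $n$. Using the enrichment of Corollary \ref{enrich-CAT-Q} together with Proposition \ref{func-sSet-char}, and the fact that $F$ is a map between cofibrant objects while $Z^{\Delta[n]^+}_\bullet$ is strictly $JQ$-fibrant, this Kan complex is identified with $J\bigl(\MapC{F}{Z^{\Delta[n]^+}_\bullet}{\pGSC}\bigr)$. Combining the two-variable adjunction of Proposition \ref{two-var-adj-qcat-nor-gcat} with Corollary \ref{nor-map-sp} produces a natural isomorphism $\MapC{F}{Z^{\Delta[n]^+}_\bullet}{\pGSC}\cong\MapC{F}{Z}{\pGSC}^{\Delta[n]}$. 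Hence $J\bigl(\MapC{F}{Z}{\pGSC}^{\Delta[n]}\bigr)$ is a homotopy equivalence for all $n\ge 0$, and Lemma \ref{char-lo-QCat-en} then forces $\MapC{F}{Z}{\pGSC}$ to be an equivalence of quasi-categories.

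The reverse direction is immediate: since $J$ carries categorical equivalences of quasi-categories to homotopy equivalences of Kan complexes, the assumption that $\MapC{F}{Z}{\pGSC}$ is an equivalence of quasi-categories for every normalized coherently commutative monoidal quasi-category $Z$ immediately makes $\HMapC{F}{Z}{\pGSC}=J\bigl(\MapC{F}{Z}{\pGSC}\bigr)$ a homotopy equivalence. The main obstacle I expect is the first step of the forward direction: verifying the stability of the Segal condition under the pointed cotensor and pinning down the adjointness isomorphism $\MapC{F}{Z^{\Delta[n]^+}_\bullet}{\pGSC}\cong\MapC{F}{Z}{\pGSC}^{\Delta[n]}$ cleanly in the pointed, normalized framework; once that bookkeeping is in place, the remainder of the argument is formal and parallels the proof of Proposition \ref{char-CCME} almost verbatim.
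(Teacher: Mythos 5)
Your proposal is correct and matches the paper's intent exactly: the paper gives no separate argument here, stating only that ``an argument similar to the proof of Proposition \ref{char-CCME} proves the proposition,'' and your write-up is precisely that adaptation — cotensoring $Z$ by $\Delta[n]^+$, checking the cotensor is again a normalized coherently commutative monoidal quasi-category, identifying $\HMapC{F}{Z^{\Delta[n]^+}_\bullet}{\pGSC}$ with $J\bigl(\MapC{F}{Z}{\pGSC}^{\Delta[n]}\bigr)$ via Proposition \ref{func-sSet-char} and the two-variable adjunction, and concluding with Lemma \ref{char-lo-QCat-en}, with the converse handled by $J$ preserving categorical equivalences. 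The bookkeeping you flag (normalization and Segal condition under the pointed cotensor, and the isomorphism $\MapC{F}{Z^{\Delta[n]^+}_\bullet}{\pGSC}\cong\MapC{F}{Z}{\pGSC}^{\Delta[n]}$ via $\Delta[n]^+\wedge\Delta[m]^+\cong(\Delta[n]\times\Delta[m])^+$) goes through without difficulty.
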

 The main objective of the current subsection is to construct a
 new model category structure on the category of normalized $\gSs$ $\pGSC$
 by localizing the strict $JQ$-model category of normalized $\gSs$ with respect to morphisms in the set $\Delta \times \E_\infty\S_\bullet$. The desired model structure follows from theorem \ref{local-tool}
 
\begin{thm}
 \label{main-result-nor}
 There is a closed, left proper, combinatorial model category structure on
 the category of normalized $\gSs$, $\pGSC$, in which
 \begin{enumerate}
 \item The class of cofibrations is the same as the class of
 $JQ$-cofibrations of normalized $\gSs$.
 \item The weak equivalences are equivalences of normalized coherently commutative monoidal quasi-categories.
 \end{enumerate}
 An object is fibrant in this model category if and only if it is a
  normalized coherently commutative monoidal quasi-category. Further, this model category structure makes
  $\pGSC$ a closed symmetric monoidal model category under
 the smash product.
 
 \end{thm}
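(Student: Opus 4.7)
The plan is to proceed in parallel to the construction of the JQ-model category of $\gSs$ in Theorem \ref{loc-semi-add} and its symmetric monoidal closed refinement in Theorem \ref{SM-closed-CCMC}. First, I would invoke Theorem \ref{local-tool} applied to the combinatorial strict JQ-model category $\pGSCStr$ of Theorem \ref{str-mdl-cat-gCat}, localizing with respect to the set $\Delta \times \E_\infty\S_\bullet$. Left properness of $\pGSCStr$ follows pointwise from left properness of the Joyal model category on $\pSSets$, since cofibrations, weak equivalences, and pushouts are all computed degreewise. Theorem \ref{local-tool} then directly produces a left proper combinatorial model structure on $\pGSC$ whose cofibrations coincide with the strict JQ-cofibrations and whose weak equivalences are, by definition, the $(\Delta \times \E_\infty\S_\bullet)$-local equivalences, i.e.\ the equivalences of normalized coherently commutative monoidal quasi-categories. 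The fibrant object characterization comes from combining Theorem \ref{local-tool}(4) with Proposition \ref{char-CCMC}.

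For the symmetric monoidal closed portion, the starting point is Lydakis's smash product of $\gSs$ (extended here to pointed simplicial sets in place of pointed sets), which makes $\pGSC$ a closed symmetric monoidal category with unit $\gn{1}$. I would first verify that the strict JQ-model category $\pGSCStr$ is symmetric monoidal closed under the smash product, by adapting the proof of Theorem \ref{SM-closed-mdl-str-GCat}: using the $\pSSets$-enrichment of Theorem \ref{enrich-p-CAT-Q} and the fact that $\pSSets$ with the Joyal structure is a symmetric monoidal closed model category under the smash product of pointed simplicial sets, one reduces the pushout-product axiom for the smash product on $\pGSC$ to the corresponding axiom for the smash product on $\pSSets$, applied levelwise.

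With the strict statement in hand, the localized pushout-product axiom follows the exact same template as Theorem \ref{SM-closed-CCMC}. Given JQ-cofibrations $i:U \to V$ and $j:Y \to Z$ with $j$ acyclic, I would use Proposition \ref{criterion-acy-cof} to reduce the claim that $i \Box j$ is an acyclic JQ-cofibration to a lifting problem against any strict JQ-fibration $p:W \to X$ between normalized coherently commutative monoidal quasi-categories, transpose the problem across the smash/internal-Hom adjunction, and then reduce to the strict pushout-product axiom plus the fact that the induced map on internal Homs lands between fibrant objects in the localized model category. A standard cellular induction over the generating cofibrations extends the result to arbitrary JQ-cofibrations $i$, and the symmetric case (where $i$ is the acyclic one) is handled symmetrically.

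The main obstacle is the normalized analog of Lemma \ref{Ein-map-obj-Cat}: for any JQ-cofibrant normalized $\gS$ $W$ and any normalized coherently commutative monoidal quasi-category $X$, the internal Hom object $\MGCat{W}{X}$ (with respect to the smash product) must itself be a normalized coherently commutative monoidal quasi-category. In the unpointed Day convolution setting this used Proposition \ref{rt-adjs-DayCP} to identify $X(n^+ \wedge -)$ with a representable mapping object, together with the Segal-type decomposition of Proposition \ref{expn-ho-prod}. The smash product of normalized $\gSs$ does not factor through the categorical product as cleanly, so I expect the work will be in establishing a smash-product analog of the identification $-(n^+ \wedge -) \cong \MGCat{\gn{n}}{-}$ (where the right-hand side is now the smash-internal Hom on $\pGSC$) and then verifying that $X(n^+ \wedge -)$ still satisfies the Segal condition when $X$ does, so that $\MGCat{W}{X}$ inherits the Segal property from the cofibrant structure of $W$ via a cellular induction built out of the smash-generators $\gn{n} \wedge \Delta[m]^+$.
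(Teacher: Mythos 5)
Your proposal follows essentially the same route as the paper: the model structure, its left properness, combinatoriality, cofibrations, weak equivalences, and the characterization of fibrant objects are all obtained by applying Theorem \ref{local-tool} to the strict $JQ$-model category of normalized $\gSs$ (together with Proposition \ref{char-CCMC}), and the monoidal statement is proved by rerunning the argument of Theorem \ref{SM-closed-CCMC} for the smash product using the enrichment of the strict normalized model structure. The paper's own proof is just a terse statement of this plan, so your extra care in flagging the strict pushout-product axiom for the smash product and the normalized analog of Lemma \ref{Ein-map-obj-Cat} is exactly the detail the paper leaves implicit rather than a different approach.
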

 \begin{proof}
 The strict $JQ$-model category of normalized $\gSs$ is a combinatorial
 model category therefore the existence of the model structure
 follows from theorem \ref{local-tool}. The statement
 characterizing fibrant objects also follows from theorem \ref{local-tool}.
 An argument similar to the proof of theorem \ref{SM-closed-CCMC} using the enrichment of the strict $JQ$-model category of normalized $\gSs$ over the $\pSSetsQ$ established in proposition \ref{two-var-adj-cat-gcat} shows that the localized model category has a symmetric monoidal closed model category structure under the smash product. 
 \end{proof}
 \begin{nota}
 The model category constructed in theorem \ref{main-result-nor} will
 be referred to either as \emph{the $JQ$-model category of normalized $\gSs$}
 or as the model category of normalized coherently commutative monoidal quasi-categories.
 \end{nota}

 \bibliographystyle{amsalpha}
\bibliography{StrictCModel}

\end{document}